\newcommand{\p}{\partial}
\newcommand{\cjd}{\rangle}
\newcommand{\cjg}{\langle}
\newcommand\CC{\mathbb{C}}
\newcommand\RR{\mathbb{R}}
\newcommand\R{\mathbb{R}}
\newcommand\rr{\mathbb{R}}
\newcommand\NN{\mathbb{N}}
\newcommand\Id{\operatorname{Id}}
\newcommand{\pl}{\partial}
\newcommand{\x}{\times}
\newcommand{\til}{\widetilde}
\newcommand{\bbar}{\overline}
\newcommand{\la}{\lambda}
\newcommand{\eps}{\epsilon}
\newcommand{\mc}{\mathcal}
\newcommand\xin{\overline{\xi}_0}
\newtheorem{theorem}{Theorem}
\newtheorem{lemma}[theorem]{Lemma}
\newtheorem{proposition}[theorem]{Proposition}
\newtheorem{corollary}[theorem]{Corollary}
\newtheorem{definition}[theorem]{Definition}
\theoremstyle{remark}
\numberwithin{equation}{section}
\numberwithin{theorem}{section}
\title{X-Ray Transform in Asymptotically Conic Spaces}
\author{Colin Guillarmou}
\address{Universit\'e Paris-Saclay, CNRS, Laboratoire de math\'ematiques d'Orsay, 91405, Orsay, France.}
\email{colin.guillarmou@math.u-psud.fr}
\author{Matti Lassas}
\address{Department of Mathematics and Statistics
P.O. Box 68 (Gustaf Hällströmin katu 2b)
FI-00014 University of Helsinki
Finland}
\email{matti.lassas@helsinki.fi}
\author{Leo Tzou}
\address{Leo Tzou\newline\indent School of Mathematics and Statistics, University of Sydney, NSW 2006, Australia}
\email{leo@maths.usyd.edu.au}
\begin{document}
\begin{abstract}
In this article, we study the properties of the geodesic X-ray transform for asymptotically Euclidean or conic Riemannian metrics and show injectivity under non-trapping and no conjugate point assumptions. We also define a notion of lens data for such metrics and study the associated inverse problem.
\end{abstract}

\maketitle
\section{Introduction}
On Euclidean space $(\R^n,g_0)$, the linear operator $I_0$ mapping a function $f$ to the set of its integrals  
$$
I_0f(\gamma)=\int_\R f(\gamma(r))\,dr
$$
along lines $\gamma$ is called X-ray transform, or Radon transform in dimension $2$. It is known to be invertible using Radon inversion formula and is the basis of X-ray tomography. This operator, when acting on functions supported in a fixed convex and compact set, say for example the unit ball $B$, is also the linearisation of the following natural geometric 
inverse problem: is there a metric $g=e^{2f}g_0$ conformal to the Euclidean metric $g_0$ (with $f\in C_c^\infty(B)$) so that there is a conjugation $\psi: S_{g_0}\R^n\to S_g\R^n$ between the geodesic flow of $g_0$ and $g$ on their respective unit tangent bundles, which is equal to the Identity outside $S_{g_0}B$, i.e. 
\[\varphi^{g}_t(\psi(x,v))=\psi(\varphi_{t}^{g_0}(x,v)) ,\quad \psi(x,v)=(x,v) \textrm{ if }|x|\geq 1\]
This conjugation property can also be written in terms of the equality between two functions
called scattering map and rescaled lengths, that we shall introduce below. 
More generally, one can define an $X$-ray transform on symmetric tensors of order $m\in\NN$ by the formula
$$
I_mf(\gamma)=\int f_{\gamma(t)}(\otimes^m \dot{\gamma}(t))dt
=\int_\R  \sum_{i_1,i_2,\dots, i_m=1}^n f_{i_1i_2\dots i_m}(\gamma(t)) \dot{\gamma}^{i_1}(t)
\dot{\gamma}^{i_2}(t)\dots\dot{\gamma}^{i_m}(t)\, dt.
$$
The case $m=2$ corresponds to the same linearised problem as above but replacing conformal metrics $g=e^{2f}g_0$ by any possible metric $g$ on $\R^n$ which is a compact perturbation of $g_0$. This rigidity problem was solved by Gromov \cite{Gr}, with an alternate proof by Croke \cite{Cr}, for the class of metrics $g$ with no conjugate points. This property is also called boundary rigidity of the Euclidean metric on $B$.\\

In this paper, we investigate a similar problem but for non-compact perturbations of $\R^n$, and more generally non-compact perturbations of metric cones. A metric cone is a warped product 
$(0,\infty)_r\x N$ with metric 
\[g_0(r,y)=dr^2+r^2h_0(y,dy)\]
 where $(N,h_0)$ is a closed Riemannian manifold of dimension $n-1$. Here we work with smooth metrics and we can take our model to be any smoothing of 
the metric cone at the cone tip $r=0$, and more generally any Riemannian metric which is asymptotic near infinity to the region $r\geq 1$ of the cone. To be precise, our Riemannian manifold $(M,g)$ metric will be called \emph{asymptotically conic} if $M$ is the interior of a smooth compact manifold with boundary $\bbar{M}$ and there is a smooth boundary defining function of $\pl\bbar{M}$ such that, in a product decomposition $[0,\eps)_\rho\x \pl\bbar{M}$ near the boundary,
\[ g(\rho,y)= \frac{d\rho^2}{\rho^4}+\frac{h_\rho(y)}{\rho^2}\]
where $h_\rho$ is a smooth $1$-parameter family of metrics on $\pl\bbar{M}$, $y$ being coordinates on $\pl\bbar{M}$; see subsection \ref{subsec normal form}. Here $\pl\bbar{M}$ plays the role of the section $N$ of the cone described above and $\rho$ plays the role of $1/r$. Using the $r$ variable, this means that $g$ has an asymptotic expansion in powers of $1/r$ near infinity, and with leading term the exact conic metric. 
We say that $g$ is \emph{non-trapping} if each complete geodesic $\gamma(t)$ of $g$ tends to $\pl\bbar{M}$ as $t\to \pm \infty$; more generally such geodesics $\gamma$ is said to be non-trapped. 
For example small perturbations of the Euclidean metric on $\RR^n$ are non-trapping.
These types of metrics have been studied intensively in scattering theory for the wave equation 
\cite{Me,MeZw,HaVa, JoSB1, JoSB2,SBWu}.
For asymptotically conic metrics we define the X-ray transform $I_m$ on symmetric tensors by 
\[I_m : C_c^\infty(M;S^m(T^*M))\to L_{\rm loc}^\infty(\mc{G}), \quad 
I_mf(\gamma):=\int_{-\infty}^\infty f_{\gamma(t)}(\otimes^m \dot{\gamma}(t))dt
\]
where $\mc{G}$ is the set of complete non-trapped geodesics of $g$. The kernel of $I_m$ contains the space of exact $(m-1)$-tensors $\{Dh; h\in C^\infty_c(M;S^{m-1}(T^*M))\}$ where $D$ is the symmetrized covariant derivative, that is, $Dh=\mc{S}(\nabla h)$ where $\mc{S}:(T^*M)^{\otimes m}\to (T^*M)^{\otimes m}$ is the  symmetrization operator,
$$
\mc{S}(\sum_{i_1,\dots i_m}h_{i_1i_2\dots i_m}dx^{i_1}\otimes \dots \otimes dx^{i_m} )=\sum_{i_1,\dots i_m} \frac 1{n!}(\sum_\sigma h_{i_{\sigma(1)}i_{\sigma(2)}\dots i_{\sigma(m)}})dx^{i_1}\otimes\dots \otimes dx^{i_m},
$$ 
where $\sigma$  runs over all permutations of the set $\{1,2,\dots,m\}$. We say that $I_m$ is solenoidal injective if its kernel is precisely $\{Dh; h\in C^\infty_c(M;S^{m-1}(T^*M))\}$.

A smooth symmetric scattering tensor of order $m$ is a smooth symmetric tensor $h\in C^\infty(M; S^mT^*M)$ on $M$ which can be written near $\rho=0$ under the form 
\[h=\sum_{j=0}^m \mc{S}\Big(\frac{h_j}{\rho^{j}}\otimes \Big(\frac{d\rho}{\rho^2}\Big)^{\otimes (m-j)}\Big)\]
where $h_j\in C^\infty(\bbar{M}; S^{j}(T^*\pl\bbar{M}))$ and $\mc{S}$ is the symmetrization operator on tensors. Note that they have bounded pointwise norm with respect to $g$. The space of smooth scattering symmetric tensors will be denoted $C^\infty(\bbar{M};S^m({^{\rm sc}T}^*\bbar{M}))$.

\begin{theorem}
\label{injectivity of tensors}
Let $(M,g)$ be an asymptotically conic manifold. 
Assume that $(M,g)$ is non-trapping and has no conjugate points and let $k>n/2+1$. Then:\\
i) If $f\in \rho^k C^\infty (\overline M)$  and $I_0f=0$, then $f=0$.\\
ii) If $f\in \rho^k C^\infty (\overline M; {}^{\rm sc} T^*\bbar{M})$ and $I_1f=0$ then there exists $u \in  \rho^{k-1} C^\infty (\overline M)$ such that $f = du$.\\
iii) Assume in addition that $(M,g)$ has non-positive sectional curvature. 
For each tensor $f \in \rho^k C^\infty (\overline M; S^m ({^{\rm sc} T}^*M))$ satisfying $I_m f = 0$ with $m>1$, there is a tensor $u\in C^\infty (M; S^{m-1}T^*M))\cap \rho^{k-1}L^\infty$ such that $Du = f$.\\
Finally, if instead $(M,g)$ has negative curvature and a non-empty trapped set, then $i),ii),iii)$ are also satisfied. 
\end{theorem}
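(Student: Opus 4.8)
The plan is to establish all the assertions of the theorem by a Pestov-type energy identity on the unit sphere bundle $SM$, the novelty with respect to the compact case being the control of the relevant primitives at the conic ends $\{\rho=0\}$ and, for the last statement, the treatment of a hyperbolic trapped set.

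For i) and ii), suppose $I_mf=0$ with $m\in\{0,1\}$. Using the non-trapping hypothesis I would form on $SM$ the primitive $u^f(x,v):=\int_0^{\infty}f_{\gamma_{x,v}(t)}(\otimes^m\dot\gamma_{x,v}(t))\,dt$, which solves the transport equation $Xu^f=-\ell_m f$, where $X$ is the geodesic vector field and $\ell_m f(x,v):=f_x(\otimes^m v)$. The hypothesis $I_mf=0$ says precisely that $u^f$ has parity $(-1)^{m+1}$ under $v\mapsto -v$, so that $u^f$ may equally be computed by a backward integral; for a base point $x$ near infinity this realizes $u^f(x,v)$ as an integral over a geodesic ray escaping monotonically to $\{\rho=0\}$ (the forward ray if $v$ is outgoing, the backward ray if $v$ is incoming), along which $\rho$ decays like the reciprocal of arclength. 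Since $|f|_g=O(\rho^k)$, this forces $u^f=O(\rho^{k-1})$, and the same bound passes to $\nabla u^f$, the linear growth of Jacobi fields along an escaping ray being absorbed against the extra power of decay of $\nabla f$. As $d\mathrm{vol}_g\sim\rho^{-n-1}\,d\rho\,dy$ near $\{\rho=0\}$, this is exactly what places $u^f$ and $\nabla u^f$ in $L^2(SM)$ when $k>n/2+1$ --- the source of the weight threshold --- and what makes all the contributions of $\{\rho=\epsilon\}$ in the Pestov identity tend to $0$ along the exhaustion $\{\rho\ge\epsilon\}$ of $M$. In the limit $\epsilon\to0$, the no-conjugate-point hypothesis makes the resulting quadratic form nonnegative (this being the analytic translation of the absence of conjugate points, cf.\ the index form of Jacobi fields), which forces $f=0$ when $m=0$, and when $m=1$ identifies $u^f$ with the lift of a function $u$; a regularity analysis of $u^f$ up to $\{\rho=0\}$ then gives $u\in\rho^{k-1}C^\infty(\bbar{M})$ with $f=du$. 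The bulk of the work in this step is the bookkeeping showing that every boundary-at-infinity term drops out, which uses the normal form $g=d\rho^2/\rho^4+h_\rho/\rho^2$, the decay of $f$, and the decay of the curvature of $g$ at the ends.

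For iii) the scheme is the same, but for $m\ge 2$ the obstruction in the energy estimate is governed by the curvature operator applied to the semibasic derivatives of $u^f$; the non-positive sectional curvature hypothesis gives this term the favorable sign (as in Pestov--Sharafutdinov), and its decay at the conic ends makes it harmless there. This produces $u\in C^\infty(M;S^{m-1}T^*M)\cap\rho^{k-1}L^\infty$ with $Du=f$. The one genuine technical point here is to push the tensorial Pestov argument --- on a compact manifold a finite computation with the fibrewise spherical-harmonic decomposition on $SM$ --- through the weighted $L^2$ estimates on the noncompact $SM$.

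Finally, for the statement with negative curvature and a non-empty trapped set $K$, the difficulty is that $u^f$ can no longer be defined by naive forward integration, since geodesics asymptotic to the stable manifold of $K$ have infinite length. Here I would use that strictly negative curvature forces $K$ to be a compact hyperbolic invariant set for the geodesic flow while still precluding conjugate points. Then the transport equation $Xu=-\ell_m f$ admits a distinguished solution $u^f$ coming from a limiting resolvent $\lim_{\epsilon\to0^+}(X\pm\epsilon)^{-1}$ of the open hyperbolic system, which lands in an anisotropic Sobolev space whose elements have wavefront set confined to the stable and unstable directions over $K$; the hypothesis $I_mf=0$ again forces $u^f$ to have parity $(-1)^{m+1}$ under $v\mapsto -v$ (equivalently, the forward and backward limiting-resolvent solutions agree), and $u^f$ still obeys $u^f=O(\rho^{k-1})$ at the conic ends since $K$ sits in a fixed compact set. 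Because $K$ has measure zero, the Pestov identity may be run on these anisotropic spaces, with the curvature term treated as in iii), to reach the same three conclusions. The crux of this last part --- and the main technical obstacle overall --- is to merge the two sources of noncompactness, namely the microlocal, anisotropic estimates near $K$ and the weighted estimates near $\{\rho=0\}$, into a single functional-analytic framework on $SM$ in which the energy identity and all its boundary terms make sense and behave as needed.
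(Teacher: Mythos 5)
Your overall strategy (escaping-ray primitive with weighted decay, Pestov identity on the exhaustion $\{\rho\ge\epsilon\}$ with vanishing boundary terms, PSU-style treatment of $m\ge2$, and the Dyatlov--Guillarmou anisotropic framework for the hyperbolic trapped set) is the same as the paper's, and your identification of the threshold $k>n/2+1$ with $L^2$-integrability against $d{\rm vol}_g\sim\rho^{-n-1}d\rho\,dy$ is correct. But there is a genuine gap in part ii), and it sits exactly at the paper's main technical novelty. For $m=1$ the Pestov identity does not ``identify $u^f$ with the lift of a function'': since $\|\nabla^v\pi_1^*f\|^2=(n-1)\|\pi_1^*f\|^2$, all it yields is $A(\nabla^v u^f)=0$ for the index-type form $A(Z)=\|XZ\|^2-\langle\mathcal{R}Z,Z\rangle$, and the no-conjugate-point hypothesis only gives $A\ge0$. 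Vanishing of $A$ does not force $\nabla^v u^f=0$; it only forces $\nabla^v u^f$ to restrict to a normal Jacobi field along every geodesic (already in flat $\R^n$, a parallel normal field along a line has zero index form). One must then prove a rigidity statement: a normal Jacobi field along a complete escaping geodesic that decays like $\langle t\rangle^{2-k}$ at both ends vanishes. This is not automatic here, because the curvature along escaping geodesics decays only like $\rho^4\sim t^{-4}$ and Jacobi fields generically grow linearly; the paper handles it by bootstrapping the decay of the Jacobi field to $\mathcal{O}(\langle t\rangle^{-\infty})$ and then applying a Carleman estimate with weight $(1+t^2)^N$ for the perturbed equation $\ddot U+R(t)U=0$, $R=\mathcal{O}(\langle t\rangle^{-4})$ (Lemmas \ref{carleman} and \ref{no vanishing jacobi fields}). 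In the asymptotically hyperbolic case of \cite{GGSU} the decay is exponential and this step is easy, which is presumably why it is invisible in your plan; in the conic setting it cannot be skipped, and without it part ii) does not follow from the Pestov identity and nonnegativity alone.

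Two smaller corrections to the same part of your outline. First, the decay estimates for the primitive require the gauge normalization $\iota_{\rho^2\partial_\rho}f=0$ near $\partial\bbar{M}$ (obtained modulo an exact term $Du$ with $u\in\rho^{k-1}C^\infty$, Lemma \ref{kill transversal terms}); without it the lifted tensor does not gain the extra powers of $\rho$ used in the resolvent and boundary-term estimates for $m\ge1$. Second, the bound on the derivatives of $u^f$ is not uniform: the vertical gradient only satisfies $\|\nabla^v u^f\|_G=\mathcal{O}(\rho^{k-2})$ (the linear growth of Jacobi fields with $J(0)=0$ costs one power of $\rho$), while the horizontal gradient is $\mathcal{O}(\rho^{k-1})$; this is harmless for $k>n/2+1$ but it is precisely what fixes the decay rate $\langle t\rangle^{2-k}$ entering the Jacobi-field rigidity step above.
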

Assuming that the boundary has additional geometric properties, we obtain similar results with weaker assumptions on the function $f$. Indeed, the geodesics that stay very far near infinity (i.e. close to $\pl\bbar{M}$) approach in the compactification $\bbar{M}$ the geodesics traveling for time $\pi$ in the boundary $(\pl\bbar{M},h_0)$. This leads to some kind of time-$\pi$ ray transform on the boundary, which turns out to be injective if for example the radius of injectivity of $(\pl\bbar{M},h_0)$ is larger than $\pi$; see Proposition \ref{I0 boundary determination}.

We emphasize that the geometric assumption on $(M,g)$ in Theorem \ref{injectivity of tensors} does not include non-trivial decaying perturbations of the Euclidean metrics, as is shown in the companion paper \cite{GMT}: indeed, asymptotically Euclidean metrics on $\RR^n$ with no conjugate points must be Euclidean. However there are plenty of examples satisfying the assumptions of Theorem \ref{injectivity of tensors}, for example some with non-positive curvature. The boundary $\pl\bbar{M}$ could typically be a round sphere of curvature less than $1$; see section \ref{Examples with no conjugate points}.

In the case of Cartan-Hadamard surfaces with curvature decaying at order $\mc{O}(d^{-\kappa})$ for some $\kappa>2$ ($d$ being to distance to a fixed point), Lehtonen-Railo-Salo \cite{LRS}  proved injectivity of X-ray for tensors with the same decay assumptions on $f$. Their result is more general in terms of the behaviour at infinity than ours, but it is weaker in the sense that it requires non-positive curvature for functions and $1$-forms.

We remark that one possible application of that work could be to apply our injectivity results for getting stability in the inverse scattering problem for the wave-equation on such manifolds, since stability estimates for the wave equation can often be reduced to X-ray stability estimates, and the proof of injectivity easily brings stability estimates. \\

In a second part of the article, we define the notion of lens data in that setting, namely the scattering map $S_g$ and the renormalized length $L_g$ of geodesics, in a way similar to the work \cite{GGSU} for asymptotically hyperbolic manifolds. The renormalized length of a non-trapped complete geodesic  $\gamma$ is 
\[ L_{g}(\gamma):=\lim_{\eps\to 0}\ell_g(\gamma\cap \{\rho\geq \eps\})-2/\eps\]
and the scattering map is a symplectic map $S_g: T^*\pl\bbar{M}\to T^*\pl\bbar{M}$ which encodes the asymptotics at 
$t\to \pm \infty$ for $(\gamma(t),\dot{\gamma}(t))$ on $S^*M$; it is defined using a time rescaling of the geodesic flow of $g$, see Section \ref{scatteringsec}. 
We show that being in the kernel of the linearisation of this pair $(L_g,S_g)$ at a given metric $g_0$ means that the X-ray transform $I_2$ (for $g_0$) of the variation of metrics must vanish, implying a deformation rigidity result. 
A non-trivial aspect in this analysis is the determination, in certain geometric cases, of the jets of the metrics at $\pl\bbar{M}$ from the scattering map: we show for example that the scattering map determines the full metric asymptotics if the boundary metric $h_0$ is such that its geodesic flow at time $\pi$ is ergodic on $S^*\pl\bbar{M}$ or more generally if the sectional curvatures of $h_0$ are negative. 
This analysis is also strongly used in our companion paper \cite{GMT} on the non-existence of non-trivial asymptotically Euclidean metrics without conjugate points on $\RR^n$.\\

We conclude with a deformation rigidity result in negative curvature: 
\begin{theorem}
\label{def rigid}Let $(M,g(s))$, $s\in (-1,1),$ be a smooth $1$-parameter family of non-trapping negatively curved asymptotically conic manifolds whose metric $g(s)$ depends smoothly on $s$, and such that near $\pl\bbar{M}$ the metric has the form
$g(s) = \frac{d\rho^2}{\rho^4} + \frac{h(s)}{\rho^2}$, where $h(s)|_{\pl\bbar{M}}=h(0)|_{\pl\bbar{M}}$ and $(\pl\bbar{M},h(0))$ has negative curvature.
Assume the family has constant lens data, that is, 
\[S_{g(s)} = S_{g(0)}, \quad L_{g(s)} = L_{g(0)}.\] 
Then there exists a family of diffeomorphisms 
$\psi(s):M\to M$ with $\psi|_{\pl\bbar{M}} = {\rm Id}$ which satisfies $g(0) = \psi(s)^* g(s)$.
\end{theorem}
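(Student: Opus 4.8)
The plan is to reduce the claim to the infinitesimal rigidity of Theorem~\ref{injectivity of tensors}(iii) and then integrate in $s$. Set $\dot g(s):=\pl_s g(s)$; this is a smooth $1$-parameter family of symmetric $2$-tensors, and since the $d\rho^2/\rho^4$ component of $g(s)$ is fixed and $h(s)|_{\pl\bbar M}$ is $s$-independent, $\dot g(s)$ is a smooth scattering $2$-tensor whose leading boundary coefficient vanishes at $\pl\bbar M$. First I would differentiate the hypothesis $(S_{g(s)},L_{g(s)})=(S_{g(0)},L_{g(0)})$ at an arbitrary $s_0\in(-1,1)$: this says that $\dot g(s_0)$ lies in the kernel of the linearisation of the lens data at the metric $g(s_0)$, and the analysis of Section~\ref{scatteringsec} relating that kernel to the X-ray transform then gives $I_2^{g(s_0)}\dot g(s_0)=0$ for every $s_0$.

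Next I would sharpen the decay of $\dot g(s)$ at $\pl\bbar M$. Since $(\pl\bbar M, h(s)|_{\pl\bbar M})=(\pl\bbar M, h(0)|_{\pl\bbar M})$ has negative curvature, the recovery of the metric asymptotics from the scattering map proved in Section~\ref{scatteringsec} shows that $g(s)$ and $g(0)$ have the same Taylor expansion at $\pl\bbar M$; hence $\dot g(s)$ vanishes to infinite order there and, in particular, $\dot g(s)\in\rho^k C^\infty(\bbar M; S^2({}^{\rm sc}T^*\bbar M))$ for every $k>n/2+1$. Each $(M,g(s))$ is non-trapping, negatively---hence non-positively---curved and, being geodesically complete, has no conjugate points, so Theorem~\ref{injectivity of tensors}(iii) with $m=2$ produces a $1$-form $u(s)$ with $D^{g(s)}u(s)=\dot g(s)$. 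To get $u(s)$ canonically and depending smoothly on $s$, I would instead solve the solenoidal gauge equation $D^{g(s)*}D^{g(s)}u(s)=D^{g(s)*}\dot g(s)$ in the appropriate decaying space: this is elliptic with $s$-smooth coefficients, its kernel is trivial because a decaying solution of $D^{g(s)}u=0$ would be a decaying Killing field and such a field vanishes when the curvature is negative, and by the vanishing of the solenoidal part of $\dot g(s)$ established above its unique solution satisfies $D^{g(s)}u(s)=\dot g(s)$. Elliptic regularity together with the infinite-order vanishing of $\dot g(s)$ makes $u(s)$ smooth up to $\bbar M$ with rapid decay at $\pl\bbar M$, so the $g(s)$-dual vector field $X(s):=-\tfrac12\,u(s)^{\sharp_{g(s)}}$ is a smooth vector field on $\bbar M$, vanishes on $\pl\bbar M$, and depends smoothly on $s$.

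Finally I would integrate. Let $\psi(s)\colon\bbar M\to\bbar M$ be determined by $\pl_s\psi(s)=X(s)\circ\psi(s)$ and $\psi(0)=\Id$; because $\bbar M$ is compact and $X(s)$ is smooth and vanishes on $\pl\bbar M$, this flow exists for all $s\in(-1,1)$, preserves $\bbar M$, and restricts to the identity on $\pl\bbar M$. Using $D^{g(s)}u(s)=\tfrac12\,\mc L_{u(s)^{\sharp_{g(s)}}}g(s)$ one computes $\pl_s\big(\psi(s)^*g(s)\big)=\psi(s)^*\big(\dot g(s)+\mc L_{X(s)}g(s)\big)=\psi(s)^*\big(\dot g(s)-\tfrac12\,\mc L_{u(s)^{\sharp_{g(s)}}}g(s)\big)=0$, whence $\psi(s)^*g(s)=\psi(0)^*g(0)=g(0)$ for all $s$, as desired.

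I expect the main obstacle to be the passage from the fixed-metric injectivity results to a smooth $1$-parameter family of potentials that are regular up to $\pl\bbar M$ with decay strong enough for the induced flow to fix the boundary. This is precisely what forces the elliptic solenoidal-gauge reformulation (whose solvability and smooth $s$-dependence hinge on the triviality of the kernel, i.e.\ on the absence of nontrivial decaying Killing fields under negative curvature) and the use of the jet-recovery result to upgrade the first-order boundary vanishing of $\dot g(s)$ coming from $h(s)|_{\pl\bbar M}=h(0)|_{\pl\bbar M}$ to vanishing of infinite order, without which Theorem~\ref{injectivity of tensors}(iii) would not be applicable.
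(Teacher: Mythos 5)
Your proposal follows essentially the same route as the paper's proof: boundary jet determination from the scattering map under negative curvature of $h(0)$ (Corollary~\ref{negcurv}) gives $g(s)=g(0)+\mc{O}(\rho^\infty)$, the linearisation of the lens data (Proposition~\ref{impliesI2=0}) gives $I_2^{g(s)}\pl_s g(s)=0$, Theorem~\ref{injectivity of tensors}(iii) produces a $1$-form potential, and integrating the dual vector fields yields $\psi(s)$. The only differences are minor: the paper establishes the $\mc{O}(\rho^\infty)$ agreement \emph{before} invoking the linearised lens-data argument (whose proof actually requires $h'\in\rho^\infty C^\infty$, so your first two steps should be swapped, which is harmless since the jet recovery uses only $S_{g(s)}=S_{g(0)}$), and it takes the potential directly from Theorem~\ref{injectivity of tensors} with $\rho^\infty$ decay rather than re-solving a solenoidal gauge equation, leaving the smooth $s$-dependence implicit where you make it explicit.
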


\noindent\textbf{Acknowledgement.} We thank the referees for their careful reading, in particular one of them for spotting a small error in a proof.
This project has received funding from the European Research Council (ERC) under the European Union’s Horizon 2020 research and innovation programme (grant agreement No. 725967). C. Guillarmou thanks the Math Dept. of Sydney Univ., where part of this work was done, for its hospitality and funding of the visit. This material is based upon work supported by the National Science Foundation under Grant No. DMS-1440140 while C. Guillarmou and L.Tzou were in residence at the Mathematical Sciences Research Institute in Berkeley, California, during the Fall 2019 semester.
M. Lassas is supported by the Academy of Finland, grants 284715, 312110. L. Tzou is suported by ARC Grants no DP190103302 and DP190103451.

\section{Geometric Preliminaries}

\subsection{Asymptotically conic metrics and normal form}\label{subsec normal form}
Asymptotically conic or scattering manifolds are complete Riemannian manifolds $(M,g)$ where 
$M$ is the interior of a smooth compact manifold with boundary $\bbar{M}$ and $g$ is a smooth metric on $M$ satisfying a certain asymptotic structure at $\pl\bbar{M}$ that we now describe.
Let $\rho_0\in C^\infty(\bbar{M};\rr^+)$ be a smooth boundary defining function, i.e. $\pl\bbar{M}=
\{\rho_0=0\}$ and $d\rho_0|_{\pl M}(y)\not=0$ for all $y\in \pl\bbar{M}$. 
First, following Melrose \cite{Me}, we recall that there is a smooth bundle, called the \emph{scattering tangent bundle} and denoted by ${^{\rm sc}T}\bbar{M}\to\bbar{M}$, 
whose space of smooth sections can be identified with the space of smooth vector fields of the form
$\rho_0 V$, where $V$ are smooth vector fields on $\bbar{M}$ that are tangent to the boundary 
$\pl\bbar{M}$. If $(y_1,\dots,y_{n-1})$ are local coordinates on $\pl \bbar{M}$, we have induced local coordinates $(\rho_0,y_1,\dots,y_{n-1})$ near $\pl \bbar{M}$ in ${\bbar{M}}$, and a local basis of  ${^{\rm sc}T}\bbar{M}\to\bbar{M}$ is given by $\rho_0^2\pl_{\rho_0},\rho_0\pl_{y_1},\dots,\rho_0\pl_{y_{n-1}}$.
The vector bundle dual to ${^{\rm sc}T}\bbar{M}$ will be denoted by ${^{\rm sc}T}^*\bbar{M}$. Near $\partial\bbar{M}$, $d\rho_0/\rho_0^2, dy_1/\rho_0, \dots,dy_{n-1}/\rho_0$ is a local frame of 
${^{\rm sc}T}^*\bbar{M}$. 

\begin{definition}\label{defAE} 
A Riemannian metric $g$ on $M$ is called \emph{asymptotically conic} if 
$g\in C^{\infty}(\bbar{M};S^2({^{\rm sc}T}^*\bbar{M}))$ and there exists a boundary defining function $\rho_0$ such that $\rho_0^{-2}|\nabla^g\rho_0|_{g}=1+\mc{O}(\rho_0^2)$. We say that it is asymptotically conic to order $m\geq 1$ if moreover $g-g_0\in \rho_0^mC^{\infty}(\bbar{M};S^2({^{\rm sc}T}^*\bbar{M}))$ for some smooth metric
$g_0$ on $M$ that is equal to an exact conic metric $g_0=d\rho_0^2/\rho_0^4+h_0/\rho_0^2$ near $\pl\bbar{M}$, $h_0$ being a smooth Riemannian metric on $\pl\bbar{M}$.
\end{definition}
In particular, near $\pl\bbar{M}$, one has 
\begin{equation}\label{g-g0}
g-g_0=\rho_0^m\Big(a\frac{d\rho_0^2}{\rho_0^4}+\frac{\sum_j b_jd\rho_0\, dy_j}{\rho_0^3}+
\frac{\sum_{i,j=1}^{n-1}\ell_{ij}dy_i\,dy_j}{\rho_0^2}\Big)
\end{equation} 
where $a,b_j,\ell_{ij}\in C^{\infty}(\bbar{M})$ with
$\rho_0^ma=\mc{O}(\rho_0^{\max(2,m)})$. A metric cone is $(0,\infty)_r\x N$ with metric 
$dr^2+r^2h_0$ if $(N,h_0)$ is a compact Riemannian manifold. Setting $\rho=1/r$, the metric becomes $d\rho^2/\rho^4+h_0/\rho^2$ outside $r=0$, thus smoothing the tip $r=0$ of the cone indeed gives an asymptotically conic metric. In \cite{JoSB2}, Joshi-Sa Barreto proved that an asymptotically conic metric admits an approximate normal form near the boundary $\pl\bbar{M}$. An exact normal form can in fact easily be obtained by reducing to a non-characteristic first order PDE, this suggestion appears for example in Graham-Kantor \cite{GrKa}: we give here a short self-contained proof based on this argument (we also need to compare the exact cone case to the perturbed one). The approximate normal form of \cite{JoSB2} correspond to the form \eqref{exactnormalform}
up to an error $\mc{O}(s^{\infty})$ as $s\to 0$.

\begin{lemma}\label{normalform}
Let $g$ be asymptotically conic to order $m\geq 1$. Then
there is a boundary defining function $\rho\in C^\infty(\bbar{M})$  satisfying
\begin{equation}
\label{|drho|=1}
\frac{|\nabla^g\rho|_g}{\rho^2}=1,
\qquad
\rho=\rho_0(1+\mc{O}(\rho_0^m)).
\end{equation}
If $m\geq 2$, the function $\rho$ is uniquely determined near the boundary by the equation \eqref{|drho|=1}, while
if $m=1$, such a  $\rho$ is not unique: for each $\omega_0\in C^\infty(\pl\bbar{M})$ there is 
a function $\rho=\rho_0+\omega_0\rho_0^2+\mc{O}(\rho_0^3)$ such that 
$\rho^{-2}|\nabla^g\rho|_g=1$, uniquely defined near $\pl\bbar{M}$.
For each such $\rho$, there is a smooth diffeomorphism 
\[\psi:[0,\eps)_s \x \pl\bbar{M}\to U\subset \bbar{M}\] 
onto a collar neighborhood $U$ of $\pl\bbar{M}$ such that $\psi(0,\cdot)|_{\partial\bbar{M}}=\Id$, $\psi^*\rho=s$ and 
\begin{equation}\label{exactnormalform}
\psi^*g=\frac{ds^2}{s^4}+\frac{h_s}{s^2},
\end{equation} 
where $h_s$ is a smooth family of Riemannian metrics on $\pl\bbar{M}$ such that
\[h_s-h_0\in s^m C^\infty([0,\eps)\x\pl\bbar{M};S^2(T^*\pl\bbar{M})).\] 
The diffeomorphism $\psi$ is given by the expression 
$\psi(s,y)=e^{sZ_\rho}(y)$, where $Z_\rho:=\rho^{-4}\nabla^g\rho\in C^\infty(\bbar{M};T\bbar{M})$.
When $m=1$, if $h_s$ and $\hat{h}_s$ are the smooth family of Riemannian metrics on $\pl\bbar{M}$ associated with two boundary defining function $\rho,\hat{\rho}$ with $\hat{\rho}=\rho+\omega_0\rho^2+\mc{O}(\rho^3)$, then 
\begin{equation}\label{changeofrho}
\hat{h}_s=h_s+s(\mc{L}_{\nabla^{h_0}\omega_0}h_0+2\omega_0h_0)+\mc{O}(s^2).
\end{equation}
\end{lemma}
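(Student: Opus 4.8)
The plan is to recast \eqref{|drho|=1} as a first-order eikonal equation, solve it near $\pl\bbar M$ by combining a formal Taylor expansion at the boundary with the method of characteristics (the reduction to a non-characteristic first-order PDE alluded to above, following Graham--Kantor), and then read off the normal form \eqref{exactnormalform} from the gradient flow of the resulting defining function. Set $r:=1/\rho$. Since $\nabla^g(1/\rho)=-\rho^{-2}\nabla^g\rho$, the condition $\rho^{-2}|\nabla^g\rho|_g=1$ is equivalent to the eikonal equation $|\nabla^g r|_g=1$, i.e.\ the Hamilton--Jacobi equation for $p(x,\xi)=|\xi|_g^2$, whose characteristics project to the unit-speed geodesics of $g$; the solution we want is the one asymptotic to $1/\rho_0$, which in the variable $\rho$ is precisely $\rho=\rho_0(1+\mc O(\rho_0^m))$. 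Writing $\rho=\rho_0 F$ in collar coordinates $(\rho_0,y)$ and dividing $g^{-1}(d\rho,d\rho)=\rho^4$ by $\rho_0^4$ turns the equation into a first-order equation for $F$ with coefficients smooth up to $\rho_0=0$, which evaluated at $\rho_0=0$ forces $F|_{\pl\bbar M}=1$ (using \eqref{g-g0}).

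The next step is to solve this equation to infinite order at $\pl\bbar M$. Differentiating $k$ times in $\rho_0$ and evaluating at $\rho_0=0$ yields, for the Taylor coefficients $F_k:=\pl_{\rho_0}^k F|_{\rho_0=0}$, an identity in which $F_k$ occurs with coefficient $2(k-1)$ together with terms depending only on $F_0=1,\dots,F_{k-1}$ and on the Taylor coefficients of $g-g_0$ of order $<k$; thus $F_k$ is determined for every $k\ge2$, while the $k=1$ identity imposes no condition on $F_1$ and is automatically consistent, because the constraint $\rho_0^m a=\mc O(\rho_0^{\max(2,m)})$ in \eqref{g-g0} forces $\rho_0^{-4}\,g^{-1}(d\rho_0,d\rho_0)=1+\mc O(\rho_0^2)$. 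Hence, if $m\ge2$, the requirement $F=1+\mc O(\rho_0^m)$ pins down $F_1=\dots=F_{m-1}=0$ and all higher $F_k$ are then determined, whereas if $m=1$ the single datum $F_1=\omega_0\in C^\infty(\pl\bbar M)$ is free and every $F_k$, $k\ge2$, is determined by it. Borel-summing the resulting series produces $\rho^{\rm form}=\rho_0(1+\mc O(\rho_0^m))$ (equal to $\rho_0+\omega_0\rho_0^2+\mc O(\rho_0^3)$ when $m=1$) solving \eqref{|drho|=1} to infinite order at $\pl\bbar M$; one then upgrades this to an exact solution either by writing $\rho=\rho^{\rm form}(1+e)$, whereupon the eikonal equation becomes a transport equation for $e$ along the geodesic flow with source vanishing to infinite order (so that $e=\mc O(\rho_0^\infty)$ is obtained by integrating along characteristics), or, equivalently, by solving the eikonal equation directly by the method of characteristics with data on a non-characteristic level set $\{\rho_0=\eps_0\}$ — its outgoing normal geodesics, which escape to the boundary near the conic end, sweeping out a collar of $\pl\bbar M$ — and matching that solution to $\rho^{\rm form}$. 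This yields a smooth boundary defining function $\rho$ with $\rho=\rho_0(1+\mc O(\rho_0^m))$ satisfying $\rho^{-2}|\nabla^g\rho|_g=1$ near $\pl\bbar M$. For uniqueness: if $\rho,\tilde\rho$ both solve \eqref{|drho|=1} near $\pl\bbar M$ and have the same prescribed asymptotics — $\rho/\rho_0,\tilde\rho/\rho_0=1+\mc O(\rho_0^m)$, with the same $\omega_0$ in case $m=1$ — then the recursion above forces $\rho-\tilde\rho=\mc O(\rho_0^\infty)$, and a Gronwall estimate applied to the linear transport equation obtained by subtracting the two eikonal equations gives $\rho=\tilde\rho$ near $\pl\bbar M$; this is unconditional uniqueness when $m\ge2$ and uniqueness given $\omega_0$ when $m=1$.

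With such a $\rho$ in hand, put $Z_\rho:=\rho^{-4}\nabla^g\rho$. Expanding $\nabla^g\rho=g^{-1}d\rho$ in the scattering frame and using $\rho=\rho_0(1+\mc O(\rho_0^m))$ shows that $Z_\rho$ extends to a smooth vector field on $\bbar M$ near $\pl\bbar M$, transverse to $\pl\bbar M$ (for the exact cone $Z_{\rho_0}=\pl_{\rho_0}$ identically, and in general the $\pl_{\rho_0}$-component of $Z_\rho$ is $1+\mc O(\rho_0^m)$ and its tangential part is $\mc O(\rho_0^{m-1})$). Hence $\psi(s,y):=e^{sZ_\rho}(y)$, $y\in\pl\bbar M$, is a diffeomorphism of $[0,\eps)_s\x\pl\bbar M$ onto a collar $U$ with $\psi(0,\cdot)=\Id$. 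Since $\rho|_{\pl\bbar M}=0$ and $Z_\rho\rho=\rho^{-4}|\nabla^g\rho|_g^2=1$ we get $\psi^*\rho=s$; moreover $\psi^*g$ has no $ds\,dy$ cross terms, because $g(Z_\rho,V)=\rho^{-4}d\rho(V)$ and $d\rho(\psi_*\pl_{y_j})=\pl_{y_j}(\psi^*\rho)=0$; its $ds^2$-coefficient equals $|Z_\rho|_g^2=\rho^{-8}|\nabla^g\rho|_g^2=\rho^{-4}=s^{-4}$; and its tangential part is $h_s/s^2$, where $h_s:=s^2(\psi^*g)|_{T\{s={\rm const}\}}$ is a smooth family of metrics on $\pl\bbar M$. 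This is \eqref{exactnormalform}. To get $h_s-h_0\in s^mC^\infty$, compare with the exact-cone case: there $Z_{\rho_0}=\pl_{\rho_0}$, the associated $\psi_0$ is the identity in $(\rho_0,y)$, and $\psi_0^*g_0=ds^2/s^4+h_0/s^2$ with $h_0$ independent of $s$; since $Z_\rho-\pl_{\rho_0}$ is $\mc O(\rho_0^{m-1})$ tangentially and $\mc O(\rho_0^m)$ normally, $\rho-\rho_0=\mc O(\rho_0^{m+1})$, and $g-g_0\in\rho_0^mC^\infty(\bbar M;S^2({}^{\rm sc}T^*\bbar M))$, both the correction $\psi-\psi_0$ and the tangential part of $g-g_0$ contribute to $h_s-h_0$ at order $s^m$. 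Finally, \eqref{changeofrho} is a direct first-order computation in the model coordinates $(s,y)$: for $\hat\rho=\rho+\omega_0\rho^2+\mc O(\rho^3)$, using $\nabla^g s=s^4\pl_s$ one finds $Z_{\hat\rho}=\pl_s+\nabla^{h_0}\omega_0+\mc O(s)$; flowing it one unit of time gives the tangential reparametrization $y\mapsto y+s\nabla^{h_0}\omega_0(y)+\mc O(s^2)$, which pulls $h_0$ back to $h_0+s\,\mc L_{\nabla^{h_0}\omega_0}h_0+\mc O(s^2)$, while the accompanying change of scale $s\mapsto\hat s=s+\omega_0 s^2+\mc O(s^3)$ multiplies $1/s^2$ by $1+2\omega_0 s+\mc O(s^2)$; adding these contributions yields \eqref{changeofrho}.

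I expect the main obstacle to be the portion of the second step that converts the formal boundary expansion into a bona fide smooth solution of \eqref{|drho|=1} with exactly the claimed leading order — the Borel summation together with the solution of the infinite-order-vanishing error equation (equivalently, the method-of-characteristics construction and the smoothness of the eikonal solution up to $\pl\bbar M$) — and, inseparable from it, the careful accounting of powers of $\rho_0$ that produces the dichotomy between the free function $\omega_0$ when $m=1$ and rigidity when $m\ge2$. This is exactly where the constraint $\rho_0^m a=\mc O(\rho_0^{\max(2,m)})$ in \eqref{g-g0} does its work; everything downstream of it is standard Riemannian bookkeeping.
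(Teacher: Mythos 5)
Most of your outline tracks the paper's actual proof: the reduction of $\rho^{-2}|\nabla^g\rho|_g=1$ to a first-order Hamilton--Jacobi problem for a factor $F$ (the paper uses $\rho=\rho_0e^{\rho_0\omega}$ rather than $\rho=\rho_0F$), the boundary recursion whose coefficient degenerates exactly at order one --- giving the free function $\omega_0$ when $m=1$ and rigidity when $m\geq 2$ --- the construction of $\psi(s,y)=e^{sZ_\rho}(y)$ with $Z_\rho=\rho^{-4}\nabla^g\rho$, the absence of cross terms via $Z_\rho\rho=1$ and $g(Z_\rho,V)=\rho^{-4}d\rho(V)$, and the first-order computation of \eqref{changeofrho} from $Z_{\hat\rho}|_{\pl\bbar M}=\pl_\rho+\nabla^{h_0}\omega_0$ are all essentially the paper's argument, and your model recursion (coefficient $2(k-1)$) is correct.

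The genuine gap is the step you yourself flag: converting the formal boundary series into an exact smooth solution. Substituting $\rho=\rho^{\rm form}(1+e)$ into the eikonal equation does \emph{not} produce a linear transport equation for $e$ along the geodesic flow; it produces another fully nonlinear (quasilinear) first-order equation, so ``integrating along characteristics'' is not available as stated, and the Borel-summation detour buys you nothing. Your alternative --- solving the eikonal equation by characteristics from an interior level set $\{\rho_0=\eps_0\}$ and ``matching'' to $\rho^{\rm form}$ --- has two unaddressed problems: smoothness of that solution up to $\pl\bbar M$ in the compactified sense is exactly the nontrivial point (it requires control of the geodesic flow near infinity, which is what the Joshi--Sa Barreto approximate normal form is about), and there is no reason the interior-data solution agrees with the formal series to infinite order at the boundary, so the matching is unjustified. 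The paper closes this in one stroke: after the substitution $\rho=\rho_0e^{\rho_0\omega}$, the equation \eqref{NLODE} for $\omega$ is a non-characteristic first-order PDE at $\rho_0=0$ with coefficients smooth up to the boundary (the $\pl_{\rho_0}\omega$ term is non-degenerate there, and $G_g(0,y,\omega,U,V)$ is independent of $U$), so the standard Cauchy problem with boundary datum $\omega|_{\rho_0=0}=\omega_0$ has a unique smooth solution near $\pl\bbar M$; the decay $\omega=\mc{O}(\rho_0^{m-1})$ is then a bootstrap on \eqref{omeganear0}, and uniqueness (given $\omega_0$) is part of the same theorem --- no Gr\"onwall argument or formal/exact comparison is needed. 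Your proof becomes correct once you replace the Borel-plus-correction (or interior-characteristics-plus-matching) step by this direct non-characteristic solvability at the boundary, which is the device your own opening sentence alludes to but never actually deploys.
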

\begin{proof} 
We search for $\rho:=\rho_0e^{\rho_0 \omega}$ with $\omega\in C^\infty(\bbar{M})$ such that 
$|d\rho/\rho^2|_g=1$ near $\pl\bbar{M}$. This is equivalent to the equation
\begin{equation}\label{HJ}
 2\rho_0^{-3}(\nabla^g\rho_0)(\rho_0\omega)=e^{2\rho_0\omega}-1-\rho_0^2\Big|\frac{d\omega}{\rho_0}\Big|^2_g-\rho_0^2\omega^2\Big|\frac{d\rho_0}{\rho_0^2}\Big|^2_g-2\rho_0^2\omega g\Big(\frac{d\omega}{\rho_0},\frac{d\rho_0}{\rho_0^2}\Big)+\rho_0^2F
\end{equation}
with $F:=\rho_0^{-2}(1-|d\rho_0/\rho_0^2|_g)\in \rho^{\max(m-2,0)}C^{\infty}(\bbar{M})$.
A direct computation gives 
\[ \nabla^g\rho_0=\rho_0^{4}\pl_{\rho_0}+\rho_0^{m+3}V\]
for some smooth vector field on $\bbar{M}$ that has the form $V=\alpha\rho_0\pl_{\rho_0}+\sum_j\beta_j\pl_{y_j}$ near $\pl\bbar{M}$ with $\alpha,\beta_j\in C^{\infty}(\bbar{M})$. Then   
\eqref{HJ} can be rewritten as
\begin{equation}\label{NLODE}
2\pl_{\rho_0}\omega=\rho_0^{-2}\Big(e^{2\rho_0\omega}-1-2\rho_0\omega\Big)+
G_g(\rho_0,y,\omega,\pl_{\rho_0}\omega,\pl_y\omega). 
\end{equation} 
where $G_g$ is $C^{\infty}$ in the variables $\rho_0,y$ and polynomial of degree $2$ in the last $3$ variables, and 
$G_g(0,y,\omega,U,V)$ is independent of $U$. Note that $(e^{x}-1-x)=x^2\sum_{j\geq 0}x^j/(j+2)!$ is smooth, then we see that the equation  \eqref{NLODE} with boundary condition $\omega|_{\rho_0=0}=\omega_0$ at $\pl\bbar{M}$ 
is a non-characteristic first order non-linear PDE with $C^{\infty}$ coefficients. 
It thus has a unique solution $\omega$ near $\pl\bbar{M}$ that is $C^{\infty}(\bbar{M})$. 
If we choose $\omega|_{\pl \bbar{M}}=0$, we obtain a particular solution of \eqref{HJ}. Notice that 
$\omega=0$ is the solution of \eqref{HJ} with $g$ is replaced by $g_0$ and $\omega|_{\pl\bbar{M}}=0$,  and that
\[G_g(\rho_0,y,\omega,\pl_{\rho_0}\omega,\pl_y\omega)=G_{g_0}
(\rho_0,y,\omega,\pl_{\rho_0}\omega,\pl_y\omega)+F+\rho_0^{m-1}\til{G}(\rho_0,y,\omega,\rho_0\pl_{\rho_0}\omega,\pl_y\omega)\]
for some $\til{G}$ that is $C^{\infty}$ in the variables $(\rho_0,y)$, polynomial of order $2$ in the last $3$ variables. 
To simplify notations, we write $G_g(\omega)$ instead of $G_g(\rho_0,y,\omega,\pl_{\rho_0}\omega,\pl_y\omega)$: using the expression of $G_{g_0}(\omega)$ obtained from \eqref{HJ} with $g_0$ instead of $g$,
\begin{equation}\label{omeganear0}
\begin{split}
2\pl_{\rho_0}\omega=& \rho_0^{m-1}\til{G}(\omega)+F+\rho_0^{-2}\Big(e^{2\rho_0\omega}-1-2\rho_0\omega\Big)+G_{g_0}(\omega)\\
=& -|\pl_y\omega|^2_{h_0}+\omega^2+ \mc{O}(|\rho_0\pl_{\rho_0}\omega|(|\omega|+|\rho_0\pl_{\rho_0}\omega|))+\rho_0^{m-1}\til{G}(\omega)+F.
\end{split}\end{equation}
Since $\omega\in C^{\infty}$, we can write its Taylor expansion, and assuming that
\[\omega=\mc{O}(\rho_0^{\ell}), \quad \pl_y\omega=\mc{O}(\rho_0^{\ell}), \quad \pl_{\rho_0}\omega=\mc{O}(\rho_0^{\ell-1})
\] 
for some $\ell \geq 1$  and plugging this into  
 \eqref{omeganear0}, we deduce that $\pl_{\rho_0}\omega=\mc{O}(\rho_0^{m-2})+\mc{O}(\rho_0^{2\ell})$, which shows that $\omega=\mc{O}(\rho_0^{m-1})+\mc{O}(\rho_0^{2\ell+1})$. By induction, this implies that
 $\omega=\mc{O}(\rho_0^{m-1})$ near $\rho_0$.  
Finally, let $Z_\rho:=\nabla^g\rho/\rho^4=\nabla^g\rho/|\nabla^g\rho|_g^2$. This is a $C^{\infty}$ vector field near $\pl\bbar{M}$ that is transverse to $\pl\bbar{M}$.
We consider its flow $\phi_s$ and the diffeomorphism $\psi:[0,\eps)_s\x \pl\bbar{M}\to \bbar{M}$
defined by $(s,y)\mapsto\phi_s(y)$ is $C^{\infty}$ and it is direct to check that it satisfies the desired properties by using that $\rho=\rho_0(1+\mc{O}(\rho_0^m))$. Moreover one has uniqueness of such a defining function if $m\geq 2$ since $\rho$ is determined by $\omega_0$ (which needs to be $0$ in order to get $\rho-\rho_0=\mc{O}(\rho_0^2)$). 
 Now, for the case where $m=1$, we see that the choices of $\rho$ are determined by the boundary value $\omega_0$: assume $\hat{\rho}=\rho+\omega_0\rho^2+\mc{O}(\rho^3)$ are two normal forms and $g=d\rho^2/\rho^4+h_{\rho}/\rho^2$ and $h_{\rho}=h_0+\rho h_1+\mc{O}(\rho^2)$. 
Let $\hat{\phi}_s=e^{sZ_{\hat{\rho}}}$; to check \eqref{changeofrho}, we compute 
 that $\pl_s (s^2\hat{\phi}_s^*g)(\pl_{y_i},\pl_{y_j})|_{s=0}=(h_1+\mc{L}_{Z_{\hat{\rho}}}h_0+2\omega_0h_0)(\pl_{y_i},\pl_{y_j})$. Now
an easy computation shows that $Z_{\hat{\rho}}|_{\pl\bbar{M}}=\pl_{\rho}+\nabla^{h_0}\omega_0$ and this gives \eqref{changeofrho}.
\end{proof}

In what follows, we will study $g$ in normal form, i.e. $g=d\rho^2/\rho^4+h_\rho/\rho^2$ near $\pl \bbar{M}$, for some smooth family $h_\rho$ of metrics on $\pl\bbar{M}$.

\subsection{Curvature tensor}

Let us first compute the decay of the curvature tensor near infinity.
\begin{proposition}
\label{curvature decay}
Let  $V, W\in \ker d\rho$ of length $g_p(V,V)=g_p(W,W)=1$ and $g_p(V,W)=0$ and let $Z=\rho^2\pl_\rho$.
For $p=(\rho,y)\in M$ sufficiently close to $\partial M$, the sectional curvature $K_p$ and Riemann tensor $R_p$  satisfy:\\ 
1) $|K_{p}(V,W)| =|g(R_p(V,W)W,V)|=\mc{O}(\rho^2)$.\\ 
2) $|K_{p}(V, Z)|= |g(R_p(V,W)W,V)|=\mc{O}(\rho^4)$.\\
3) $g(R_p(V, W) W, Z)=\mc{O}(\rho^3)$.\\
If in addition $(\pl\bbar{M},h_0)$ has sectional curvature $+1$, then 
\[K_p(V,W)=g(R_p(V,W)W,V)
=\mc{O}(\rho^3).\]
\end{proposition}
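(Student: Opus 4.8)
The plan is to compute the curvature tensor directly in the normal form coordinates $g = d\rho^2/\rho^4 + h_\rho/\rho^2$ near $\partial\bbar M$, exploiting the fact that this is a warped-product-type metric in the $\rho$-direction. The key conceptual point is that the exact cone metric $g_0 = d\rho^2/\rho^4 + h_0/\rho^2$ is, after the substitution $r = 1/\rho$, exactly the Euclidean cone $dr^2 + r^2 h_0$; its sectional curvatures are $K_0(V,W) = (\kappa_{h_0}(V,W) - 1)/r^2 = \rho^2(\kappa_{h_0}(V,W) - 1)$ for $V,W$ tangent to the cross-section, where $\kappa_{h_0}$ is the sectional curvature of $(\partial\bbar M, h_0)$, while all sectional curvatures of planes containing the radial direction $\partial_r$ vanish (a metric cone is flat in the radial directions). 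This immediately gives the model contribution to 1), 2), 3), and shows that when $h_0$ has curvature $+1$ the model term in 1) vanishes identically, leaving only the perturbation.

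First I would pass to the $r = 1/\rho$ coordinate, so that $Z = \rho^2\partial_\rho = -\partial_r$ has unit length, and write $g = dr^2 + r^2 h_{1/r}$ with $h_{1/r} - h_0 = \mathcal O(r^{-m}) = \mathcal O(\rho^m)$ (and $m\geq 1$; in fact one should track that the perturbation in Definition \ref{defAE} is $\mathcal O(\rho^{\max(2,m)})$ in the $a$-component, though that component does not enter the leading-order curvature computation). Next I would use the standard Koszul/O'Neill-type formulas for a metric of the form $dr^2 + k_r$ on $[R,\infty)_r \times \partial\bbar M$, where $k_r = r^2 h_{1/r}$ is an $r$-dependent metric on the cross-section. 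The second fundamental form of the slices $\{r = \text{const}\}$ is $\frac12 \partial_r k_r = r h_{1/r} + \frac{r^2}{2}\partial_r h_{1/r}$; since $\partial_r h_{1/r} = \mathcal O(r^{-2}\partial_\rho h_\rho|_{\rho = 1/r}) = \mathcal O(r^{-m-2})$, the second fundamental form is $r h_0 + \mathcal O(r^{-m+1})$ times the appropriate factors, and one should compare to the cone where it is exactly $r h_0$. The Gauss and Codazzi equations then express $R_p$ in terms of (a) the intrinsic curvature of $(\partial\bbar M, r^2 h_{1/r})$, which is $r^{-2}$ times the curvature of $(\partial\bbar M, h_{1/r})$ and hence $\mathcal O(r^{-2})$, plus the curvature of $h_{1/r} - h_0$ contributing an extra $\mathcal O(r^{-m-2})$; (b) products of second fundamental form terms, which for $g$ unit-normalized vectors give the dominant $\mathcal O(r^{-2}) = \mathcal O(\rho^2)$ contribution to $K_p(V,W)$; and (c) the $\partial_r$-derivative of the second fundamental form, entering the radial sectional curvatures. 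Carefully unwinding the $\rho^2$-rescaling of $g$ relative to $h_0$ on the cross-section (a $g$-unit vector $V$ has $h_0$-length $\mathcal O(\rho)$, i.e. $r^{-1}$) is what produces the stated powers: the mixed term $g(R_p(V,W)W,Z)$ involves one Codazzi-type term linear in the $\partial_r$-variation of the second fundamental form, giving $\mathcal O(\rho^3)$, and the radial sectional curvature $K_p(V,Z)$ involves the radial Riccati/Jacobi equation $\partial_r(\text{2nd ff}) + (\text{2nd ff})^2$, which for the pure cone vanishes and for the perturbation is $\mathcal O(\rho^4)$.

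For the final claim, I would specialize to $\kappa_{h_0} \equiv +1$: then the Gauss-equation model term $K_0(V,W) = \rho^2(\kappa_{h_0}(V,W)-1)$ vanishes, so $K_p(V,W)$ is entirely governed by the perturbation $h_\rho - h_0 \in \rho^m C^\infty$. Tracking the leading perturbative contribution through the Gauss equation — the intrinsic curvature correction of order $\rho^2 \cdot \rho^m \cdot \rho^{-2}$... — here one must be a little careful: the naive count gives $\mathcal O(\rho^m)$, but the claim is $\mathcal O(\rho^3)$, which for $m = 1$ would be worse than the claim, so the point is that the $\mathcal O(\rho^{m})$-part with $m=1$, i.e. the $\rho h_1$ term, must cancel in the sectional curvature of the plane $\mathrm{span}(V,W)$ because of a trace/algebraic identity; more plausibly, the statement implicitly intends $m \geq 1$ with the genuine perturbative term contributing $\mathcal O(\rho^{\min(m+2, ?)})$ — I would recompute the Gauss equation keeping the $r^2 h_{1/r}$ scaling exactly, so that the curvature of the slice is $r^{-2}\kappa_{h_{1/r}}$ and $\kappa_{h_{1/r}} = 1 + \mathcal O(\rho^m)$ enters as $r^{-2}(1 + \mathcal O(\rho^m)) = \rho^2 + \mathcal O(\rho^{m+2})$, while the second-fundamental-form-squared terms are exactly those of the cone plus $\mathcal O(\rho^{m+2})$ corrections, and the $+1$-curvature hypothesis makes the leading $\rho^2$ pieces cancel, leaving $\mathcal O(\rho^{m+2}) = \mathcal O(\rho^3)$. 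I expect the main obstacle to be exactly this bookkeeping: keeping track of which contributions to the Riemann tensor are "cone-model" (and hence cancel under the $+1$ hypothesis) versus "genuinely perturbative," while correctly accounting for the $\rho$-weights coming from normalizing $V, W, Z$ to unit $g$-length — a single misplaced power of $\rho$ changes the answer. A clean way to organize this is to write everything in the scattering frame $\rho^2\partial_\rho, \rho\partial_{y_j}$ (which is a $g$-orthonormal-up-to-$\mathcal O(\rho^m)$ frame), compute the connection $1$-forms of $g$ in this frame, observe they differ from those of $g_0$ by $\rho^m C^\infty$ terms, and read off the curvature $2$-forms; the stated decay rates then follow from counting how many frame covectors $d\rho/\rho^2$ versus $dy_j/\rho$ appear in each curvature component, since the former carries an extra $\rho$ relative to the latter.
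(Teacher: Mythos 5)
Your overall route --- the Gauss, Codazzi and radial Riccati equations for $dr^{2}+r^{2}h_{1/r}$, compared term by term with the exact cone --- is essentially the computation the paper carries out by hand with the Koszul formula: the identity \eqref{slice curvature} is the Gauss equation for the slices, the last display of the proof is the Codazzi term, and \eqref{KZV} is exactly the Riccati expression $-(\partial_{r}S_{r}+S_{r}^{2})$ evaluated on a unit tangential vector. So the strategy is sound and, carried out exactly, proves 1), 3) and the final claim (note also that the proposition carries no ``order $m$'' hypothesis: it concerns the general normal form $g=d\rho^{2}/\rho^{4}+h_{\rho}/\rho^{2}$, so effectively $m=1$, and in normal form there is no $a$-component or cross term to track).

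The genuine gap is in your justification of 2). The decay estimate $\partial_{r}h_{1/r}=\mathcal{O}(r^{-m-2})$ is off by one power: $h_{\rho}-h_{0}\in\rho^{m}C^{\infty}$ only gives $\partial_{\rho}h_{\rho}=\mathcal{O}(\rho^{m-1})$ (generically $\mathcal{O}(1)$ in the setting of the proposition), hence $\partial_{r}h_{1/r}=\mathcal{O}(r^{-m-1})$ and the correction to the shape operator $S_{r}=\frac{1}{r}\mathrm{Id}-\frac{\rho^{2}}{2}h_{\rho}^{-1}\partial_{\rho}h_{\rho}$ is $\mathcal{O}(\rho^{2})$, not $\mathcal{O}(\rho^{3})$. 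With the correct decay, order-of-magnitude counting in $\partial_{r}S_{r}+S_{r}^{2}$ produces two terms of size $\mathcal{O}(\rho^{3})$ (namely $\partial_{r}$ applied to the $-\frac{\rho^{2}}{2}S$ correction, with $S:=h_{\rho}^{-1}\partial_{\rho}h_{\rho}$, and the cross term $-\frac{2}{r}\cdot\frac{\rho^{2}}{2}S$ in $S_{r}^{2}$), so your plan as written only yields $K(V,Z)=\mathcal{O}(\rho^{3})$; the stated $\mathcal{O}(\rho^{4})$ comes out of your counting only because the erroneous estimate compensates. The $\mathcal{O}(\rho^{4})$ is true, but it rests on the exact cancellation of these two $\rho^{3}S$ terms: one finds $\partial_{r}S_{r}+S_{r}^{2}=\frac{\rho^{4}}{2}\partial_{\rho}S+\frac{\rho^{4}}{4}S^{2}$, which is precisely \eqref{KZV}, so you must exhibit this cancellation rather than assert the order. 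A smaller imprecision: the mixed term 3) is governed by the tangential covariant derivatives of the non-umbilic part $-\frac{1}{2}\partial_{\rho}h_{\rho}$ of the second fundamental form (Codazzi), not by its $\partial_{r}$-variation; the size $\mathcal{O}(\rho^{3})$ you claim is nonetheless correct, since the three $g$-unit tangential arguments contribute the factor $\rho^{3}$.
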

\begin{proof}
Let us define the subbundle $E=\ker d\rho\subset T\bbar{M}$ and 
$\mc{E}=C^\infty(\bbar{M};E)$.
We have $V=\rho\bbar{V}$ and $W=\rho\bbar{W}$ for some smooth $\bbar{V},\bbar{W}\in \mc{E}$, and let $Z=\rho^2\pl_\rho=\rho^2\bbar{Z}$.
Here the vector field $\bbar{Z} \in C^\infty(\bbar{M};T\bbar{M})$ is simply the smooth vector field $\partial_{\rho}$ which is smooth up to the boundary $\partial M$. Using $(\rho,y_1,\dots,y_{n-1})$ local coordinates near $\pl \bbar{M}$ with $(y_i)$ local coordinates on $\pl\bbar{M}$ as above, a direct computation by writing down $V = \sum_j \rho \bar v_j \partial_{y_j}$ and $W = \sum_j \rho \bar w_j \partial_{y_j}$ gives for all $V,W\in \rho\mc{E}$
\[ [Z,V]-\rho V\in \rho^3\mc{E}, \quad [V,W]\in \rho^2\mc{E}\]
(the commutators above belong to $\ker d\rho$ due to the absence of the $\partial_{\rho}$ vector in the resulting coordinate calculation).

Using Koszul formula, the Levi-Civita connection satisfies for all $V,W\in \rho\mc{E}$ 
\[\nabla_{V} W= \rho^2 \nabla^h_{\bbar{V} }\bbar{W} +a(V,W)Z, \quad \nabla_ZZ=0, \quad g(\nabla_ZV,Z)=0, \]
where $\nabla^h$ is the connection of $h(\rho)$ on the hypersurfaces given by level sets of $\rho$ and $a(V,W)$ some smooth function on $\bbar{M}$. 

We claim that the curvature in the direction $V,W\in \mc{E}$ (with $\cjg V,W\cjd=0$ and $|V|_g=1=|W|_g$) is given by
\begin{equation}
\label{slice curvature}
\begin{split}
K(V, W)= & \rho^2( K^h (\bbar{V}, \bbar{W})-1)  + \frac{\rho^3}{2} ((\partial_\rho h)(\bbar{V}, \bbar{V})+(\pl_\rho h)(\bbar{W},\bbar{W}))  \\
& -\frac{\rho^4}{4}\big((\partial_\rho h)(\bbar{V}, \bbar{V})(\partial_\rho h)(\bbar{W}, \bbar{W}) - ((\partial_\rho h)(\bbar{V}, \bbar{W}))^2\big)
\end{split}
\end{equation}
Indeed, we compute for an orthonormal basis $(Y_j)_{j=1}^{n-1}\in \rho\mc{E}$ of $\ker d\rho$ for the metric $g$
\begin{eqnarray*}
g( \nabla_{W} \nabla_{V} W, V)& =& g(\nabla_{W}  g(\nabla_{V} W, Z)Z, V) + \sum_{j=1}^{n-1}g( \nabla_{W}  g(\nabla_{V} W, Y_j) Y_j, V)\\
&=&g( \nabla_{V}W, Z)g( \nabla_{W} Z, V) + \rho^2 h(\nabla^h_{\bbar{W}}
\nabla^h_{\bbar{V}} \bbar{W}, \bbar{V})\\
&=&-g(\nabla_{V} W, Z) g(Z ,\nabla_{W}V) + \rho^2 h(\nabla^h_{\bbar{W}}\nabla^h_{\bbar{V}} \bbar{W}, \bbar{V})\\
&=&-(g(\nabla_{V} W, Z))^2  + \rho^2 h(\nabla^h_{\bbar{W}}\nabla^h_{\bbar{V}} \bbar{W}, \bbar{V})\\
&=&-\frac{1}{4}\rho^4 (\pl_\rho h(\bbar{V},\bbar{W}))^2+ \rho^2 
h(\nabla^h_{\bbar{W}}\nabla^h_{\bbar{V}} \bbar{W}, \bbar{V})
\end{eqnarray*}
The last equality comes from using Koszul formula. 
Also
\begin{eqnarray*}
g( \nabla_{V} \nabla_{W} W, V) &=& g( \nabla_{V} \sum_j g(\nabla_{W}W, Y_j) Y_j, V) + g( \nabla_{V} g(\nabla_{W} W, Z) Z, V)\\
&=& \rho ^2 h( \nabla^h _{\bbar{V}}  \nabla^h_{\bbar{W}} \bbar{W}, \bbar{V}) - g(W ,\nabla_{W} Z) g([V, Z], V)\\
&=& \rho^2  h( \nabla^h_{\bbar{V}} \nabla_{\bbar{W}} \bbar{W}, \bbar{V})- g (W,[Z, W]) g([Z, V],V)\\
&=& \rho^2 h( \nabla^h_{\bbar{V}} \nabla^h_{\bbar{W}} \bbar{W}, \bbar{V}) - g(W, \rho W + \rho^3 [\pl_\rho,\bbar{W}]) g (V, \rho V + \rho^3 [\pl_\rho,\bbar{V}] )
\end{eqnarray*}
Now using the fact that $\partial_\rho (h(\bbar{W},\bbar{W}))= 0=\partial_\rho (h(\bbar{V},\bbar{V}))$, we see that 
$2h([\pl_\rho,\bbar{W}],\bbar{W})=-\pl_\rho h(\bbar{W},\bbar{W})$ and the same with $\bbar{V}$ replacing $\bbar{W}$. We thus obtain
\[\begin{split}
g( \nabla_{V} \nabla_{W} W, V) =& \rho^2 g( \nabla^h_{\bbar{V}} \nabla^h_{\bbar{W}} \bbar{W}, \bbar{V}) -\rho^2-\frac{\rho^4}{4}\pl_\rho h(\bbar{V},\bbar{V})\pl_\rho h(\bbar{W},\bbar{W})\\
&+ \frac{1}{2}\rho^3(\pl_\rho h(\bbar{V},\bbar{V}) +\pl_\rho h(\bbar{W},\bbar{W}))
\end{split}\]
Since also $g( \nabla_{[V,W]}V,W)=\rho^2h (\nabla^h_{[\bbar{V},\bbar{W}]}\bbar{V},\bbar{W})$, we can 
 combine these computations to deduce \eqref{slice curvature}.

Next, we compute the curvature in mixed direction $K(Z,V)$.
\begin{equation}\label{curvature first term}
\begin{split}
g( \nabla_{Z} \nabla_{V}V, Z) =& Z g(\nabla_{V} V,Z)= - Z g( V, \nabla_{V} Z)=Z(\rho h(\bbar{V}, [\partial_\rho,\rho\bbar{V}]) \\
 =&  \rho^2-\rho^3\pl_\rho h(\bbar{V}, \bbar{V})-\frac{1}{2}\rho^4 \pl_\rho(\pl_\rho h(\bbar{V}, \bbar{V})).
\end{split}
\end{equation}
We can choose $V$ to be parallel with respect to $Z$ in the collar $\rho\in (0,\eps)$, so that
\[\begin{split} 
g( \nabla_{V} \nabla_{Z}V, Z) =0.
\end{split}\]
The equation $\nabla_ZV=0$ can be written as (here $S=h^{-1}\pl_\rho h$ denotes the shape operator for the hypersurfaces $\rho={\rm const}$)
\[ [\pl_\rho,\bbar{V}]=-\frac{1}{2}S\bbar{V}.\]
And finally, we have 
\[\begin{split} 
-g(\nabla_{[Z,V]} V,Z) = & g( V, \nabla_{[Z,V]}Z)=g(V,\nabla_Z [Z,V]-
[Z,[Z,V]])\\
=& h( \bbar{V}, \nabla_{\rho\pl_\rho}[Z,V] - 2\rho^2[\pl_\rho,V]-\rho^3[\pl_\rho,[\pl_\rho,V]])\\
=&  \rho^3h( \bbar{V}, \nabla_{\pl_\rho}[\pl_\rho,V] -[\pl_\rho,[\pl_\rho,V]])\\
=& \rho^3h( \bbar{V}, \nabla_{\pl_\rho}\bbar{V}-[\pl_\rho,\bbar{V}]+\rho\nabla_{\pl_\rho}[\pl_\rho,\bbar{V}]  -\rho[\pl_\rho,[\pl_\rho,\bbar{V}]])\\
= & -\rho^2+\frac{\rho^3}{2}\pl_\rho h(\bbar{V},\bbar{V})+\rho^4h( \bbar{V}, \nabla_{\pl_\rho}[\pl_\rho,\bbar{V}]  -[\pl_\rho,[\pl_\rho,\bbar{V}]]).
\end{split}\]
But we also have, using $\nabla_{\pl_\rho}V=0$, 
\[\begin{split} 
\rho^4h( \bbar{V}, \nabla_{\pl_\rho}[\pl_\rho,\bbar{V}]) = & \rho^6(\pl_\rho(g(\bbar{V},[\pl_\rho,\bbar{V}]))+\rho^{3}h(\bbar{V},[\pl_\rho,\bbar{V}])\\
=& -\rho^3h(\bbar{V},[\pl_\rho,\bbar{V}])+\rho^{4}\pl_\rho(h(\bbar{V},[\pl_\rho,\bbar{V}]))\\
=& \frac{1}{2}\rho^3\pl_\rho h(\bbar{V},\bbar{V})-\frac{\rho^4}{2}\pl_\rho(\pl_\rho h(\bbar{V},\bbar{V}))
\end{split}\]
and, by differentiating twice $h(\bbar{V},\bbar{V})=1$ with respect to $\pl_\rho$,
\[ -h([\pl_\rho^2,\bbar{V}],\bbar{V})-\frac{1}{2}\pl_\rho^2h(\bbar{V},\bbar{V})=h([\pl_\rho,\bbar{V}],[\pl_\rho,\bbar{V}])+2\pl_\rho h([\pl_\rho,\bbar{V}],\bbar{V}).\]
Thus, combining with \eqref{curvature first term} we obtain that 
\begin{equation}\label{KZV}
\begin{split}
K(Z,V)=& -\frac{\rho^4}{2} \pl_\rho^2h(\bbar{V}, \bbar{V})+\rho^4h([\pl_\rho,\bbar{V}],[\pl_\rho,\bbar{V}])\\
=& -\frac{\rho^4}{2} \pl_\rho^2h(\bbar{V}, \bbar{V})+\frac{\rho^4}{4}h(S\bbar{V},S\bbar{V}).
\end{split}\end{equation}
This implies that $K(Z,V)=\mc{O}(\rho^4)$ and $K(V,W)=\mc{O}(\rho^2)$. Note that if $h$ has curvature $+1$, then $K(V,W)=\mc{O}(\rho^3)$.  

For the last statement, taking $V,W$ such that $\nabla_ZW=\nabla_ZV=0$, then 
\[\begin{split}
R(Z,W,V,W)=&g(\nabla_Z\nabla_WV-\nabla_{[Z,W]}V,W)=Zg(\nabla_WV,W)-
\rho^2h(\nabla^h_{[\pl_\rho,W]}\bbar{V},\bbar{W})\\
=& Zg([W,V],W)-\rho^2h(\nabla^h_{\bbar{W}+\rho[\pl_\rho,\bbar{W}]}\bbar{V},\bbar{W})\\
=& \rho^2h([\bbar{W},\bbar{V}],\bbar{W})-\rho^2h(\nabla^h_{\bbar{W}}\bbar{V},\bbar{W})+\mc{O}(\rho^3)=\mc{O}(\rho^3)
\end{split}\]
concluding the proof.
\end{proof}

\subsection{Examples with no conjugate points}\label{Examples with no conjugate points}

Let us also give an example of a simply connected asymptotically conic manifold with no conjugate points that is not the Euclidian metric. Take $\rr^n$ and use the radial coordinates $(r,\theta)$ so that the Euclidean metric is $g_0=dr^2+r^2d\theta^2$ with $d\theta^2$ the metric of curvature $+1$ on the round sphere $\mathbb{S}^{n-1}$. Take now a new metric on $\rr^n$ given by in polar coordinates 
\[ g=dr^2+f(r)^2d\theta^2, \quad f''(r)\geq 0, \quad f(r)=r \textrm{ in }[0,1], \quad f(r)=ar \textrm{ in }[4,\infty)\]
where $a>1$ and $f\in C^\infty([0,\infty))$. Then the metric is smooth on $\rr^n$ and is asymptotically conic, as can be seen by setting $\rho=1/r$ outside the ball $\{r\leq 1\}$. Moreover, a standard computation gives that the sectional curvature of $g$ is for any $V,W$ of unit length and tangent to 
the level sets $r={\rm const}$
\[ K(\pl_r,V)=-\frac{f''(r)}{f(r)}\leq 0, \quad K(V,W)=\frac{1}{f^2(r)}(1-(f'(r))^2)\leq 0.\]
In particular this metric has no conjugate points and is a Cartan-Hadamard manifold.\\

In fact, we can choose $f$ so that $f(r)=\sinh(r)$ in $r\in [2,3]$ in which case the curvature is $-1$ in $r\in [2,3]$ and by a result of Gulliver \cite[Theorem 3]{Gu}, there is a new metric $g'$ on $\rr^n$ such that $g'=g$ outside a compact subregion $\Omega$ of $\{r\in (2,3)\}$ , the curvature of $g'$ is positive in an open subset $U\subset \Omega$ and $g'$ has no conjugate points. This shows that there are asymptotically conic metrics on $\rr^n$ that have positive curvature and no conjugate points, they are therefore non-trapping but not Cartan-Hadamard manifolds and do not enter in the class studied by Lehtonen-Railo-Salo \cite{LRS}.\\

In fact, for asymptotically conic metrics $g$ on $\rr^2$, it can be shown that if the boundary metric 
$h(0)$ is such that $\pl M=\mathbb{S}^1$ has length less than $2\pi$, then $g$ must have conjugate points, see \cite{GMT}. 

\subsection{The manifold $\overline{S^*M}$, geodesic vector field, Liouville measure}
The unit cotangent bundle $S^*M=\{ (x,\xi)\in T^*M; |\xi|_g=1\}$ is non-compact. There is a natural compactification by taking the scattering unit cotangent bundle 
${^{\rm sc}S}^*M=\{ (x,\xi)\in {^{\rm sc}T}^*M; |\xi|_g=1\}$ but the geodesic flow does not behave  well near the boundary of that compactification. It is more convenient to work on a non-compact manifold with boundary on which the geodesic vector field is transversal to the boundary. This manifold is denoted by $\bbar{S^*M}$, its interior is $S^*M$ and its boundary consists of two connected components diffeomorphic to $T^*\pl\bbar{M}$. The manifold $\bbar{S^*M}$ is defined using the following procedure. The unit scattering cotangent bundle is    
\[ 
{^{\rm sc}S}^*\bbar{M}:=\big\{ (x,\xi)\in {^{\rm sc}T}^*\bbar{M}; \, |\xi|_{g}=1\big\}.
\]
It is a compact smooth manifold with boundary.
The local coordinates $(\rho,y)$ on $\bbar{M}$ near $\pl\bbar{M}$ induce local coordinates $(\rho,y,\bbar{\xi}_0,\bbar{\eta})$ on ${^{\rm sc}T}^*\bbar{M}$ by writing each $\xi\in{^{\rm sc}T}^*\bbar{M}$ under the form 
\begin{equation}\label{scatxi}
\xi=\bbar{\xi}_0\frac{d\rho}{\rho^2}+\sum_{i=1}^{n-1}\bbar{\eta}_i\frac{dy_i}{\rho},
\end{equation} 
and $\xi\in{^{\rm sc}S}^*\bbar{M}$ means that $\bbar{\xi}_0^2+|\bbar{\eta}|_{h_\rho}^2=1$. 
A quick study of the integral curves of the geodesic vector field on $S^*M$ shows that 
geodesics going to infinity have their $(\bbar{\xi}_0,\bbar{\eta})$ components tending to 
$(\pm 1,0)$, which suggest to use polar coordinates around the incoming/outgoing sets 
$L_{\mp}:=\{\rho=0,\bbar{\xi}_0=\pm 1,\bbar{\eta}=0\}$ in ${^{\rm sc}S}^*\bbar{M}$.
We thus consider the manifold 
\[ [{^{\rm sc}S}^*\bbar{M}; L_\pm] \]
obtained by performing a radial blow-up of ${^{\rm sc}S}^*\bbar{M}$ at the incoming/outgoing submanifolds $L_\pm=\{ \bbar{\xi}_0=\mp 1, \rho=0\}\subset {^{\rm sc}S}^*\bbar{M}$. This is
obtained by replacing $L_\pm$ by the inward pointing spherical normal bundle $NL_\pm/\R^+$ of the submanifold $L_\pm$ of ${^{\rm sc}S}^*\bbar{M}$. The blow-down map $\beta:[{^{\rm sc}S}^*\bbar{M}; L_\pm]\to {^{\rm sc}S}^*\bbar{M}$ is the identity outside $NL_\pm/\R^+$ and is the natural 
projection $NL_\pm/\R^+\to L_\pm$ when restricted to $NL_\pm/\R^+$. A function on the blow-up is smooth if outside $NL_\pm/\R^+$ it is the pull back by $\beta$ of a smooth function, and 
if near $NL_\pm/\R^+$ it can be written as a smooth function in polar coordinates around $L_\pm$, i.e. it is a smooth function of the variables (here $|\bbar{\eta}|:=|\bbar{\eta}|_{h_\rho}$)
\begin{equation}\label{coordblowup} 
y,\, R:=\sqrt{|\bbar{\eta}|^2+\rho^2}, \, Y:=\frac{\bbar{\eta}}{\sqrt{|\bbar{\eta}|^2+\rho^2}}, \, Z:=\frac{\rho}{\sqrt{|\bbar{\eta}|^2+\rho^2}}.
\end{equation}
Note that in these coordinates, $\beta$ can be described near $NL^\pm/\R^+$ as 
\[\begin{split} 
\beta \left(y,\sqrt{|\bbar{\eta}|^2+\rho^2}, \frac{\bbar{\eta}}{\sqrt{|\bbar{\eta}|^2+\rho^2}},  \frac{\rho}{\sqrt{|\bbar{\eta}|^2+\rho^2}}\right)&=(\rho,y,\bbar{\xi}_0,\bbar{\eta})\\
&= (ZR,y,\mp \sqrt{1-R^4Z^2|Y|^2},YR).
\end{split}\]
We refer to the lecture notes \cite[Chapter 5]{Melrose:1996ij} for details about blow-ups. 
The manifold $[{^{\rm sc}S}^*\bbar{M}; L_\pm]$ is a smooth manifold with codimension $2$ corners. The boundary hypersurface of $[{^{\rm sc}S}^*\bbar{M}; L_\pm]$ 
corresponding to the pull-back of $\{\rho=0,\bbar{\eta}\not=0\}$ to $[{^{\rm sc}S}^*\bbar{M}; L_\pm]$ by the blow-down map $\beta:[{^{\rm sc}S}^*\bbar{M}; L_\pm]\to {^{\rm sc}S}^*\bbar{M}$ is denoted by ${\rm bf}$. In other words, ${\rm bf}=\hbox{cl}(\beta^{-1}(\{\rho=0,\bbar{\eta}\not=0\}))\subset [{^{\rm sc}S}^*\bbar{M}; L_\pm]$.
Then the boundary of $ [{^{\rm sc}S}^*\bbar{M}; L_\pm] \setminus {\rm bf}$
is the union of two boundary faces obtained from the blow-up, 
they are the half-sphere bundles $NL_\pm/\R^+$ and are defined by $R=0$ in the smooth coordinates \eqref{coordblowup} on $[{^{\rm sc}S}^*\bbar{M}; L_\pm]$, with interior denoted by $\pl_\pm S^*\bbar{M}$. These two new boundary faces are isomorphic to $T^*\pl\bbar{M}$: using that in the region $\pm\bbar{\xi}_0>0$, one has
$L_\mp=\{\rho=0,\bbar{\eta}=0\}$, 
 the projective coordinates    
\begin{equation}\label{projcoord} 
\rho, y,\eta:=\bbar{\eta}/\rho 
\end{equation}
are smooth coordinates in a neighborhood of the interior of these new boundary faces. Moreover $\pl_\pm S^*M=\{\rho=0\}$ in this neighborhood (in the projective coordinates): then $(y,\eta)$ restricted to $\pl_\pm S^*M$ provide a diffeomorphism with $T^*\pl M$. The variable $\bbar{\xi}_0$ is determined by $(\rho,y,\eta)$ near $\pl_\pm S^*M$ by the equation
\begin{equation}\label{bbarS^*M} 
\bbar{\xi}_0^2+ \rho^2 |\eta|_{h_\rho}^2=1.
\end{equation}

We then define the non-compact manifold with boundary
\[\bbar{S^*M}:= [{^{\rm sc}S}^*\bbar{M}; L_\pm] \setminus {\rm bf}. \]
The coordinates $(\rho,y,\xi_0,\eta)$ satisfying   
the condition \eqref{bbarS^*M} provide well-defined smooth coordinates on $\bbar{S^*M}$. We also note that $\rho$ is a smooth boundary defining function of $\pl_\pm S^*M$
in $\bbar{S^*M}$.
More informally, the space $\bbar{S^*M}$ corresponds to $S^*M$ with two copies of $T^*\pl\bbar{M}$ glued at $\{\rho=0,\bbar{\xi}_0=\pm 1\}$ in a way that smooth functions on $\bbar{S^*M}$ correspond to smooth functions 
$f\in C^\infty(S^*M)$ which can be written under the form $f(x)=F(\rho,y,\bbar{\xi}_0,\eta)$ near $\rho=0$ by using the coordinates \eqref{projcoord}, with $F\in C^\infty([0,\eps)\x \pl\bbar{M}\x[-1,1]\x T^*\pl\bbar{M})$. 
Recall that two normal forms with functions $\rho$ and $\hat\rho$ are related by $\hat{\rho}=\rho+\omega_0\rho^2+\mc{O}(\rho^3)$ for some $\omega_0\in C^\infty(\pl\bbar{M})$ in Lemma \ref{normalform}. Thus the induced coordinates $(\hat{\rho},\hat{y},\hat{\bbar{\xi}}_0,\hat{\eta})$ are related to $(\rho,y,\bbar{\xi}_0,\eta)$ by 
\begin{equation}\label{changeofcoord}
\hat{y}=y+\mc{O}(\rho), \quad \hat{\bbar{\xi}}_0=\bbar{\xi}_0+\mc{O}(\rho),\quad \hat{\eta}=\eta+d\omega_0+\mc{O}(\rho^2).
\end{equation}

 The canonical Liouville $1$-form on $T^*M$ is denoted $\alpha$, in local coordinates near 
$\pl \bbar{M}$ it is given by 
\[\alpha =\xi_0 d\rho+\eta.dy,\] 
and $d\alpha$ is the canonical Liouville symplectic form of $T^*M$.  
The geodesic vector field $X$ is the Hamilton vector field of the energy functional 
\[ 
H(x,\xi)=\frac{1}{2} |\xi|^2_{g_x}= \frac 12 (\bbar{\xi}_0^2 + \rho^2 |\eta|^2_{h_\rho})
\] 
As usual, we can restrict $\alpha$ to $S^*M$ as a contact form satisfying $\alpha(X)=1$ and $i_Xd\alpha=0$, so that $X$ is  the Reeb vector field of $\alpha$.  
A direct computation yields
\begin{equation}\label{formluaX}
X =   \rho^2 \xin  \p_\rho   +  \rho^2 \sum_{i,j}h^{ij}\eta_i
\p_{y_j} -  \big( \rho^2|\eta|_{h_\rho}^2 + \frac{1}{2}\rho^3 \p_\rho |\eta|^2_{h_\rho}\big) \rho\p_{\xin} -
\frac 12 \rho^2 \sum_{j}\p_{y_j}(|\eta|^2_{h_\rho}) \p_{\eta_j}.
\end{equation}
In particular, 
\begin{equation}
\label{bbar X}
\bbar{X}:=\rho^{-2}X
\end{equation}
 extends smoothly down to $\pl\bbar{S^*M}=\{\rho=0\}$ in $\bbar{S^*M}$ and 
is equal at $\{\rho=0\}$ to 
\begin{equation}\label{limitingcase}
\bbar{X}|_{\pl \bbar{S^*M}}=\xin  \p_\rho   +   \sum_{ij}h^{ij}\eta_i
\p_{y_j}-\frac{1}{2} \sum_{j}\p_{y_j}(|\eta|^2_{h_0}) \p_{\eta_j}=\xin  \p_\rho+Y
\end{equation} 
 where $Y$ is the Hamilton field of $\frac{1}{2}|\eta|^2_{h_0}$ on $(\pl \bbar{M},h_0)$. We deduce
\begin{lemma}\label{factorX}
The vector field $X$ extends smoothly to $\bbar{S^*M}$ and can be factorized under the form $X=\rho^2\bbar{X}$ with $\bbar{X}\in C^\infty(\bbar{S^*M};T\bbar{S^*M})$ a smooth vector field transverse to the boundary $\pl\bbar{S^*M}$. 
\end{lemma}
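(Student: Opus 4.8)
The plan is to read the statement off the explicit formula \eqref{formluaX} for the geodesic vector field together with the description of $\bbar{S^*M}$ and its boundary established just above. First I would fix a normal form $g=d\rho^2/\rho^4+h_\rho/\rho^2$ near $\pl\bbar{M}$ and pass to the projective coordinates $(\rho,y,\xin,\eta)$ of \eqref{projcoord}; by the construction of $\bbar{S^*M}$ as $[{^{\rm sc}S}^*\bbar{M};L_\pm]\setminus{\rm bf}$ these are genuine smooth coordinates on a collar of each boundary face $\pl_\pm S^*M$, with $\rho$ a boundary defining function of $\pl_\pm S^*M$ and $\xin$ the function cut out by \eqref{bbarS^*M}. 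The only point that really uses the work of the previous subsection, as opposed to a bare computation, is exactly this: that coordinates which look merely projective on ${^{\rm sc}S}^*\bbar{M}$ chart the blow-up $[{^{\rm sc}S}^*\bbar{M};L_\pm]$ smoothly away from ${\rm bf}$, and that the two faces they cover together exhaust $\pl\bbar{S^*M}\cong T^*\pl\bbar{M}\sqcup T^*\pl\bbar{M}$. After recording this, everything is a local computation near $\{\rho=0\}$.

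The second step is bookkeeping on \eqref{formluaX}. Its coefficients in front of $\p_\rho,\ \p_{y_j},\ \p_{\xin},\ \p_{\eta_j}$ are, respectively,
\[
\rho^2\xin,\qquad \rho^2\sum_i h^{ij}\eta_i,\qquad -\rho^3\Big(|\eta|^2_{h_\rho}+\tfrac12\rho\,\p_\rho|\eta|^2_{h_\rho}\Big),\qquad -\tfrac12\rho^2\,\p_{y_j}\big(|\eta|^2_{h_\rho}\big),
\]
and since $h_\rho$ (hence $h^{ij}$ and $|\eta|^2_{h_\rho}$) is smooth up to $\rho=0$ while $(y,\eta)$ are smooth coordinates, each of them is a smooth function on $\bbar{S^*M}$ divisible by $\rho^2$. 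Therefore $\bbar X:=\rho^{-2}X$ of \eqref{bbar X} has smooth coefficients in $(\rho,y,\xin,\eta)$ and extends smoothly across $\{\rho=0\}$; away from the boundary $\bbar X$ is smooth because $X$ is and $\rho>0$ there. This gives $\bbar X\in C^\infty(\bbar{S^*M};T\bbar{S^*M})$ with $X=\rho^2\bbar X$, which is the first half of the claim.

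Finally I would verify transversality to $\pl\bbar{S^*M}=\{\rho=0\}$. Setting $\rho=0$ in the four coefficients above yields \eqref{limitingcase}, namely $\bbar X|_{\pl\bbar{S^*M}}=\xin\,\p_\rho+Y$ with $Y$ the Hamilton field of $\tfrac12|\eta|^2_{h_0}$ on $T^*\pl\bbar{M}$, in particular tangent to $\{\rho=0\}$. Since $\rho$ is a boundary defining function of $\pl_\pm S^*M$, transversality of $\bbar X$ is equivalent to $\bbar X\rho=\xin\neq 0$ on the boundary; but \eqref{bbarS^*M} forces $\xin^2=1$ at $\rho=0$, so $\xin=\pm1$ on $\pl_\pm S^*M$, which is precisely transversality. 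I do not anticipate any real obstacle here: the two things to stay careful about are the compatibility of the projective coordinates with the blow-up smooth structure (dealt with above) and the bookkeeping of $\rho$-powers in \eqref{formluaX}, the latter being itself a routine Hamilton-vector-field computation for $H=\tfrac12(\xin^2+\rho^2|\eta|^2_{h_\rho})$.
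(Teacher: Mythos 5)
Your proposal is correct and follows essentially the same route as the paper: the paper also reads the factorization directly off the explicit formula \eqref{formluaX} in the smooth coordinates $(\rho,y,\bbar{\xi}_0,\eta)$ on $\bbar{S^*M}$, obtains the boundary restriction \eqref{limitingcase}, and concludes transversality from $\bbar{\xi}_0=\pm1$ at $\{\rho=0\}$. Your extra care about why the projective coordinates are smooth on the blow-up away from ${\rm bf}$ is exactly the content the paper establishes in the preceding subsection, so nothing is missing.
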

There is a natural volume form on $S^*M$, namely the Liouville measure 
given by 
\[\mu= \alpha\wedge (d\alpha)^{n-1}.\]
The boundary $\pl \bbar{S^*M}$ identifies to two copies of $T^*\pl \bbar{M}$ and is thus a natural symplectic manifold with the canonical Liouville form $\sum_{j}d\eta_j\wedge dy_j$.
\begin{lemma}\label{volform}
The forms $\rho^{2}\alpha$ and $d\alpha$ extend smoothly to $\bbar{S^*M}$ and $\iota^*_\pl d\alpha$ restricts to the canonical Liouville symplectic form of $\pl_\pm S^*M\simeq T^*\pl\bbar{M}$ if $\iota_\pl:\pl \bbar{S^*M}\to \bbar{S^*M}$ is the inclusion map. The volume form $\mu$ is such that $\rho^2\mu$ extends smoothly on $\bbar{S^*M}$ and $\iota_\pl^*i_X\mu$ is the canonical Liouville symplectic volume form on $\pl \bbar{S^*M}$.
\end{lemma}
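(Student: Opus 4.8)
The plan is to do everything in the smooth coordinates $(\rho,y,\bbar{\xi}_0,\eta)$ on $\bbar{S^*M}$ near $\pl_\pm S^*M$ provided by \eqref{projcoord}, in which a covector is $\xi=\bbar{\xi}_0\,d\rho/\rho^2+\sum_i\eta_i\,dy_i$ — so the fiber coordinate dual to $d\rho$ is $\bbar{\xi}_0/\rho^2$, while the one dual to $dy_i$ is the projective coordinate $\eta_i$ — subject to the energy constraint $\bbar{\xi}_0^2+\rho^2|\eta|_{h_\rho}^2=1$ of \eqref{bbarS^*M}. Then $\alpha=(\bbar{\xi}_0/\rho^2)\,d\rho+\eta\cdot dy$, so $\rho^2\alpha=\bbar{\xi}_0\,d\rho+\rho^2\,\eta\cdot dy$ is manifestly a smooth $1$-form in these coordinates; since $\alpha$ is anyway smooth on the interior $S^*M$, this gives the smooth extension of $\rho^2\alpha$ to $\bbar{S^*M}$.

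Next I would compute $d\alpha$. The canonical symplectic form of $T^*M$ reads, in these coordinates, $d(\bbar{\xi}_0/\rho^2)\wedge d\rho+\sum_i d\eta_i\wedge dy_i=\rho^{-2}\,d\bbar{\xi}_0\wedge d\rho+\sum_i d\eta_i\wedge dy_i$; restricting to $S^*M$ and differentiating the constraint in the form $\bbar{\xi}_0^2=1-\rho^2|\eta|_{h_\rho}^2$ gives $2\bbar{\xi}_0\,d\bbar{\xi}_0\wedge d\rho=-\rho^2\,d(|\eta|_{h_\rho}^2)\wedge d\rho$, so the apparent $\rho^{-2}$ singularity cancels and
\[
d\alpha=-\frac{d(|\eta|_{h_\rho}^2)\wedge d\rho}{2\bbar{\xi}_0}+\sum_i d\eta_i\wedge dy_i .
\]
Near $\pl_\pm S^*M$ one has $\bbar{\xi}_0=\pm 1+\mc{O}(\rho^2)$, so the right-hand side is smooth up to the boundary, which gives the smooth extension of $d\alpha$. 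Since $\iota_\pl^*\rho\equiv 0$ forces $\iota_\pl^*d\rho=0$, pulling this identity back to $\pl_\pm S^*M$ annihilates the first term and leaves $\iota_\pl^*d\alpha=\sum_i d\eta_i\wedge dy_i$, which under the identification $\pl_\pm S^*M\simeq T^*\pl\bbar{M}$ from \eqref{projcoord} is exactly the canonical symplectic form.

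The assertions about $\mu$ then follow formally. We have $\rho^2\mu=(\rho^2\alpha)\wedge(d\alpha)^{n-1}$, a wedge of forms just shown to be smooth on $\bbar{S^*M}$, so $\rho^2\mu$ is a smooth top-degree form; at $\rho=0$ one has $\rho^2\alpha=\bbar{\xi}_0\,d\rho$ and $(d\alpha)^{n-1}$ agrees with $\big(\sum_i d\eta_i\wedge dy_i\big)^{n-1}$ modulo forms containing the factor $d\rho$, whence $\rho^2\mu|_{\rho=0}=\bbar{\xi}_0\,d\rho\wedge\big(\sum_i d\eta_i\wedge dy_i\big)^{n-1}$, nonvanishing, so $\rho^2\mu$ is in fact a smooth volume form on $\bbar{S^*M}$. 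For the last claim, using Lemma \ref{factorX} to write $X=\rho^2\bbar{X}$ together with $C^\infty$-linearity of contraction in both arguments, $i_X\mu=\rho^{-2}i_X(\rho^2\mu)=i_{\bbar{X}}(\rho^2\mu)$ is smooth on $\bbar{S^*M}$; to restrict it to the boundary I would plug in $\bbar{X}|_{\rho=0}=\bbar{\xi}_0\,\pl_\rho+Y$ from \eqref{limitingcase}, with $Y$ tangent to $\{\rho=0\}$ so that $i_Yd\rho=0$. A direct contraction of $\bbar{\xi}_0\,d\rho\wedge\big(\sum_i d\eta_i\wedge dy_i\big)^{n-1}$ against this vector field produces $\bbar{\xi}_0^2\,\big(\sum_i d\eta_i\wedge dy_i\big)^{n-1}$ plus a term carrying a factor $d\rho$; since $\iota_\pl^*d\rho=0$ and $\bbar{\xi}_0^2=1$ on $\pl_\pm S^*M$, we conclude $\iota_\pl^*i_X\mu=\big(\sum_i d\eta_i\wedge dy_i\big)^{n-1}$, the canonical Liouville volume form of $T^*\pl\bbar{M}$.

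The only delicate point is the smooth extension of $d\alpha$: a priori $\alpha$ itself blows up like $\rho^{-2}$ at $\pl\bbar{S^*M}$, so what matters is the cancellation of that singularity, which is precisely the statement that $d\bbar{\xi}_0\wedge d\rho$ vanishes to second order in $\rho$ on $S^*M$ — forced by the energy constraint $\bbar{\xi}_0^2+\rho^2|\eta|_{h_\rho}^2=1$. Everything else is routine bookkeeping in the blow-up coordinates, together with the elementary identities $i_{fV}\omega=f\,i_V\omega=i_V(f\omega)$ and $i_V(d\rho\wedge\theta)=(i_Vd\rho)\,\theta-d\rho\wedge i_V\theta$.
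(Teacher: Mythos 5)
Your proposal is correct and follows essentially the same route as the paper: write $\alpha$ in the projective coordinates $(\rho,y,\bbar{\xi}_0,\eta)$, use the differentiated constraint $\bbar{\xi}_0^2+\rho^2|\eta|^2_{h_\rho}=1$ to cancel the $\rho^{-2}$ singularity in $d\alpha$, and pull back to $\{\rho=0\}$ to recover the canonical symplectic form and volume. The only (harmless) deviation is the last step: the paper gets $\iota_\pl^* i_X\mu$ at once from the identity $i_X\mu=(d\alpha)^{n-1}$ (using $\alpha(X)=1$, $i_Xd\alpha=0$), whereas you contract $\bbar{X}=\rho^{-2}X$ into $\rho^2\mu$ explicitly via Lemma \ref{factorX} and \eqref{limitingcase}; both computations are valid and give the same conclusion.
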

\begin{proof}
We can write $\alpha=\bbar{\xi}_0\frac{d\rho}{\rho^2}+\sum_{j}\eta_jdy_j$ so $\rho^{2}\alpha$ extends smoothly to $\bbar{S^*M}$. Now $d\alpha=\rho^{-2}d\bbar{\xi}_0\wedge d\rho+\sum_{j}d\eta_j\wedge dy_j$ and differentiating $1=\bbar{\xi}_0^2 + \rho^2 |\eta|^2_{h_\rho}$ gives 
$\bbar{\xi}_0d\bbar{\xi}_0=-\rho |\eta|^2_{h_\rho} d\rho+\mc{O}(\rho^2)$ on $T(S^*M)$ thus $d\alpha$ extends smoothly to $\pl \bbar{S^*M}$ and $i^*_\pl d\alpha$ restricts to the canonical Liouville symplectic form of $\pl_\pm S^*M$. From the discussion above, $\rho^2\mu$ extends smoothly, and $i_X\mu=(d\alpha)^{n-1}$, thus pulls-back to the symplectic measure on $T^*(\pl M)\simeq \pl_\pm S^*M$.  
\end{proof}
Since also $\rho^2\mu=\bbar{\xi}_0d\rho\wedge (\sum_{j}\eta_jdy_j)^{n-1}+\mc{O}(\rho)$, 
we see that the orientation induced by $\rho^2\mu$ and $(\sum_{j}\eta_jdy_j)^{n-1}$ agree (resp. are opposite) on $\pl_+S^*M$ (resp. on $\pl_-S^*M)$.\\

\subsection{Connection map and Sasaki metric on $S^*M$}\label{connectionmap}
The manifold $T^*M$ has a natural metric structure called the Sasaki metric defined so that the horizontal space, defined through the Levi-Civita connection, and the vertical space are orthogonal;  we refer to \cite[Chapter 1.3]{Pa} for details. The Sasaki metric will be denoted by $G$. 
The projection on the base $\pi:T^*M\to M$ allows to define the vertical bundle
$\mc{V}:=\ker d\pi\subset T(T^*M)$ and there is a map called connection map $\mc{K}:T(T^*M)\to T^*M$ and $\mc{H}:=\ker \mc{K}$ is called the horizontal space if $z\in T^*M$. The maps
\[ \mc{K}: \mc{V}\to T^*M ,\quad d\pi: \mc{H}\to TM\]
are isomorphisms and we will call horizontal (resp. vertical) lift $\xi^h\in \mc{H}_{(x,\xi)}$ 
(resp. $\xi^v\in \mc{V}_{z}$) of a point $z=(x,\xi)\in T^*M$ the element $\xi^h\in\mc{H}_{z}$ 
(resp. $\xi^v\in \mc{V}_{z}$) so that $d\pi(\xi^h)=\xi$ (resp. $\mc{K}(\xi^v)=\xi$).
We have a similar decomposition $T(S^*M)=\mc{H}\oplus \mc{V}$ over $S^*M$ but $\mc{V}$ becomes $(n-1)$-dimensional.  We let $\mc{Z}\to S^*M$ be the bundle with fibers $\mc{Z}_{(x,\xi)}=\{ v\in T_xM: \xi(v)=0\}$; then 
the maps $d\pi|_{\mc{H}}: \mc{H}\cap \ker \alpha \to \mc{Z}$ and $\mc{K}|_{\mc{V}}:\mc{V}\to
\mc{Z}$ are isomorphisms (see \cite{PSU}). We will call horizontal lift $\xi^h\in \mc{H}_{(x,\xi)}$ of a point  $(x,\xi)\in S^*M$ the element $\xi^h\in\mc{H}_{(x,\xi)}$ so that $d\pi(\xi^h)=\xi$.
The vector field $X$ also  acts on  sections of $\mc{Z}$ by using parallel transport 
along geodesics of $g$: for $v\in \Gamma(\mc{Z})$, 
$v(\varphi_t(x,\xi))$ is a vector field along the geodesic $\pi(\varphi_t(x,\xi))$, 
and we define 
\[ Xv(x,\xi):=\nabla_{\pl_t}v(\varphi_t(x,\xi))|_{t=0}
\]
where $\nabla$ is the Levi-Civita connection pulled-back to $\mc{Z}$ over $S^*M$.

If an element $z=(x,\xi) \in T^*M$ is expressed as $x=(\rho,y)$ and 
$\xi = \bbar{\xi}_0 \frac{d\rho}{\rho^2} + \sum_j\eta_jdy_j=\xi_0d\rho + \sum_j\eta_jdy_j$, and if we use the coordinate on $T^*M$ given by $(\rho, y ,\xi_0, \eta)$ 
and the coordinate frame for $T(T^*M)$ 
is $\{ \partial_\rho, \partial_{y}, \partial_{\xi_0}, \partial_{\eta}\}$, 
the vertical lift of $\xi$ is given by 
\[ \xi^v=\xi_0\pl_{\xi_0}+\sum_j\eta_j\pl_{\eta_j} \in \mc{V}.\] 
Using the coordinates $(\rho, y ,\bbar{\xi}_0, \eta)$ we have $\xi^v=\bbar{\xi}_0\pl_{\bbar{\xi}_0}+\sum_j\eta_j\pl_{\eta_j}$.
The horizontal lift of a vector $Z=Z_0\pl_\rho +\sum_{j=1}^{n-1}Z_j\pl_{y_j} \in T_xM$ to a vector over the element $z = (x,\xi) \in T^*M$ is given by 
\[ Z^h=Z+\sum_{i,j,k}\xi_k\Gamma_{ij}^kZ_j\pl_{\xi_i} \in \mc{H}\]
where $\xi_i=\eta_i$ for $i\geq 1$ and $\Gamma_{ij}^k = dx_k( \nabla_{\partial_{x_i}}\partial_{x_j})$. In particular, some computations give 
\begin{equation}\label{horverlifts} 
\begin{gathered}
\pl_{\rho}^h=\pl_\rho  - 2 \rho^{-1} \bbar{\xi}_0\partial_{\bbar{\xi}_0}+\rho^{-1}L^{0}_x(\eta,\pl_\eta)\\
\pl_{y_j}^h=\pl_{y_j}+\rho^{-1}\bbar{\xi}_0J_x^{j}(\pl_\eta)+\rho K_x^{j}(\eta)\pl_{\bbar{\xi}_0}
+L^{j}(\eta,\pl_\eta)
\end{gathered}\end{equation}
where $L^{j}_x(a,b)$ are smooth in $x=(\rho,y)$ (down to $\rho=0$) 
and bilinear in $(a,b)$, $J_x(b)$, $K_x(a)$ are smooth in $(\rho,y)$ (down to $\rho=0$) and linear in $a$ and $b$.
The Sasaki metric satisfies 
\begin{equation}\label{sasakimet}\begin{split}
G(\xi^v,\xi^v)=& g^{-1}(\xi,\xi)=\rho^4\xi_0^2+\rho^2|\eta|^2_{h_\rho}=\bbar{\xi}_0^2+\rho^2|\eta|^2_{h_\rho},\\
G(Z^h,Z^h)=& g(Z,Z)=\rho^{-4}Z_0^2+\rho^{-2}\Big|\sum_{j=1}^{n-1}Z_j\pl_{y_j}\Big|^2_{h_\rho}=
\rho^{-4}Z_0^2+\rho^{-2}\sum_{ij=1}^{n-1}h_{ij} Z_jZ_i,\\
G(Z^h,\xi^v)=& 0,
\end{split}
\end{equation}
in particular $\mc{H}\oplus \mc{V}$ is an orthogonal decomposition for the metric $G$.
Using the local frame $\{\partial_\rho^h, \partial_{y_j}^h, 
\pl_{\bbar{\xi}_0}, \pl_{\eta_j}\}$ of $T(T^*M)$, the Sasaki metric can be expressed as 
\[G = \rho^{-4} \delta_\rho^2 + \rho^{-2} \sum_{jk}h_{kj} \delta_{y_j} \delta_{y_k} + 
d \bbar{\xi}_0^2 + \rho^2 \sum_{jk}h^{jk}d{\eta_j} d\eta_k \]
where $\{\delta_\rho,\delta_{y_j},d\bbar{\xi}_0,d\eta_j\}$ denotes the dual frame ($1$-forms on $T^*M$).

To compute the gradient $\nabla^{G}u$ of $u:S^*M\to \rr$ with respect to $G$, it suffices to compute $\til{\nabla}^G(u\circ p)|_{S^*M}$ where $p:T^*M\to S^*M$ is defined by $p(x,\xi)=(x,\xi/|\xi|)$ and $\til{\nabla}^G$ denotes the gradient with respect to $G$ in $T^*M$.
We get, writing $\til{u}=u\circ p$, 
\[\til{\nabla}^G\til{u}=\rho^4\pl_\rho^h\til{u}\pl_\rho^h+\rho^2\sum_{j,k}h^{kj}\pl_{y_j}^h\til{u}\pl_{y_k}^h+\pl_{\bbar{\xi}_0}\til{u}\pl_{\bbar{\xi}_0}+\rho^{-2}\sum_{k,j}h_{jk}\pl_{\eta_{k}}\til{u}\pl_{\eta_j} \]
with the condition $\bbar{\xi}_0\pl_{\bbar{\xi}_0}\til{u}+\sum_{j}\eta_j\pl_{\eta_j}\til{u}=0$. We deduce that 
\begin{equation}\label{normnablaGu} 
\| \nabla^G u\|^2_{G}=\rho^4 (\pl_\rho^hu)^2+\rho^2\sum_{k,j}h^{kj}\pl_{y_j}^hu \pl_{y_k}^hu+(\pl_{\bbar{\xi}_0}u)^2+ \rho^{-2}\sum_{k,j}h_{kj}\pl_{\eta_j}u \pl_{\eta_k}u
\end{equation}
We will also write $\nabla^hu$ and $\nabla^vu$ for the horizontal and vertical gradient, which are 
the orthogonal projections of $\nabla^Gu$ onto $\mc{H}$ and $\mc{V}$ with respect to $G$.

\subsection{Lift of tensors}
Denote by $C^\infty(\bbar{M}; S^m ({^{\rm sc}T}^*\bbar{M}))$ 
the smooth symmetric scattering tensors, i.e. smooth sections of the bundle of symmetric tensors in ${^{\rm sc}T}^*\bbar{M}$.
There exists a natural lift 
\begin{equation}\label{liftpim}
\pi_m^* : C^\infty(M; S^m T^*M) \to C^\infty(S^*M),  \quad 
\pi_m^* f (x,\xi) := f(x)(\otimes^m \xi^\sharp)
\end{equation} 
where $\xi^\sharp\in T_xM$ is the dual to $\xi\in T^*_xM$ with respect to $g$. Let us consider the action of $\pi^*_m$ on $C^\infty(\bbar{M}; S^m ({^{\rm sc}T}^*\bbar{M}))$. When viewed as a function on the smooth compact manifold with boundary ${^{\rm sc}S}^*\bbar{M}:=\{(x,\xi)\in {^{\rm sc}T}^*\bbar{M}; |\xi|_g=1\}$, it is direct to see that $\pi_m^*f\in C^\infty({^{\rm sc}S}^*\bbar{M})$. Now we also need to view $\pi_m^*$ as a function on $\bbar{S^*M}$.
This can be computed explicitly in the coordinate given by $\xi = \bar\xi_0 \frac{d\rho}{\rho^2} + \sum_j\eta_j d{y^j}$ where $\bbar{\xi}_0^2 + \rho^2 |\eta|_{h}^2 =1$: since 
 $\xi=\bbar{\xi}_0 \frac{d\rho}{\rho^2} + \sum_j\rho\eta_j \frac{dy_j}{\rho}$ and 
 $\xi^\sharp=\bbar{\xi}_0\rho^2\pl_\rho+\rho^2\sum_{ij}h^{ij}\eta_j\pl_{y_i}$
 we see that if $f\in C^\infty(\bbar{M}; S^m ({^{\rm sc}T}^*\bbar{M}))$ then for $\ell=0,\dots,m-1$
 \begin{equation}\label{liftest}
  \pi_m^*f \in C^\infty(\bbar{S^*M}), \quad \textrm{ and }
 \pi^*_mf\in \rho^{m-\ell}C^\infty(\bbar{S^*M}) \textrm{ on }\ker (i_{\rho^2\pl_\rho})^{\ell+1}
 \end{equation}
where $i_{\rho^2\pl_\rho}$ denotes interior product. On smooth sections of 
$S^m({^{\rm sc}T}^*\bbar{M})\cap \ker (i_{\rho^2\pl_\rho})^{\ell+1}$ we get
\begin{equation}\label{pimf}
|\pi_m^* f (x,\xi)| \leq  C\|f\|_{C^0(\bbar{M}; S^m ({^{\rm sc}T}^*\bbar{M}))}\rho^{m-\ell}|\eta|^{m-\ell}_{h_\rho}\leq C\|f\|_{C^0(\bbar{M}; S^m ({^{\rm sc}T}^*\bbar{M}))}
\end{equation}
for some uniform $C>0$, and so the same estimate thus holds on $C^0(\bbar{M}; S^m({^{\rm sc}T}^*\bbar{M}))$.
 Similarly, by direct computation using \eqref{normnablaGu} and \eqref{horverlifts},
 \begin{equation}
 \label{gradient of lift estimate}
 \|\nabla^G \pi^*_mf\|_G^2 \leq  C\|f\|^2_{C^1(\bbar{M}; S^m ({^{\rm sc}T}^*\bbar{M}))}| \rho\eta|_h^{2m-2-2\ell}\leq C\|f\|^2_{C^1(\bbar{M}; S^m ({^{\rm sc}T}^*\bbar{M}))}
 \end{equation} 
on smooth sections of $S^m({^{\rm sc}T}^*\bbar{M})\cap \ker (i_{\rho^2\pl_\rho})^{\ell+1}$ for some uniform $C>0$.
 
 It is well known that on a manifold with smooth metric, trace-free elements of $S^m T^*M$ lift to fiberwise homogeneous harmonic polynomials on $T^*M$ with respect to the vertical Laplacian $\Delta^v:=(\nabla^v)^*\nabla^v$  which then restrict to spherical harmonics on $S^*M$ (see \cite{Sh}) which we denote by $\Omega_m$.

\subsection{Integral curves of $X$ pointing towards infinity}
Recall that the rescaled vector field $\bbar{X}$ was defined as $\rho^{-2}X$ where $X$ is the geodesic vector field on $S^*M$ given by the expression (near $\rho=0$)
\eqref{formluaX}. 
We first prove a lemma about the flow $\varphi_t$ of $X$, which in turn is a rescaling of the flow of $\bbar{X}$.  In what follows, we let
\[W_\epsilon^\pm := \{ z = (\rho,y,\bbar{\xi}_0, \eta) \in \bbar{S^*M}; \rho \leq \epsilon, \pm \bbar{\xi}_0 \leq 0\}.\]

\begin{lemma}
\label{x asymptotic}
For $\epsilon>0$ small enough, there is $C>0$ such that for all $z \in W_\epsilon^+$ and $t\geq 0$
\[ \frac{\rho(z)}{1+ \rho(z) t}\leq \rho(\varphi_t(z)) \leq \frac{C\rho(z)}{1+ \rho(z) t},\ \  
|\eta(z)|_h e^{-C\rho(z)}\leq|\eta(\varphi_t(z))|_h \leq |\eta(z)|_h e^{C\rho(z)}.\]
Furthermore, 
\[0\leq1 - \bbar\xi_0(\varphi_t(z))^2 \leq  C\big(1 - \bbar\xi_0(z)^2\big)\Big(\frac{1}{1+ \rho(z) t}\Big)^{2} e^{C\rho(z)}.\]
The same holds for the reverse time flow $\varphi_{-t}(z)$ for $z\in W^-_{\epsilon}$.
\end{lemma}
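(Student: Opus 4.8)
The plan is to work with the rescaled flow and derive differential inequalities for the three quantities $\rho$, $|\eta|_h^2$, and $1-\bbar\xi_0^2$ along integral curves of $\bbar{X}=\rho^{-2}X$, then integrate them. The starting point is the observation that, since $\dot{t} \mapsto \varphi_t$ is the flow of $X$ and $X=\rho^2\bbar{X}$, along a trajectory we have $\frac{d}{dt}F(\varphi_t(z)) = (XF)(\varphi_t(z)) = \rho^2 (\bbar{X}F)(\varphi_t(z))$ for any smooth $F$. From the explicit formula \eqref{formluaX} we read off
\[
X\rho = \rho^2\,\bbar\xi_0\,\rho = \rho^3\bbar\xi_0, \qquad
\tfrac{d}{dt}\rho(\varphi_t(z)) = \rho(\varphi_t(z))^3\,\bbar\xi_0(\varphi_t(z)).
\]
On $W_\epsilon^+$ we have $\bbar\xi_0\le 0$, so $\rho$ is nonincreasing; moreover $W^+_\epsilon$ is forward-invariant because $\rho$ decreases and $\bbar\xi_0$ cannot cross $0$ (at $\bbar\xi_0=0$ one computes $X\bbar\xi_0 = -\rho^3|\eta|_{h_\rho}^2 + O(\rho^4)\le 0$ for $\epsilon$ small, since $\bbar\xi_0=0$ forces $\rho^2|\eta|_{h_\rho}^2=1$, so $|\eta|_{h_\rho}$ is bounded below). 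On this set $\bbar\xi_0 = -\sqrt{1-\rho^2|\eta|_{h_\rho}^2}$, which is $-1+O(\rho^2)$. Thus $\frac{d}{dt}\rho = -\rho^3(1+O(\rho^2)) = -\rho^3(1+O(\epsilon^2))$, and comparing with the ODE $\dot u = -u^3$ (whose solution through $u_0$ at $t=0$ is $u(t) = u_0/\sqrt{1+2u_0^2 t}$) — or more simply with $\dot u = -c u^3$ for suitable constants $c$ bracketing $1$ — gives, after integrating $\frac{d}{dt}(\rho^{-2}) = 2\rho^{-3}\cdot(\text{sign and size as above})$, bounds of the exact shape $\rho(z)/(1+\rho(z)t) \lesssim \rho(\varphi_t(z)) \lesssim \rho(z)/(1+\rho(z)t)$. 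The cleanest bookkeeping is: $\frac{d}{dt}\rho^{-1} = -\rho^{-2}\frac{d}{dt}\rho = \rho(1+O(\epsilon^2)) \in [\tfrac12\rho,\,2\rho]$ say, but we must re-express the RHS in terms of $\rho^{-1}$ — it is cleaner still to note $\frac{d}{dt}\rho^{-1}$ lies between constant multiples of $\rho$, hence between $0$ and bounded, and then bootstrap; I will present it by first getting the upper bound $\rho(\varphi_t(z))\le \rho(z)$, then using $\frac{d}{dt}\rho^{-1}\ge \tfrac12\rho(\varphi_t(z))^{-1}\cdot\rho(\varphi_t(z))^2 \ge \ldots$ — the standard Grönwall-type comparison for the Riccati-type scalar equation. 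This yields the first displayed estimate.

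Next, for $|\eta|_h^2=|\eta|_{h_\rho}^2$: from \eqref{formluaX}, $X|\eta|_{h_\rho}^2 = \rho^2\sum_{i,j}h^{ij}\eta_i\partial_{y_j}|\eta|^2_{h_\rho} - \tfrac12\rho^2\sum_j \partial_{y_j}(|\eta|^2_{h_\rho})\partial_{\eta_j}|\eta|^2_{h_\rho} + \rho^2\bbar\xi_0\rho\,\partial_\rho|\eta|^2_{h_\rho}$; the first two terms cancel (this is just $Y|\eta|^2_{h_0}=0$, conservation of the boundary kinetic energy, up to the $\rho$-dependence of $h_\rho$, which contributes $O(\rho)\cdot|\eta|^2$), and the remaining term is $O(\rho^3)|\eta|^2_{h_\rho}$. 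Hence $\big|\frac{d}{dt}\log|\eta(\varphi_t(z))|^2_h\big| \le C\rho(\varphi_t(z))^3 / \rho(\varphi_t(z))^2 \cdot \ldots$ — more precisely $\le C\rho(\varphi_t(z))$ after dividing by $|\eta|^2$ is not quite it; rather $\big|\frac{d}{dt}|\eta|^2_h\big|\le C\rho^3|\eta|^2_h$, wait — it is $X$ not $\bbar X$, so $\frac{d}{dt}|\eta|^2_h = O(\rho^3)|\eta|^2_h$ where $\rho=\rho(\varphi_t(z))$. Then $\int_0^\infty \rho(\varphi_t(z))^3\,dt \le \int_0^\infty \frac{C\rho(z)^3}{(1+\rho(z)t)^3}dt = \frac{C}{2}\rho(z)^2 \le C'\rho(z)$ for $\rho\le\epsilon$; actually one gets the better bound $O(\rho(z)^2)$, but $O(\rho(z))$ as stated suffices, and integrating $\frac{d}{dt}\log|\eta|^2_h$ gives $|\eta(z)|_h e^{-C\rho(z)}\le |\eta(\varphi_t(z))|_h\le |\eta(z)|_h e^{C\rho(z)}$. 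Finally, for $1-\bbar\xi_0^2 = \rho^2|\eta|^2_{h_\rho}$: this is just the product of the bounds already obtained, since $1-\bbar\xi_0^2(\varphi_t(z)) = \rho(\varphi_t(z))^2|\eta(\varphi_t(z))|^2_{h_{\rho(\varphi_t(z))}} \le \big(\tfrac{C\rho(z)}{1+\rho(z)t}\big)^2 \cdot |\eta(z)|^2_h e^{C\rho(z)} \le C\rho(z)^2|\eta(z)|^2_h\,(1+\rho(z)t)^{-2}e^{C\rho(z)}$; and $\rho(z)^2|\eta(z)|^2_h$ is comparable to $1-\bbar\xi_0(z)^2$ up to the smooth, bounded, bounded-below conversion factor between $h_\rho$ and $h_0$ near $\rho=0$. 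The nonnegativity $0\le 1-\bbar\xi_0(\varphi_t(z))^2$ is automatic since $\bbar\xi_0^2\le 1$ on $S^*M$.

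The reverse-time statement on $W^-_\epsilon$ follows by the time-reversal symmetry of the geodesic flow: $z\mapsto \varphi_{-t}$ on $W^-_\epsilon$ is conjugated to $\varphi_t$ on $W^+_\epsilon$ by the involution $(\rho,y,\bbar\xi_0,\eta)\mapsto(\rho,y,-\bbar\xi_0,-\eta)$, which preserves $\rho$, $|\eta|_h$, and $1-\bbar\xi_0^2$, so the identical estimates hold. The main obstacle here is not conceptual but organizational: keeping the Riccati comparison for $\rho$ clean enough to extract constants of the precise form $\rho(z)/(1+\rho(z)t)$ on both sides, and carefully controlling the $O(\rho^2)$ and $O(\rho^3)$ error terms uniformly over the (non-compact in the $\eta$ direction, but this is harmless since $\rho^2|\eta|^2_{h_\rho}\le 1$ is bounded) region $W^\pm_\epsilon$ — in particular verifying that the coefficients in \eqref{formluaX}, divided by the appropriate power of $\rho$, are smooth and bounded down to $\rho=0$, which is exactly Lemma \ref{factorX}.
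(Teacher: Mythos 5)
There is a genuine gap, and it sits exactly in the estimate for $\rho(\varphi_t(z))$, on which everything else in your argument depends. First, a computational error: from \eqref{formluaX} one has $X\rho=\rho^2\bbar\xi_0$, not $\rho^3\bbar\xi_0$. The correct model equation is the Riccati-type equation $\dot u=-u^2$, whose solutions decay like $u_0/(1+u_0t)$; with your equation $\dot u=-u^3$ the decay would be $u_0/\sqrt{1+2u_0^2t}$, which is incompatible with the bound you are trying to prove, so the comparison as written cannot close. Second, and more seriously, your key assertion that on $W_\epsilon^+$ one has $\bbar\xi_0=-\sqrt{1-\rho^2|\eta|^2_{h_\rho}}=-1+\mc{O}(\rho^2)$ is false: the constraint on $S^*M$ is $\bbar\xi_0^2+\rho^2|\eta|^2_{h_\rho}=1$ with $|\eta|_{h_\rho}$ allowed to be as large as $1/\rho$, and $W_\epsilon^+$ contains the turning points $\bbar\xi_0=0$, where $\rho|\eta|_{h_\rho}=1$. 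Hence you do not have $\dot\rho\leq -c\,\rho^2$ with a uniform $c>0$ at the start of the trajectory, and the upper bound $\rho(t)\leq C\rho_0/(1+\rho_0 t)$ (the constant $C$ is there precisely because of these near-turning-point trajectories) does not follow from your comparison. Only the lower bound survives, since $\partial_t(1/\rho)=-\bbar\xi_0\leq 1$ always.

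The missing ingredient is a quantitative statement that $\bbar\xi_0(t)\to -1$. The paper first gets the lower bound $\rho(t)\geq\rho_0/(1+\rho_0t)$, then uses $\rho^2|\eta|^2+\tfrac{\rho^3}{2}\partial_\rho h(\eta,\eta)\geq\tfrac12(1-\bbar\xi_0^2)$ to obtain $\dot{\bbar\xi}_0\leq-\tfrac12\tfrac{\rho_0}{1+\rho_0t}(1+\bbar\xi_0)$, applies Gr\"onwall to get $1+\bbar\xi_0(t)\leq(1+\bbar\xi_0(0))(1+\rho_0t)^{-1/2}$, and only then integrates $\partial_t(1/\rho)=-\bbar\xi_0$ to deduce the upper bound on $\rho(t)$, splitting the cases $\rho_0t\leq c_0$ and $\rho_0t\geq c_0$. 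Without this step you also do not obtain the uniform integrability $\int_0^\infty\rho(t)^2\,dt\leq C\rho_0$, which is what your Gr\"onwall argument for $|\eta|_h$ and the final bound on $1-\bbar\xi_0^2$ rely on (note too that the error in $\partial_t|\eta|^2_{h_\rho}$ is $\mc{O}(\rho^2)|\eta|^2$, coming from $\dot\rho\,\partial_\rho h(\eta,\eta)$, not $\mc{O}(\rho^3)|\eta|^2$). The remaining parts of your plan — the exact cancellation of the tangential terms, forward invariance of $W_\epsilon^+$, deducing the $1-\bbar\xi_0^2$ bound by combining the first two estimates, and time reversal for $W_\epsilon^-$ — are fine and coincide with what the paper does.
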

\begin{proof}
We write $\varphi_t(z)=(\rho(t),y(t),\bbar{\xi}_0(t),\eta(t))$. From the form of $X$ in \eqref{formluaX}, we get the flow equations 
\[ \dot \rho = \rho^2\bar\xi_0,\ \ \dot{\overline\xi_0} =  - \rho^3 (|\eta|^2 +\frac{\rho}{2}\pl_\rho h(\eta,\eta)),\  \dot{y}_j(t)=\rho^2\sum_{i,j}h^{ij}\eta_i, \  \dot{\eta}_i=-\frac{1}{2}\rho^2\pl_{y_i} h(\eta,\eta)\]
We first note  that $\pl_t (\frac{1}{\rho(t)}) \leq -\bbar\xi_0 \leq 1$, which gives that
\begin{equation}
\label{x lower bound}
\rho(t) \geq \frac{\rho(0)}{1+\rho(0) t}.
\end{equation}
Furthermore for $\rho(0)\leq \epsilon$ small enough and $\bbar\xi_0(0)\leq 0$, one has that $\dot \rho \leq 0$ and $\bbar\xi_0(t) \leq 0$ for all $t>0$.
We now establish the upper bound on $|\eta(t)|$. We also get from the flow equation
that $\pl_t |\eta|^2=2\sum_{i,j}h^{ij}\dot{\eta}_i\eta_j+\sum_i\dot{y}_i\pl_{y_i}h(\eta,\eta)+\dot{\rho}\pl_\rho h(\eta,\eta)=\dot{\rho}\pl_\rho h(\eta,\eta)$ so 
\[ -C\rho^2 |\eta|^2\leq \pl_t |\eta|^2 \leq C \rho^2 |\eta|^2.\]
By Gr\"onwall's inequality one gets 
\begin{eqnarray} 
\label{prelim eta estimate}
 |\eta(0)| e^{-C\int_0^\infty \rho^2(s) ds}\leq |\eta(t)| \leq |\eta(0)| e^{C\int_0^\infty \rho^2(s) ds}.
\end{eqnarray}
We see here that it is natural to consider an estimate for the $L^2$ norm of $\rho(t)$. To this end we first observe that since $\rho(t)$ is decreasing and $\rho(0) \leq \epsilon$, thus for sufficiently small $\epsilon>0$ 
\[\rho(t)^2 |\eta(t)|^2 + \rho(t)^3 \pl_\rho h(\eta(t), \eta(t)) \geq\frac{\rho(t)^2}{2} |\eta(t)|^2 = \frac{1}{2}(1-|\bbar\xi_0(t)|^2).\] 
Combining this with \eqref{formluaX} and \eqref{x lower bound}, we have
\begin{eqnarray}
\label{xi derivative estimate}
\dot{\overline\xi_0} \leq  \frac{-1}{2}\frac{\rho(0)}{1+ \rho(0)t} (1-|\bbar\xi_0(t)|^2) &=&   \frac{-1}{2}\frac{\rho(0)}{1+ \rho(0)t}  (1+ \bar\xi_0)(1-\bar\xi_0)\\\nonumber &\leq&\frac{-1}{2}\frac{\rho(0)}{1+ \rho(0)t}  (1+ \bbar\xi_0) .\end{eqnarray}
The last inequality uses the fact that $\bbar\xi_0(t) \leq 0$ for all $t>0$. Applying Gr\"onwall again we see that $1+ \bar\xi_0(t) \leq \frac{(1+ \bar\xi_0(0))}{\sqrt{1+ \rho(0)t}}$ or 
\begin{eqnarray}
\label{xi bounds}
-1\leq \bar\xi_0(t) \leq -1 + \frac{1+ \bbar\xi_0(0)}{\sqrt{1 + \rho(0)t}}.
\end{eqnarray}
Inserting this into $\pl_t(\frac{1}{\rho}) = - \bbar\xi_0$ we see that 
\[\frac{1}{\rho(t)}\geq \frac{1}{\rho(0)} + t - 2 \frac{1+ \bbar\xi_0(0)}{\rho(0)} \sqrt{1+ \rho(0)t} + 2\frac{1+ \bar\xi_0(0)}{\rho(0)}\]
or
\begin{eqnarray}
\label{x bound}
 \rho(t) \leq \frac{\rho(0)}{1 + \rho(0)t (1-2\frac{(1+\bbar\xi_0(0))(\sqrt{1+ \rho(0)t}-1)}{\rho(0)t})}.\end{eqnarray}
This gives the integrability estimate
\[\int_0^\infty \rho(t)^2 dt \leq  \rho(0) \int_0^\infty \frac{1}{\Big(1 + s (1-2\frac{(1+\bbar\xi_0(0))(\sqrt{1+ s}-1)}{s})\Big)^2}ds.\]
Using that $\bbar\xi_0(0) \leq 0$ and $0\leq 1- 2\frac{\sqrt{1+s} -1}{s} \leq 1$ is strictly increasing we see that $\int_0^\infty \rho(t)^2 dt \leq C\rho(0)$ where $C$ is independent of the choice of 
$z\in W^+_\epsilon$. Going back to \eqref{prelim eta estimate} we deduce that
\[|\eta(t)| \leq |\eta(0)| e^{C\rho(0)}\ \ \forall t>0.\]
Furthermore if we let $c_0>0$ be the number such that $2\frac{(\sqrt{1+ s}-1)}{s} < 1/2$ for all $s> c_0$ we have from \eqref{x bound} that if $\rho(0)t \geq c_0$ then $\rho(t) \leq \frac{2\rho(0)}{1+ \rho(0)t}$. If $\rho(0)t \leq c_0$ then 
$$(1+c_0) \frac{\rho(0)}{1+ \rho(0)t} \geq \rho(0) \geq \rho(t)$$
since $\dot \rho(t) \leq 0$. This estimate along with \eqref{x lower bound} gives the estimates for $\rho(t)$. The estimate for $1- \bbar\xi_0(t)^2 = \rho(t)^2 |\eta(t)|^2$ follows immediately. \end{proof}

\subsection{Rescaled Dynamic}

If $\varphi_t$ is the flow generated by $X$, one can see that for $z_0 \in S^*M$,
$\bbar{\varphi}_\tau (z_0) := \varphi_{t(\tau)} (z_0)$ is the flow for the rescaled vector field $\bbar{X}$ by setting 
\begin{equation}
\label{tau change of var}
\tau(t) := \int_0^t \rho^2(\varphi_s(z_0))ds,\quad \quad  t(\tau) := \int_0^\tau \rho^{-2}(\bbar{\varphi}_s(z_0)) ds.
\end{equation} 
In what follows, for a fixed $z_0\in S^*M$ we will denote by $z(\tau) := \bbar{\varphi}_\tau(z_0)$ whereas $z(t) := \varphi_t(z_0)$ will denote the unscaled dynamic with $\tau$ and $t$ related by the change of variable given by \eqref{tau change of var}.
We define the rescaled arrival times for $z\in \bbar{S^*M}$
\begin{equation}\label{taupm}
\begin{gathered} 
\tau^+(z):= \sup\{ \tau\geq 0; \forall s<\tau, \bbar{\varphi}_s(z)\notin \pl_+S^*M\}\in [0,\infty],\\ 
\tau^-(z):=\inf\{ \tau\leq 0; \forall s>\tau, \bbar{\varphi}_s(z)\notin \pl_-S^*M\}\in [-\infty,0].
\end{gathered}\end{equation}
We notice that this quantity depends on the choice of $\rho$.

\subsection{Geodesics arbitrarily close to infinity}
We first discuss the exact conic case near infinity, i.e. when  $h=h_0$ is constant in $\rho$. 
In this case the vector field  $\bbar{X}$ is given by the formula \eqref{bbar X}. 
We then obtain
\begin{eqnarray}
\label{ddot rho(tau)}
\ddot \rho (\tau) = - \rho |\eta|_{h_0}^2 ,
\end{eqnarray}
and a direct computation shows that $|\eta(\tau)|_{h_0}=|\eta_0|_{h_0}$ is constant with $\dot \rho(0) = \xin(0) =  1,$ so we obtain
\[
\rho(\tau) = \frac{1}{|\eta_0|_{h_0}} \sin(\tau |\eta_0|_{h_0}).
\]
We see that for large $|\eta_0|$, the geodesic stays close to infinity.
We would like to assert that for large initial speed $|\eta_0|$ the rescaled dynamics aproximate the one given by a warped product metric. To this end we first prove a lemma about the rescaled dynamic for short time.

\begin{lemma}
\label{rho is tau}
Let $\epsilon >0$ be sufficiently small and suppose that $\bbar{\varphi}_\tau(z_0)$ is a flow line contained in $W^+_\epsilon \cup W^-_\epsilon$ for $z_0\in \pl_-S^*M$. There exist positive constants $c$ and $C$ such that $c\tau \leq \rho(\tau) \leq C \tau$ for all $\tau$ such that $\bbar{\varphi}_\tau(z_0) \in W^-_\epsilon$ and $c(\tau^+(z_0) - \tau) \leq \rho(\tau) \leq  C(\tau^+(z_0) - \tau)$ for all $\tau$ such that $\bbar{\varphi}_\tau(z_0) \in W^+_\epsilon$.
\end{lemma}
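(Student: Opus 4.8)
The plan is to follow the flow line through an angle variable on the unit circle $\{\bbar\xi_0^2+\rho^2|\eta|^2_{h_\rho}=1\}$ and reduce the estimates to elementary properties of $\sin$. Write $\bbar\varphi_\tau(z_0)=(\rho(\tau),y(\tau),\bbar\xi_0(\tau),\eta(\tau))$, and from $\bbar X=\rho^{-2}X$ with $X$ as in \eqref{formluaX} record the rescaled flow equations
\[\dot\rho=\bbar\xi_0,\qquad \dot{\bbar\xi}_0=-\rho\,b,\qquad \dot y_j=\textstyle\sum_i h_\rho^{ij}\eta_i,\qquad \dot\eta_i=-\tfrac12\partial_{y_i}|\eta|^2_{h_\rho},\]
where $b:=|\eta|^2_{h_\rho}+\tfrac12\rho\,\partial_\rho|\eta|^2_{h_\rho}=|\eta|^2_{h_\rho}(1+O(\rho))$. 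Since $z_0\in\partial_-S^*M$ one has $\rho(0)=0$, $\bbar\xi_0(0)=1$, and $\bbar\xi_0^2+\rho^2|\eta|^2_{h_\rho}=1$ along the flow by \eqref{bbarS^*M}; the degenerate case $\eta(z_0)=0$ forces $\rho(\tau)=\tau$ and an empty $W^+_\epsilon$-part, so assume $\eta(z_0)\neq0$. Put $a(\tau):=|\eta(\tau)|_{h_{\rho(\tau)}}$ and $|\eta_0|:=a(0)$; a short computation in which the $\dot y$- and $\dot\eta$-terms cancel gives $\partial_\tau(a^2)=\bbar\xi_0\,\partial_\rho|\eta|^2_{h_\rho}$.

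The first — and main — step is the uniform two-sided bound $e^{-C\epsilon}|\eta_0|\le a(\tau)\le e^{C\epsilon}|\eta_0|$ along the whole flow line (constants depending only on $\epsilon$). For $\tau$ with $\bbar\varphi_\tau(z_0)\in W^-_\epsilon$ this follows from the reverse-time form of Lemma~\ref{x asymptotic} applied at $z=\bbar\varphi_\tau(z_0)$: it gives $|\eta(z)|e^{-C\rho(z)}\le|\eta(\varphi_{-t}(z))|\le|\eta(z)|e^{C\rho(z)}$ for all $t\ge0$; since $\varphi_{-t}(z)\to z_0$ in $\bbar{S^*M}$ as $t\to\infty$ (the unscaled flow reaches $\partial_-S^*M$ only at infinite time) and $\rho(z)\le\epsilon$, passing to the limit and rearranging gives the bound. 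For the $W^+_\epsilon$-part one applies the forward form of Lemma~\ref{x asymptotic} at the point $z_1$ where $\bbar\xi_0$ first vanishes — it lies in $W^-_\epsilon\cap W^+_\epsilon$, so the bound already holds there — and propagates the comparison forward. (Alternatively, for $g$ asymptotically conic to order $m\ge2$ one may use $|\partial_\tau\ln(a^2)|\le C\rho^{m-1}$ and finiteness of $\int\rho\,d\tau$; invoking the existing lemma is cleaner.) This step is the delicate one: the naive Grönwall estimate is circular, since controlling $\tau^+(z_0)$ already requires a bound on $a$, which is what forces the limiting argument and some care with the boundary at $\partial_\pm S^*M$.

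Next, define $\theta(\tau)\in[0,\pi]$ by $\bbar\xi_0=\cos\theta$, $\rho a=\sin\theta$; this is legitimate since $(\bbar\xi_0,\rho a)$ lies on the unit circle in the closed upper half-plane. Differentiating $\bbar\xi_0+i\rho a$ and using the flow equations together with $\partial_\tau(a^2)=\bbar\xi_0\,\partial_\rho|\eta|^2_{h_\rho}$ gives $\tfrac{d}{d\tau}(\bbar\xi_0+i\rho a)=i\tfrac{b}{a}(\bbar\xi_0+i\rho a)$, hence
\[\theta(\tau)=\int_0^\tau\frac{b(\sigma)}{a(\sigma)}\,d\sigma,\]
which is smooth and strictly increasing because $b=a^2(1+O(\rho))>0$ for $\epsilon$ small. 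Thus $\theta(0)=0$, and $\theta$ reaches $\pi$ precisely when $(\bbar\xi_0,\rho a)=(-1,0)$, i.e. when the flow line meets $\partial_+S^*M$, so $\theta(\tau^+(z_0))=\pi$ and in particular $\tau^+(z_0)<\infty$. Since $\tfrac{b}{a}=a(1+O(\rho))=a(1+O(\epsilon))$ and $a\asymp|\eta_0|$, integrating yields $\theta(\tau)\asymp|\eta_0|\tau$ and, by the same argument run from the other endpoint, $\pi-\theta(\tau)=\int_\tau^{\tau^+(z_0)}\tfrac{b}{a}\,d\sigma\asymp|\eta_0|(\tau^+(z_0)-\tau)$.

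Finally, combine. On the $W^-_\epsilon$-part $\bbar\xi_0\ge0$, i.e. $\theta\in[0,\tfrac{\pi}{2}]$, where $\tfrac{2}{\pi}\theta\le\sin\theta\le\theta$, so
\[\rho(\tau)=\frac{\sin\theta(\tau)}{a(\tau)}\asymp\frac{\theta(\tau)}{a(\tau)}\asymp\frac{|\eta_0|\tau}{|\eta_0|}=\tau,\]
which is $c\tau\le\rho(\tau)\le C\tau$. On the $W^+_\epsilon$-part $\theta\in[\tfrac{\pi}{2},\pi]$, so $\sin\theta=\sin(\pi-\theta)$ with $\pi-\theta\in[0,\tfrac{\pi}{2}]$, and the same chain with $\pi-\theta$ in place of $\theta$ gives $\rho(\tau)\asymp(\pi-\theta(\tau))/a(\tau)\asymp\tau^+(z_0)-\tau$, i.e. $c(\tau^+(z_0)-\tau)\le\rho(\tau)\le C(\tau^+(z_0)-\tau)$; all implied constants depend only on $\epsilon$, as required.
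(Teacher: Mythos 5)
Your proposal is correct, but it takes a genuinely different route from the paper's. The paper's proof is essentially two lines: for a point $z_1=\bbar{\varphi}_{\tau_0}(z_0)$ on the trajectory, the change of variable \eqref{tau change of var} gives $\tau_0=\int_0^\infty \rho^2(\varphi_{-t}(z_1))\,dt$, and the two-sided bound $\rho(\varphi_{-t}(z_1))\asymp \rho(z_1)/(1+\rho(z_1)t)$ from Lemma \ref{x asymptotic} makes this integral comparable to $\rho(z_1)$ from above and below, which is exactly the claim; the outgoing part is handled symmetrically with the forward flow. You instead use only the $|\eta|$-comparison part of Lemma \ref{x asymptotic} (applied backwards on the $W^-_\epsilon$ piece and forwards from the turning point, which is the right way to avoid the circularity you flag), and then run a polar-coordinate argument: $\bbar{\xi}_0=\cos\theta$, $\rho a=\sin\theta$, $\dot\theta=b/a\asymp|\eta_0|$, so $\theta\asymp|\eta_0|\tau$, $\pi-\theta\asymp|\eta_0|(\tau^+(z_0)-\tau)$, and $\rho=\sin\theta/a$ converts this into the stated bounds via $\sin\theta\asymp\min(\theta,\pi-\theta)$. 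This is in effect the mechanism of Lemmas \ref{limiting dynamic} and \ref{asymp of geod}, but carried out uniformly for all $|\eta_0|>0$ rather than only in the large-$|\eta_0|$ regime, and your computations ($\pl_\tau(a^2)=\bbar{\xi}_0\,\pl_\rho|\eta|^2_{h_\rho}$, the rotation equation for $\bbar{\xi}_0+i\rho a$, uniformity of the constants in $|\eta_0|$) are all correct. What each approach buys: the paper's argument is shorter and needs only the $\rho$-asymptotics of Lemma \ref{x asymptotic}; yours is longer but more self-contained dynamically and yields extra structural information along the way (strict monotonicity of the angle, finiteness of $\tau^+(z_0)$, and the explicit comparison $\theta\asymp|\eta_0|\tau$), which the paper only establishes later and only for $|\eta_0|$ large.
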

\begin{proof}
Let $\tau_0>0$ be such that $z_1:= \bbar{\varphi}_{\tau_0}(z_0) \in W^-_\epsilon$. Consider the unscaled backward flow $\varphi_{-t}(z_1)$ along the same trajectory. By \eqref{tau change of var} one has that $\tau_0 = \int_0^\infty \rho^2(\varphi_{-t}(z_1)) dt$, which by Lemma \ref{x asymptotic} can be estimated above and below by $c\rho(z_1)\leq \tau_0 \leq C \rho(z_1)$ and this completes the proof for the $W^+_\epsilon$ case. The $W^{-}_\epsilon$ case can be dealt with the same way.
\end{proof}

\begin{lemma}
\label{limiting dynamic}
There is $C>0$ such that for each $z_0=(y_0,\eta_0)\in\pl_-S^*M$, if $|\eta_0|$ is sufficiently large we have $\rho(\tau) = \tau + \mc{O}(\tau^3)$ for all $\tau \in [0,\tau^+(y_0,\eta_0)]$ where $|\eta_0| \tau^{+}(y_0,\eta_0) = \pi + \mc{O}(|\eta_0|^{-1})$, and
$$\sup\limits_{\tau \in [0,\tau^+(y_0,\eta_0)]} \rho(\tau) \leq \frac{C}{|\eta_0|},\quad  \sup\limits_{\tau \in [0,\tau^+(y_0,\eta_0)]}\Big|\frac{|\eta(\tau)|}{|\eta_0|} - 1\Big| \leq \frac{C}{|\eta_0|}.$$
\end{lemma}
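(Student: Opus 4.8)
The plan is to analyze directly the ODE satisfied by the rescaled flow $\bbar\varphi_\tau$ in the coordinates $(\rho,y,\bbar\xi_0,\eta)$ near $\pl\bbar{S^*M}$, and to compare it with the exact conic dynamics $\rho(\tau)=|\eta_0|^{-1}\sin(|\eta_0|\tau)$ computed above. Writing $\bbar\varphi_\tau(z_0)=(\rho(\tau),y(\tau),\bbar\xi_0(\tau),\eta(\tau))$, the formula \eqref{formluaX} and $\bbar X=\rho^{-2}X$ give $\dot\rho=\bbar\xi_0$, $\dot{\bbar\xi}_0=-\rho\big(|\eta|^2_{h_\rho}+\tfrac{\rho}{2}\pl_\rho|\eta|^2_{h_\rho}\big)$, $\dot y_j=\sum_i h^{ij}\eta_i$, $\dot\eta_i=-\tfrac{1}{2}\pl_{y_i}|\eta|^2_{h_\rho}$, with initial data $\rho(0)=0$, $\bbar\xi_0(0)=1$ since $z_0=(y_0,\eta_0)\in\pl_-S^*M$. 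Two elementary observations drive the argument. First, as $\bbar\varphi_\tau$ is a flow on $\bbar{S^*M}$, whose defining equation in these coordinates is $\bbar\xi_0^2+\rho^2|\eta|^2_{h_\rho}=1$, this identity is preserved along the orbit; in particular $\rho(\tau)^2E(\tau)\le1$, where $E(\tau):=|\eta(\tau)|^2_{h_{\rho(\tau)}}$ and $E(0)=|\eta_0|^2$ (taking $|\eta_0|:=|\eta_0|_{h_0}$). Second, a short computation shows the $y$- and $\eta$-contributions to $\tfrac{d}{d\tau}E$ cancel, leaving $\dot E=\dot\rho\,\pl_\rho|\eta|^2_{h_\rho}$; since $|\pl_\rho|\eta|^2_{h_\rho}|\le C_0 E$ for $\rho\le\eps$, we get $|\tfrac{d}{d\tau}\log E|\le C_0|\dot\rho|$, hence $|\log(E(\tau)/E(0))|\le C_0 V(\tau)$ with $V(\tau)$ the total variation of $\rho$ on $[0,\tau]$. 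In particular $E>0$ throughout.

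First I would prove confinement: for $|\eta_0|$ large the whole forward orbit of $z_0$ stays in $W^-_\eps\cup W^+_\eps$ and exits through $\pl_+S^*M$ at a finite time $\tau^+$. Since $\dot{\bbar\xi}_0<0$ wherever $\rho>0$, $\bbar\xi_0$ is strictly decreasing on $(0,\tau^+)$, so $\rho$ is unimodal: it rises to a single maximum $\rho_{\max}$, attained where $\bbar\xi_0=0$ (at time $\tau_{\max}$), then falls back. On the rising phase $V(\tau)=\rho(\tau)$, so $E(\tau)\ge E(0)e^{-C_0\rho(\tau)}$; together with $\rho^2E\le1$ this rules out $\rho$ ever reaching $\eps$, since otherwise, at the first such time $\tau_1$ (necessarily on the rising phase), $\eps^2E(0)e^{-C_0\eps}\le\eps^2E(\tau_1)\le1$, impossible once $E(0)=|\eta_0|^2$ is large. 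Thus the orbit is confined; as $\bbar\xi_0$ is bounded below by $-1$, strictly decreasing while $\rho>0$, it must cross $0$ and then drive $\rho$ back to $0$ in finite time, so $\tau^+<\infty$ (this is also immediate from Lemma \ref{rho is tau}). Evaluating the conserved identity at $\tau_{\max}$ gives $\rho_{\max}^2E(\tau_{\max})=1$, and $E(\tau_{\max})\ge\tfrac12E(0)$ (as $\rho_{\max}\le\eps$), whence $\rho_{\max}\le C|\eta_0|^{-1}$. Consequently $V(\tau^+)=2\rho_{\max}=\mc{O}(|\eta_0|^{-1})$, so $|E(\tau)/E(0)-1|\le C|\eta_0|^{-1}$ on $[0,\tau^+]$; this already gives $\sup_\tau\rho(\tau)\le C|\eta_0|^{-1}$ and $\sup_\tau\big||\eta(\tau)|_{h_\rho}/|\eta_0|-1\big|\le C|\eta_0|^{-1}$.

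Next I would extract the sharp asymptotics. On the rising phase $\dot\rho=\sqrt{1-\rho^2E}>0$, so $\tau=\int_0^{\rho(\tau)}(1-(\rho')^2E)^{-1/2}\,d\rho'$ with $E$ read along the orbit; since $E=E(0)(1+\mc{O}(|\eta_0|^{-1}))$ there, the substitution $\rho'=|\eta_0|^{-1}\sin\theta$ yields $\rho(\tau)=|\eta_0|^{-1}\sin(|\eta_0|\tau)+\mc{O}(|\eta_0|^{-2})$, the falling phase being its reflection about $\tau_{\max}$; imposing $\rho=0$ at both ends then forces $|\eta_0|\tau^+=\pi+\mc{O}(|\eta_0|^{-1})$. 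Finally, from $\rho(0)=0$, $\dot\rho(0)=1$, $|\ddot\rho(\tau)|\le CE(\tau)\rho(\tau)\le C|\eta_0|^2\rho(\tau)$, and $\rho(\tau)\le\tau$ on all of $[0,\tau^+]$ (because $\dot\rho\le1$ on the rising phase, while on the falling phase $\tau\ge\tau_{\max}\ge\rho_{\max}\ge\rho(\tau)$), Taylor's formula gives $|\rho(\tau)-\tau|=\big|\int_0^\tau(\tau-s)\ddot\rho(s)\,ds\big|\le C|\eta_0|^2\tau^3$, which is the asserted $\rho(\tau)=\tau+\mc{O}(\tau^3)$ — the implied constant being of size $|\eta_0|^2$, exactly as in the exact conic case.

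The step I expect to be the main obstacle is the confinement estimate: the $\bbar\xi_0$-equation is dissipative only while $\rho$ is bounded below, yet $\rho$ becomes small near both endpoints of the orbit, so one cannot integrate it naively. What makes the bootstrap close is the exact identity $\bbar\xi_0^2+\rho^2E=1$ together with the fact that $E$ varies only by $\mc{O}(|\eta_0|^{-1})$ relative to $E(0)=|\eta_0|^2$; these pin the turning point at $\rho_{\max}\sim|\eta_0|^{-1}$, force the orbit back to $\pl_+S^*M$, and control $\tau^+$. The remaining pieces — the Gr\"onwall bound for $E$, the separation of variables, and the Taylor estimate — are routine.
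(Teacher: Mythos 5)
Your proposal is correct and reaches all four assertions, but it is organized rather differently from the paper's proof, so a comparison is worth making. The paper does not redo the near-boundary analysis inside this lemma: it quotes Lemma \ref{x asymptotic} (applied backwards on the incoming leg and forwards after the turning point) and Lemma \ref{rho is tau} to get $|\eta(\tau)|\leq C|\eta_0|$, $\rho_{\max}\asymp |\eta_0|^{-1}$ and $\tau^+\asymp|\eta_0|^{-1}$, and then obtains the sharp time asymptotics by introducing the angle $\theta$ with $\bbar\xi_0=\cos\theta$, $\rho|\eta|_h=\sin\theta$, computing $\dot\theta=|\eta|_h+\tfrac{\rho}{2}\pl_\rho h(\eta,\eta/|\eta|)$ and integrating the two-sided bound $\dot\theta=|\eta_0|+\mc{O}(|\eta_0|\tau^+)$ from $\theta(0)=0$ to $\theta(\tau^+)=\pi$. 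You instead work directly with the rescaled ODE, using the conserved constraint $\bbar\xi_0^2+\rho^2E=1$, the variation bound $|\log(E(\tau)/E(0))|\leq C_0V(\tau)$ (the same cancellation $\dot E=\dot\rho\,\pl_\rho|\eta|^2_{h_\rho}$ that powers the Gr\"onwall step in Lemma \ref{x asymptotic}), the turning-point identity $\rho_{\max}^2E(\tau_{\max})=1$, and the quadrature $d\tau=d\rho/\sqrt{1-\rho^2E}$. Your route is more self-contained, and it has the merit of actually proving the $\rho(\tau)=\tau+\mc{O}(\tau^3)$ claim (the paper's proof leaves this implicit; compare \eqref{tau asymptotic for general curve} later), together with the correct reading that the implied constant is of size $|\eta_0|^2$, as the exact cone $\rho(\tau)=|\eta_0|^{-1}\sin(|\eta_0|\tau)$ forces.

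Two steps need a bit more care than your sketch gives them. First, "$\bbar\xi_0$ strictly decreasing" does not by itself yield a finite crossing time or finite $\tau^+$; you need the quantitative version (once $\rho$ is comparable to its supremum, $\dot{\bbar\xi}_0\leq -c\,\rho_{\max}|\eta_0|^2$), or simply the citation of Lemmas \ref{x asymptotic}--\ref{rho is tau} you give in parentheses, which is what the paper does. Second, and more substantively, the sine substitution must be handled carefully near the turning point: replacing $E$ by $E(0)(1+\mc{O}(|\eta_0|^{-1}))$ inside $\sqrt{1-\rho^2E}$ is not a uniform relative approximation there, since the comparison integrand turns at $\rho=|\eta_0|^{-1}$ rather than at $\rho_{\max}$. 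The fix is available from your own estimates: setting $\rho=\rho_{\max}u$ and using that $\log E$ is $C_0$-Lipschitz in $\rho$ along the rising phase, one gets $1-u^2\rho_{\max}^2E(\rho_{\max}u)=(1-u^2)+\mc{O}\big((1-u)/|\eta_0|\big)$, so the perturbation vanishes at $u=1$ at the same rate as $1-u^2$ and the relative error in the integrand is uniformly $\mc{O}(|\eta_0|^{-1})$; the paper's angle variable sidesteps this turning-point singularity entirely, which is the main thing its parametrization buys.
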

\begin{proof} Let $z_0:=(y_0,\eta_0) \in \partial_- S^*M$. Applying Lemma \ref{x asymptotic} backwards one sees that $|\eta(\tau)| \leq C |\eta_0|$ for all $\tau>0$ such that $\bbar{\varphi}_\tau(z_0) \in W^-_\epsilon$. Since $\bbar{\xi}_0^2 + \rho^2 |\eta|_h^2 = 1$ along the trajectory, taking $|\eta_0|<\eps/C$, we see that the first time $\tau_{\max}$ 
so that $\bbar{\varphi}_{\tau}(z_0)\notin W_{\eps}^-$ for $\tau>\tau_{\max}$ needs to satisfy 
$\bbar{\xi}_0(\tau_{\max})=0$ and $\varphi_{\tau_{\max}}(z_0)\in W_\eps^+$. 
By Lemma \ref{x asymptotic} (and its proof), $\bbar{\varphi}_{\tau}(z_0)\in W_\eps^+$ for all $\tau>\tau_{\max}$ and $|\eta(\tau)|\leq C|\eta(\tau_{\max})|\leq C'|\eta_0|$ for some $C,C'$ uniform.
We also see that for $|\eta_0|$ sufficiently large, $c|\eta_0|^{-1}\leq \rho_{\max} \leq C|\eta_0|^{-1}$ where $\rho_{\max}=\rho(\tau_{\max})$ is the maximum value of $\rho(\tau)$ for $\tau \in [0,\tau^+(y_0,\eta_0)]$ and $c,C$ independent of $\eta_0$. 
By Lemma \ref{rho is tau} one has that
\[ c''|\eta_0|^{-1}\leq c'\rho_{\max} \leq \tau_{\max} \leq C'\rho_{\max} \leq C''|\eta_0|^{-1}.\]
for some positive $c',c'',C',C''$ independent of $\eta_0$.
The same argument yields that 
\begin{equation}\label{prelim tau+ bound}
\frac{c}{|\eta_0|} \leq \tau^+(y_0,\eta_0) \leq \frac{C}{|\eta_0|}
\end{equation}
for $|\eta_0|$ sufficiently large.
We introduce polar coordinates on $S^*M$ given by 
\begin{eqnarray}\label{polar coord}
\bbar{\xi}_0 = \cos\theta,\ \rho|\eta|_h = \sin\theta.
\end{eqnarray} 
Differentiating the cosine term we have (using the expression of $\bbar{X}$ from \eqref{formluaX})
\[-\Big( \rho |\eta|_h^2 + \frac{\rho^2}{2} \partial_\rho h(\eta,\eta)\Big)=\frac{d}{d\tau} \bbar{\xi}_0 = -\dot \theta \sin \theta = -\dot\theta \rho|\eta|\] 
which becomes 
\[\dot\theta(\tau) = |\eta|_h + \frac{\rho}{2} \partial_\rho h(\eta,\frac{\eta}{|\eta|}) = |\eta_0| + \int_0^\tau \partial_s |\eta(\bbar{\varphi}_s(z_0))|_h ds + \frac{\rho}{2} \partial_\rho h(\eta,\frac{\eta}{|\eta|}) .\]
Again from \eqref{formluaX} and $|\eta|^2_h=(1-\bbar{\xi}_0^2)\rho^{-2}$, it is direct to check that $\partial_s |\eta(\bbar{\varphi}_s(z_0)| = \frac{1}{2}\bbar{\xi}_0 \partial_\rho h(\eta,\frac{\eta}{|\eta|})$. Inserting this expression into the integral we obtain the double-sided bound on $\dot\theta$:
\[ |\eta_0| - C|\eta_0|\tau^+  \leq \dot\theta \leq |\eta_0| + C |\eta_0|\tau^+\]
where we have simplified the notation $\tau^+ := \tau^+(y_0,\eta_0)$. Going back to \eqref{polar coord} one sees that $z(\tau) \in \partial_- S^*M$ precisely when $\bbar{\xi}_0 = -1$ or $\theta = \pi$. Integrating the two-sided differential inequality and evaluating at $\tau = \tau^+$ we arrive at 
\[ |\eta_0| \tau^+ - C|\eta_0| (\tau^+)^2 \leq \theta(\tau^+) = \pi \leq  |\eta_0| \tau^+ + C|\eta_0| (\tau^+)^2 .\]
Dividing through by $|\eta_0|$ and applying \eqref{prelim tau+ bound} we see that 
\[\tau^+ \in \Bigg[\frac{\pi}{|\eta_0|} - \frac{C}{|\eta_0|^2},  \frac{\pi}{|\eta_0|} + \frac{C}{|\eta_0|^2}\Bigg].\] 
The estimate for $\Big|\frac{|\eta(\tau)|}{|\eta_0|} - 1\Big|$ follows from these estimates and Lemma \ref{x asymptotic}.\end{proof}
We would like to get more detailed description of the asymptotics of the dynamics as $|\eta_0|\to\infty$. To this end, fix $(y_0,\eta_0)\in \partial_-S^*M$ with $|\eta_0|_{h_0} = 1$ and consider the rescaled dynamics $\bbar{\varphi}_{\tau}(z_0):=(\rho(\tau),y(\tau),\bbar{\xi}_0(\tau),\eta(\tau))$ with initial condition $z_0:=(y_0,\epsilon^{-1}\eta_0)\in \partial_-S^*M$. It is convenient to rescale again and introduce variables 
\[\til{\rho} (s) := \epsilon^{-1}\rho (\epsilon s), \,\, \til{\xi}_0(s) := \bbar{\xi}_0(\epsilon s),\,\, 
 \til{\eta}(s) := \epsilon\eta(\epsilon s), \,\,  \til{y} (s) = y(\epsilon s).\]
In coordinates, we get equations 
\begin{equation}
\label{tilde dynamic}
\begin{gathered}
\dot{\til{\rho}} = \til{\xi}_0,  \quad \dot{\til{\xi_0}} = -\til{\rho} (|\til{\eta}|^2 + \frac{\epsilon\til{\rho}}{2} \big(\partial_\rho h_{\epsilon\til{\rho}}\big)(\til{\eta},\til{\eta}) ), \\
\dot{\til{y}}_j = \sum_{k}h^{jk}_{\epsilon\til{\rho}} \til{\eta}_k, 
\quad \dot{\til{\eta}}_j = -\frac{1}{2} \pl_{y_j}
|\til{\eta}|^2_{\eps \til{\rho}}.
\end{gathered}
\end{equation}
Note that using these relations we can derive a convenient representation for $|\til\eta|^2$. Indeed, differentiating the relation $|\til{\eta}|^2 = \frac{1- \til{\xi}_0^2}{\til{\rho}^2}$ we have using \eqref{tilde dynamic} that $\pl_s |\til{\eta}|^2 =   \epsilon \til{\xi}_0 \pl_\rho h(\til{\eta},\til{\eta})$. Therefore,
\begin{equation}
\label{|tilde eta|}
|\til{\eta}(s)|^2= 1 + \epsilon\int_0^s \til{\xi}_0 \pl_\rho h(\tilde\eta,\til{\eta}) dt
\end{equation}
is a way to keep track of the evolution of $|\til{\eta}|^2$ (and the same holds for $|\til{\eta}|$).
This expression also allows us to obtain a convenient representation for $\epsilon^{-1} \tau^+_g(y,\epsilon^{-1}\eta_0)$.
First, we consider the variable $\theta(\cdot): [0,\epsilon^{-1}\tau^+_g(y,\epsilon^{-1}\eta_0)] \to [0,\pi]$ defined by 
\[\til{\xi}_0(s) = \cos \theta(s),\ \quad  \til{\rho}(s) |\til{\eta}(s)|_h = \sin\theta(s).\] 
Observe that $\theta(0) = 0$ and $\theta(\epsilon^{-1}\tau^+_g(y,\epsilon^{-1}\eta_0)) = \pi$. Differentiating the second equation and using \eqref{tilde dynamic} we obtain the integral relation
\[\theta(s) = \int_0^s |\til{\eta}| + \frac{\epsilon \til{\rho}}{2} \pl_\rho h_{\epsilon\til{\rho}} 
(\til{\eta}, \frac{\til{\eta}}{|\til{\eta}|}) dt = s + \epsilon \int_0^s \Big( \int_0^t \frac{\til{\xi}_0}{2} \pl_\rho h_{\epsilon\til{\rho}}(\til{\eta}, \frac{\til{\eta}}{|\til{\eta}|}) du  
+ \frac{\til{\rho}}{2} \pl_\rho h_{\epsilon \til{\rho}}(\til{\eta}, \frac{\til{\eta}}{|\til{\eta}|})\Big)dt.\]
where we used \eqref{|tilde eta|} for the second identity.
Setting $s= \epsilon^{-1} \tau^+_g(y,\epsilon^{-1}\eta_0)$ we obtain
\begin{eqnarray}
\label{epsilon^{-1} tau^+_g(y,epsilon^-1eta_0)}
\quad \pi = \epsilon^{-1} \tau^+_g(y,\epsilon^{-1}\eta_0) + \epsilon \int_0^{ \epsilon^{-1} \tau^+_g(y,\epsilon^{-1}\eta_0)} \Big(\int_0^t \frac{\til{\xi}_0}{2} \pl_\rho h_{\epsilon\til{\rho}}
(\til{\eta}, \frac{\til{\eta}}{|\til{\eta}|}) du  + 
\frac{\til{\rho}}{2} \pl_\rho h_{\epsilon \til{\rho}}(\til{\eta}, \frac{\til{\eta}}{|\til{\eta}|})\Big)ds\quad
\end{eqnarray}
which implies 
\begin{equation}\label{taueps}
\tau^+_g(y,\epsilon^{-1}\eta_0)=\epsilon \pi+\mc{O}(\epsilon^2).
\end{equation}
We then obtain a description of the integral curves as $\epsilon\to 0$:
\begin{lemma}
\label{asymp of geod}
Fix $(y_0,\eta_0)\in \partial_- S^*M$ with $|\eta_0|_{h_0}=1$ and set $\tau_\epsilon = \tau^+_g(y_0, \epsilon^{-1} \eta_0)$. For $\epsilon >0$ sufficiently small, the solutions to \eqref{tilde dynamic} for 
$\epsilon s\in[0, \tau_\epsilon]$ are of the form
\[\til{\rho}(s) = \frac{ \tau_\epsilon}{\epsilon\pi} \sin(\frac{\epsilon\pi s}{\tau_\epsilon}) + 
r_\epsilon(s),\ \ \til{\xi}_0(s) =  \cos(\frac{ \epsilon\pi s}{\tau_\epsilon}) + q_\epsilon(s)\]
where $0=r_\epsilon(0) = \dot r_\epsilon(0) = \ddot r_\epsilon(0) = r_\epsilon(\epsilon^{-1}\tau_\epsilon) = \dot r_\epsilon(\epsilon^{-1}\tau_\epsilon) = \ddot r_\epsilon(\epsilon^{-1}\tau_\epsilon)$ and 
\begin{eqnarray}
\label{r_epsilon < epsilon}
\sup\limits_{s \in (0, \epsilon^{-1} \tau_\epsilon)} {|q_\epsilon(s)|}\leq C\epsilon,\ \ \sup\limits_{s \in (0, \epsilon^{-1} \tau_\epsilon)} \frac{|r_\epsilon(s)|} {\sin^3(\frac{\epsilon\pi s}{\tau_\epsilon})}  \leq C\epsilon.
\end{eqnarray}
\end{lemma}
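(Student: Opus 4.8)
The plan is to recognize $\til\rho$ as solving a small perturbation of a harmonic oscillator whose frequency is chosen so that the exact transit time is reproduced, and then to compare with the explicit unperturbed solution. Set $s^*:=\epsilon^{-1}\tau_\epsilon$ and $\omega:=\pi/s^*=\epsilon\pi/\tau_\epsilon$; by \eqref{taueps} one has $\omega=1+\mc{O}(\epsilon)$, so also $\omega^2-1=\mc{O}(\epsilon)$. Let $u_0(s):=\omega^{-1}\sin(\omega s)$, the solution of $\ddot u_0+\omega^2u_0=0$ with $u_0(0)=0$, $\dot u_0(0)=1$; by construction $u_0$ vanishes exactly at $s=0$ and at $s=s^*$ (since $\omega s^*=\pi$), with $\dot u_0(s^*)=\cos\pi=-1$, and $u_0>0$ on $(0,s^*)$. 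We then look for $r_\epsilon:=\til\rho-u_0$ and $q_\epsilon:=\til\xi_0-\cos(\omega s)$; since $\dot{\til\rho}=\til\xi_0$ by \eqref{tilde dynamic} and $\dot u_0=\cos(\omega s)$, one has $q_\epsilon=\dot r_\epsilon$, so it is enough to control $r_\epsilon$ and $\dot r_\epsilon$ on $[0,s^*]$.

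First I would gather a priori information. From Lemma~\ref{limiting dynamic} applied with initial speed $\epsilon^{-1}$, and from Lemma~\ref{rho is tau} after rescaling $s=\tau/\epsilon$, one gets $\sup_{[0,s^*]}\til\rho\le C$, $\sup_{[0,s^*]}\bigl||\til\eta|-1\bigr|\le C\epsilon$, and the two-sided bound $c\,\sin(\omega s)\le\til\rho(s)\le C\,\sin(\omega s)$ on $[0,s^*]$: the linearity of $\til\rho$ in the time variable near each endpoint is exactly what Lemma~\ref{rho is tau} delivers on the two pieces of the flow line lying in $W^-_\epsilon$ and in $W^+_\epsilon$, while Lemma~\ref{limiting dynamic} guarantees these two pieces have $s$-length uniformly comparable to $1$, so that $\til\rho(s)\asymp\min(s,s^*-s)\asymp\sin(\omega s)$ throughout. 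Next, inserting the representation \eqref{|tilde eta|} of $|\til\eta|^2$ into the second equation of \eqref{tilde dynamic}, I rewrite the $\til\rho$-equation as $\ddot{\til\rho}+\omega^2\til\rho=\til\rho\,g_\epsilon$ with
\[ g_\epsilon(s)=(\omega^2-1)-\epsilon\Bigl(\int_0^s\til\xi_0\,(\pl_\rho h_{\epsilon\til\rho})(\til\eta,\til\eta)\,dt+\tfrac12\til\rho\,(\pl_\rho h_{\epsilon\til\rho})(\til\eta,\til\eta)\Bigr); \]
the a priori bounds, together with $\omega^2-1=\mc{O}(\epsilon)$, then give $\sup_{[0,s^*]}|g_\epsilon|\le C\epsilon$.

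The function $r_\epsilon$ solves $\ddot r_\epsilon+\omega^2 r_\epsilon=F_\epsilon$ with $F_\epsilon:=\til\rho\,g_\epsilon$. Evaluating the flow at $s=0$, where $\rho=0$ and $\bbar\xi_0=+1$ because $z_0\in\pl_-S^*M$, and at $s=s^*$, where $\rho=0$ and $\bbar\xi_0=-1$ because $\bbar\varphi_{\tau_\epsilon}(z_0)\in\pl_+S^*M$, and using $\ddot{\til\rho}=\dot{\til\xi}_0=-\til\rho\,(\cdots)$, one reads off that $r_\epsilon$, $\dot r_\epsilon$ and $\ddot r_\epsilon$ all vanish at $s=0$ and at $s=s^*$; these are precisely the boundary conditions asserted in the statement. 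Multiplying the equation for $r_\epsilon$ by $\sin(\omega s)$ and integrating by parts twice over $[0,s^*]$, all boundary terms vanish (both $r_\epsilon$ and $\sin(\omega\cdot)$ vanish at $0$ and $s^*$) and the two interior contributions cancel, leaving the orthogonality identity $\int_0^{s^*}u_0\,F_\epsilon\,ds=0$.

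Finally, I substitute $r_\epsilon=u_0\,w$ on $(0,s^*)$, which turns the equation into $(u_0^2\dot w)'=u_0 F_\epsilon$; integrating from $0$ and, using the orthogonality, also from $s^*$, one gets $u_0(s)^2\dot w(s)=\int_0^s u_0 F_\epsilon=-\int_s^{s^*}u_0 F_\epsilon$. Since $|u_0 F_\epsilon|=|u_0\,\til\rho\,g_\epsilon|\le C\epsilon\sin^2(\omega s)$ by the a priori bounds and $u_0(s)^2\asymp\min(s,s^*-s)^2\asymp\sin^2(\omega s)$, using the first expression for $s\le s^*/2$ and the second for $s\ge s^*/2$ yields $|\dot w(s)|\le C\epsilon\min(s,s^*-s)$; integrating $\dot w$ from the nearer endpoint — at which $w$ vanishes, since $r_\epsilon$ vanishes there to order $\ge2$ while $u_0$ vanishes to order $1$ — gives $|w(s)|\le C\epsilon\min(s,s^*-s)^2\le C\epsilon\sin^2(\omega s)$ on $[0,s^*]$. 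Hence $|r_\epsilon|=|u_0 w|\le C\epsilon\sin^3(\omega s)$ and $|q_\epsilon|=|\dot r_\epsilon|=|\cos(\omega s)\,w+u_0\dot w|\le C\epsilon$, which is \eqref{r_epsilon < epsilon} once one recalls $\omega s=\epsilon\pi s/\tau_\epsilon$. The delicate point — the main obstacle — is the cubic, rather than merely $\mc{O}(\epsilon)$, vanishing of $r_\epsilon$: it rests on the exact half-period matching built into the choice $\omega=\pi/s^*$, on the endpoint linearity of $\til\rho$ from Lemma~\ref{rho is tau}, and, most importantly, on the orthogonality $\int_0^{s^*}u_0F_\epsilon\,ds=0$, which is precisely what permits $\dot w$ to be estimated by integrating inward from $s^*$ and thereby to inherit the extra vanishing there.
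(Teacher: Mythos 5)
Your argument is correct, and it reaches the same conclusion by a mildly different technical route than the paper. The paper's proof converts the problem into a forced harmonic oscillator of frequency exactly $1$: it writes $\ddot r_\epsilon + r_\epsilon = \epsilon k_\epsilon$, absorbing the frequency mismatch $(\tfrac{\pi\epsilon}{\tau_\epsilon}-\tfrac{\tau_\epsilon}{\pi\epsilon})\sin(\tfrac{\pi\epsilon s}{\tau_\epsilon})$ into the forcing via \eqref{epsilon^{-1} tau^+_g(y,epsilon^-1eta_0)}, and then uses the two Duhamel (variation-of-parameters) representations of $r_\epsilon$ — one integrated from $s=0$ with vanishing initial data, one from $s=\epsilon^{-1}\tau_\epsilon$ with vanishing end data — together with the vanishing $k_\epsilon(s)=\mc{O}(\min(s,\epsilon^{-1}\tau_\epsilon-s))$ of the forcing at the two endpoints, to get the cubic bound near each end. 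You instead keep the oscillator at frequency $\omega=\epsilon\pi/\tau_\epsilon$, so the unperturbed solution is exactly the reference profile $u_0$, write the error equation as $\ddot r_\epsilon+\omega^2 r_\epsilon=\til\rho\,g_\epsilon$ with $|g_\epsilon|\le C\epsilon$, extract the orthogonality $\int_0^{s^*}u_0F_\epsilon=0$ from the endpoint conditions, and run a reduction of order $r_\epsilon=u_0w$ with the two representations of $u_0^2\dot w$; the cubic gain then comes from the a priori comparison $\til\rho(s)\asymp\sin(\omega s)$, which you correctly source from Lemma \ref{rho is tau} (rescaled) plus the fact, extractable from Lemma \ref{limiting dynamic} (e.g.\ via $\rho_{\max}\asymp\epsilon$ at the turning point $\bbar\xi_0=0$ and Lemma \ref{rho is tau} again), that the $W^-_\epsilon$ and $W^+_\epsilon$ portions of the trajectory each have $s$-length comparable to $1$ — a small justification worth writing out, since the lemma statements alone don't literally say it, but not a gap. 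The trade-off: the paper's version needs only the upper endpoint decay of the forcing and avoids the two-sided bound on $\til\rho$, while your version makes the mechanism behind the cubic vanishing (half-period matching plus orthogonality to $u_0$) more transparent and keeps the reference solution identical to the one in the statement.
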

\begin{proof}
We define $r_\eps$ and $q_\eps$ by the expression in the Lemma,
with boundary condition $r_\eps(0) = r_\eps(\epsilon^{-1}\tau_\epsilon) = q_\epsilon(0)=q_\epsilon(\epsilon^{-1}\tau_\epsilon)= 0$. 
Using \eqref{tilde dynamic} and \eqref{|tilde eta|}, we see that $r_\epsilon$ and $q_\epsilon$ must solve
\[\dot r_\epsilon = q_\epsilon,\ \dot q_\epsilon = - \epsilon \til{\rho} \int_0^s \til{\xi}_0 \pl_\rho h(\til{\eta}, \til{\eta}) dt - \epsilon \frac{\til{\rho}^2}{2} \pl_\rho h (\til{\eta},\til{\eta}) + (\frac{\pi\epsilon}{ \tau_\epsilon} - \frac{\tau_\epsilon}{\pi\epsilon})\sin(\frac{\pi \tau \epsilon}{ \tau_\epsilon}) - r_\epsilon.\] 
Note that due to \eqref{epsilon^{-1} tau^+_g(y,epsilon^-1eta_0)} we have that 
$(\frac{\pi\epsilon}{\tau_\epsilon} - \frac{\tau_\epsilon}{\pi\epsilon}) = c_\eps \epsilon$ where
\[c_\eps = \frac{\pi + \epsilon^{-1}\tau_\epsilon}{\pi\epsilon^{-1}\tau_\epsilon}  \int_0^{ \epsilon^{-1} \tau_\epsilon} \Big(\int_0^t \frac{\til{\xi}_0}{2} \pl_\rho h_{\epsilon\til{\rho}}(\tilde \eta, \frac{\tilde\eta}{|\tilde\eta|}) du  + \frac{\til{\rho}}{2} \pl_\rho h_{\epsilon \til{\rho}}(\til{\eta}, \frac{\til{\eta}}{|\til{\eta}|})\Big)dt=\mc{O}(1)\]
as $\epsilon\to 0$. So the system can be simplified to
\[\dot r_\epsilon = q_\epsilon,\,\, \ \dot q_\epsilon = -r_\epsilon + \epsilon k_\epsilon\]
with $k_\epsilon$ given by
\[k_\epsilon (s) = -  \til{\rho} \int_0^s \til{\xi}_0 \pl_\rho h(\til{\eta}, \til{\eta}) dt - 
\frac{\til{\rho}^2}{2} \pl_\rho h (\til{\eta},\til{\eta}) + c_\eps\sin(\frac{\pi s \epsilon}
{\tau_\epsilon}).\]
Solving the ODE systems with vanishing initial conditions one then gets the following representation for $r_\epsilon(s)$:
\[r_\epsilon(s) = \epsilon \Big(-\cos(s) \int_0^s \sin(t) k_\epsilon(t) dt + \sin(s) \int_0^s k_\epsilon(t) \cos(t) dt \Big).\]
Similarly solving the ODE with vanishing end conditions gives that 
\[r_\epsilon(s) = \epsilon \Big(-\cos(s) \int_s^{\epsilon^{-1}\tau_\epsilon} \sin(t) k_\epsilon(t) dt + \sin(s) \int_s^{\epsilon^{-1}\tau_\epsilon} k_\epsilon(t) \cos(t) dt \Big).\]
Note that $k_\epsilon(s)=\mc{O}(s)$ as $s\to 0$ and $k_\epsilon(s)=\mc{O}(|s- \epsilon^{-1}\tau_\epsilon|)$ as $s\to  \epsilon^{-1}\tau_\epsilon$.
From this we see that $r_\eps(s)=\mc{O}(\epsilon s^3)$ as $s\to 0$ and a similar estimate 
as $s\to \epsilon^{-1}\tau_\epsilon$, thus \eqref{r_epsilon < epsilon} is satisfied.
\end{proof}

Note that due to the smoothness of the tensor $h$ up to the boundary $\partial M$, as $\epsilon \to 0$ the equation in \eqref{tilde dynamic} for the $(\til{y}, \til{\eta})$ dynamic converges to the equation  
\[\dot{y}_j = \sum_{k}h^{jk}_0\eta_k,\ \ \  
\dot{\eta}_j = -\frac{1}{2}\pl_{y_j} |\eta|^2_{h_0}\]
where $h_0$ is the metric on $\pl\bbar{M}$. Note that this is nothing but the Hamilton flow equation of the Hamiltonian vector field $H_0$ of the Hamiltonian $\frac{1}{2}|\eta|^2_{h_0}$ on $T^*\pl\bbar{M}$.
In particular, if 
\[(\hat{y}(s),\hat{\eta}(s)):=e^{sH_0}(y_0,\eta_0)\in T^*\pl\bbar{M}\] 
is the integral curve solution of this equation
with initial condition $(\hat{y}(0),\hat{\eta}(0))=(y_0,\eta_0)$, we get from Duhamel formula
\begin{equation}\label{limitinggeodesics}
d_{h_0}((\til{y}(s),\til{\eta}(s)),(\hat{y}(s),\hat{\eta}(s)))=\mc{O}(s\epsilon)
\end{equation}
where we view $(\til{y}(s),\til{\eta}(s))$ as a curve in $T^*\pl\bbar{M}$ and $d_{h_0}$ is the distance for the Sasaki metric associated to $h_0$ in $T^*\pl\bbar{M}$ 
(or equivalently any other Riemannian metric).

\section{X-Ray transform}

We first define the X-ray transform on functions on $S^*M$, and this can be seen as some boundary value of a natural boundary value problem on $\bbar{S^*M}$.
Except in Subsection \ref{trappedsec}, we shall always assume that the metric $g$ is \emph{non-trapping} in the sense that $\tau^+(z)<\infty$ for all $z\in S^*M$, where $\tau^+$ is defined in \eqref{taupm}. Observe that due to \eqref{tau change of var} and the estimates for $\rho(t)$ in Lemma \ref{x asymptotic}, the condition $|\tau_\pm(z)|<\infty$ is equivalent to saying that  $\rho(\varphi_t(z))\to 0$ as $t\to \pm \infty$. 

\subsection{Boundary value problem and X-ray}
Let us consider the boundary value problem 
\[ -Xu=f , \quad u|_{\pl_+S^*M}=0\]
where $f\in C_c^\infty(S^*M)$. Assuming that the metric $g$ is non-trapping, there is a unique smooth solution $u\in C^\infty(S^*M)$ given by 
\[ u(z):= \int_{0}^\infty f(\varphi_t(z))dt.\]
This defines an operator $R_+: C_c^\infty(S^*M)\to C^\infty(S^*M)$ by setting 
$R_+f=\int_0^\infty f\circ \varphi_t \, dt$.
We would like to extend this to functions which are not compactly supported and to points in $z\in \overline{S^*M}$ (i.e. all the way up to the boundary). To this end, observe that by making a change of variable $t = t(\tau)$ of \eqref{tau change of var} one can define the forward/backward resolvent
\begin{equation}\label{Rf}
R_{+} f (z) := \int_{0}^{{\tau^+}(z)}  \frac{f(\bbar{\varphi}_\tau (z))}{\rho^2(\tau)} d\tau,\ \ R_{-}f(z) = -\int_{\tau^-(z)}^0 \frac{f(\bbar{\varphi}_\tau(z))}{\rho^2(\tau)}d\tau.
\end{equation} 
Note that this definition extends to functions $f \in \rho^2 C^\infty(\overline{S^*M})$. Writing $f = \rho^2\bbar{f}$ one gets 
$R f(z) :=\int_{0}^{{\tau^+}(z)}  \bbar{f}(\bbar{\varphi}_\tau (z)) d\tau.$
Again, due to the non-trapping assumption this is a smooth function in $S^*M$. Furthermore, since $\bbar{f}$ and $\bbar{\varphi}$ are both smooth all the way up to $\partial S^*M$, this extends to be a smooth function on $\overline{S^*M}$. 
\begin{definition}\label{defXray}
The X-ray transform of a function $f\in \rho^2 C^\infty(\bbar{S^*M})$ is defined by  
$If := R_+f|_{\partial_- S^*M}=\int_{0}^{{\tau^+}(z)}  \rho^{-2}(\tau)f(\bbar{\varphi}_\tau (z)) d\tau=\int_{-\infty}^\infty f(\varphi_t(z_0))$ where $z_0\in M$ is any point on the integral curve $\cup_{\tau\in (0,\tau^+(z))}\bbar{\varphi}_\tau (z)$. This does not depend on $\rho$.
\end{definition}
First, observe that 
\begin{equation}\label{kerIcobord} 
f\in \ker I\cap \rho^2C^\infty(\bbar{S^*M})\iff R_+f=R_-f.
\end{equation}
By \eqref{liftest}, for symmetric tensors $f\in \rho^k C^\infty(\overline M; S^m({}^{sc}T^*\bbar{M}))\cap \ker (\iota_{\rho^2\partial_\rho})^{\ell+1}$, 
one has $\pi_m^*f\in \rho^{k+m-\ell}C^\infty(\bbar{S^*M})$ and thus, for $k+m-\ell\geq 2$ 
\begin{equation}\label{defIm}
I_m:= I\pi_m^*:\rho^k C^\infty(\overline M; S^m({}^{sc}T^*\bbar{M}))\cap \ker (\iota_{\rho^2\partial_\rho})^{\ell+1}\to  C^\infty(\pl_-S^*M)
\end{equation}
is well-defined and continuous. We call it the X-ray transform on $m$-tensors.
In the next subsections we will study the kernel of $I_m$. 

\subsection{Choice of gauge}
Let $B_\pm: \bbar{S^*M}\to \pl_\pm S^*M$ be the endpoint map defined by 
\[ B_\pm(z):= \bbar{\varphi}_{\tau_\pm(z)}(z).\] 
where $\tau_\pm$ are defined in \eqref{taupm}.
Since $R_+Xf=-f+f\circ B_+$ for $f\in C^\infty(\bbar{S^*M})$, we see that  
\[ IXf= f|_{\pl_+S^*M}\circ B_+-f|_{\pl_-S^*M}\]
and in particular $IXf=0$ if $f\in \rho C^\infty(\bbar{S^*M})$. Now it is also a direct computation to
check that $X\pi_m^*f=\pi_{m+1}^*Df$ if $f\in C^\infty(M;S^m T^*M)$ and where 
$D:=\mc{S}\nabla$ is the symmetrised covariant derivative. As a consequence, 
 if $f \in \rho C^\infty(\bbar{M} ; S^{m-1}({}^{\rm sc}T^*\bbar{M}))$, then $I_mDf=0$.
Since $I_m$ has a natural kernel, we can always put a function $f$ in a certain gauge using that 
kernel.
\begin{lemma}
\label{kill transversal terms}
Let $f\in \rho^k C^\infty(\bbar{M} ; S^m ({}^{\rm sc} T^*\bbar{M}))$  with 
$k \geq 2$ and $m\geq 1$. There exists a tensor $u  \in \rho^{k-1}C^\infty(\bbar{M} ; 
S^{m-1}({}^{\rm sc}T^*\bbar{M}))$ such that $f - Du\in \rho^k C^\infty(\overline M ; S^m({}^{sc} T^*\bbar{M}))$ and $\iota_{\rho^2\partial_\rho}(f - Du) = 0$ near $\partial M$.
\end{lemma}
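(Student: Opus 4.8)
The plan is to build $u$ on a collar near $\partial\bbar{M}$, where the condition is required, and then cut it off. So I would first pass to a collar $[0,\eps)_\rho\times\partial\bbar{M}$ in normal form $g=d\rho^2/\rho^4+h_\rho/\rho^2$ (Lemma \ref{normalform}), set $N:=\rho^2\partial_\rho$ — the unit vector field with $\nabla_N N=0$ whose integral curves are the lines $\{y=\textrm{const}\}$, all of which reach $\rho=0$ — and $\lambda:=d\rho/\rho^2=N^\flat$, using that $\nabla\lambda$ is the second fundamental form of the level sets $\{\rho=\textrm{const}\}$, which is tangential ($\iota_N\nabla\lambda=0$) and $\mc{O}(\rho)$ as a scattering tensor. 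Near the boundary, symmetric scattering tensors split orthogonally by \emph{transversal degree}: every $T\in C^\infty(\bbar{M};S^p({}^{\rm sc}T^*\bbar{M}))$ is uniquely $T=\sum_{\ell=0}^{p}\mc{S}(\lambda^{\otimes\ell}\otimes T_\ell)$ with $T_\ell$ tangential ($\iota_N T_\ell=0$), and $\iota_{\rho^2\partial_\rho}T=0$ precisely when $T_\ell=0$ for all $\ell\ge1$. I would write $f=\sum_{\ell=0}^{m}\mc{S}(\lambda^{\otimes\ell}\otimes f_\ell)$ with $f_\ell\in\rho^k C^\infty$ tangential, and look for $u=\sum_{\ell=0}^{m-1}\mc{S}(\lambda^{\otimes\ell}\otimes u_\ell)$ with $u_\ell\in\rho^{k-1}C^\infty$ tangential making $(f-Du)_\ell=0$ for $\ell=1,\dots,m$ — exactly $m$ equations for the $m$ tangential unknowns $u_0,\dots,u_{m-1}$.

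The computational heart is the identity obtained by expanding $D=\mc{S}\nabla$ with Leibniz and tracking transversal degrees (using only $\nabla_N N=0$, $\iota_N u_j=0$, and the tangentiality of $\nabla\lambda$): for $1\le\ell\le m$,
\[(Du)_\ell = c_{m,\ell}\,\nabla_N u_{\ell-1} + \mc{A}_\ell(u_{\ell-1}) + \mc{B}_\ell(u_\ell,u_{\ell+1}),\]
with $c_{m,\ell}\ne0$ combinatorial, $\mc{A}_\ell$ a bundle map of size $\mc{O}(\rho)$ (built from the shape operator), and $\mc{B}_\ell$ a first-order tangential differential operator. The key point is that this is \emph{triangular}: the only normal derivative of $u_{\ell-1}$ occurring is the displayed $\nabla_N u_{\ell-1}$, no tangential derivative of $u_{\ell-1}$ enters (those make up the Gauss-formula tangential part of $\nabla u_{\ell-1}$, which lands in $(Du)_{\ell-1}$), and $u_0,\dots,u_{\ell-2}$ do not enter $(Du)_\ell$ at all.

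I would therefore solve for $u_{m-1},u_{m-2},\dots,u_0$ in that order: with $u_\ell,u_{\ell+1}$ already known, the equation $(Du)_\ell=f_\ell$ becomes, for $u_{\ell-1}$,
\[c_{m,\ell}\,\nabla_N u_{\ell-1} + \mc{A}_\ell(u_{\ell-1}) = f_\ell - \mc{B}_\ell(u_\ell,u_{\ell+1}) =: g_\ell\in\rho^k C^\infty,\]
a linear transport equation along the integral curves of $N$, depending smoothly on the transversal variable $y$; in the variable $r=1/\rho$ it reads $-c_{m,\ell}\partial_r u_{\ell-1}+\mc{O}(r^{-1})u_{\ell-1}=\mc{O}(r^{-k})$. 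Its homogeneous solutions do not decay as $\rho\to0$, so integrating from $\rho=0$ with vanishing data and using $k\ge2$ yields a unique $u_{\ell-1}\in\rho^{k-1}C^\infty([0,\eps)\times\partial\bbar{M};S^{m-\ell}(T^*\partial\bbar{M}))$, tangential and smooth up to $\rho=0$. Assembling $u=\sum_\ell\mc{S}(\lambda^{\otimes\ell}\otimes u_\ell)\in\rho^{k-1}C^\infty$ on the collar gives $\iota_{\rho^2\partial_\rho}(f-Du)=0$ there. Finally I would pick $\chi\in C^\infty(\bbar{M})$ equal to $1$ near $\partial\bbar{M}$ and supported in the collar and replace $u$ by $\chi u\in\rho^{k-1}C^\infty(\bbar{M};S^{m-1}({}^{\rm sc}T^*\bbar{M}))$: on $\{\chi\equiv1\}$ nothing changes, so $\iota_{\rho^2\partial_\rho}(f-D(\chi u))=0$ near $\partial\bbar{M}$, and $f-D(\chi u)\in\rho^k C^\infty(\bbar{M};S^m({}^{\rm sc}T^*\bbar{M}))$ since $D$ maps $\rho^{k-1}C^\infty(S^{m-1}({}^{\rm sc}T^*\bbar{M}))$ into $\rho^k C^\infty(S^m({}^{\rm sc}T^*\bbar{M}))$ — in the normal form each scattering vector field sends $\rho^{k-1}C^\infty$ into $\rho^k C^\infty$, and the Christoffel symbols of $g$ in the scattering frame are $\mc{O}(\rho)$.

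The hard part will be the key identity: establishing the triangular structure of $(Du)_\ell$, which is exactly what makes eliminating the transversal components one degree at a time decouple into honest ODEs along the normal lines — together with the (standard but not automatic) check that these ODEs preserve the weight $\rho^{k-1}$ and the regularity up to $\rho=0$ (no resonant logarithmic terms, thanks to the sign of the second fundamental form), which is where $k\ge2$ is used. The localization, the cutoff, and the remaining weight bookkeeping are routine.
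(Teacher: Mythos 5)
Your plan is essentially the paper's proof: the same decomposition of $f$ and $u$ into powers of $d\rho/\rho^2$ with tangential coefficients, the same triangular structure of $D$ coming from \eqref{nabla dx}--\eqref{nabla alpha}, and the same successive elimination of the transversal components by solving transport ODEs in $\rho$ from the boundary, which is where $k\geq 2$ gives the weight $\rho^{k-1}$. The only cosmetic difference is that the paper runs the elimination as an induction and writes each step as $(\partial_\rho+B)(\rho^2 q_j)=\tilde f_j$, so the Fuchsian/resonance issue you flag disappears automatically (the $\mathcal{O}(\rho)$ zeroth-order term is exactly $\partial_\rho(\rho^2)$ modulo $\mathcal{O}(\rho^2)$), and no sign condition on the second fundamental form is actually needed.
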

\begin{proof}
We proceed as in \cite{GGSU} and consider a collar neighbourhood $[0,\epsilon)_\rho \times \partial \bbar{M}\subset \bbar{M}$ and express $f$ in this neighbourhood as $f = \mc{S} (f_j \otimes (\frac{d\rho}{\rho^2})^{m-j})$ where $f_j\in \rho^k C^\infty( \bbar{M}; S^j ({}^{sc}T^* \bbar{M}))$ satisfies $\iota_{\rho^2\partial_\rho} f_j = 0$. We will proceed by induction on $j$.
By a direct computation
\begin{equation}
\label{nabla dx}
\nabla \frac{d\rho}{\rho^2} =\frac{1}{2} \partial_\rho h_\rho- \frac{h_\rho}{\rho} \in \rho 
C^\infty(\bbar{M}; S^2 ({}^{sc}T^*\bbar{M}))\cap \ker \iota_{\pl_\rho}.
\end{equation}
Similarly, if $\omega \in  \rho^k C^\infty(\bbar{M} ; {}^{sc}T^*\bbar{M})$ is tangential to $\partial \bbar{M}$ near $\rho= 0$ (i.e. in $\ker i_{\rho^2\pl_\rho}$) then
\begin{equation}
\label{nabla alpha}
\nabla\omega = d\rho \otimes \pl_\rho \omega + 2\mc{S}(\omega\otimes \frac{d\rho}{\rho}) 
+ \mc{S}(d\rho\otimes A\omega)+\nabla^h\omega 
\end{equation}
for some smooth endomorphism $A\in C^\infty(\bbar{M};{\rm End}(T^*\pl \bbar{M}))$ and $\nabla^h$ denotes the Levi-Civita connection of $h_\rho$ on $\pl\bbar{M}$.  
We begin by setting $q_0(\rho,y) := \int_0^\rho s^{-2} f_0(s,y)ds \in \rho^{k-1}C^\infty(\overline M)$ so that, using \eqref{nabla dx}, 
\[D(q_0 (\frac{d\rho}{\rho^2})^{m-1})(\rho^2\pl_\rho,\dots,\rho^2\pl_\rho) = f_0.\]
This means that $f - D(q_0(\frac{d\rho}{\rho^2})^{m-1}) = \sum\limits_{j = 1}^m \mc{S}(\til{f}_j \otimes (\frac{d\rho}{\rho^2})^{m-j})$ where $\til{f}_j \in \rho^k C^\infty(\overline M ; S^j({}^{sc} T^*\bbar{M}))$ satisfies $\iota_{\rho^2\pl_\rho} \tilde f_j = 0$ near $\pl \bbar{M}$, using again \eqref{nabla dx}.

To complete the induction, we show that if $j \in [1,m-1]$ and $\til{f}_j \in \rho^k 
C^\infty(\overline M ; S^j({}^{sc} T^*\bbar{M}))$ satisfies 
$\iota_{\rho^2\partial_\rho} \til{f}_j = 0$ near $\partial \bbar{M}$, we can construct $q_j \in \rho^{k-1}C^\infty(\overline M ; S^j({}^{sc} T^*\bbar{M}))$ which satisfies $\iota_{\rho^2\pl_\rho} q_j = 0$ near $\partial \bbar{M}$ and solves
\begin{equation}
\label{inductive solve}
D\mc{S}(q_j\otimes (\frac{d\rho}{\rho^2})^{m-j-1})  = \mc{S}(\til{f}_j \otimes (\frac{d\rho}{\rho^2})^{m-j}) + T
\end{equation}
where $T= \mc{S}(T_0 \otimes (\frac{d\rho}{\rho^2})^{m-j-1}) + \mc{S}(T_1\otimes (\frac{d\rho}{\rho^2})^{m-j-2})$ with $T_0 \in \rho^k C^\infty(\overline M ; S^{j+1}({}^{sc} T^*\bbar{M}))$ and $T_1 \in \rho^k C^\infty(\overline M ; S^{j+2}({}^{sc} T^*\bbar{M}))$ 
satisfying $\iota_{\rho^2\pl_\rho} T_0 = \iota_{\rho^2\pl_\rho} T_1 = 0$ (when $j = m-1$ the term involving $T_1$ vanishes).
To this end, using \eqref{nabla alpha} and \eqref{nabla dx}, we compute 
\[ D\mc{S}(q_j\otimes (\frac{d\rho}{\rho^2})^{m-j-1})=\mc{S}(
(\rho^2\pl_\rho q_j+2\rho q_j+\rho^2 Bq_j)\otimes (\frac{d\rho}{\rho^2})^{m-j})+T\]
with $T$ as above if $q_j \in \rho^{k-1}C^\infty(\overline M ; S^j({}^{sc} T^*\bbar{M}))\cap \ker \iota_{\rho^2\pl_\rho}$, for some smooth endomorphism $B\in C^\infty(\bbar{M}; {\rm End}(S^{j} T^*\pl\bbar{M}))$. It then suffices to solve the ODE 
\[ (\pl_\rho +B)(\rho^2q_j)=\til{f}_j\]
which has a solution $q_j\in \rho^{k-1}C^\infty(\overline M ; S^j({}^{sc} T^*\bbar{M}))$.
\end{proof}
Thus, in what follows, for studying the kernel of $I_m$, we will always assume that we are working with tensors in $\ker \iota_{\rho^2\pl_\rho}$.

\subsection{Boundary determinations}
The first property that we will take advantage of is that an element in the kernel of $I_m$ must have its leading behaviour at $\pl \bbar{M}$ satisfying some property on the boundary $(\pl \bbar{M},h_0)$. A consequence of  \eqref{limitinggeodesics} and Lemma \ref{asymp of geod} is the following 
\begin{lemma}
\label{converge to pi transform on boundary}
Let $f\in C^\infty(\overline M ; S^m({}^{sc}{T^*\bbar{M}}))$, then for all $k\geq 2$ and $z_0=(y_0,\eta_0)\in T^*\partial M$ with $|\eta_0|_{h_0} =1$, we have for $\gamma(\tau)=\pi_0(\bbar{\varphi}_\tau(z_0))$
\begin{eqnarray*}
&\lim\limits_{\epsilon\to 0}& \epsilon^{1-k} \int_0^{\tau^+(y_0,\epsilon^{-1}\eta_0)} \rho^{k-2}(\tau) f(\rho^2\dot\gamma(\tau), \dots, \rho^2\dot\gamma (\tau)) d\tau\\ 
&=& \sum\limits_{\ell= 0}^m \int_0^\pi \sin^{k+m-\ell-2}(s) \cos^\ell(s) \iota_{\partial M}^*(\underbrace{\iota_{\rho^2\partial_\rho} \dots\iota_{\rho^2\partial_\rho}}_{\ell} f)(\underbrace{\dot\alpha(s),\dots,\dot\alpha(s)}_{m-\ell})
\end{eqnarray*}
where $\alpha :[0,\pi]\to \partial M$ is the unit speed geodesic for the metric $h_0$ with initial condition $(\alpha(0),\dot\alpha(0)) = (y_0,\eta_0^\sharp)$ with $\eta_0^\sharp\in T_y\pl\bbar{M}$ the dual of $\eta_0$ by $h_0$.
\end{lemma}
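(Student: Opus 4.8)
The plan is to reduce everything to the rescaled dynamics of \eqref{tilde dynamic} and let $\epsilon\to0$. First I would write $f$ near $\pl\bbar{M}$ in its scattering decomposition $f=\sum_{j=0}^m\mc{S}\big(\tfrac{h_j}{\rho^j}\otimes(\tfrac{d\rho}{\rho^2})^{\otimes(m-j)}\big)$ with $h_j\in C^\infty(\bbar{M};S^j(T^*\pl\bbar{M}))$, and evaluate the integrand. Since $\gamma(\tau)=\pi_0(\bbar{\varphi}_\tau(z_0))$, the velocity splits as $\dot\gamma(\tau)=\bbar{\xi}_0(\tau)\pl_\rho+V(\tau)$ with $V(\tau)=\sum_{i,j}h^{ij}(\rho(\tau),y(\tau))\eta_i(\tau)\pl_{y_j}$ the tangential part; pairing $\rho^2\dot\gamma$ against the scattering coframe gives $\langle\tfrac{d\rho}{\rho^2},\rho^2\dot\gamma\rangle=\bbar{\xi}_0$ and $\langle\tfrac{dy_i}{\rho},\rho^2\dot\gamma\rangle=\rho\, V^i$, so
\[
\rho^{k-2}(\tau)\,f\big(\rho^2\dot\gamma(\tau),\dots,\rho^2\dot\gamma(\tau)\big)=\sum_{j=0}^m\rho(\tau)^{\,k-2+j}\,\bbar{\xi}_0(\tau)^{\,m-j}\,h_j(\rho(\tau),y(\tau))\big(\otimes^jV(\tau)\big).
\]

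Next I would substitute $\tau=\epsilon s$, $s\in[0,\epsilon^{-1}\tau_\epsilon]$ with $\tau_\epsilon=\tau^+(y_0,\epsilon^{-1}\eta_0)$, and pass to the variables $\tilde\rho,\tilde\xi_0,\tilde y,\tilde\eta$ of \eqref{tilde dynamic}, using $\rho(\epsilon s)=\epsilon\tilde\rho(s)$, $\bbar{\xi}_0(\epsilon s)=\tilde\xi_0(s)$, $\eta(\epsilon s)=\epsilon^{-1}\tilde\eta(s)$, hence $V(\epsilon s)=\epsilon^{-1}\tilde V(s)$ with $\tilde V(s)=\sum h^{ij}(\epsilon\tilde\rho,\tilde y)\tilde\eta_i\pl_{y_j}$. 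Counting powers of $\epsilon$ ($\epsilon^{k-2+j}$ from $\rho^{k-2+j}$, $\epsilon^{-j}$ from $h_j(\otimes^jV)$, and $\epsilon$ from $d\tau$), the integral over $[0,\tau_\epsilon]$ equals $\epsilon^{k-1}$ times
\[
\int_0^{\epsilon^{-1}\tau_\epsilon}\sum_{j=0}^m\tilde\rho(s)^{k-2+j}\,\tilde\xi_0(s)^{m-j}\,h_j\big(\epsilon\tilde\rho(s),\tilde y(s)\big)\big(\otimes^j\tilde V(s)\big)\,ds,
\]
so the prefactor $\epsilon^{1-k}$ cancels exactly. Then I would let $\epsilon\to0$: by \eqref{taueps} $\epsilon^{-1}\tau_\epsilon\to\pi$, and the change of variables $\sigma=\epsilon\pi s/\tau_\epsilon\in[0,\pi]$ (Jacobian $\to1$) combined with Lemma~\ref{asymp of geod} gives $\tilde\rho\to\sin\sigma$, $\tilde\xi_0\to\cos\sigma$ uniformly, while \eqref{limitinggeodesics} and $|\tilde\eta|\to1$ (from \eqref{|tilde eta|}) give $\tilde V\to\dot\alpha(\sigma)$, where $\alpha$ is the unit-speed $h_0$-geodesic with $(\alpha(0),\dot\alpha(0))=(y_0,\eta_0^\sharp)$; and $h_j(\epsilon\tilde\rho,\tilde y)\to h_j(0,\alpha(\sigma))$ by smoothness up to the boundary. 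Since $k-2+j\ge0$ and all quantities are uniformly bounded by Lemmas~\ref{x asymptotic}, \ref{rho is tau}, \ref{limiting dynamic} and \ref{asymp of geod}, dominated convergence yields $\sum_{j=0}^m\int_0^\pi\sin^{k+j-2}(\sigma)\cos^{m-j}(\sigma)\,h_j(0,\alpha(\sigma))\big(\otimes^j\dot\alpha(\sigma)\big)\,d\sigma$.

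To finish, I would reindex $\ell=m-j$ and invoke the elementary identity $\iota^*_{\pl\bbar{M}}\big(\iota^{\ell}_{\rho^2\pl_\rho}f\big)=h_{m-\ell}|_{\pl\bbar{M}}$, a direct computation from the scattering decomposition (each application of $\iota_{\rho^2\pl_\rho}$ strips off one factor $\tfrac{d\rho}{\rho^2}$, and $\iota^*_{\pl\bbar{M}}$ retains only the purely tangential component), which turns the last display into exactly the right-hand side of the Lemma. The step needing the most care is the passage to the limit: one must check that the convergences $\tilde\rho\to\sin\sigma$ and $\tilde V\to\dot\alpha(\sigma)$ (after the $s\leftrightarrow\sigma$ reparametrisation) are uniform on the \emph{moving} interval $[0,\epsilon^{-1}\tau_\epsilon]$, in particular near its endpoints where $\tilde\rho\to0$, and that the integrand stays uniformly bounded so dominated convergence applies. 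There is however no deeper difficulty: the hard dynamical estimates are already in Lemmas~\ref{x asymptotic}--\ref{asymp of geod}, so what remains is bookkeeping of powers of $\rho$ and a routine limiting argument.
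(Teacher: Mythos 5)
Your argument is correct and is essentially the paper's own proof: the paper likewise decomposes $f$ in the scattering coframe near $\pl\bbar{M}$, evaluates along the rescaled flow, substitutes $s=\epsilon^{-1}\tau$, and passes to the limit using \eqref{limitinggeodesics}, Lemma \ref{asymp of geod} and \eqref{taueps}; your version merely makes the power counting and the dominated-convergence step on the moving interval $[0,\epsilon^{-1}\tau_\epsilon]$ explicit. The only caveat is your closing identity $\iota^*_{\pl\bbar{M}}(\iota^{\ell}_{\rho^2\pl_\rho}f)=h_{m-\ell}|_{\pl\bbar{M}}$, which with the projection-normalized symmetrization $\mc{S}$ acquires a factor $\binom{m}{\ell}^{-1}$ from the repeated contractions; this is purely a matter of the normalization convention used to read the right-hand side of the Lemma, and the paper's own brief proof treats this identification with the same level of informality.
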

\begin{proof}
In coordinates we can write $f = \sum\limits_{\ell =0}^m \sum_{I=(i_1,\dots,i_{(m-\ell)})}f_{\ell,i_{1},\dots, i_{m-\ell}} (\frac{d\rho}{\rho^2})^\ell \frac{dy_{i_1}}{\rho}\dots \frac{dy_{i_{m-\ell}}}{\rho}$ (for $i_k\in \{1,\dots,n-1\}$) and since 
\[\dot \gamma(\tau) = \bbar{\xi}_0(\tau) 
\pl_\rho +  \sum_{k}h^{jk}_\rho(\tau)\eta_j(\tau) \partial_{y_k}\] 
we get 
\begin{eqnarray*}
&& \epsilon^{1-k}\int_0^{\tau^+(y_0,\epsilon^{-1}\eta_0)} \rho^{k-2}(\tau)f(\rho^2\dot\gamma(\tau), \dots, \rho^2\dot\gamma (\tau)) d\tau \\&=& \epsilon^{1-k}\sum\limits_{\ell=0}^m\sum_{I} \int_0^{\tau^+(y_0,\epsilon^{-1}\eta_0)} \rho^{k+m-2-\ell}(\tau)  f_{\ell,i_1,\dots, i_{m-\ell}}(\tau) \bbar{\xi}_0^\ell(\tau) \eta^\sharp_{i_1}(\tau)\dots \eta^\sharp_{i_{m-\ell}}(\tau)d\tau,
\end{eqnarray*}
with $\eta^\sharp$ the dual of $\eta$ by $h_\rho$. 
Make the change of variable $s=\epsilon^{-1}\tau$ and take the limit as $\epsilon\to0$ using \eqref{limitinggeodesics}, Lemma \ref{asymp of geod} and \eqref{taueps}, we obtain the desired result.
\end{proof}
This leading behaviour can be viewed as a sort of ray-transform on the boundary but for geodesic segments of length $\pi$.
We recall that for a closed manifold $(N,h_0)$, the X-ray transform on $m$-symmetric tensors is called s-injective if for each $u\in C^\infty(N;S^mT^*N)$ satisfying 
$\int_\gamma \pi_m^*u=0$ for all closed geodesic $\gamma$ on $N$, then $u=Df$ for some 
$f\in C^\infty(N; S^{m-1}T^*N)$ (and for $m=0$ we ask that $u=0$).
From \cite{JoSB1} and Lemma \ref{converge to pi transform on boundary}, we obtain directly \begin{proposition}
\label{I0 boundary determination}
Suppose $(\partial\bbar{M}, h_0)$ is a Riemannian manifold of dimension $n\geq 2$ which either has injectivity radius strictly bigger than $\pi$, or its X-ray transform on functions is injective or it is a sphere of radius $r\notin \{1/k; k\in \mathbb{N}, k\geq 2\}$. If $f\in \rho^2 C^\infty (\overline M)$ satisfies $I_0f = 0$, then $f\in \rho^\infty C^\infty(\overline M)$.
\end{proposition}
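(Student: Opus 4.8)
The plan is to show, by induction on $N\ge 2$, that $f\in\rho^N C^\infty(\overline M)$; this gives the statement since $\rho^\infty C^\infty(\overline M)=\bigcap_{N}\rho^N C^\infty(\overline M)$. The mechanism will be Lemma~\ref{converge to pi transform on boundary}: I will use it to convert the vanishing of $I_0f$ along the geodesics that escape to infinity while staying close to $\partial\overline M$ into the vanishing of a weighted ``time--$\pi$'' ray transform, on $(\partial\overline M,h_0)$, of the leading Taylor coefficient of $f$ at $\partial\overline M$. The injectivity of that weighted transform, under each of the three stated hypotheses on $(\partial\overline M,h_0)$, is what I will quote from \cite{JoSB1}.

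For the inductive step I would assume $f\in\rho^j C^\infty(\overline M)$ with $j\ge2$, write $f=\rho^j\bar f$ with $\bar f\in C^\infty(\overline M)$, and set $f_j:=\bar f|_{\partial\overline M}\in C^\infty(\partial\overline M)$. Fix $y_0\in\partial\overline M$ and $\eta_0\in T^*_{y_0}\partial\overline M$ with $|\eta_0|_{h_0}=1$. For small $\epsilon>0$ consider $z_0^\epsilon:=(y_0,\epsilon^{-1}\eta_0)\in\partial_-S^*M$ in the projective coordinates \eqref{projcoord} and the geodesic $\bar\varphi_\tau(z_0^\epsilon)$, $\tau\in[0,\tau^+(z_0^\epsilon)]$; by Lemmas~\ref{limiting dynamic} and \ref{asymp of geod} this geodesic escapes to infinity, stays within $\mathcal{O}(\epsilon)$ of $\partial\overline M$, satisfies $\tau^+(z_0^\epsilon)=\epsilon\pi+\mathcal{O}(\epsilon^2)$, and, after rescaling time by $\epsilon$, its boundary projection converges to the unit--speed $h_0$--geodesic $\alpha\colon[0,\pi]\to\partial\overline M$ with $(\alpha(0),\dot\alpha(0))=(y_0,\eta_0^\sharp)$. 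Since $I_0f=0$, Definition~\ref{defXray} gives, for every $\epsilon>0$,
\[
0=I_0f(z_0^\epsilon)=\int_0^{\tau^+(z_0^\epsilon)}\rho^{-2}(\tau)\,f(\bar\varphi_\tau(z_0^\epsilon))\,d\tau=\int_0^{\tau^+(z_0^\epsilon)}\rho^{\,j-2}(\tau)\,\bar f(\bar\varphi_\tau(z_0^\epsilon))\,d\tau .
\]
Multiplying by $\epsilon^{1-j}$ and letting $\epsilon\to0$, Lemma~\ref{converge to pi transform on boundary} (applied with $m=0$, $k=j$, to the function $\bar f$) identifies the limit of the last integral as $\int_0^\pi(\sin s)^{j-2}f_j(\alpha(s))\,ds$. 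Hence $\int_0^\pi(\sin s)^{j-2}f_j(\alpha(s))\,ds=0$ for every unit--speed geodesic segment $\alpha$ of $(\partial\overline M,h_0)$ of length $\pi$.

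Then I would invoke \cite{JoSB1}: under any one of the hypotheses --- injectivity radius bigger than $\pi$; injective ordinary X-ray transform on functions; or a round sphere of radius $r\notin\{1/k:k\in\NN,\ k\ge2\}$ --- the weighted time--$\pi$ transform $u\mapsto\bigl(\alpha\mapsto\int_0^\pi(\sin s)^{j-2}u(\alpha(s))\,ds\bigr)$ is injective on $C^\infty(\partial\overline M)$, so $f_j\equiv0$. Consequently $\bar f$ vanishes on $\partial\overline M$, so $\bar f\in\rho C^\infty(\overline M)$ by Taylor's theorem and $f\in\rho^{j+1}C^\infty(\overline M)$. Starting from the hypothesis $f\in\rho^2 C^\infty(\overline M)$ and iterating completes the induction and hence the proof.

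The step I expect to require the most care is the appeal to \cite{JoSB1}: one must verify that the family of weighted transforms with weights $(\sin s)^{j-2}$, $j\ge2$, through which the induction descends is precisely the one whose injectivity is established there, and in particular that in the spherical case the excluded radii $\{1/k\}$ are exactly those for which no such weighted transform has a kernel (a zonal spherical harmonic obstruction). Everything else --- identifying the escaping geodesics, their limiting boundary dynamics, and the passage $\epsilon\to0$ --- is furnished by Lemmas~\ref{limiting dynamic}, \ref{asymp of geod} and \ref{converge to pi transform on boundary} together with Taylor's theorem, which is why the conclusion follows ``directly'' once those ingredients are in hand.
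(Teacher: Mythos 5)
Your proposal is correct and follows essentially the same route as the paper: write $f=\rho^j\bar f$, use the boundary-limit Lemma \ref{converge to pi transform on boundary} (equivalently Lemmas \ref{asymp of geod}, \ref{limiting dynamic}) to turn $I_0f=0$ into the vanishing of the $\sin$-weighted time-$\pi$ transform of the leading coefficient on $(\pl\bbar{M},h_0)$, quote the injectivity of that weighted transform from \cite{JoSB1} (Theorem 4.2 and its proof for the sphere case), and iterate by induction on the vanishing order. The only cosmetic difference is your indexing ($f=\rho^jC^\infty$, weight $\sin^{j-2}$) versus the paper's Taylor-coefficient formulation ($f=\rho^2\bar f$, $\bar f_k$, weight $\sin^k$), and your normalizing power $\epsilon^{1-j}$ is in fact the one matching the lemma.
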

\begin{proof}
We write $f = \rho^2\bbar {f}$ where $\bbar{f}\in C^\infty(\overline M)$ and we have that 
$\bbar{f} =\sum_{k=0}^N \bbar{f}_k\rho^k +\mc{O}(\rho^{N+1})$ with $\bbar{f}_k \in C^\infty(\partial \bbar{M})$. We proceed by induction: if $\bbar{f}_j=0$ for all $j\leq k-1$, we have by 
\[0 = |\eta_0|^{k-1}\int_{0}^{\tau^+(y_0,\eta_0)}  \bbar{f}(\pi_0(\varphi_\tau(y_0,\eta_0))) d\tau \to \int_0^\pi \sin^k(s)\bbar{f}_k(\alpha_{y_0,v}(s))ds\]
as $|\eta_0|\to \infty$,  with $v=\eta_0/|\eta_0|\in S^*_{y_0}\pl\bbar{M}$  and $\alpha_{y,v}(s)$ the geodesic for $h_0$ with initial condition 
$\alpha_{y,v}(0)=y$, $\dot{\alpha}_{y,v}(0)=v$. The proof of  \cite[Theorem 4.2]{JoSB1} implies $\bbar{f}_k = 0$\footnote{Although it is surprisingly not written in the statement of Theorem 4.2 of \cite{JoSB1}, the authors considered the case of the sphere with radius $1$ in the proof.}.
\end{proof}

\textbf{Remark}: For $1$-forms, it is possible to show, using similar arguments as in \cite{JoSB1} and Lemma \ref{kill transversal terms} that if $I_1 \omega = 0$ for $\omega \in \rho^2 C^\infty(\overline M; {}^{sc} T^*\bbar{M})$ then there is $u\in \rho C^\infty(\overline M)$ such that $\omega-du \in \rho^\infty C^\infty(\overline M; {}^{sc} T^*\bbar{M})$ under the assumption that $(\pl\bbar{M},h_0)$ is a closed Riemannnian manifold with injectivity radius ${\rm inj}(h_0)>2\pi$ and such that the X-ray transform on $1$-forms is s-injective. 

\subsection{Resolvent Estimates and Pestov Identity}
\subsubsection{Jacobi Fields Near Infinity}

The curvature estimates in Proposition \ref{curvature decay} allows us to deduce estimates on the Jacobi fields. 
\begin{lemma}
\label{R as map}
There is $C>0$ such that for each $z\in W^\pm_\epsilon$, if 
 $\gamma(t):=\pi_0(\varphi_t(z))$ and $J(t)$ is a smooth vector field along $\gamma$, 
  we  have the following estimate
\[|{ R}_{\gamma(t)}(\dot \gamma(t),J(t))\dot\gamma(t)|_g \leq C \rho^4(t) |J(t)|_g\]
for all $\pm t\geq 0$. 
\end{lemma}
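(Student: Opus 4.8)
The plan is to push the curvature decay of Proposition~\ref{curvature decay} through a multilinear expansion of $R(\dot\gamma,J)\dot\gamma$ in the collar frame. Since $z\in W^{\pm}_\epsilon$, Lemma~\ref{x asymptotic} shows that for $\pm t\ge 0$ the curve $\varphi_t(z)$ stays in the collar $\{\rho<\epsilon\}$, with $\rho(\varphi_t(z))$ decaying and $|\eta(\varphi_t(z))|_{h}$ controlled, so that the tangential speed $\sigma(t):=\sqrt{1-\bbar{\xi}_0(\varphi_t(z))^2}=\rho(t)\,|\eta(t)|_{h}$ is $\mc O(\rho(t))$ along the flow. At $\gamma(t)=\pi_0(\varphi_t(z))$ write the unit velocity as $\dot\gamma=\bbar{\xi}_0 Z+T$, where $Z:=\rho^2\pl_\rho$ satisfies $|Z|_g=1$, where $T:=\rho^2\sum_{ij}h^{ij}\eta_i\pl_{y_j}\in\ker d\rho$ satisfies $|T|_g=\sigma$, and where $Z\perp_g\ker d\rho$ for the block‑diagonal normal form. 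Split $J=\mu Z+\widetilde J$ with $\widetilde J\in\ker d\rho$ and $\max(|\mu|,|\widetilde J|_g)\le|J|_g$, and evaluate $g\big(R(\dot\gamma,J)\dot\gamma,P\big)$ with $P=Z$ and with $P$ running over a $g$-orthonormal frame of $\ker d\rho$; this reduces the statement to bounding the scalars $g(R(A,B)C,D)$, where $A,C\in\{Z,T/|T|_g\}$, $B\in\{Z,\widetilde J/|\widetilde J|_g\}$ and $D\in\{Z,\text{frame vector}\}$, with each occurrence of $T$ contributing a factor $|T|_g=\sigma$ and all other scalar factors bounded by $\max(1,|J|_g)$.

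The heart of the argument is the following bookkeeping. Using antisymmetry in each pair of slots, the pair-exchange symmetry, the first Bianchi identity, and polarization, every component $g(R(A,B)C,D)$ with $A,B,C,D$ each equal either to $Z$ or to a $g$-unit vector in $\ker d\rho$ is a fixed universal combination of the three quantities $g(R(V,W)W,V)$, $g(R(V,Z)Z,V)$, $g(R(V,W)W,Z)$ with $V,W\in\ker d\rho$; hence, by Proposition~\ref{curvature decay}, if exactly $k$ of $\{A,B,C,D\}$ are proportional to $Z$, then $|g(R(A,B)C,D)|\le C\rho^{2+\min(k,2)}$, and it vanishes for $k\ge 3$ since then one of the two ``pair'' slots consists of two copies of $Z$. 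In a non‑vanishing term of the expansion of $g(R(\dot\gamma,J)\dot\gamma,P)$ one thus has $k\le 2$, so at most one of slots $1,2$ and at most one of slots $3,4$ is $Z$; counting shows the number of tangential slots among the two slots occupied by $\dot\gamma$ (slots $1$ and $3$) is at least $2-\min(k,2)$, and each such slot yields a factor $\sigma\le C\rho$. Multiplying the curvature bound $\rho^{2+\min(k,2)}$ by these $\sigma$-factors gives $\rho^{4}$ uniformly for every one of the finitely many terms, and summing (after factoring out one $|J|_g$) yields $|R_{\gamma(t)}(\dot\gamma(t),J(t))\dot\gamma(t)|_g\le C\rho^4(t)|J(t)|_g$ for $\pm t\ge 0$.

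I expect the main obstacle to be the $\rho$-power accounting rather than any single estimate. Proposition~\ref{curvature decay} only supplies the three ``sectional'' decays, so one must check that their polarizations, combined with the Riemann symmetries and the first Bianchi identity, genuinely control all the mixed components at the stated orders --- e.g. that $g(R(Z,V)W,W')=\mc O(\rho^3)$ and $g(R(V,Z)W,Z)=\mc O(\rho^4)$ for $V,W,W'\in\ker d\rho$ --- which is the one nontrivial linear-algebra step. The second delicate point is that the tangential part $T$ of $\dot\gamma$ is precisely the mechanism that can lower a curvature component below two $Z$-slots, so each such occurrence must be matched with the compensating decay $|T|_g=\rho|\eta|_{h}=\mc O(\rho)$; this uses crucially that $z\in W^{\pm}_\epsilon$ (via Lemma~\ref{x asymptotic}), since with only the universal bound $|T|_g\le 1$ one obtains merely $\mc O(\rho^2)|J|_g$.
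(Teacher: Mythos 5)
Your argument is essentially the paper's proof: the paper likewise writes $\dot\gamma=\bbar{\xi}_0\rho^2\pl_\rho+\rho^2V$ with the tangential part of the velocity of size $\mc{O}(\rho)$ in $g$-norm (justified by Lemma \ref{x asymptotic}, exactly your claim $\sigma(t)=\mc{O}(\rho(t))$), splits $J$ into its $\rho^2\pl_\rho$-component $J_0$ and its $\ker d\rho$-part $J_1$, and then bounds the normal and tangential components of $R(\dot\gamma,J)\dot\gamma$ using Proposition \ref{curvature decay}. Your symmetry/Bianchi/polarization bookkeeping simply makes explicit the componentwise curvature estimates that the paper asserts directly from that proposition.
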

\begin{proof}
We write $z=(\rho,\eta)\in SM$ with $\rho(z)\leq \eps$ and from the geodesic flow equation we have
$\dot\gamma(t)=\bbar{\xi}_0(t)\rho^2(t)\pl_\rho+\rho(t)^2V(t)$ where $V(t)=
\sum_{ij}h^{ij}\eta_i(t)\pl_{y_j}$ in local coordinates and 
with $|V(t)|_h\leq C$ and $d\rho(V(t))=0$ for some uniform $C>0$
using Lemma \ref{x asymptotic}.  By Proposition \ref{curvature decay} we have 
\[|g( R_{\gamma(t)}(\dot \gamma(t), J(t)) \dot\gamma(t),\rho^2\pl_\rho)|\leq C ( |J_0(t)|\rho^6(t)+ 
|J_1(t)|_g\rho^5(t))\]
where $J_0(t)= g(J(t),\rho^2\pl_\rho)$ and $J_1(t)=J(t)-J_0(t) \rho^2\pl_\rho$, while for each $Y(t)\in \ker d\rho$ with
$|Y(t)|_g=1$
\[ | g(R_{\gamma(t)}(\dot \gamma(t), J(t)) \dot\gamma(t),Y(t))|\leq C(|J_0(t)|\rho^5(t)
+|J_1(t)|_g \rho^4(t))\]
which ends the proof.
\end{proof}
\begin{lemma}
\label{jacobi growth}
There is $C>0$ such that for all $z\in W^+_\epsilon$ and $J(t)$ a Jacobi field along $\gamma(t) := \pi_0(\varphi_t(z))$, we have the following estimates:
\begin{equation}
\label{combined initial condition}
\begin{split}
|\dot J(t)|_g &\leq C|J(0)|_g \rho^3(0) + C|\dot J(0)|_g,\\
|J(t)|_g &\leq |J(0)|_g+C |J(0)|_g\rho^3(0)t + C|\dot J(0)|_g t. 
\end{split}
\end{equation}
\end{lemma}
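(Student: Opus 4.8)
The plan is to turn the Jacobi equation into a pair of coupled integral inequalities for $a(t):=|J(t)|_g$ and $b(t):=|\dot J(t)|_g=|\nabla_{\dot\gamma}J(t)|_g$, to control the total curvature felt along the outgoing ray using the sharp decay of $\rho$ from Lemma~\ref{x asymptotic}, and then to close the system by Gr\"onwall's inequality, using that $\epsilon$ is small.

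First I would rewrite the Jacobi equation as $\nabla_{\dot\gamma}\nabla_{\dot\gamma}J=-R_{\gamma(t)}(J(t),\dot\gamma(t))\dot\gamma(t)$. Since $z\in W^+_\epsilon$, the proof of Lemma~\ref{x asymptotic} shows that $\varphi_s(z)\in W^+_\epsilon$ for all $s\ge 0$, so Lemma~\ref{R as map} applies along the whole forward ray and gives $|\nabla_{\dot\gamma}\nabla_{\dot\gamma}J(t)|_g\le C\rho^4(t)\,a(t)$ for $t\ge 0$, where $\rho(t):=\rho(\varphi_t(z))$. Integrating along $\gamma$ (using that parallel transport is an isometry) then yields, for all $t\ge 0$,
\[
a(t)\ \le\ a(0)+\int_0^t b(s)\,ds,\qquad\qquad b(t)\ \le\ b(0)+C\int_0^t \rho^4(s)\,a(s)\,ds .
\]

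Next I would record the two curvature integrals. From $\rho(t)\le C_0\rho(0)/(1+\rho(0)t)$ and the change of variables $u=\rho(0)s$ one gets, with constants uniform over $z\in W^+_\epsilon$,
\[
\int_0^\infty \rho^4(s)\,ds\ \le\ C'\rho^3(0),\qquad\qquad \int_0^\infty s\,\rho^4(s)\,ds\ \le\ C''\rho^2(0).
\]
Substituting the first integral inequality into the second, interchanging the order of integration to write $\int_0^t\rho^4(s)\!\int_0^s b(u)\,du\,ds=\int_0^t b(u)\big(\int_u^t\rho^4\big)\,du$, and setting $K(u):=\int_u^\infty\rho^4$ (so $\int_0^\infty K=\int_0^\infty s\rho^4(s)\,ds\le C''\rho^2(0)$), one arrives at
\[
b(t)\ \le\ \big(b(0)+CC'\rho^3(0)\,a(0)\big)+C\int_0^t K(u)\,b(u)\,du .
\]
Gr\"onwall's inequality then gives $b(t)\le\big(b(0)+CC'\rho^3(0)a(0)\big)\,e^{CC''\rho^2(0)}$; choosing $\epsilon$ so small that $e^{CC''\epsilon^2}\le 2$ produces the first estimate $|\dot J(t)|_g\le C(\rho^3(0)|J(0)|_g+|\dot J(0)|_g)$, valid for all $t\ge 0$. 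Feeding this back into $a(t)\le a(0)+\int_0^t b(s)\,ds$ gives the second estimate.

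The only genuine subtlety --- the point where one could easily go wrong --- is the Gr\"onwall step: bounding $a(s)$ by its supremum over $[0,t]$ in the inequality for $b$ is fatal, because $a$ really does grow linearly; one must keep the factor $s$ (equivalently, perform the Fubini rearrangement above), which converts the kernel $\rho^4$ into the weighted kernel $s\rho^4(s)$. The content of Lemma~\ref{x asymptotic} is precisely that this weighted kernel is still integrable, with total mass $\mc{O}(\rho^2(0))$, hence small for $\rho(0)\le\epsilon$, which is what makes the Gr\"onwall constant harmless.
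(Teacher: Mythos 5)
Your proof is correct and follows essentially the same route as the paper's: the curvature bound of Lemma \ref{R as map} combined with the decay $\rho(t)\leq C\rho(0)/(1+\rho(0)t)$ from Lemma \ref{x asymptotic} reduces the Jacobi equation to a Gr\"onwall argument whose effective kernel is $s\,\rho^4(s)$, of total mass $\mc{O}(\rho(0)^2)$, which is exactly the mechanism in the paper. The only difference is bookkeeping: the paper runs the estimate through a monotone barrier function for $|\dot J|$ and treats the cases $\dot J(0)=0$ and $J(0)=0$ separately before combining by linearity, whereas you close the coupled integral inequalities in one stroke via the Fubini rearrangement; the smallness of $\epsilon$ in your last step is not even needed, boundedness of $e^{C\rho(0)^2}$ suffices.
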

\begin{proof}
Consider the function given by $F(t) := \langle R_{\gamma(t)}(\dot \gamma(t), J(t)) \dot\gamma(t), \frac{\dot J (t)}{|\dot J(t)|_g}\rangle_g$. We have $\frac{d}{dt} |\dot J(t)|_g = F(t)$ since $J$ is a Jacobi field. Define the barrier function $B(t)$ by 
\[\dot B(t) := |F(t)|,\ \ B(0) := |\dot J(0)|_g.\]
One then can easily see that 
\begin{eqnarray}
\label{B is bigger than J dot}
B(t)\geq |\dot J(0)|_g+ \int_0^t \pl_s |\dot J(s)|_gds = |\dot J(t)|_g
\end{eqnarray}
and is non-decreasing. Furthermore, due to the asymptotics of Lemma \ref{x asymptotic} and the estimates of Lemma \ref{R as map} we have
\begin{equation}
\label{F bound}
|F(t)| \leq C \Big(\frac{\rho(0)}{1+ \rho(0)t}\Big)^4 |J(t)|_g.
\end{equation}
Therefore one can write for $t>0$
\[\dot B(t) \leq C \Big(\frac{\rho(0)}{1+ \rho(0)t}\Big)^4 |J(t)|_g =  C \Big( \frac{\rho(0)}{1+ \rho(0)t}\Big)^4\Big( |J(0)|_g + \int_0^t \pl_s |J(s)|_g ds\Big)\]
Differentiating $|J(s)|_g$ and using \eqref{B is bigger than J dot} we obtain
\[\dot B(t) \leq  C\Big(\frac{\rho(0)}{1+ \rho(0)t}\Big)^4 \Big( |J(0)|_g + \int_0^t B(s) ds\Big).\]
And now using the fact that $B(s)$ is increasing we arrive at 
\[\dot B(t) \leq  C\Big(\frac{\rho(0)}{1+ \rho(0)t}\Big)^4 \Big( |J(0)|_g + t B(t)\Big).\]
If $B(0) = |\dot J(0)| = 0$ we have 
\begin{eqnarray*}
B(t) \leq C |J(0)|_g \int_0^t \Big( \frac{\rho(0)}{1+ \rho(0)s}\Big)^4 ds + 
C \int_0^t \Big(\frac{\rho(0)}{1+ \rho(0)s}\Big)^4 sB(s) ds.
\end{eqnarray*}
Then by Gr\"onwall's lemma,
\begin{eqnarray*}
 B(t) \leq |J(0)|_g \rho(0)^3 \int_0^{\rho(0)t} \Big(\frac{1}{1+s}\Big)^4 ds \exp \Big( C\rho(0)^2\int_0^{\rho(0)t} \Big(\frac{1}{1+ r}\Big)^4 r dr\Big) \leq C \rho(0)^3|J(0)|_g.
\end{eqnarray*}
So we obtain 
\begin{eqnarray}
\label{dot J(0) = 0}
|\dot J(t)|_g\leq C|J(0)|_g \rho(0)^3,\ \ \ |J(t)| \leq |J(0)|_g+C\rho(0)^3 |J(0)|_gt
\end{eqnarray}
if $\dot J (0) = 0$. Suppose now $J(0) = 0$. Then $\dot B(t) \leq C \Big( \frac{\rho(0)}{1 + \rho(0)t}\Big)^4 t B(t)$ and we obtain
\begin{equation}\label{J(0) = 0}
\begin{split}
|\dot J(t)|_g &\leq  B(t) \leq |\dot J(0)|_g \exp\Big(C \rho^2(0)\int_0^{\rho(0)t} \Big( \frac{1}{1 + s}\Big)^4sds\Big)\leq C|\dot{J}(0)|_g ,\\ 
|J(t)|_g &\leq   |\dot J(0)|_g \int_0^t \exp\Big( C \rho^2(0) \int_0^{\rho(0)s} \Big(\frac{1}{1+u} \Big)^4 udu\Big) ds\leq C|\dot{J}(0)|_gt
\end{split}
\end{equation}
Consequently combining \eqref{dot J(0) = 0} and \eqref{J(0) = 0} we have the desired estimates.
\end{proof}
\subsubsection{Resolvent mapping properties}
We next describe the solution of $Xu=f$ with boundary conditions $u|_{\pl_\pm S^*M}=0$ when $f$ are symmetric tensors.
\begin{lemma}
\label{resolvent estimate}
Let $f \in \rho^k C^\infty(\overline M; S^m ({}^{\rm sc}T^*\overline M))$ satisfy 
$\iota_{\rho^2\pl_\rho}f = 0$ near $\pl \bbar{M}$. If $m \geq 1$, let $k+m >3$ and define $u_{\pm} := R_{ \pm} \pi_m^*f $ as in \eqref{Rf}. There is $C>0$ such that for all $z = (\rho,y,\bbar{\xi}_0,\eta)\in W^{\pm}_\epsilon$, 
\begin{equation}\label{estimateupm}
\begin{gathered}
|u_{\pm}(z)| \leq C \rho^{k-1} |\rho\eta|^m_{h_\rho},\\ 
\| \nabla^v u_{\pm} (z) \|_G  \leq C \rho^{k-2} |\rho\eta|_{h_\rho}^{m-1},\
 \|\nabla^h u_{ \pm}(z)\|_G \leq C \rho^{k-1}  |\rho\eta|_{h_\rho}^{m-1}.
 \end{gathered}\end{equation} 
The resolvent thus satisfies the estimate
\[|u_{\pm}(z)|+\|\nabla^v u_{ \pm} (z) \|_G+ \|\nabla^h u_{ \pm}(z)\|_G\leq C \rho^{k-2}.\]
If $m=0$, one gets the same estimates but with $\rho^{k-1}$ on the right hand side.
Furthermore, if $f \in \rho^\infty C^\infty(\overline M; S^m ({}^{sc}T^*\overline M))$ then 
$u_{\pm}$ vanishes to infinite order at $\pl_{\pm}S^*M$.
\end{lemma}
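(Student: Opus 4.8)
\emph{Proof sketch.} The plan is to reduce all three bounds to a single pointwise estimate on $F:=\pi_m^*f$ together with one on $\nabla^GF$, and then integrate along the flow using the quantitative control of Lemmas \ref{x asymptotic} and \ref{jacobi growth}. Since $\iota_{\rho^2\pl_\rho}f=0$ we are in the case $\ell=0$ of \eqref{liftest}, \eqref{pimf}, \eqref{gradient of lift estimate}; writing $f=\rho^k\tilde f$ with $\tilde f$ a smooth scattering $m$-tensor, these give
\[
|F(z)|\le C\rho^{k}|\rho\eta|_{h_\rho}^{m},\qquad \|\nabla^GF(z)\|_G\le C\rho^{k}|\rho\eta|_{h_\rho}^{m-1}\ \ (m\ge1),
\]
where for the gradient one also invokes $|\nabla^g\rho|_g=\rho^2$ (Lemma \ref{normalform}) to control the horizontal derivative landing on the weight $\rho^k$; when $m=0$ instead $\nabla^vF=0$ and $\|\nabla^hF\|_G=|df|_g\le C\rho^{k+1}$. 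On $W^+_\epsilon$ the forward flow $\varphi_t$ stays in $W^+_\epsilon$ with $\rho(\varphi_tz)\to0$ (Lemma \ref{x asymptotic}), and in the $t$-parametrisation $u_+(z)=\int_0^\infty F(\varphi_tz)\,dt$, the integral converging by the decay just recorded; $u_-$ is handled identically on $W^-_\epsilon$ with the reverse-time flow, so I only treat $u_+$.

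\emph{Sup bound.} Inserting into $u_+(z)=\int_0^\infty F(\varphi_tz)\,dt$ the estimates $\rho(\varphi_tz)\le C\rho(z)/(1+\rho(z)t)$ and, via $1-\bbar\xi_0^2=|\rho\eta|_{h_\rho}^2$, $|\rho\eta|_{h_\rho}(\varphi_tz)\le C|\rho(z)\eta(z)|_{h_\rho}/(1+\rho(z)t)$ from Lemma \ref{x asymptotic} gives
\[
|u_+(z)|\le C\rho(z)^{k}|\rho(z)\eta(z)|_{h_\rho}^{m}\int_0^\infty\frac{dt}{(1+\rho(z)t)^{k+m}}=C\rho(z)^{k-1}|\rho(z)\eta(z)|_{h_\rho}^{m},
\]
which is finite because $k+m>1$.

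\emph{Gradient bounds.} Here I would differentiate under the integral sign: for $v\in\mc{Z}_z$ with vertical and horizontal lifts $v^v,v^h$,
\[
\langle\nabla^vu_+(z),v^v\rangle_G=\int_0^\infty\langle\nabla^GF(\varphi_tz),d\varphi_t(v^v)\rangle_G\,dt,\qquad \langle\nabla^hu_+(z),v^h\rangle_G=\int_0^\infty\langle\nabla^GF(\varphi_tz),d\varphi_t(v^h)\rangle_G\,dt.
\]
The key input is the classical identification (see \cite[Ch.\ 1]{Pa}) of $d\varphi_t$ on the vertical, resp. horizontal, bundle with Jacobi fields $J$ along $\gamma(t)=\pi_0(\varphi_tz)$: $d\varphi_t(v^v)$ corresponds to $(J(t),\dot J(t))$ with $(J(0),\dot J(0))=(0,v)$ and $d\varphi_t(v^h)$ to the one with $(J(0),\dot J(0))=(v,0)$. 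For $\|v^{v}\|_G=\|v^h\|_G=1$, Lemma \ref{jacobi growth} then yields $\|d\varphi_t(v^v)\|_G\le C(1+t)$ and $\|d\varphi_t(v^h)\|_G\le C(1+\rho^3(z)t)$. Bounding $\|\nabla^GF(\varphi_tz)\|_G\le C\rho(\varphi_tz)^{k}|\rho\eta|_{h_\rho}^{m-1}(\varphi_tz)$ and using Lemma \ref{x asymptotic} as above reduces everything to the elementary integrals $\int_0^\infty(1+t)(1+\rho t)^{-(k+m-1)}\,dt\le C\rho^{-2}$ and $\int_0^\infty(1+\rho^3t)(1+\rho t)^{-(k+m-1)}\,dt\le C\rho^{-1}$, the first being exactly where $k+m>3$ is needed. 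This produces $\|\nabla^vu_+\|_G\le C\rho^{k-2}|\rho\eta|_{h_\rho}^{m-1}$ and $\|\nabla^hu_+\|_G\le C\rho^{k-1}|\rho\eta|_{h_\rho}^{m-1}$; for $m=0$ the argument is the same, the extra power of $\rho$ in $\|\nabla^hF\|_G$ being absorbed, giving $\rho^{k-1}$ throughout. Summing the three estimates and using $|\rho\eta|_{h_\rho}\le1$, $\rho\le1$ gives the stated $C\rho^{k-2}$.

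\emph{Infinite-order vanishing and main difficulty.} If $f\in\rho^\infty C^\infty$, apply the above with $k$ arbitrarily large; the same scheme applied to higher derivatives (differentiating the transport equation $Xu_+=-F$, or equivalently iterating the flow and Jacobi estimates, whose derivatives grow only polynomially) controls $\pl^\alpha u_\pm$ against the $\rho^\infty$ decay of $F$, so $u_\pm$ vanishes to infinite order at $\pl_\pm S^*M$. The main obstacle is essentially bookkeeping rather than conceptual: justifying the differentiation under the integral uniformly up to the boundary face $\pl_\pm S^*M$, where the $t$-integral is over an infinite interval, and fixing the normalisations in the $d\varphi_t$–Jacobi-field dictionary so that the powers of $\rho$ and the $G$-norms propagate correctly; once this is done everything rests on Lemmas \ref{x asymptotic}, \ref{jacobi growth} and on the finiteness of the above integrals, which holds precisely under $k+m>3$.
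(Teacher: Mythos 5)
Your argument is essentially the paper's own proof: both write $u_\pm$ as the time integral of $\pi_m^*f$ along the flow, bound the integrand via \eqref{pimf}--\eqref{gradient of lift estimate} together with the decay of $\rho(t)$ and $|\eta(t)|$ from Lemma \ref{x asymptotic}, and estimate the differentiated integral through the standard identification of $d\varphi_t$ on vertical/horizontal vectors with Jacobi fields with data $(0,v)$ resp. $(v,0)$, controlled by Lemma \ref{jacobi growth}, which yields exactly the same powers $\rho^{k-2}|\rho\eta|^{m-1}$ and $\rho^{k-1}|\rho\eta|^{m-1}$ under $k+m>3$. The only (minor) divergence is the last claim: the paper deduces infinite-order vanishing directly from the formula $u=R_+f$ written with the smooth rescaled flow $\bbar{\varphi}_\tau$ on $\bbar{S^*M}$, which is cleaner than iterating derivative estimates in the unrescaled time as you sketch, but both routes work.
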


\begin{proof}
We only do this for $u = u_+$ as the $u_-$ case is exactly the same. By definition one writes in using the decomposition $\xi=\bbar{\xi}_0d\rho/\rho^2+\eta$ 
of cotangent vectors near $\pl\bbar{S^*M}$ (with $\iota_{\pl_\rho} \eta=0$)
\begin{equation}
\label{explicit resolvent}
u(\rho,y,\bbar{\xi}_0,\eta) = \int_0^\infty f_{\gamma(t)}(\eta^\sharp (t),\dots,\eta^\sharp(t))dt
\end{equation}
where $(\gamma(t),\bbar{\xi}_0(t), \eta(t))\in S^*M$ is the geodesic with initial condition $(\rho,y,\bar\xi_0,\eta)\in S^*M$ and with $h_{\rho}(\eta^\sharp(t),\cdot)/\rho^2(t)=\eta(t)$. The first inequality in \eqref{estimateupm} then follows from Lemma \ref{x asymptotic} and the definition of ${}^{\rm sc}T^*\overline M$.

For the estimates on the derivatives, we see from computing using the chain rule that 
\[\|\nabla^h u(z)\|_G \leq \sup\limits_{\|V\| = 1} \int_0^\infty \|d(\pi_m^*f)_{\varphi_t(z)}\|_G\|(J_{(V,0)}(t), \dot J_{(V,0)}(t)))\|_Gdt\]
where $J_{V,W}(t)$ is the Jacobi field with initial condition $J_{V,W}(0)=V$ and $\dot{J}_{V,W}(0)=W$.
Using \eqref{gradient of lift estimate}, we have
\[
\|\nabla^h u(z)\|_G \leq  C \sup\limits_{\|V\| = 1} \int_0^\infty  \rho(t)^{k}  |\rho(t)\eta(t)|_h^{(m-1)}\|(J_{(V,0)}(t), \dot J_{(V,0)}(t))\|_Gdt
\]
and applying the estimates of Lemma \ref{x asymptotic} and \eqref{combined initial condition} we obtain 
\[\begin{split}
\|\nabla^h u(z)\|_G \leq & C \rho^{k+ m-1} \sup\limits_{\|V\| = 1} \int_0^\infty  \Big(\frac{1}{1 + \rho(0)t}\Big)^{k + m-1}   |\eta(0)|_h^{(m-1)}\|(J_{(V,0)}(t), \dot J_{(V,0)}(t))\|_Gdt \\
\leq &  C\rho^{k-1}|\rho\eta|_h^{m-1}.
\end{split}\]
For obtaining the analogous estimate for $\|\overset v \nabla u(z)\|_g$, we repeat the same process and get
\[\begin{split}
\|\nabla^v u(z)\|_G \leq & C\rho^{k}|\rho\eta|_h^{m-1}\sup\limits_{\|V\| = 1} \int_0^\infty  \Big(\frac{1}{1 + \rho(0)t}\Big)^{k+ m-1}\|(J_{(0,V)}(t), \dot J_{(0,V)}(t))\|_Gdt \\
\leq & C\rho^{k-2}|\rho\eta|_h^{m-1}.
\end{split}\] 
The case $m=0$ is similar with an improvement of one power of $\rho$. The last statement is a direct consequence of the expression $u=R_+f$
 in terms of the smooth flows $\bbar{\varphi}_\tau$ on $\bbar{S^*M}$.
\end{proof}
We then derive the following 
\begin{corollary}\label{primitiveforkerI}
Let $f \in \rho^k C^\infty(\overline M; S^m ({}^{\rm sc}T^*\overline M))$ satisfy 
$\iota_{\rho^2\pl_\rho}f = 0$ near $\pl \bbar{M}$. If $m \geq 1$, we also assume 
$k+m >3$. If $I_mf=0$, there exists $u\in C^\infty(S^*M)$ satisfying $Xu=f$ and the bounds near 
$\rho=0$
\[|u(z)|+\|\nabla^v u (z) \|_G+ \|\nabla^h u(z)\|_G\leq C \rho(z)^{k-1}.\] 
\end{corollary}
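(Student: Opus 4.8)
The plan is to take $u$ to be the common value of the forward and backward resolvents of $X$ applied to $\pi_m^*f$. Set $u:=-R_+\pi_m^*f$. Because $\iota_{\rho^2\pl_\rho}f=0$ near $\pl\bbar{M}$, \eqref{liftest} gives $\pi_m^*f\in\rho^{k+m}C^\infty(\bbar{S^*M})\subset\rho^2C^\infty(\bbar{S^*M})$ (using $k+m\ge 2$), and $I_mf=I\pi_m^*f=0$ means $\pi_m^*f\in\ker I$; hence \eqref{kerIcobord} yields $R_+\pi_m^*f=R_-\pi_m^*f$, so also $u=-R_-\pi_m^*f$. By the construction of $R_\pm$ one has $Xu=\pi_m^*f$, and $u\in C^\infty(S^*M)$ by Lemma \ref{resolvent estimate}. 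Thus $u$ is the required primitive, and it only remains to prove the decay near $\rho=0$.

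For the pointwise bounds, cover a collar neighbourhood of $\pl\bbar{S^*M}$ by $W^+_\epsilon\cup W^-_\epsilon$. On $W^+_\epsilon$ one has $u=-u_+$ with $u_+=R_+\pi_m^*f$ as in Lemma \ref{resolvent estimate}, and on $W^-_\epsilon$ one has $u=-u_-$ — this is precisely where the identity $R_+\pi_m^*f=R_-\pi_m^*f$ is used. Applying the estimates of Lemma \ref{resolvent estimate} in each region and using that $\bbar{\xi}_0^2+\rho^2|\eta|^2_{h_\rho}=1$ on $S^*M$ forces $|\rho\eta|_{h_\rho}\le 1$, we get $|u|\le C\rho^{k-1}$ and $\|\nabla^hu\|_G\le C\rho^{k-1}$ immediately. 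The only quantity for which Lemma \ref{resolvent estimate} is not already sharp enough is $\|\nabla^vu\|_G$, for which it only yields $C\rho^{k-2}|\rho\eta|^{m-1}_{h_\rho}$, one power of $\rho$ short of what is claimed.

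Recovering that power is the crux, and it is where the hypothesis $I_mf=0$ genuinely enters. Differentiate $R_\pm\pi_m^*f$ in a unit vertical direction $V$ and write the vertical Jacobi field along $\gamma(t)=\pi_0(\varphi_t(z))$ as $J_{(0,V)}(t)=tV(t)+K(t)$, where $V(t)$ is the parallel transport of $V$ and $K$ solves the inhomogeneous Jacobi equation with source $-tR_{\gamma(t)}(\dot\gamma(t),V(t))\dot\gamma(t)=\mc{O}(\rho^4|t|)$ by Proposition \ref{curvature decay} (so that $|K(t)|,|\dot K(t)|$ are smaller by a factor $\rho^2$ than the naive size). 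Feeding this into $\nabla^vu=-\nabla^v(R_+\pi_m^*f)$, every resulting term is $\le C\rho^{k-1}$ except $\int_0^\infty t\,G\big(\nabla^h(\pi_m^*f)(\varphi_t(z)),V(t)^h\big)\,dt$; running the same computation with $u=-R_-\pi_m^*f$ produces the analogous integral over $(-\infty,0]$, so adding the two expressions for the single quantity $\nabla^vu(z)$ the obstruction becomes $\tfrac12\int_{-\infty}^\infty t\,G\big(\nabla^h(\pi_m^*f)(\varphi_t(z)),V(t)^h\big)\,dt$, which up to terms already controlled by $C\rho^{k-1}$ is the vertical derivative at $z$ of the flow‑invariant function $z\mapsto\int_{-\infty}^\infty\pi_m^*f(\varphi_t(z))\,dt$. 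That function equals $I_mf$ precomposed with the endpoint map and hence vanishes identically, so the obstruction is $\le C\rho^{k-1}$ as well. The main obstacle is to make all of these estimates \emph{uniform} up to $\rho=0$: since the growth of $J_{(0,V)}$ is only linear in $t$ and does not decay, the integrals are borderline exactly in the region where the geodesic through $z$ dwells near infinity (where $|\bbar{\xi}_0|$ stays bounded away from $1$); there one rescales and invokes Lemmas \ref{rho is tau}, \ref{limiting dynamic} and \ref{asymp of geod}, according to which the trajectory, suitably rescaled, is an $\mc{O}(\rho)$‑perturbation of a geodesic of $(\pl\bbar{M},h_0)$ run for time $\pi$, which renders the constants uniform. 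Near the faces $\pl_\pm S^*M$ themselves the weighted bound $C\rho^{k-2}|\rho\eta|^{m-1}_{h_\rho}$ of Lemma \ref{resolvent estimate} is already $\le C\rho^{k-1}$ when $m\ge 2$ since $|\rho\eta|_{h_\rho}\to 0$ there, while for $m=1$ one uses instead that $|u_\pm|\le C\rho^{k-1}|\rho\eta|_{h_\rho}$ forces $u$ to vanish to order $k$ on compact subsets of those faces, so its tangential, and hence vertical, derivatives carry the extra power of $\rho$ too.
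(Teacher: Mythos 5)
Your construction of $u$ is exactly the paper's proof: the paper also notes that $I_mf=0$ is equivalent, by \eqref{kerIcobord}, to $R_+\pi_m^*f=R_-\pi_m^*f$, takes $u$ to be this common value (your extra minus sign is immaterial), and then simply reads the bounds off Lemma \ref{resolvent estimate}, using the $u_+$ estimates on $W^+_\epsilon$ and the $u_-$ estimates on $W^-_\epsilon$. The paper stops there; it does not attempt the upgrade of the $\nabla^v u$ bound that occupies the second half of your argument, and indeed its one\nobreakdash-line proof only delivers $\|\nabla^v u\|_G\leq C\rho^{k-2}|\rho\eta|_{h_\rho}^{m-1}$ when $m\geq 1$ (which is also all that is used later, e.g.\ in part ii) of the proof of Theorem \ref{injectivity of tensors}). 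So you are right to flag that the literal $\rho^{k-1}$ bound for the vertical gradient does not follow verbatim from Lemma \ref{resolvent estimate} for $m\geq1$; the problem is that your proposed repair does not work as written.

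Two concrete gaps. First, the symmetrization with $R_-$: at a point $z\in W^+_\epsilon$ with $|\eta(z)|$ bounded, the backward orbit does not stay in the collar — it crosses the compact region $\{\rho\geq\epsilon\}$ before reaching $\pl_-S^*M$ — so in the representation of $\nabla^v u$ through $R_-\pi_m^*f$ the ``remaining'' terms (the vertical pairing $\int G(\nabla^v\pi_m^*f,\dot J)\,dt$, the correction $K(t)$, etc.) are only $O(1)$, not $O(\rho(z)^{k-1})$: there is no smallness of $\rho$ along the interior portion of the trajectory. Averaging the two representations therefore destroys the forward-side error bound, and the cancellation argument gives only $O(1)$; the uniformity issue you address (grazing geodesics with $|\eta|$ large) is not the actual obstruction. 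Second, the $m=1$ endgame is not a valid inference: a pointwise bound $|u|\leq C\rho^{k-1}|\rho\eta|_{h_\rho}$ does not control $\pl_{\bbar{\xi}_0}u$ or $\rho^{-1}\pl_\eta u$, which is what $\|\nabla^v u\|_G$ measures by \eqref{normnablaGu}. If one really wants $\|\nabla^v u\|_G\leq C\rho^{k-1}|\rho\eta|_{h_\rho}^{m-1}$, the hypothesis $I_mf=0$ is not the right tool at all: one should instead sharpen the proof of Lemma \ref{resolvent estimate} by splitting $d(\pi_m^*f)$ into horizontal and vertical parts — using \eqref{horverlifts} one checks that the horizontal part is $O(\rho^{k+1}|\rho\eta|_{h_\rho}^{m-1})$, one order better than the full bound \eqref{gradient of lift estimate} — and pairing the horizontal part with $J(t)=O(t)$ and the vertical part with $\dot J(t)=O(1)$; each resulting integral is then $O(\rho^{k-1}|\rho\eta|_{h_\rho}^{m-1})$ by Lemma \ref{x asymptotic}. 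In any case, the intended content of the corollary in the paper is just Lemma \ref{resolvent estimate} combined with \eqref{kerIcobord}, which your first paragraph already reproduces.
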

\begin{proof}
The condition $I_mf=0$ is equivalent to $R_+\pi_m^*f=R_-\pi^*_mf$ by \eqref{kerIcobord} and thus it suffices to set $u=R_+\pi_m^*f$ and apply Lemma \ref{resolvent estimate}.
\end{proof}
\subsubsection{Case with trapping}\label{trappedsec}
We briefly discuss the case where the curvature of $g$ is negative, in which case the trapped set 
\[ K:= \{z\in S^*M \,|\, \tau^+(z)=+\infty, \tau^-(z)=-\infty\}\]
is a hyperbolic set in the sense of Anosov. The dynamics and resolvent are constructed in \cite{DyGu} and the application to X-ray is done for the compact setting in \cite{Gui}. In that case the trapped set has measure $0$ and the set of $z\in \pl_-S^*M$ for which $\tau^+(z)<\infty$ is an open set of full measure and we will say that $I_mf=0$ if $I_mf(z)=0$ for all such $z\in \pl_-S^*M$.
Using that the trapped set is contained in a strictly convex compact region $\{\rho\geq \eps\}\subset S^*M$  for some $\eps>0$ small, it is straightforward to apply the analysis of \cite{DyGu,Gui} in our setting. 
The argument are mutatis mutandis the same as in the asymptotically hyperbolic case discussed in \cite[Proposition 3.11 and Lemma 3.12]{GGSU}: combined with the analysis near $\pl\bbar{M}$ done to prove Lemma \ref{resolvent estimate}, we obtain
\begin{lemma}\label{trapped}
Assume that $g$ is asymptotically conic with negative curvature, but with a non-trivial trapped set. Then the conclusion of Corollary \ref{primitiveforkerI} hold true exactly as in the non-trapping case.
\end{lemma}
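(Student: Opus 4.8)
The plan is to combine two ingredients that are already in place. The first is the microlocal theory of the resolvents $R_\pm=(\mp X-0)^{-1}$ for a geodesic flow with hyperbolic trapped set on a compact manifold with strictly convex boundary, developed in \cite{DyGu}, together with its application to the X-ray transform in \cite{Gui}; the second is Lemma \ref{resolvent estimate}, whose proof used only the dynamics in $W^\pm_\epsilon$ together with Lemmas \ref{x asymptotic} and \ref{jacobi growth}, and is therefore insensitive to trapping in the compact part of $S^*M$. To invoke the first ingredient I would first record that the trapped set $K=\{z\in S^*M : \tau^+(z)=+\infty,\ \tau^-(z)=-\infty\}$ is a compact hyperbolic set lying in the interior. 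Indeed, by Lemma \ref{x asymptotic} every $z\in W^+_\epsilon$ escapes to $\rho=0$ in forward time, so $K\cap W^\pm_\epsilon=\emptyset$ and $K\subset\{\rho\geq\epsilon\}$; negative curvature makes the flow on $K$ Anosov, the stable and unstable subbundles being spanned by the exponentially decaying and growing Jacobi fields; and for $\epsilon$ small the region $M_\epsilon:=\{\rho\geq\epsilon\}$ is strictly geodesically convex, since on $\{\rho=\epsilon,\ \bbar{\xi}_0=0\}$ the formula \eqref{formluaX} gives $\ddot\rho=\rho^2\dot{\bbar{\xi}}_0=-\rho^5\big(|\eta|^2_{h_\rho}+\tfrac{\rho}{2}\pl_\rho|\eta|^2_{h_\rho}\big)<0$. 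Thus $(M_\epsilon,g)$ is exactly of the type treated in \cite{DyGu}.

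Next, following \cite{Gui} and \cite[Proposition 3.11 and Lemma 3.12]{GGSU}, I would set $u_\pm:=R_\pm\pi_m^*f$, where on $M_\epsilon$ the resolvent $R_\pm$ is the one constructed in \cite{DyGu} as an operator on anisotropic Sobolev spaces (holomorphic near the spectral point $0$ once the finite-rank resonant projector is removed), glued across $\{\rho\sim\epsilon\}$ to the non-trapping resolvent of Lemma \ref{resolvent estimate} on $\{\rho<\epsilon\}$, the gluing introducing only smoothing, rapidly decaying errors exactly as in \cite[Proposition 3.11]{GGSU}. This yields distributions $u_\pm$ on $S^*M$ solving $Xu_\pm=f$ (in the sense of Corollary \ref{primitiveforkerI}), smooth away from the forward (resp.\ backward) trapped set, with $\WF(u_\pm)$ over $K$ contained in the dual unstable bundle $E^*_u$ (resp.\ the dual stable bundle $E^*_s$). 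On the open, dense, full-measure set of geodesics non-trapped in both time directions, $u_+-u_-$ equals the full integral $\int_{-\infty}^\infty\pi_m^*f(\varphi_t(z))\,dt$, which is $I_mf$ along that geodesic and hence vanishes; since $w:=u_+-u_-$ satisfies $Xw=0$ and $\WF(w)$ over $K$ is contained in $E^*_u\cup E^*_s$, the argument of \cite[Lemma 3.12]{GGSU} forces $w\equiv0$, so $u:=u_+=u_-$ has $\WF(u)\subset E^*_u\cap E^*_s=\emptyset$ and therefore $u\in C^\infty(S^*M)$. Finally, for $z\in W^\pm_\epsilon$ the forward (resp.\ backward) trajectory leaves $M_\epsilon$ in finite time and then escapes to $\rho=0$, so $u(z)=u_\pm(z)=\int_0^{\tau^\pm(z)}\rho^{-2}(\tau)\,\pi_m^*f(\bbar{\varphi}_\tau(z))\,d\tau$ is a convergent integral over $W^\pm_\epsilon$ and the proof of Lemma \ref{resolvent estimate} applies verbatim, giving $|u(z)|+\|\nabla^v u(z)\|_G+\|\nabla^h u(z)\|_G\leq C\rho(z)^{k-1}$ near $\rho=0$, and vanishing to infinite order at $\pl_\pm S^*M$ when $f\in\rho^\infty C^\infty$.

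The step I expect to be the main obstacle is the identity $u_+=u_-$ and the resulting smoothness of $u$: this is where both the hyperbolicity of $K$ — needed for the wavefront calculus of \cite{DyGu} and for the cancellation $E^*_u\cap E^*_s=\emptyset$ — and the hypothesis $I_mf=0$ are genuinely used, and it is the core of \cite{Gui}. By contrast, the geometric preliminaries (compactness and strict convexity of $M_\epsilon$), the gluing of the interior resolvent to the near-infinity one, and the $\rho^{k-1}$ decay near $\rho=0$ are routine; the last is literally Lemma \ref{resolvent estimate}, which requires no modification.
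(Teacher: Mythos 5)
Your proposal is correct and follows essentially the same route as the paper, which itself only sketches this step: it invokes the resolvent construction of \cite{DyGu} and its X-ray application \cite{Gui} on the strictly convex compact core $\{\rho\geq\eps\}$, argues mutatis mutandis as in \cite[Proposition 3.11 and Lemma 3.12]{GGSU} (gluing, $u_+=u_-$ via the wavefront set/hyperbolicity argument), and combines this with the near-infinity analysis of Lemma \ref{resolvent estimate}. Your added details (escape from $W^\pm_\eps$ forcing $K\subset\{\rho\geq\eps\}$, strict convexity of $M_\eps$, and the verbatim reuse of the $\rho^{k-1}$ bounds) are exactly the points the paper leaves implicit.
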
 

\subsubsection{Pestov identities}
We next give the Pestov identity for tensors. Let $M_\epsilon:=\{z\in M; \rho(z)\geq \eps\}$, and notice that $M_\eps$ is strictly convex for small $\eps>0$. 
\begin{proposition}
\label{pestov}
Let $f\in \rho^k C^\infty(\overline M ; S^m ({}^{\rm sc}T^*\bbar{M}))$ which satisfies $\iota_{\rho^2\partial_\rho} f = 0$ near $\pl \bbar{M}$ and $I_mf=0$. Assume 
$k > \frac{n}{2}+1$ and set $u := R_+\pi_m^*f$ defined by \eqref{Rf}. The following identity holds
\[ \| \nabla^v \pi_m^* f\|_{L^2}^2 - \| X \nabla^v u\|_{L^2}^2 = (n-1)\| \pi_m^* f\|_{L^2}^2 - \langle \mc{R} \nabla^v u, \nabla^v u\rangle.\]
where $\mc{R}: \mc{Z}\to \mc{Z}$ is defined by $\mc{R}_{(x,\xi)}Z:=R_x(Z,\xi^\sharp)\xi^\sharp$ with $R$ the Riemann curvature tensor.
In general, for all $ u\in \rho^\infty C^\infty(^{\rm sc}S^*M)$ one has 
\[ \| \nabla^v Xu\|^2 - \| X\nabla^v u\|^2 = (n-1)\| Xu\|^2 - \langle {\mathcal R} \nabla^v u, \nabla^v u\rangle_{L^2(S^*M)}.\]
\end{proposition}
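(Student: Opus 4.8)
\emph{Proof plan.} This is the Pestov identity on the non-compact unit sphere bundle $S^*M$, and the plan is to deduce it from the usual \emph{pointwise} Pestov identity on $S^*M$ — which is local and hence insensitive to the geometry at infinity — integrate the result against the Liouville measure $\mu$, and check that the boundary terms produced by the exhaustion $M_\eps=\{\rho\ge\eps\}\uparrow M$ vanish as $\eps\to 0$. First I would assemble the ingredients: the commutator relations between $X$ and the vertical/horizontal gradients on $S^*M$ (with signs fixed as in Subsection~\ref{connectionmap}), namely $\nabla^v(Xu)=X\nabla^v u+\nabla^h u$ and $X(\nabla^h u)=\nabla^h(Xu)+\mc R\nabla^v u$; the pointwise symmetry of $\mc R:\mc Z\to\mc Z$, from the symmetries of the Riemann tensor; and the formal skew-adjointness of $X$, acting on sections of $\mc Z$ by parallel transport along geodesics, on $L^2(S^*M;\mc Z,\mu)$ — which follows from $\mc L_X\mu=0$ (equivalently $\iota_X d\alpha=0$, $\alpha(X)=1$) together with metric-compatibility of the connection, and which on $M_\eps$ holds up to a flux term over $\{\rho=\eps\}$.

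From $\nabla^v(Xu)=X\nabla^v u+\nabla^h u$ one reads off $\|\nabla^v(Xu)\|^2-\|X\nabla^v u\|^2=2\langle X\nabla^v u,\nabla^h u\rangle+\|\nabla^h u\|^2$ (all inner products over $S^*M$ against $\mu$), so the whole statement reduces to the pointwise identity on $S^*M$
\[
2\langle X\nabla^v u,\nabla^h u\rangle+|\nabla^h u|^2=(n-1)(Xu)^2-\langle\mc R\nabla^v u,\nabla^v u\rangle+\operatorname{div}_\mu(W_u),
\]
with $W_u$ an explicit vector field on $S^*M$ quadratic in $u$ and its first horizontal/vertical derivatives. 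I would cite this from the closed-manifold theory (e.g. \cite{Sh,PSU}): it comes from writing $\langle X\nabla^v u,\nabla^h u\rangle=\operatorname{div}_\mu(\langle\nabla^v u,\nabla^h u\rangle X)-\langle\nabla^v u,X\nabla^h u\rangle$, replacing $X\nabla^h u$ by $\nabla^h(Xu)+\mc R\nabla^v u$ (which produces the curvature term $-\langle\mc R\nabla^v u,\nabla^v u\rangle$), and removing the remaining mixed term $\langle\nabla^v u,\nabla^h(Xu)\rangle$ by the standard vertical–horizontal integration by parts, in which the trace over the $(n-1)$-dimensional fiber of $\mc Z$ yields the constant $n-1$; everything here uses only the Sasaki metric, the connection map and $\mc R$, and so is unchanged by the asymptotic structure of $g$.

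It remains to integrate over $S^*M$ and show $\int_{S^*M}\operatorname{div}_\mu(W_u)\,\mu=0$, i.e. that the flux of $W_u$ through $\{\rho=\eps\}$ tends to $0$. For $u\in\rho^\infty C^\infty({}^{\rm sc}S^*\bbar M)$ this is immediate: $u$ and all its derivatives are $\mc O(\rho^N)$ uniformly on $S^*M$ for every $N$, whereas the area of $\{\rho=\eps\}$ for the induced measure grows only polynomially in $\eps^{-1}$ — on $S^*M$ one has $\rho|\eta|_{h_\rho}\le 1$ and $\rho^2\mu$ extends smoothly to $\bbar{S^*M}$ by Lemma~\ref{volform} — and the same bounds make all $L^2$ norms in the identity finite. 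For the first statement, set $u:=R_+\pi_m^*f$; since $I_m f=0$, Corollary~\ref{primitiveforkerI} gives $Xu=\pi_m^*f$, so $\nabla^v(Xu)=\nabla^v\pi_m^*f$, and the estimates $|u|\lesssim\rho^{k-1}|\rho\eta|_h^m$, $\|\nabla^v u\|_G\lesssim\rho^{k-2}|\rho\eta|_h^{m-1}$, $\|\nabla^h u\|_G\lesssim\rho^{k-1}|\rho\eta|_h^{m-1}$ of Lemma~\ref{resolvent estimate}, combined with $\rho|\eta|_h\le 1$ and $\rho^2\mu\in C^\infty(\bbar{S^*M})$, give by a direct count of powers of $\rho$ and $|\eta|$ that the flux through $\{\rho=\eps\}$ is $\mc O(\eps^{2k-n-2})\to 0$ and that $\|\nabla^v\pi_m^*f\|_{L^2}$, $\|X\nabla^v u\|_{L^2}$, $\|\pi_m^*f\|_{L^2}$ are finite — the hypothesis $k>n/2+1$ is precisely the threshold for this. (Note $\nabla^v u$ alone need not lie in $L^2$ when $n/2+1<k\le n/2+2$, yet $\langle\mc R\nabla^v u,\nabla^v u\rangle$ is still finite, because $\|\mc R\|_{\rm op}=\mc O(\rho^2)$ by Proposition~\ref{curvature decay} supplies the missing powers of $\rho$.)

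The only genuinely non-routine point is this last one. Unlike in the closed case, $\bbar{S^*M}$ is non-compact: its boundary $\partial_\pm S^*M\cong T^*\partial\bbar M$ has the unbounded fiber directions — which in the blow-up construction of $\bbar{S^*M}$ sit near the removed corner ${\rm bf}$ — so the slice $\{\rho=\eps\}$ has area blowing up like $\eps^{-(n-1)}$ and the flux term cannot be discarded by smallness of $u$ alone; one must track the joint decay of $u$ and $\nabla u$ in $\rho$ and in $|\eta|$ and verify that it outweighs this growth. That is where Lemma~\ref{resolvent estimate}, Proposition~\ref{curvature decay} and Lemma~\ref{volform} are used essentially, whereas the algebraic heart of the identity is entirely local and classical.
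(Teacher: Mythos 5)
Your proposal is correct and follows essentially the same route as the paper: the paper likewise quotes the compact-region Pestov identity (from \cite{GGSU}) with its explicit boundary term over $\{\rho=\eps\}$, then uses Lemma \ref{resolvent estimate}, Proposition \ref{curvature decay} and the structure of $\mu$ near $\pl\bbar{S^*M}$ to show the flux is $\mc{O}(\eps^{2k-2-n})$ and the bulk terms converge, with the same threshold $k>n/2+1$ and the same use of the curvature decay to make $\langle\mc{R}\nabla^v u,\nabla^v u\rangle$ integrable. The only cosmetic difference is that the paper controls the convergence of $\|\nabla^v Xu\|_{L^2(S^*M_\eps)}^2$ via the vertical-Laplacian eigenvalue identity for the spherical-harmonic components of $\pi_m^*f$, rather than by your direct power count from \eqref{gradient of lift estimate}.
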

\begin{proof}
We will only prove the first identity since the same argument applies to the second one. We use 
the computation of \cite[Proof of Theorem 1]{GGSU} and have for $u\in C^\infty(S^*M)$
\[\begin{split} 
\| \nabla^v Xu\|_{L^2(S^*M_\eps)}^2 - \| X \nabla^v u\|_{L^2(S^*M_\eps)}^2 = & (n-1)\| Xu\|_{L^2(S^*M_\eps)}^2 - \langle {\mathcal R} \nabla^v u, \nabla^v u\rangle_{L^2(S^*M_\eps)}\\
& +\int_{\pl S^*M_\eps}\big(\cjg \nabla^vu,\nabla^hu\cjd-(n-1)uXu\big)\mu_\eps
\end{split}\]
where $\mu_\eps=\iota^*_{\{\rho=\eps\}}\iota_X\mu$ and $\cjg \nabla^vu,\nabla^hu\cjd$ is understood as $g(\mc{K}\nabla^vu,d\pi \nabla^hu)$.

We now argue that when $\epsilon \to 0$ the boundary terms vanish and the terms involving interior integrals converge to the integral on all of $S^*M$. 
Treating the expression term by term we see first using \eqref{pimf} that 
\[\| \pi_m^*f\|_{L^2(S^*M_\eps)}^2 \leq C\int_{\rho \geq \epsilon}\rho^{2k} {\rm dvol}_g\leq C'\]
if $k>n/2$ and $\| \pi_m^*f\|_{L^2(S^*M_\eps)}^2\to \| \pi_m^*f\|_{L^2(S^*M)}^2$ as $\eps\to 0$.
For the term involving $\langle {\mathcal R}^v\nabla u,\nabla^v u\rangle_{L^2(S^*M_\eps)}$, this can be estimated by using the pointwise estimate
$|g {\mathcal R} \nabla^v u, \nabla^v u)| \leq \|\nabla^v u\|^2_G| K(\xi^\#,\nabla^v u)|$:
using Lemma \ref{resolvent estimate} and Proposition \ref{curvature decay}, we obtain  $
|g({\mathcal R} \overset{v}\nabla u, \overset{v}\nabla u)| \leq  C\rho^{2k-2} |\rho \eta|_{h}^{2(m-1)}$ if $m\geq 1$ and $|g( {\mathcal R} \nabla^v u, \nabla^v u)| \leq \rho^{2k}$ if $m = 0$.
If $k>n/2+1$, we get as $\epsilon\to 0$ that
\[\langle {\mathcal R} \nabla^v u, \nabla^v u\rangle_{L^2(S^*M_\eps)}\to \langle {\mathcal R} \nabla^v u, \nabla^v u\rangle_{L^2(S^*M )}.\]
We now look at  $\| \nabla^v Xu\|_{L^2(S^*M_\eps)}^2$ (recall $Xu=\pi_m^*f$): this is equal to 
$\| \Delta^v\pi_m^*f\|_{L^2(S^*M_\eps)}^2$ where $\Delta^v$ is the vertical Laplacian in the fiber,
but $f=\sum_{2j\leq m}\mc{S}(\otimes^j g\otimes f_j)$ for some trace-free $f_j\in \rho^k C^\infty(\overline M ; S^{m-2j} ({}^{\rm sc}T^*\bbar{M}))$, which then satisfy 
\[\Delta^v\pi_{m-2j}^*f_j=(m-2j)(m-2j+n-2)\pi_{m-2j}^* f_j.\] 
This implies that $\| \nabla^v Xu\|_{L^2(S^*M_\eps)}^2\to \| \nabla^v Xu\|_{L^2(S^*M)}^2$ as $\eps\to 0$ if $k>n/2$.
For the $ \| X \nabla^v u\|_{L^2}^2$ term we use the identity $X \nabla^v  u = \nabla^v\pi^*_mf -  \nabla^h u$
in conjunction with Lemma \ref{resolvent estimate} to get $|X\nabla^v  u|_G^2 \leq 
C \rho^{2k-2}$, thus $ \| X \nabla^v u\|_{L^2(S^*M_\eps)}^2\to \| X \nabla^v u\|_{L^2(S^*M)}^2$
as $\eps \to 0$ if $k>n/2+1$.

We conclude with the boundary terms: by Lemma \ref{resolvent estimate} we have at $\pl S^*M_\eps$
\[ |g(\nabla^vu,\nabla^hu)|+|\pi_m^*f|.|u| \leq C\eps^{2k-3} + C\eps^{2k-1}
\]
thus for small $\eps$
\[ \int_{\pl S^*M_\eps}(|g(\nabla^vu,\nabla^hu)|+|\pi_m^*f|.|u|)\mu_\eps\leq 
C\eps^{2k-2-n}\]
which converges to $0$ as $\eps\to 0$.
This ends the proof for the first statement. The second statement of the Lemma goes the same way, where it suffices to use the fast vanishing of $u$ as $\rho\to 0$.
 \end{proof}
\noindent\textbf{Remark:} Note that in the case when $m\geq 1$ and the dimension $n\geq 2$ then the condition $k > 3-m$ needed in Lemma \ref{resolvent estimate} is always valid if we assume as in the Proposition that $k > \frac{n}{2}+1$. 

\subsection{Injectivity of X-ray transforms}
We begin with a Carleman estimate which will be useful in proving injectivity of $I_1$ (here $\cjg t\cjd:=(1+t^2)^{1/2}$ and $W^{2,\infty}(\RR;\RR^{n-1})$ is the sobolev space of $\RR^{n-1}$ valued bounded functions with two derivatives bounded) 
\begin{lemma}
\label{carleman}
Let $R \in \langle t\rangle^{-2}L^\infty(\RR; {\rm End}(\RR^{n-1}))$, then there is $C>0$ such that for all $U\in \langle t\rangle^{-\infty} W^{2,\infty}(\RR;\RR^{n-1})$ one has the following estimate for $N$ sufficiently large:
\[\|e^{N \log (1+t^2)} \partial_t^2 e^{-N \log (1+t^2)}U + R(t)U\|_{L^2}^2 \geq N\|\dot U\|_{L^2}^2+ CN^2\|(1+t^2)^{-1}U\|_{L^2}^2.\]
\end{lemma}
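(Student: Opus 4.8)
The plan is to run a standard Carleman argument built around an exact integration-by-parts identity for the conjugated operator. Write $\varphi=N\log(1+t^2)$ and $L:=e^{\varphi}\partial_t^2 e^{-\varphi}$, so that $LU=\ddot U-2\varphi'\dot U+((\varphi')^2-\varphi'')U$. Since $R\in\langle t\rangle^{-2}L^\infty$ gives $\|R(\cdot)U\|_{L^2}^2\le C_0\|(1+t^2)^{-1}U\|_{L^2}^2$, one has $\|LU+RU\|^2\ge\frac12\|LU\|^2-C_0\|(1+t^2)^{-1}U\|^2$, so it suffices to prove, for $N$ large,
\[\|LU\|_{L^2}^2\ \ge\ 2N\|\dot U\|_{L^2}^2+B N^2\|(1+t^2)^{-1}U\|_{L^2}^2\]
with a fixed $B>0$; the $C_0$-term is then absorbed into the $N^2$-term for $N$ large enough.

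The core of the proof is the identity, obtained by expanding $\|LU\|^2$ and integrating by parts (all boundary terms vanishing since $U\in\langle t\rangle^{-\infty}W^{2,\infty}$),
\[\|LU\|_{L^2}^2=\int_{\RR}\ddot U^2+\int_{\RR}\big(2(\varphi')^2+4\varphi''\big)\dot U^2+\int_{\RR}\big((\varphi')^4+4(\varphi')^2\varphi''+(\varphi'')^2-\varphi''''\big)U^2.\]
One then computes $(\varphi')^2+2\varphi''=\frac{4N((N-1)t^2+1)}{(1+t^2)^2}$, recognises the coefficient of $\dot U^2$ as $2\big((\varphi')^2+2\varphi''\big)$ and the coefficient of $U^2$ as $\big((\varphi')^2+2\varphi''\big)^2-3(\varphi'')^2-\varphi''''$. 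Using $(\varphi'')^2=\frac{4N^2(1-t^2)^2}{(1+t^2)^4}$, the elementary bound $16((N-1)t^2+1)^2-12(1-t^2)^2\ge 4(1+t^2)^2$ valid for $N\ge2$, and $|\varphi''''|\le C_0N(1+t^2)^{-2}$, the $U^2$-coefficient is bounded below by $\frac{4N^2-C_0N}{(1+t^2)^2}\ge\frac{2N^2}{(1+t^2)^2}$ for $N$ large. This yields the term $2N^2\|(1+t^2)^{-1}U\|^2$ outright, so one may take $B=2$.

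The remaining, and most delicate, point is producing the full $2N\|\dot U\|^2$. The coefficient $2((\varphi')^2+2\varphi'')=\frac{8N((N-1)t^2+1)}{(1+t^2)^2}$ is $\ge 2N$ on $\{|t|\le1\}$, so $\int(2(\varphi')^2+4\varphi'')\dot U^2$ directly controls $2N\int_{|t|\le1}\dot U^2$, but it decays like $N^2/t^2$ for $|t|$ large, where the missing part of $2N\|\dot U\|^2$ must be recovered from the non-negative term $\int\ddot U^2$ in the identity together with a fixed fraction of the $\dot U^2$- and $U^2$-terms. Concretely I would localise with a cutoff $\chi\in C_c^\infty(\RR)$, $\chi\equiv1$ on $\{|t|\le1\}$ and $\supp\chi\subset\{|t|\le2\}$: the commutator $[L,\chi]$ is a first-order operator supported in $\{1\le|t|\le2\}$ and is absorbed via the near-origin estimate applied to $\chi U$, while on $\{|t|\ge1\}$ one couples $\int\ddot U^2$ with $\int (N^2/t^2)\dot U^2$ and $\int(N^2/t^4)U^2$ through a weighted Hardy / interpolation inequality in order to bound $N\int_{|t|\ge1}\dot U^2$, the lower-order leftovers being reabsorbed into the $N^2$-term after slightly shrinking $B$. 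Patching the two regions and undoing the reduction of the first paragraph completes the proof. The essential difficulty is exactly this last step: the weight $N\log(1+t^2)$ fails to be convex at $t=0$, which is why only $N\|\dot U\|^2$ — rather than the stronger bound a convex weight would give — appears, and why even its far-field part is obtained only by playing the three terms of the identity against one another.
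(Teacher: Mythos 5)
Your reduction in the first paragraph and your integration-by-parts identity for $\|LU\|_{L^2}^2$ are correct (the coefficients $2(\varphi')^2+4\varphi''$ and $(\varphi')^4+4(\varphi')^2\varphi''+(\varphi'')^2-\varphi''''$ check out), and your lower bound of order $N^2(1+t^2)^{-2}$ for the $U^2$-coefficient is fine. The genuine gap is exactly the step you defer to a ``weighted Hardy / interpolation inequality'': recovering the \emph{unweighted} term $N\|\dot U\|_{L^2}^2$ on $\{|t|\geq 1\}$ from $\int \ddot U^2$ together with the weighted $\dot U^2$- and $U^2$-terms. No such argument can exist, because the unweighted inequality is false. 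Take $n-1=1$, $R=0$ and $U_T(t)=\chi\big((t-2T)/T\big)\sin t$ with $\chi\in C_c^\infty((-1,1))$. Then $\|U_T\|^2$, $\|\dot U_T\|^2$, $\|\ddot U_T\|^2$ are all $\sim cT$ as $T\to\infty$ (same constant $c$), while on the support $|\varphi'|\leq 2N/T$ and $|(\varphi')^2-\varphi''|\leq CN^2/T^2$, so the left-hand side is at most $cT\,(1+O(N/T))^2$, whereas the claimed right-hand side is at least $N\,cT\,(1+o(1))$. For any fixed $N\geq 2$ this is violated once $T$ is large, so the far-field part of $N\|\dot U\|^2$ simply cannot be produced; your cutoff/Hardy step would be proving a false statement.

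What is true, and what the paper's own proof actually establishes and then uses (only the conclusion $U\equiv 0$ is needed in Lemma \ref{no vanishing jacobi fields}), is the weighted version with $N\int(1+t^2)^{-1}|\dot U|^2$ in place of $N\|\dot U\|^2$; the unweighted $\dot U$-term in the statement should be regarded as a slip. The paper's argument is also structurally different from yours: instead of expanding $\|LU\|^2$, it uses that the conjugated second-order operator is the square of the conjugated first-order one, $e^{\varphi}\partial_t^2e^{-\varphi}=(e^{\varphi}\partial_te^{-\varphi})^2$, applies the exact identity $\|e^{\varphi}\partial_te^{-\varphi}V\|^2=\|\dot V\|^2+2N\int\frac{1+(2N-1)t^2}{(1+t^2)^2}|V|^2$ once to get $\|LU\|^2\geq N\int(1+t^2)^{-1}\big|e^{\varphi}\partial_te^{-\varphi}U\big|^2$, and then repeats the integration by parts with the extra weight $(1+t^2)^{-1}$ to obtain $N\int(1+t^2)^{-1}|\dot U|^2+c\,N^2\int(1+t^2)^{-2}|U|^2$, after which $RU$ is absorbed for $N$ large. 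Your identity yields the same weighted conclusion directly, since your $\dot U^2$-coefficient $8N((N-1)t^2+1)(1+t^2)^{-2}$ is bounded below by $2N(1+t^2)^{-1}$ for $N\geq 2$; so the fix is to keep the weight on the $\dot U$-term rather than trying to remove it.
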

\begin{proof}
A direct computation using integration by parts give
\begin{eqnarray}
\label{exact identity}
\|e^{N\log (1+t^2)} \partial_t e^{-N\log (1+t^2)} U \|_{L^2}^2 = \int_\rr |\dot U|^2 dt + 2N\int_\rr \frac{1+ (2N-1)t^2}{(1+t^2)^2} |U|^2 dt.
\end{eqnarray}
So for $N\geq2$ we yield  that $\|e^{N\log (1+t^2)} \partial_t e^{-N\log (1+t^2)} U \|^2 \geq \int \frac{N}{1+t^2}|U|^2$. Using this inequality in conjunction with \eqref{exact identity} we have
\begin{eqnarray*}
\nonumber\|e^{N\log (1+t^2)} \partial_t^2 e^{-N\log (1+t^2)} U \|_{L^2}^2 &\geq& N\int (1+t^2)^{-1} \big|e^{N\log (1+t^2)} \partial_te^{-N\log (1+t^2)} U\big|^2\\
&\geq & N \int \frac{|\dot U|^2}{1+t^2}+2N \int_\R \frac{1+(2N-3)t^2}{(1+t^2)^3}|U|^2dt\\
& \geq & N \int \frac{|\dot U|^2}{1+t^2}+2N \int_\R \frac{1}{(1+t^2)^2}|U|^2dt
\end{eqnarray*}
We see that for $N$ sufficiently large the potential term $RU$ can be absorbed to the right hand  side (by the last term $2N\int \cjg t\cjd^{-4}|U|^2dt$) to obtain the desired inequality.
\end{proof}
\begin{lemma}
\label{no vanishing jacobi fields}
Let $\gamma$ be a complete geodesic curve on $M$ with $\lim\limits_{t\to\pm \infty} \gamma(t) \in \partial M$. If $J$ is a Jacobi field along $\gamma$ satisfying $\|J(t)\|_g \leq C\langle t\rangle^{-\alpha}$ for some $\alpha >0$ and $g(J,\dot{\gamma})=0$, then $J = 0$.
\end{lemma}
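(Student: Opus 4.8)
The plan is to reduce the Jacobi equation to a second‑order linear ODE for an $\RR^{n-1}$‑valued function, to use the asymptotically conic structure to show that its curvature coefficient decays like $\langle t\rangle^{-4}$ at both ends, to bootstrap the polynomial decay of $J$ into decay of infinite order, and then to conclude with the Carleman estimate of Lemma \ref{carleman}. First I would trivialize the normal bundle of $\gamma$: writing $z_0:=(\gamma(0),\dot\gamma(0))\in S^*M$, so that $\gamma(t)=\pi_0(\varphi_t(z_0))$, pick a parallel orthonormal frame $e_1(t),\dots,e_{n-1}(t)$ of $\dot\gamma(t)^\perp$ along $\gamma$ and set $J(t)=\sum_jV_j(t)e_j(t)$, so that $\|J(t)\|_g=|V(t)|$ with $V=(V_1,\dots,V_{n-1})$. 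Since $R_{\gamma(t)}(\,\cdot\,,\dot\gamma)\dot\gamma$ preserves $\dot\gamma^\perp$, the Jacobi equation becomes $\ddot V(t)+\mathcal R(t)V(t)=0$, where $\mathcal R(t)\in{\rm End}(\RR^{n-1})$ is the (by the curvature symmetries, symmetric) matrix with entries $\mathcal R_{ij}(t)=g\big(R_{\gamma(t)}(e_i(t),\dot\gamma(t))\dot\gamma(t),e_j(t)\big)$, and the hypothesis gives $|V(t)|\le C\langle t\rangle^{-\alpha}$.

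Next I would estimate $\mathcal R$. Since $\rho(\gamma(t))\to0$ as $t\to\pm\infty$ while $\rho(\gamma(t))>0$, the equation $\dot\rho=\rho^2\bbar{\xi}_0$ forces $\bbar{\xi}_0(\gamma(t))\le0$ for all large $t$ (otherwise $\rho\circ\gamma$ would be eventually increasing, contradicting $\rho(\gamma(t))\to0$), and then the monotonicity recorded in the proof of Lemma \ref{x asymptotic} gives $\varphi_t(z_0)\in W^+_\epsilon$ for $t\ge t_1$ and $\varphi_t(z_0)\in W^-_\epsilon$ for $t\le -t_1$, for some $t_1>0$. Applying Lemma \ref{x asymptotic} from the base points $\varphi_{\pm t_1}(z_0)$ yields $\rho(\gamma(t))\le C\langle t\rangle^{-1}$ for $|t|\ge t_1$ (while $\rho\circ\gamma$ is bounded on $[-t_1,t_1]$), and then Lemma \ref{R as map}, applied to the vector fields $e_i$, gives $\|\mathcal R(t)\|\le C\rho(\gamma(t))^4\le C'\langle t\rangle^{-4}$ for $|t|\ge t_1$, with $\|\mathcal R(t)\|$ bounded on $[-t_1,t_1]$ by continuity. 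In particular $\mathcal R\in\langle t\rangle^{-2}L^\infty(\RR;{\rm End}(\RR^{n-1}))$, which is what Lemma \ref{carleman} requires.

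Then I would bootstrap. Since $|\mathcal R(t)V(t)|\le C\langle t\rangle^{-4-\alpha}$ is integrable near $\pm\infty$, the equation $\ddot V=-\mathcal R V$ shows $\dot V$ has limits there, which must vanish (else $|V|$ would grow linearly), so $\dot V(t)=\int_t^{+\infty}\mathcal R V$ near $+\infty$ and analogously near $-\infty$, whence $|\dot V(t)|\le C\langle t\rangle^{-3-\alpha}$; using $V(\pm\infty)=0$ then gives $|V(t)|\le C\langle t\rangle^{-2-\alpha}$, and iterating shows that $V,\dot V,\ddot V$ all decay faster than any power of $\langle t\rangle$, i.e.\ $V\in\langle t\rangle^{-\infty}W^{2,\infty}(\RR;\RR^{n-1})$. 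Finally I would set $\tilde U(t):=(1+t^2)^NV(t)=e^{N\log(1+t^2)}V(t)\in\langle t\rangle^{-\infty}W^{2,\infty}(\RR;\RR^{n-1})$; since the scalar $(1+t^2)^N$ commutes with $\mathcal R(t)$ and $\ddot V+\mathcal R V=0$,
\[e^{N\log(1+t^2)}\partial_t^2e^{-N\log(1+t^2)}\tilde U+\mathcal R(t)\tilde U=(1+t^2)^N\big(\ddot V+\mathcal R V\big)=0,\]
so the left‑hand side of the estimate in Lemma \ref{carleman} (applied with $R=\mathcal R$ and $U=\tilde U$) vanishes, giving $0\ge CN^2\|(1+t^2)^{-1}\tilde U\|_{L^2}^2$ for $N$ large, hence $\tilde U\equiv0$, i.e.\ $V\equiv0$ and $J\equiv0$.

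I expect the only delicate point to be the decay of $\mathcal R$: the Carleman estimate only demands a potential in $\langle t\rangle^{-2}L^\infty$, but bootstrapping the merely polynomial decay of $J$ up to $\langle t\rangle^{-\infty}$ decay requires strictly more than $\langle t\rangle^{-2}$, and this is exactly what the $\rho^4$ decay of the sectional curvature near infinity (Proposition \ref{curvature decay}, Lemma \ref{R as map}) supplies once $\rho(\gamma(t))\lesssim\langle t\rangle^{-1}$ is known. Everything else is routine ODE manipulation, and notably no curvature sign or no‑conjugate‑points hypothesis is needed.
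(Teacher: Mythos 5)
Your proof is correct and follows essentially the same route as the paper's: reduce to $\ddot U+R(t)U=0$ in a parallel orthonormal frame, get $R\in\langle t\rangle^{-4}L^\infty$ from Lemma \ref{R as map} together with the $\rho(\gamma(t))\lesssim\langle t\rangle^{-1}$ bound of Lemma \ref{x asymptotic}, bootstrap the polynomial decay of $U$ to decay of infinite order, and conclude with the Carleman estimate of Lemma \ref{carleman}. The only differences are cosmetic: your bootstrap (via the vanishing limits of $\dot U$) is a slightly cleaner version of the paper's $\limsup$ argument, and you make explicit the conjugation by $(1+t^2)^N$ that the paper leaves implicit when invoking Lemma \ref{carleman}.
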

\begin{proof}
Let $\{\dot{\gamma}, Y_1,\dots, Y_{n-1}\}$ be a unitary orthonormal frame which is parallel along $\gamma$, and recall that 
$|\dot{\gamma}(t)\pm\rho^2(\gamma(t))\pl_\rho|_g=\mc{O}(\rho(\gamma(t)))$ as $t\to \pm \infty$ by   using Lemma \ref{x asymptotic}, so $(d\rho/\rho^2)(Y_j(t))=\mc{O}(\rho(\gamma(t)))$.
We write $J = \sum U_\ell Y_\ell$, the equation 
$\ddot J = R(\dot\gamma, J)\dot \gamma$ becomes $\ddot U(t) + R(t) U(t) = 0$ with $U(t) \in \langle t\rangle^{-\alpha} L^\infty(\RR; \RR^{n-1})$ and some $R\in \langle t\rangle^{-4}L^\infty(\rr; 
{\rm End}(\rr^{n-1}))$ by using Lemma \ref{R as map}. We first show that $U(t), \dot U(t) \in \langle t\rangle^{-\infty} L^\infty(\rr; \RR^{n-1})$. 
Indeed, we write for $t<T$,
\[\dot U (t)  = - \int_t^T \ddot U (s)ds + \dot U(T) = \int_t^TR(s) U(s)ds + \dot U(T).\] 
This implies by integrating
\[|\dot U(T)| t \leq |U(t)| + |U(0)| + C\int_0^t  \int_r^\infty   \langle s\rangle^{-(4+\alpha)} dsdr.\]
Taking the $\limsup$ as $T \to \infty$ we obtain
\[\limsup_{T\to\infty}|\dot U(T)| t \leq |U(t)| + |U(0)| + C\int_0^t  \int_r^\infty   \langle s\rangle^{-(3+\alpha)} dsdr.\]
Taking $t\to +\infty$ and using the fact that $U\in L^\infty(\RR)$ we get that $\limsup\limits_{T\to\infty} |\dot U(T)| = 0$. Therefore, we obtain 
\[|\dot U (t) | = \int_t^\infty |R(s) U(s)|ds  \leq C \int_t^\infty \langle s \rangle^{-(4+\alpha)}ds \leq C \langle t\rangle^{-(3+\alpha)} \]
as $t\to +\infty$. Same estimate holds for when $t\to -\infty$ by the analogous argument. Now write
\[|U(t)| \leq \int_t^\infty |\dot U(s)| ds \leq C \langle t\rangle^{-(2 + \alpha)}\]
as $t\to \infty$ and similarly for $t\to-\infty$. We can repeat the argument and deduce that $U \in \langle t\rangle^{-\infty} L^\infty$ by induction, which means $\dot U(t) = \int_{-\infty}^t R(s) U(s) ds$ is in $\langle t\rangle^{-\infty} L^\infty$.
Now that we have $U \in \langle t\rangle^{-\infty}W^{2,\infty}(\RR;\RR^{n-1})$ we can apply Lemma \ref{carleman} to deduce that $U = 0$.
\end{proof}
We are now in position to prove Theorem \ref{injectivity of tensors}, the injectivity of X-ray transform on tensors.

\begin{proof}[Proof of Theorem \ref{injectivity of tensors}] We first show i): by Corollary \ref{primitiveforkerI} (or Lemma \ref{trapped} for the trapping case with negative curvature), we get $\mc{O}(\rho^{k-1})$ pointwise bounds on the smooth function $u:=R_+\pi_0^*=R_-\pi_0^*f\in C^\infty(S^*M)$, and on the derivatives $|\nabla^vu|_G$, $|\nabla^hu|_G$.
We apply Pestov identity from Proposition \ref{pestov} and get
\[0=  \| X \nabla^v u\|_{L^2}^2 + (n-1)\| \pi_0^* f\|_{L^2}^2 - \langle \mc{R} \nabla^v u, \nabla^v u\rangle.\]
Then we follow the proof of \cite[Theorem 1]{GGSU}: we let $Z\in C^\infty(S^*M;\mc{Z})\cap \rho^{k-1}L^\infty$ so that $|XZ|_G\in \rho^{k-1}L^\infty$ and $|X^2Z|_G\in \rho^{k-1}L^\infty$, and define the quadratic form
\[
A(Z)=\|XZ\|^2_{L^2(S^*M)}-\cjg \mc{R}Z,Z\cjd_{L^2(S^*M)}.
\]
By the same argument as in the proof of \cite[Theorem 1]{GGSU}, we have $A(Z)\geq 0$ for such $Z$: the method is to first replace $Z$ by $Z_\eps:=\chi(\rho/\eps)Z$ where $\chi(s)=1$ for $s\geq 1$ and $\chi(s)=0$ for $s\leq 1/2$, then we get $A(Z_\eps)\geq 0$ since $g$ has no conjugate points and $A(Z_\eps)$ is the integrated index form on geodesics, and by letting $\eps\to 0$ we get $A(Z)=\lim_{\eps\to 0} A(Z_\eps)$ using $|X(\chi(\rho/\eps))|\leq C\eps$ and the bounds on $Z$. It then suffices to apply this with $Z=\nabla^vu$ (we use  $[X,\nabla^v]u=-\nabla^hu$ and $[X,\nabla^h]u=\mc{R}\nabla^vu$ to get bounds on $XZ$ and $X^2Z$ from bounds on $|\nabla^vu|$ and $|\nabla^hu|$) and we deduce that 
$\| X \nabla^v u\|_{L^2}^2- \langle \mc{R} \nabla^v u, \nabla^v u\rangle\geq 0$, so $f=0$.\\

For ii), we may assume, using Proposition \ref{kill transversal terms} that $\iota_{\rho^2\partial_\rho} f = 0$ near $\partial \bbar{M}$. We can then argue as in \cite[Proof of Theorem 1]{GGSU} to show that Proposition \ref{pestov} implies that $X^2 {\nabla}^v u = {\mathcal R} {\nabla}^v u$ if $u=R_+\pi_1^*f=R_-\pi_1^*f$ (using $I_1f=0$). When restricted to a geodesic curve $\gamma\subset M$, $J:= {\nabla}^v u|_\gamma $ is then a Jacobi field along $\gamma$ with (for $x=\gamma(0)\in M$)
\[\|J(\gamma(t))\|_g= \|\nabla^v u|_{\gamma_t(x)} \|_G \leq C \rho(t)^{k-2} \leq C\langle t\rangle^{-k+2},\ \ t\to\pm\infty\]
by using Lemma \ref{resolvent estimate} and Lemma \ref{x asymptotic}. 
Now apply Lemma \ref{no vanishing jacobi fields} to deduce that $J(\gamma(t)) = 0$ identically. Since $\gamma$ is an arbitrary complete geodesic with end points on $\partial M$, we have that $\nabla^v u = 0$ and therefore $u=\pi_0^*q$  for some $q \in \rho^{k-1} C^\infty(\overline M)$ and 
$Xu=\pi_1^*f$ implies $dq=f$.\\

For iii) we follow the idea of \cite{PSU} (see also \cite{GGSU}). First by Proposition \ref{kill transversal terms} we may assume $\iota_{\pl_\rho} f = 0$ near $\partial M$. 
Let $ u = R_+ \pi^*_m f=R_-\pi^*_mf$, which is smooth in $S^*M$ and belongs to $\rho^k L^\infty(\overline{S^*M})$ with similar estimates for its vertical/horizontal derivatives by Proposition \ref{primitiveforkerI}. 
We write $u= \sum\limits_\ell u_\ell$ where $u_\ell$ are eigenmodes of the vertical Laplacian $\Delta^v={\nabla^v}^*\nabla^v$. Define $\til{u} := u - \sum_{\ell\leq m-1} u_\ell$, then $X \til{u} = \pi^*_m f - \sum_{\ell\leq m-1} Xu_\ell$ has no eigenmodes above $m$ since $ \pi^*_m f = \sum_{\ell \leq m}f_\ell$. Furthermore, $X\til{u} = \sum\limits_{l \geq m} X u_l$ has no eigenmodes below $m-1$. Therefore, $X\til{u} = (X\til{u})_m + (X\til{u})_{m-1}$.  We apply Proposition \ref{pestov} to $\til{u}$ and get 
\[\begin{split}
\lambda_m \| (X\til{u})_m\|^2 + \lambda_{m-1} \|(X\til{u})_{m-1} \|^2 = & 
\| X\nabla^v \til{u}\|^2 - 
\langle {\mathcal R}  \nabla^v \til{u}, \nabla^v \til{u}\rangle + \\
& +(n-1) (\| (X\til{u})_m\|^2 +\| (X\til{u})_{m-1}\|^2 ).\end{split}
\]
where $\lambda_ m = m(m+n-2) $ and $\lambda_{m-1} = (m-1)(m+n-3)$.
Now if the curvature is non-positive, one has that $X \til{u} = 0$: indeed, 
\begin{eqnarray*}
\lambda_m \| (X\til{u})_m\|^2 + \lambda_{m-1} \|(X\til{u})_{m-1} \|^2& \geq& \| X\nabla^v \til{u}\|^2  + (n-1) (\| (X\til{u})_m\|^2 +\| (X\til{u})_{m-1}\|^2)\\
&\geq& \Big( \frac{(m-1)(m+n-2)^2}{m+n-3} + (n-1)\Big) \| (X\til{u})_{m-1}\|^2  \\
&&+ \Big( \frac{m(m+n-1)^2}{m+n-2}  + (n-1)\Big)\|(X\til{u})_{m}\|^2 
\end{eqnarray*}
where we used the computation in \cite[Lemma 4.3]{PSU} to deal with $\|X\nabla^v\til{u}\|^2$. 
This implies that $X \til{u} = 0$ with $\til{u}$ decaying to order $\mc{O}(\rho^k)$ as $\rho\to 0$. 
Thus necessarily $\til u = 0$ and the proof is complete.
\end{proof}

\section{Renormalized length and scattering map}

\subsection{Scattering map}\label{scatteringsec}
We define the scattering map using the rescaled flow $\bbar{\varphi}_\tau$.
\begin{definition}\label{nontrap}
For non-trapping asymptotically conic manifolds $(M,g)$, the scattering map $S_g: T^*\pl\bbar{M}\to T^*\pl\bbar{M}$ is defined by 
\[ S_g(z):= \bbar{\varphi}_{\tau^+(z)}(z).\]
In the case of trapping, the scattering map is defined on the subset $\{z\in T^*\pl\bbar{M}; \tau^+(z)<\infty\}$.
\end{definition}
We notice that $S_g$ is defined using a choice of normal form (in order to identify $\pl_\pm S^*M$ to $T^*\pl\bbar{M}$), but by \eqref{changeofcoord}, a change of normal form yields the 
coordinate change $(y,\eta)\mapsto (y,\eta+d\omega_0)$ on $T^*\pl\bbar{M}$ where $\omega_0\in C^\infty(\pl\bbar{M})$, which means that $S_g$ is simply being conjugated by this transformation of $T^*\pl\bbar{M}$.  

\begin{lemma}\label{scatteringlarge}
Let $(y_0,\eta_0)\in T^*\pl\bbar{M}$, then 
\begin{equation}
\label{scattering relation limit}
\lim_{\eps\to 0 }\epsilon.S_g(y_0,\epsilon^{-1} \eta_0) = |\eta_0|_{h_0}.\phi_\pi \Big(y_0,\frac{\eta_0}{|\eta_0|_{h_0}}\Big)
\end{equation}
where we defined the fiber dilation action $c.(y,\eta):=(y,c\eta)$ if $c>0, (y,\eta)\in T^*\pl\bbar{M}$ and 
$\phi_t:T^*\pl\bbar{M}\to T^*\pl\bbar{M}$ is the Hamilton flow of the Hamiltonian 
$\frac{1}{2}|\eta|^2_{h_0}$ at time $t$. 
\end{lemma}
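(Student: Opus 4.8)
The plan is to read the limit off directly from the short‑time description of the rescaled geodesic flow obtained in Lemma~\ref{asymp of geod}, together with the boundary‑flow approximation \eqref{limitinggeodesics} and the arrival‑time estimate \eqref{taueps}. First I would reduce to the case of a unit covector. Writing $r:=|\eta_0|_{h_0}$, $\eta_0':=\eta_0/r$, one has $\epsilon^{-1}\eta_0=(\epsilon/r)^{-1}\eta_0'$, so with $\epsilon':=\epsilon/r$ and using that the fiber dilation is multiplicative, $(cc')\cdot z=c\cdot(c'\cdot z)$, we get $\epsilon\cdot S_g(y_0,\epsilon^{-1}\eta_0)=r\cdot\big(\epsilon'\cdot S_g(y_0,(\epsilon')^{-1}\eta_0')\big)$. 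Since $\epsilon'\to 0$ as $\epsilon\to 0$ and $r\cdot\phi_\pi(y_0,\eta_0')$ is exactly the right‑hand side of \eqref{scattering relation limit}, it suffices to prove $\lim_{\epsilon\to0}\epsilon\cdot S_g(y_0,\epsilon^{-1}\eta_0)=\phi_\pi(y_0,\eta_0)$ when $|\eta_0|_{h_0}=1$.

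So assume $|\eta_0|_{h_0}=1$ and set $z_0:=(y_0,\epsilon^{-1}\eta_0)\in\pl_-S^*M$, $\tau_\epsilon:=\tau^+_g(y_0,\epsilon^{-1}\eta_0)$. The initial covector $\epsilon^{-1}\eta_0$ has $h_0$‑norm $\epsilon^{-1}\to\infty$, so Lemma~\ref{limiting dynamic} applies and, for $\epsilon$ small, the flow line $\bbar{\varphi}_\tau(z_0)$, $\tau\in[0,\tau_\epsilon]$, stays in $W^-_{\epsilon_0}\cup W^+_{\epsilon_0}$ for a fixed small $\epsilon_0$; hence the rescaled picture of Lemma~\ref{asymp of geod} and the estimate \eqref{limitinggeodesics} are available. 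I would then pass to the rescaled variables $\til\rho(s)=\epsilon^{-1}\rho(\epsilon s)$, $\til\xi_0(s)=\bbar\xi_0(\epsilon s)$, $\til\eta(s)=\epsilon\eta(\epsilon s)$, $\til y(s)=y(\epsilon s)$. Since $\bbar{\varphi}_{\tau_\epsilon}(z_0)\in\pl_+S^*M$, its coordinates $(y(\tau_\epsilon),\eta(\tau_\epsilon))$ are by definition the value of $S_g(z_0)$ as an element of $T^*\pl\bbar{M}$; applying the fiber dilation by $\epsilon$ and translating to the tilded variables gives the clean identity $\epsilon\cdot S_g(z_0)=\big(\til y(\epsilon^{-1}\tau_\epsilon),\,\til\eta(\epsilon^{-1}\tau_\epsilon)\big)$.

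To conclude I would use \eqref{taueps}, giving $\epsilon^{-1}\tau_\epsilon=\pi+\mc{O}(\epsilon)$, and \eqref{limitinggeodesics}, which states that, viewing $(\til y(s),\til\eta(s))$ as a curve in $T^*\pl\bbar{M}$, one has $d_{h_0}\big((\til y(s),\til\eta(s)),\phi_s(y_0,\eta_0)\big)=\mc{O}(s\epsilon)$ uniformly over the relevant bounded range of $s$. Evaluating at $s=\epsilon^{-1}\tau_\epsilon$ bounds $\epsilon\cdot S_g(z_0)$ within $\mc{O}(\epsilon)$ of $\phi_{\epsilon^{-1}\tau_\epsilon}(y_0,\eta_0)$, which in turn equals $\phi_\pi(y_0,\eta_0)+\mc{O}(\epsilon)$ by smoothness of the Hamilton flow $\phi_t$ in $t$. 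Letting $\epsilon\to 0$ settles the unit case, and the reduction step then yields \eqref{scattering relation limit} in general.

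There is no deep obstacle here: the statement is essentially a bookkeeping corollary of the dynamical estimates of the previous subsection. The only point demanding care is keeping the three layers of rescaling straight — the passage between $\varphi_t$ and $\bbar{\varphi}_\tau$ via \eqref{tau change of var}, the $\epsilon$‑rescaling $\til{(\cdot)}$, and the fiber dilation $c\cdot(y,\eta)$ — together with correct use of the identification $\pl_\pm S^*M\simeq T^*\pl\bbar{M}$, and checking that the flow stays in the near‑infinity region where \eqref{limitinggeodesics} and Lemma~\ref{asymp of geod} are valid; this last point is precisely what Lemma~\ref{limiting dynamic} guarantees.
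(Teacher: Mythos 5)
Your proposal is correct and follows essentially the same route as the paper's proof: reduce to $|\eta_0|_{h_0}=1$ by the dilation identity, then evaluate \eqref{limitinggeodesics} at $s=\epsilon^{-1}\tau_\epsilon=\pi+\mc{O}(\epsilon)$ using \eqref{taueps}. The extra bookkeeping you supply (the identification $\epsilon.S_g(z_0)=(\til y(\epsilon^{-1}\tau_\epsilon),\til\eta(\epsilon^{-1}\tau_\epsilon))$ and the appeal to Lemma \ref{limiting dynamic} to justify staying in the near-infinity region) is exactly what the paper leaves implicit.
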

\begin{proof}
Let us first take $|\eta_0|_{h_0}=1$. We use the bound \eqref{limitinggeodesics} at $s=\epsilon^{-1}\tau_\epsilon=\epsilon^{-1}\tau_g^+(y_0,\epsilon^{-1}\eta_0)$, which by \eqref{taueps} is equal to $\pi+\mc{O}(\epsilon)$, to deduce that as $\epsilon \to 0$,
\[
\epsilon.S_g(y_0,\epsilon^{-1} \eta_0) \to \phi_\pi (y_0,\eta_0).
\]
For the general case, it suffices to write 
$\epsilon.S_g(y_0,\epsilon^{-1} \frac{\eta_0}{|\eta_0|_{h_0}})= \frac{\epsilon'}{|\eta_0|_{h_0}}.S_g(y_0,{\epsilon'}^{-1} \eta_0)$ with $\epsilon':=\epsilon|\eta_0|_{h_0}\to 0$.
\end{proof}
\subsection{Renormalized length}
In this section we define the notion of a rescaled length $L_g(\gamma)$ for non-trapped  geodesics $\gamma : (-\infty,\infty) \to M$. The first observation we make is that for $\gamma(t)=\pi_0(\varphi_t(z))$ with $z\in S^*M$, we have $\rho(\gamma(t)) \leq C \langle t\rangle^{-1}$ when $t\to\pm \infty$ by Lemma \ref{x asymptotic}. This means that the quantity $L_g^\lambda(\gamma) = \int_{-\infty}^\infty \rho^\lambda(\varphi_t(z)) dt $ is finite if ${\rm Re}(\lambda) >1$. The goal is to extend this function holomorphically near $\lambda =0$:
\begin{proposition}
\label{rescaled geod length}
For each complete non-trapped geodesic $\gamma$, the function $L^\lambda_g(\gamma)$ has a meromorphic extension to ${\rm Re}(\lambda) >-1$ which is holomorphic at $\lambda =0$. Furthermore, the renormalized length $L_g(\gamma) := L^0_g(\gamma)$ is also given by
\[L_g(\gamma) = \lim\limits_{\epsilon \to 0} \Big(\ell_g(\gamma\cap \{\rho >\epsilon\}) - 2\epsilon^{-1}\Big)\]
where $\ell_g$ denotes the length with respect to $g$.
\end{proposition}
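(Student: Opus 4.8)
The plan is to localize near $\pl\bbar{M}$, prove that $\rho$ restricted to $\gamma$ has a \emph{logarithm-free} asymptotic expansion in powers of the inverse arclength, and then read off both the meromorphic continuation and the renormalized-length formula from elementary incomplete-Beta-type integrals, in the spirit of \cite{GGSU}.

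Fix $z\in S^*M$ and write $\gamma(t)=\pi_0(\varphi_t(z))$; by non-trapping, $\tau^\pm(z)$ are finite and $\rho(\varphi_t(z))\to 0$ as $t\to\pm\infty$. The first step is the local structure near the outgoing boundary. Since $\bbar{\varphi}_\tau$ and $\bbar{X}$ extend smoothly and transversally to $\pl\bbar{S^*M}=\{\rho=0\}$ (Lemma \ref{factorX}), the function $r(\tau):=\rho(\bbar{\varphi}_\tau(z))$ is smooth up to $\tau=\tau^+:=\tau^+(z)$, vanishes there, and $r'(\tau)=(\bbar{X}\rho)(\bbar{\varphi}_\tau(z))=\bbar{\xi}_0(\bbar{\varphi}_\tau(z))$, so $r'(\tau^+)=\bbar{\xi}_0|_{\pl_+S^*M}=-1$ by \eqref{bbarS^*M}. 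The crucial point is that the second derivative vanishes too: from \eqref{formluaX} one gets $\bbar{X}\bbar{\xi}_0=-\rho\big(|\eta|^2_{h_\rho}+\tfrac{\rho}{2}\pl_\rho|\eta|^2_{h_\rho}\big)$, hence $r''(\tau^+)=(\bbar{X}\bbar{\xi}_0)(\bbar{\varphi}_{\tau^+}(z))=0$ because $\rho$ vanishes there (consistent with \eqref{ddot rho(tau)}). Thus, setting $s:=\tau^+-\tau$, we have $\hat r(s):=r(\tau^+-s)=s+\mc{O}(s^3)$ with no $s^2$ term, so $\hat r(s)^{-2}=s^{-2}+\mc{O}(1)$ has no $s^{-1}$ term. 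Using the change of variable \eqref{tau change of var}, $\tfrac{dt}{ds}=-\hat r(s)^{-2}$, and integration gives $t(s)=s^{-1}+(\text{smooth in }s)$, with \emph{no} $\log s$. Inverting, $s$, and therefore $\rho(\gamma(t))=\hat r(s)$, is a smooth function of $1/t$ for $t$ near $+\infty$, with $\rho(\gamma(t))=\tfrac1t+\mc{O}(t^{-2})$ and in fact a classical expansion in powers of $1/t$; the same holds as $t\to-\infty$. This log-free expansion is the delicate point — it is exactly where the precise form \eqref{formluaX} of the geodesic vector field (equivalently $\bbar{X}\bbar{\xi}_0=\mc{O}(\rho)$) is used — and I expect it to be the main obstacle; the rest is bookkeeping.

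For the meromorphic continuation, fix $\epsilon>0$ small enough that $\{t:\rho(\gamma(t))>\epsilon\}$ is a bounded interval $(t_-(\epsilon),t_+(\epsilon))$ (possible since $\rho\circ\gamma$ is bounded below on compact time intervals and, by the proof of Lemma \ref{x asymptotic}, monotone in each tail). Then $\int_{t_-(\epsilon)}^{t_+(\epsilon)}\rho(\gamma(t))^\lambda\,dt$ is entire in $\lambda$, with value $\ell_g(\gamma\cap\{\rho>\epsilon\})$ at $\lambda=0$. On the forward tail $\rho\circ\gamma$ is a monotone smooth function of $t$; substituting $v=\rho(\gamma(t))$ and writing $\bbar{\xi}_0=-\beta(v)$ with $\beta$ smooth, $\beta(0)=1$, and $\beta'(0)=r''(\tau^+)=0$ (the same vanishing), gives
\[\int_{t_+(\epsilon)}^\infty \rho(\gamma(t))^\lambda\,dt=\int_0^\epsilon \frac{v^{\lambda-2}}{\beta(v)}\,dv=\frac{\epsilon^{\lambda-1}}{\lambda-1}+\int_0^\epsilon v^\lambda B(v)\,dv,\]
using $\beta(v)^{-1}=1+v^2B(v)$ with $B$ smooth; this expression is holomorphic on $\{{\rm Re}\,\lambda>-1\}\setminus\{1\}$ with a simple pole only at $\lambda=1$. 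The backward tail is identical. Summing the three pieces, and noting they agree with $\int_{-\infty}^\infty\rho(\gamma(t))^\lambda\,dt$ for ${\rm Re}\,\lambda>1$, yields the meromorphic extension of $L^\lambda_g(\gamma)$ to $\{{\rm Re}\,\lambda>-1\}$, holomorphic except for a simple pole at $\lambda=1$; in particular holomorphic at $\lambda=0$.

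Finally, evaluating the three pieces at $\lambda=0$ (each $\tfrac{\epsilon^{\lambda-1}}{\lambda-1}$ contributing $-1/\epsilon$, and $\int_0^\epsilon B_\pm(v)\,dv=\mc{O}(\epsilon)$) gives
\[L_g(\gamma)=L^0_g(\gamma)=\ell_g(\gamma\cap\{\rho>\epsilon\})-\frac{2}{\epsilon}+\mc{O}(\epsilon)\qquad(\epsilon\to0).\]
Since the left-hand side is independent of $\epsilon$, letting $\epsilon\to0$ yields $L_g(\gamma)=\lim_{\epsilon\to0}\big(\ell_g(\gamma\cap\{\rho>\epsilon\})-2\epsilon^{-1}\big)$, as claimed.
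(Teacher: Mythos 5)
Your proposal is correct and takes essentially the same route as the paper: the crux in both is the endpoint asymptotic $\bbar{\rho}(\tau)=\tau+\mc{O}(\tau^3)$ with no quadratic term (you get it from smoothness and transversality of $\bbar{X}$ at $\pl\bbar{S^*M}$, the paper by integrating the flow equations \eqref{class of curves}), after which one sees a simple pole only at $\lambda=1$ and reads off the two $-\eps^{-1}$ contributions at $\lambda=0$. Your substitution $v=\rho$ on the tails is a cosmetic variant of the paper's splitting of the $\tau$-integral.
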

\begin{proof} 
First, observe that, writing $\gamma (t)=(\rho(t),y(t))$ in the product decomposition near $\pl \bbar{M}$, we have by \eqref{formluaX} and the bounds Lemma \ref{x asymptotic} that there are functions $a,b,c_j$ such that
\begin{equation}
\label{class of curves}
 \begin{gathered}
 \dot{\rho}(t) = a(t) \rho(t)^2,\quad \dot{a}(t)= b(t) \rho(t)^3,\quad \dot{y}_j(t) = c_j(t) \rho(t)^2, \\
 \textrm{ with }\dot{c}_j(t) = \mc{O}(\rho(t)^2), \quad |b(t)|=\mc{O}(1), \quad \lim\limits_{t\to\pm\infty} |a(t)| = 1,  \quad |a(t)| \leq 1. 
\end{gathered}\end{equation}
We use  the rescaled variable $\tau(t) := \int_{-\infty}^t \rho(\gamma(r))^2 dr$ and denote its inverse by $t(\tau)$. Direct computation yields that for curves satisfying \eqref{class of curves}, we get in the rescaled variable that $\bbar{\rho} (\tau) := \rho(\gamma(t(\tau)))$ has the following expression near $\tau^-:=\lim_{t\to-\infty}\tau(t)=0$:
\[
\bbar{\rho}(\tau) = \int_{0}^\tau a(t(r))dr \textrm{ and }a(t(\tau))=-1+\int_{0}^\tau b(t(r))\bbar{\rho}(r)dr=-1+\mc{O}(|\tau|^2)
\]
which implies that
\begin{equation}\label{tau asymptotic for general curve} 
\bbar{\rho}(\tau)=\tau+\mc{O}(|\tau|^3).
\end{equation}
The analogous asymptotic holds for $\tau$ near $\tau^+=\lim_{t\to +\infty}\tau(t)$ with $\tau^+-\tau$ as leading term.
We can then write
\begin{equation}
\label{L^lambda_g}
L^\lambda_g(\gamma) := \int_{-\infty}^\infty \rho^\lambda(\gamma(s))  ds
=  \int_{0}^{\tau_+} \bbar{\rho}^{\lambda-2}(\tau)   d\tau.
\end{equation}
Using the asymptotic \eqref{tau asymptotic for general curve}, it is direct to see that for a fixed $\tau_0\in(\tau^-,\tau_+)$
\[ L^\lambda_g(\gamma)=\frac{\tau_0^{\la-1}}{\la-1}+ \frac{(\tau_+-\tau_0)^{\la-1}}{\la-1}+H_{\la,\tau_0}\]
with $H_{\la,\tau_0}$ holomorphic in ${\rm Re}(\la)>-1$.
To show that $L_g(\gamma)$ can be obtained as the asymptotic limit of the length, we first note that since $\gamma$ intersects $\{\rho=\eps\}$ transversally for $\epsilon >0$ small enough. 
By \eqref{tau asymptotic for general curve}, the equation $\bbar{\rho}(\tau)=\eps$ has two solutions $\tau^\eps_\pm$ for $\eps>0$ small, satisfying
 $\tau_\mp^\eps = \tau_\mp\pm\eps+\mc{O}(\eps^3)$. We obtain, using in addition \eqref{tau asymptotic for general curve},
\[\ell_g(\gamma\cap \{\rho>\epsilon\}) = \int_{\tau^\eps_-} ^{\tau^\eps_+} {\bbar{\rho}}^{-2}(\tau) d\tau=
2\eps^{-1}-\tau_0^{-1}-(\tau_+-\tau_0)^{-1}+H_{0,\tau_0}+o(1)
\]
as $\eps\to 0$, which proves the claim.
\end{proof}

The reason of choosing $L_g$ instead of $\tau^+_g$ for the renormalized length is that $L_g$ is in some sense a good invariant geometric quantity, which only depends on the first jet at infinity of the boundary defining function (and in a simple way, as shown in \eqref{different bdf for rescaled length}), while $\tau^+_g$ really depends on the choice of $\rho$ in the bulk $M$, thus is not intrinsic.
We now investigate how the rescaled length depends on the boundary defining function. Suppose $\rho$ and $\til\rho$ are the boundary defining function for two coordinate systems under which the scattering metric $g$ is in normal form. Then $\til\rho = \rho + a \rho^2$ for some function $a\in C^\infty(\overline M)$. Denote by $L_g$ and $\til L_g$ to be the rescaled length with respect to $\rho$ and $\til\rho$. Let $\gamma(t)$ be a unit speed geodesic whose trajectory is the same as the rescaled flow (with respect to $x$) $\bbar\varphi_\tau(y_0,\eta_0)$ for some $(y_0,\eta_0) \in \partial_-S^*M$. By definition the rescaled length with respect to the boundary defining function $\til \rho$ is given by 
$${\til L}_g (\gamma)= \int_{-\infty}^\infty {\til\rho}^\lambda(\gamma(t))dt\mid_{\lambda = 0} = \int_{-\infty}^\infty \left(\rho(\gamma(t)) + a(\gamma(t)) \rho^2(\gamma(t)) \right)^\lambda dt \mid_{\lambda = 0}$$
We make a change of variable $\tau(t) = \int_{-\infty}^t \rho^{-2}(\gamma(t)) dt$ as in \eqref{L^lambda_g}, we get that
\[{\til L}_g (\gamma)= \int_0^{\tau(\infty)}  \bbar \rho^{\lambda -2}(\tau) ( 1 + \bbar a(\tau) \bbar \rho(\tau))^{\lambda}d\tau\mid_{\lambda = 0}\]
where $\bbar a(\tau) = a(\bbar\varphi_\tau(y_0,\eta_0))$. Split the integral into three parts we obtain, for any $\tau_0 >0$,
\[{\til L}_g(\gamma) = \int_0^{\tau_0} + \int_{\tau_0}^{\tau(\infty) - \tau_0} + \int_{\tau(\infty) -\tau_0}^{\tau(\infty)}\bbar \rho^{\lambda -2}(\tau) ( 1 + \bbar a(\tau) \bbar \rho(\tau))^{\lambda} d\tau \mid_{\lambda = 0}\]
The middle integral extends trivially to $\lambda = 0$ to become $\int_{\tau_0}^{\tau(\infty) - \tau_0} \bbar \rho^{-2}(\tau)d\tau$. The integral along $(0,\tau_0)$ can be treated by writing 
$$(1+ \bbar a (\tau) \bar \rho(\tau))^{\lambda} = 1 + \lambda \bbar a(\tau) \bbar\rho(\tau) + \lambda {\mathcal O}(\bbar \rho^2(\tau))$$
so that
$$\int_0^{\tau_0} \bbar \rho^{\lambda -2}(\tau) ( 1 + \bbar a(\tau) \bbar \rho(\tau))^{\lambda} d\tau \mid_{\lambda = 0} =  \int_0^{\tau_0}\bbar \rho^{\lambda -2}(\tau) \mid_{\lambda = 0} + \lambda \int_0^{\tau_0}\bbar \rho^{\lambda -1}(\tau)\bbar a(\tau) \mid_{\lambda = 0}.$$
Recall from \eqref{tau asymptotic for general curve} that $\bbar\rho(\tau ) = \tau( 1+ O(\tau))$ which then gives
$$\int_0^{\tau_0} \bbar \rho^{\lambda -2}(\tau) ( 1 + \bbar a(\tau) \bbar \rho(\tau))^{\lambda} d\tau \mid_{\lambda = 0} = \int_0^{\tau_0}\bbar \rho^{\lambda -2}(\tau) \mid_{\lambda = 0} + \lambda \int_0^{\tau_0}\tau^{\lambda-1} (1+{\mathcal O}(\tau))^{\lambda -1}\bbar a(\tau) \mid_{\lambda = 0}.$$
It is easy to check that the meromorphic extention of the second integral is holomorphic at $\lambda =0$ and is equal to $\bbar a(0)$. Therefore,
\[
\int_0^{\tau_0} \bbar \rho^{\lambda -2}(\tau) ( 1 + \bbar a(\tau) \bbar \rho(\tau))^{\lambda} d\tau \mid_{\lambda = 0} = \int_0^{\tau_0} \bbar \rho^{\lambda -2}(\tau)d\tau \mid_{\lambda = 0} + \bbar a(0).
\]
Similarly the integral on the interval $(\tau(\infty) - \tau_0, \tau(\infty))$ can be treated the same way to give
\[
\int_{\tau(\infty)- \tau_0}^{\tau(\infty)} \bbar \rho^{\lambda -2}(\tau) ( 1 + \bbar a(\tau) \bbar \rho(\tau))^{\lambda} d\tau \mid_{\lambda = 0} = \int_{\tau(\infty) - \tau_0}^{\tau(\infty)} \bbar \rho^{\lambda -2}(\tau)d\tau \mid_{\lambda = 0} + \bbar a(\tau(\infty)).
\]
We see therefore that if $\gamma$ is a unit speed geodesic whose trajectory is given by $\bbar{\varphi}_\tau(p_0,\eta_0)$ with $(p_0,\eta_0) \in \partial_- S^*M$, then 
\begin{eqnarray}
\label{different bdf for rescaled length}
{\til L}_g(\gamma) - L_g(\gamma) = a(p_0) + a({S}_g(p_0,\eta_0)).
\end{eqnarray}
This allows us to obtain the following:
\begin{lemma}
\label{L uniquely def}
Let $g_1$ and $g_2$ be asymptotically conic metrics with the same scattering maps so that $\rho_0^{-2} |d\rho_0|_{g_j} = 1 + {\mathcal O}(\rho_0^2)$, $j = 1,2$ for some boundary defining function $\rho_0$. If the rescaled lengths of $g_1$ and $g_2$ agree with respect to a boundary defining function $\rho_0$ satisfying the conditions of Definition \ref{defAE}, then they agree for all such boundary defining functions. 
\end{lemma}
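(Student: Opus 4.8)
The plan is to read the statement off the transformation rule \eqref{different bdf for rescaled length}; its whole content is that the right-hand side of that identity depends on the metric only through the scattering map.

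First I would fix the given boundary defining function $\rho_0$, which is admissible for both $g_1$ and $g_2$, and let $\tilde\rho_0$ be any other admissible boundary defining function. As recalled just before \eqref{different bdf for rescaled length}, one can write $\tilde\rho_0=\rho_0+a\rho_0^2$ with $a\in C^\infty(\bbar M)$, and the boundary value $\omega_0:=a|_{\pl\bbar M}$ is exactly the function entering the change of coordinates \eqref{changeofcoord} on $T^*\pl\bbar M$, i.e. $(y,\eta)\mapsto(y,\eta+d\omega_0)$. I view the renormalized length of $g_j$ as a function $L_{g_j}$ on $T^*\pl\bbar M$ by parametrizing each complete non-trapped geodesic $\gamma$ by its incoming data in the $\rho_0$-identification $\pl_-S^*M\simeq T^*\pl\bbar M$, and similarly the $\tilde\rho_0$-length $\tilde L_{g_j}$; with this convention the hypotheses are $S_{g_1}=S_{g_2}$ and $L_{g_1}=L_{g_2}$.

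The key step is to rewrite \eqref{different bdf for rescaled length} as an identity of functions on $T^*\pl\bbar M$. Given incoming data $(y_0,\eta_0)$ in $\rho_0$-coordinates and the corresponding geodesic $\gamma$ of $g_j$, its incoming data in $\tilde\rho_0$-coordinates is $(y_0,\eta_0+d\omega_0(y_0))$ by \eqref{changeofcoord}; moreover in \eqref{different bdf for rescaled length} the function $a$ is evaluated only at base points on $\pl\bbar M$, so the two terms $a(p_0)$ and $a(S_g(p_0,\eta_0))$ only involve $\omega_0$, and the second of them depends on $g_j$ solely through $S_{g_j}$. Hence \eqref{different bdf for rescaled length} reads
\[
\tilde L_{g_j}\big(y_0,\eta_0+d\omega_0(y_0)\big)-L_{g_j}(y_0,\eta_0)=\omega_0(y_0)+a\big(S_{g_j}(y_0,\eta_0)\big).
\]
By hypothesis $L_{g_j}(y_0,\eta_0)$ and $S_{g_j}(y_0,\eta_0)$ are both independent of $j$, hence so is $\tilde L_{g_j}(y_0,\eta_0+d\omega_0(y_0))$; since $(y_0,\eta_0)\mapsto(y_0,\eta_0+d\omega_0(y_0))$ is a bijection of $T^*\pl\bbar M$, this gives $\tilde L_{g_1}=\tilde L_{g_2}$ on all of $T^*\pl\bbar M$, which is the assertion. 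In the case of a nonempty trapped set the same computation applies verbatim on the common open full-measure subset $\{z\in T^*\pl\bbar M;\ \tau^+(z)<\infty\}$.

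The computation itself is short, so the only point that needs care — and the only place I anticipate any friction — is the bookkeeping of the various identifications of $\pl_\pm S^*M$ with $T^*\pl\bbar M$: the one attached to $\rho_0$, the one attached to $\tilde\rho_0$, and the comparison \eqref{changeofcoord} relating them, together with the fact that the scattering maps $S_{g_j}$ relative to $\rho_0$ and relative to $\tilde\rho_0$ are conjugate by the same coordinate change. Once \eqref{different bdf for rescaled length} is expressed in these coordinates, one only has to observe that its right-hand side is genuinely metric-independent once the scattering map is fixed, and the conclusion is immediate.
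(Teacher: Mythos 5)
Your argument is correct and is exactly the paper's: the lemma is stated as an immediate consequence of the transformation rule \eqref{different bdf for rescaled length}, whose right-hand side involves the metric only through the scattering map and the boundary values of $a$, so equality of $(S_g,L_g)$ for one admissible $\rho_0$ propagates to every other admissible defining function. Your extra bookkeeping of the identifications of $\pl_\pm S^*M$ with $T^*\pl\bbar{M}$ via \eqref{changeofcoord} is a harmless elaboration (the renormalized length is a function of the geodesic itself), and matches the paper's intent.
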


\subsection{Determination of boundary metric from $S_g$}
In this section, we will study some cases where the scattering map determines the metric at the boundary. It does not play a role in the main Theorems. This part only uses geodesics staying arbitrarily close to $\pl\bbar{M}$ (those corresponding to $|\eta|$ very large), and the arguments thus work whether there is trapping or not, since such geodesics are never trapped.
First, we notice that if  $c>0$, we have for each $(y,\eta)\in T^*\pl\bbar{M}$
\[ |\eta|_{ch_0}.\phi^{ch_0}_{\pi}\Big(y,\frac{\eta}{|\eta|_{ch_0}}\Big)=c|\eta|_{h_0}.\phi^{h_0}_{c\pi}\Big(y,\frac{\eta}{c|\eta|_{h_0}}\Big)=|\eta|_{h_0}.\phi^{h_0}_{\pi}\Big(y,\frac{\eta}{|\eta|_{h_0}}\Big)
\]
where $\phi_t^{ch_0}$ is the Hamilton flow of $|\eta|_{ch_0}^2/2$. This shows that, using only \eqref{scattering relation limit}, we can not determine the boundary metric $h_0$ but at best 
only a multiple of $h_0$ can be recovered.

\begin{lemma}\label{ergodic}
Let $g,g'$ be two asymptotically conic metrics on $M$ in normal form so that, up to pulling-back by a diffeomorphism, $g=d\rho^2/\rho^4+h_\rho/\rho^2$ and $g'=d\rho^2/\rho^4+h'_\rho/\rho^2$.
If the geodesic flow of the boundary metric $(\pl\bbar{M},h_0)$ at time $\pi$, i.e. the map $\phi^{h_0}_\pi:
S^*\pl\bbar{M}\to S^*\pl\bbar{M}$,  is ergodic with respect to the Liouville measure, and if $S_g=S_{g'}$, then  $h_0'=ch_0$ for some $c>0$. This is in particular true if $h_0$ has negative curvature or more generally if it has mixing geodesic flow.
\end{lemma}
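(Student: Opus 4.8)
The plan is to exploit the limiting relation \eqref{scattering relation limit} of Lemma \ref{scatteringlarge}, which says that the behaviour of $S_g$ on covectors of large norm is governed entirely by the time-$\pi$ geodesic flow $\phi^{h_0}_\pi$ on $S^*\pl\bbar{M}$, and similarly for $S_{g'}$ by $\phi^{h_0'}_\pi$. Since $S_g=S_{g'}$, applying \eqref{scattering relation limit} to both metrics and using homogeneity gives, for every $(y,v)\in S^*\pl\bbar{M}$ (unit with respect to $h_0$),
\[
|v|_{h_0}\,\phi^{h_0}_\pi\Big(y,\tfrac{v}{|v|_{h_0}}\Big)=\lim_{\eps\to 0}\eps\, S_g(y,\eps^{-1}v)=\lim_{\eps\to 0}\eps\, S_{g'}(y,\eps^{-1}v)=|v|_{h_0'}\,\phi^{h_0'}_\pi\Big(y,\tfrac{v}{|v|_{h_0'}}\Big).
\]
Writing $f:=|v|_{h_0'}/|v|_{h_0}>0$ (a function on $S^*\pl\bbar{M}$, in fact $\tfrac12 h_0'(v,v)$ when $|v|_{h_0}=1$), the right-hand side is $f(y,v)^{-1}\,\phi^{h_0'}_\pi$ evaluated after rescaling; combined with the scaling identity at the start of this subsection (which shows $\phi^{ch_0}_\pi$ produces the same limiting map as $\phi^{h_0}_\pi$), one deduces that the two Hamiltonian flows $\phi^{h_0}_\pi$ and $\phi^{h_0'}_\pi$ must be related by a fibrewise dilation: on $S^*\pl\bbar{M}$ one has $\phi^{h_0}_\pi(y,v)=\phi^{h_0'}_\pi(y,v)$ up to the rescaling encoded by $f$. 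I would first make this reduction precise: the equality of limits forces the function $f=h_0'(\cdot,\cdot)/h_0(\cdot,\cdot)$ on $S^*_{h_0}\pl\bbar{M}$ to satisfy a transport-type invariance along the orbits of $\phi^{h_0}_\pi$.

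Next I would extract the invariance. The cleanest route: comparing $S_g$ and $S_{g'}$ not only at leading order but observing that the map $(y,v)\mapsto \phi^{h_0}_\pi(y,v)$ must agree with the corresponding map for $h_0'$ after the fibre rescaling; since a fibrewise-homogeneous symplectomorphism of $T^*\pl\bbar{M}$ is determined by its restriction to the sphere bundle together with a positive homogeneity-$1$ factor, the equality $S_g=S_{g'}$ near infinity gives that $\phi^{h_0}_\pi$ and $\phi^{h_0'}_\pi$ are conjugate by the fibre dilation $(y,v)\mapsto (y,f(y,v/|v|)v)$, and pulling this back one finds that $f\circ\phi^{h_0}_\pi=f$, i.e.\ the positive function $f$ on $S^*\pl\bbar{M}$ is invariant under the time-$\pi$ geodesic flow of $h_0$. (Here one uses that $\phi^{h_0}_\pi$ preserves the Liouville measure on $S^*\pl\bbar{M}$, being the time-$\pi$ map of a Hamiltonian flow, so ergodicity applies.)

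Now ergodicity finishes it: an invariant function for an ergodic measure-preserving transformation is constant almost everywhere, hence $f\equiv c$ for some constant $c>0$, and since $f$ is continuous (indeed smooth) this holds everywhere; that is, $h_0'(v,v)=c\,h_0(v,v)$ for all $v$ tangent to $\pl\bbar{M}$, so $h_0'=ch_0$. Finally, for the last sentence of the statement, negative sectional curvature of $(\pl\bbar{M},h_0)$ — or more generally a mixing geodesic flow — implies ergodicity of the full geodesic flow $\phi^{h_0}_t$, and I would note that mixing of the flow implies ergodicity of the time-$\pi$ map $\phi^{h_0}_\pi$ with respect to Liouville measure (mixing of a flow passes to any single time-$T$ map; for the Anosov/negatively curved case one can also invoke that the geodesic flow is mixing, e.g.\ by Anosov's theorem, hence every time-$T$ map is ergodic), so the hypothesis of the lemma is met.

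The main obstacle I anticipate is the second step: turning the equality of the \emph{limiting} scattering relations into the clean statement ``$f$ is $\phi^{h_0}_\pi$-invariant''. One must be careful that \eqref{scattering relation limit} only controls the leading asymptotics of $S_g$ on large covectors, and that the natural object that is flow-invariant is the conformal factor $f$ rather than, say, $h_0'$ itself — I would organise this by working on the unit sphere bundle $S^*_{h_0}\pl\bbar{M}$, parametrising covectors as $\eps^{-1}v$ with $|v|_{h_0}=1$, and carefully tracking how the norm $|v|_{h_0'}$ enters both the dilation factor and the time reparametrisation of the Hamiltonian flow. Once the bookkeeping is done correctly the invariance $f\circ\phi^{h_0}_\pi=f$ drops out, and the rest is immediate from ergodicity.
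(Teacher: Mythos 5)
Your proposal is correct and follows essentially the same route as the paper: apply the limit \eqref{scattering relation limit} to both metrics, deduce that the function $(y,\eta)\mapsto|\eta|_{h_0'}$ on the $h_0$-unit sphere bundle is invariant under $\phi^{h_0}_\pi$, conclude it is constant by ergodicity, and obtain ergodicity of the time-$\pi$ map from mixing of the flow exactly as the paper does. The only remark is that the invariance step you flag as the main obstacle is immediate: since $\phi^{h_0'}_\pi$ preserves the $h_0'$-norm and the fibre dilation scales it, evaluating $|\cdot|_{h_0'}$ on both sides of $|\eta|_{h_0'}.\phi^{h_0'}_\pi\big(y,\eta/|\eta|_{h_0'}\big)=\phi^{h_0}_\pi(y,\eta)$ gives $f\circ\phi^{h_0}_\pi=f$ directly, with no need for the conjugation-by-dilation argument.
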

\begin{proof}
First, we notice that the normal form of Lemma \ref{normalform} for $g$ and $g'$ (with associated boundary defining function $\rho$ and $\rho'$) allows to construct a diffeomorphism $\psi:\bbar{M}\to \bbar{M}$ so that, near $\pl\bbar{M}$, $\psi^*\rho'=\rho$ and $\psi^*g'=\frac{d\rho^2}{\rho^4}+\frac{h'_\rho}{\rho^2}$. We can then use \eqref{scattering relation limit} to deduce that for all 
$(y,\eta)\in T^*\pl\bbar{M}$ with $|\eta|_{h_0}=1$,
\[ |\eta|_{h_0'}.\phi'_\pi\Big(y,\frac{\eta}{|\eta|_{h_0'}}\Big)=\phi_\pi(y,\eta),\]
where $\phi'_t$ is the Hamilton flow of $\frac{1}{2}|\eta|^2_{h_0'}$.
This implies that the map $(y,\eta)\mapsto |\eta|_{h_0'(y)}$ is invariant by $\phi_\pi$ on the unit 
tangent bundle $S^*\pl\bbar{M}$ of $(\pl\bbar{M},h_0)$, it is thus constant by the ergodicity assumption on $\phi_\pi$. This shows that $h_0'=ch_0$ for some $c>0$. If $\phi_t$ is mixing, one has (assuming 
Liouville measure $\mu$ on $S^*\pl\bbar{M}$ has mass $1$ for notational simplicity) for $u,v\in C^\infty(S^*\pl\bbar{M})$
\[ \lim_{t\to \infty} \int_{S^*\pl\bbar{M}}u\circ \phi_t.v \, d\mu=\int_{S^*\pl\bbar{M}}u\, d\mu \int_{S^*\pl\bbar{M}} v\, d\mu\] 
and the same is then true with $t\in \pi\NN\to \infty$. This implies that if $u\circ \phi_\pi=u$,
then $\int u^2 \, d\mu=(\int u\, d\mu)^2$, and by the case of equality in H\"older we get that $u$ is constant.
\end{proof}
We also the same result under conditions on the injectivity radius of 
$(\pl \bbar{M},h_0)$. 
\begin{lemma}
\label{boundary determination of metric}
Let $g,g'$ be two asymptotically conic metrics on $M$ and denote their boundary metrics by $h_0$ and $h'_0$. Assume that the radius of injectivity of $h_0$ is larger than $\pi$ and that the scattering maps $S_g$ and $S_{g'}$ agree. Then $h_0=c\psi^*h_0'$ for some diffeomorphism $\psi$ on $\pl\bbar{M}$ and some $c>0$.
\end{lemma}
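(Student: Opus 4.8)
The plan is to mimic the proof of Lemma~\ref{ergodic}: first reduce $S_g=S_{g'}$ to an identity relating the time-$\pi$ geodesic flows of the boundary metrics $h_0$ and $h_0'$, and then invoke a rigidity statement for that flow which is available precisely because the hypothesis $\mathrm{inj}(h_0)>\pi$ makes the length-$\pi$ geodesic problem ``simple''. Concretely, for Step~1, by Lemma~\ref{normalform} we may choose boundary defining functions $\rho,\rho'$ putting $g,g'$ in normal form; as in the proof of Lemma~\ref{ergodic}, the two normal forms produce a diffeomorphism $\psi_0:\bbar M\to\bbar M$ with $\psi_0|_{\pl\bbar M}=\Id$ such that $\psi_0^*\rho'=\rho$ and $\psi_0^*g'=d\rho^2/\rho^4+h'_\rho/\rho^2$ near $\pl\bbar M$, the scattering map of $\psi_0^*g'$ differing from $S_{g'}$ only by a harmless conjugation $(y,\eta)\mapsto(y,\eta+d\omega_0)$ of $T^*\pl\bbar M$ (cf.\ \eqref{changeofcoord}). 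Replacing $g'$ by $\psi_0^*g'$, we may thus assume $g=d\rho^2/\rho^4+h_\rho/\rho^2$ and $g'=d\rho^2/\rho^4+h'_\rho/\rho^2$ with the \emph{same} $\rho$; it then suffices to prove $h_0'=c\,h_0$ for some $c>0$, which (after accounting for the diffeomorphisms relating the two normal forms) is the stated conclusion.

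In Step~2, exactly as in Lemma~\ref{ergodic}, \eqref{scattering relation limit} together with $S_g=S_{g'}$ yields, for all $(y,\eta)\in T^*\pl\bbar M$ with $|\eta|_{h_0}=1$,
\[|\eta|_{h_0'}\,\phi'_\pi\!\Big(y,\frac{\eta}{|\eta|_{h_0'}}\Big)=\phi_\pi(y,\eta),\]
where $\phi_t,\phi'_t$ denote the Hamilton flows of $\tfrac12|\cdot|^2_{h_0}$ and $\tfrac12|\cdot|^2_{h_0'}$ on $T^*\pl\bbar M$. Writing $G_t$ for the $h_0$-unit-speed geodesic flow on $S^*\pl\bbar M$ and $c(y,v):=|v|_{h_0'}$, this identity says two things: (i) $c\circ G_\pi=c$; and (ii) for every $y$ and every $h_0$-unit $v$, the $h_0$-geodesic of length $\pi$ issued from $(y,v)$ and the $h_0'$-geodesic of $h_0'$-length $\pi$ issued from $(y,v/c(y,v))$ have the same endpoint, with exit covectors parallel (with ratio $c(y,v)$). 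In other words, $h_0$ and $h_0'$ have identical scattering (lens) data along geodesic segments of length $\pi$.

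In Step~3 — the main point — one uses $\mathrm{inj}(h_0)>\pi$: geodesic segments of $h_0$ of length $\le\pi$ are minimizing and conjugate-point-free, so the geodesic spheres $\{\exp^{h_0}_y(\pi v):|v|_{h_0}=1\}$ are embedded, and by (ii) the corresponding spheres for $h_0'$ coincide with them with matching parametrizations, so the length-$\pi$ $h_0'$-geodesic problem issued from any point is likewise nondegenerate. One is then in the situation where the lens data for segments of length $\pi$ determine the metric up to a constant factor: this follows from the method of Joshi--Sa Barreto \cite{JoSB1} (the same input used in Proposition~\ref{I0 boundary determination}) together with the Pestov-identity / X-ray-transform arguments underlying it, and forces $h_0'=c\,h_0$; composing with $\psi_0$ and the normal-form diffeomorphisms then gives $h_0=c\,\psi^*h_0'$. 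Establishing this rigidity of the length-$\pi$ lens data under the sole hypothesis $\mathrm{inj}(h_0)>\pi$ is the hard step; the normal-form reduction and the passage from $S_g=S_{g'}$ to the flow identity are routine.
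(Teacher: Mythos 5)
Your Steps 1 and 2 (normal-form reduction and the passage from $S_g=S_{g'}$ to the time-$\pi$ flow identity via \eqref{scattering relation limit}) are correct and coincide with what the paper does. The genuine gap is Step 3, which you yourself flag as ``the hard step'' and then do not prove: you assert that identical length-$\pi$ lens data under the sole hypothesis $\mathrm{inj}(h_0)>\pi$ forces $h_0'=c\,h_0$, attributing this to ``the method of Joshi--Sa Barreto \cite{JoSB1} together with the Pestov-identity / X-ray-transform arguments underlying it.'' But the result of \cite{JoSB1} invoked in Proposition \ref{I0 boundary determination} is a \emph{linear} statement, the injectivity of a weighted ray transform of a \emph{function} over geodesic segments of length $\pi$; it does not yield the nonlinear conclusion that two metrics with the same time-$\pi$ scattering/lens behaviour agree up to a constant factor, and no such ``length-$\pi$ lens rigidity up to a constant'' theorem under just an injectivity-radius bound is available in the cited literature. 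To use a linear injectivity result you would at least need a linearization or an explicit reduction of the metric identity to a transform of a scalar quantity, and you supply neither. So as written the heart of the lemma is missing.

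For comparison, the paper's proof is elementary and avoids X-ray machinery entirely. First, since $\mathrm{inj}(h_0)>\pi$ the geodesic spheres of radius $\pi$ are embedded, and \eqref{scattering relation limit} shows the corresponding spheres for $h_0'$ coincide with them; the Gauss lemma then says a covector is $h_0$-orthogonal to such a sphere iff it is $h_0'$-orthogonal, which forces $h_0'=e^{2f}h_0$ pointwise. Second, writing $f=c+\hat f$ with $\hat f$ of zero mean, the flow identity evaluated at a point where $\hat f=0$ shows $\hat f$ vanishes on the entire distance-$\pi$ sphere centred at that point; a short argument with two nearby points lying at distance exactly $\pi$ from a common point (possible because $\pi+s<\mathrm{inj}(h_0)$ for small $s$) shows the zero set of $\hat f$ is open, hence by an open-closed argument $\hat f\equiv 0$ and $h_0'=c\,h_0$. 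Your observation that the spheres coincide is exactly the right starting point; what you are missing is the Gauss-lemma step giving conformality and the propagation argument giving constancy of the conformal factor, in place of the unproven black-box rigidity claim.
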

In particular, we note that the assumptions of Lemma \ref{boundary determination of metric}
are valid when $(\pl\bbar{M},h_0)$ is a sphere of an Euclidean space of radius $R>1$
\begin{proof}
As before, we use the normal form of Lemma \ref{normalform} for $g$ and $g'$.
We first show that $h_0$ and $h'_0$ are conformally related.
Let $\phi_t$ and $\phi'_t$ be the Hamilton flow of $\frac{1}{2}|\eta|^2_{h_0}$ and $\frac{1}{2}|\eta|^2_{h'_0}$ on $T^*\pl\bbar{M}$.
Since the radius of injectivity of $(\pl\bbar{M},h_0)$ is greater than $\pi$, 
for each $y_0\in \pl M$ the geodesic sphere $S_{y_0}(\pi)$ of center $y_0$ and radius $\pi$
is an embedded submanifold in $\pl\bbar{M}$.
By using \eqref{scattering relation limit}, we get that the corresponding geodesic sphere $S_{y_0}'(\pi)$ for $h_0'$ agrees with $S_{y_0}(\pi)$.
Let $(y_0,\eta_0)\in T^*\partial \bbar{M}$ and set $(y_1,\eta_1) = \phi_{-\pi}(y_0,\eta_0)$, then 
$(y_1,\eta_1) = \phi'_{-\pi}(y_0,\eta_0)$ by \eqref{scattering relation limit}. 
We see that by Gauss lemma,  $\eta_0$ belongs to the normal bundle of the geodesic sphere $S_{y_1}(\pi)=S'_{y_1}(\pi)$.  
Since $T_{y_0} S_{y_1}(\pi) = T_{y_0} S'_{y_1}(\pi)$ we get $\cjg \eta_0,\eta\cjd_{h_0(y_0)} = 0 \iff \cjg\eta_0,\eta\cjd_{h'_0(y_0)} = 0$. This shows that $h_0' = e^{2f} h_0$ for some smooth function $f$.
We need to show that $f$ is constant. Let $\hat{f}=f-\frac{1}{{\rm vol}_{h_0}(M)}\int_M f\, {\rm dvol}_{h_0}$ so that $h_0'=e^{2c}e^{2\hat{f}}h_0$ with $c=\frac{1}{{\rm vol}_{h_0}(M)}\int_M f\, {\rm dvol}_{h_0}$. Note that $\hat{f}$ vanishes somewhere.
Let  $y\in \partial \bbar{M}$ be a point such that $\hat{f}(y)  =0$. Then for all $\eta\in T_{y}\partial \bbar{M}$ such that $|\eta|_{h_0} =1= e^{-c}|\eta|_{h_0'}$, we have using \eqref{scattering relation limit}
\[\begin{split}
1= & |\phi_{\pi}'(y,e^{-c}\eta)|_{h_0'} = e^{c+\hat{f}(\pi_0 (\phi_\pi(y,\eta)))} |\phi_{\pi} '(y,e^{-c}\eta)|_{h_0}  \\
= & e^{\hat{f}(\pi_0( \phi_{\pi}(y,\eta)))} |\phi_{\pi} (y,\eta)|_{h_0}=e^{\hat{f}(\pi_0 (\phi_\pi(y,\eta)))}.
\end{split}\]
Therefore, we have 
\begin{eqnarray}
\label{distance 1 means f=0}
\hat{f}(y) = 0 \implies {\hat{f}(\pi_0(\phi_\pi(y,\eta)))} = 0,\ \ \forall \eta \in S^*_y \partial \bbar{M}
\end{eqnarray}
where $S^*\partial \bbar{M}$ is the unit tangent bundle for $h_0$. 
Let $y_1\in \partial \bbar{M}$ be a point such that $\hat{f}(y_1) =0$ and $s>0$ be small enough such that $\pi+s$ is smaller than the injectivity radius of $(\partial \bbar{M}, h_0)$. If $y_2$ is a point for which $d_{h_0}(y_1,y_2) =s$ then there exists a unit vector $\eta_1 \in S_{y_1}^* \partial \bbar{M}$ such that $\pi_0(\phi_s(y_1,\eta_1)) = y_2$. This means that 
\[d_{h_0}(y_2, \pi_0(\phi_{\pi}(y_1,\eta_1))) = \pi-s,\ \ d_{h_0}(y_2, \phi_{-\pi}(y_1,\eta_1))  = \pi+s.\]
By continuity of the map $y\mapsto d_{h_0}(y,y_2)$ on $S_{y_1}(\pi)$, this means that there exists a point $y_0\in \pl\bbar{M}$ such that $d_{h_0}(y_0,y_1) = d_{h_0}(y_0,y_2) = \pi$. Therefore by \eqref{distance 1 means f=0} we have that $0 = \hat{f}(y_1) = \hat{f}(y_0) = \hat{f}(y_2)$. Since $s>0$ is an arbitrarily small number this means that $\hat{f}(y)$ vanishes in a small neighbourhood of $y_1$. The proof is complete by an open-closed argument.
\end{proof}

\subsection{Determination of the metric jets at $\pl\bbar{M}$ from $S_g$}
We consider two asymptotically conic metrics $g,g'$, and as before the normal form of Lemma \ref{normalform} for $g$ and $g'$ (with associated boundary defining function $\rho$ and $\rho'$) allows to construct a diffeomorphism $\psi:\bbar{M}\to \bbar{M}$ so that, near $\pl\bbar{M}$, $\psi^*\rho'=\rho$ and $\psi^*g'=\frac{d\rho^2}{\rho^4}+\frac{h'_\rho}{\rho^2}$. Up to 
replacing $g'$ by $\psi^*g'$, we can thus assume that $g,g'$ are both in normal form with the same boundary defining function $\rho$, i.e.
\[g=\frac{d\rho^2}{\rho^4}+\frac{h_\rho}{\rho^2}, \quad g'=\frac{d\rho^2}{\rho^4}+\frac{h'_\rho}{\rho^2}\]
and we will assume that the boundary metrics coincide: $h_0=h_0'$.
We consider the Taylor expansion at $\rho=0$ of the dual metrics $h^{-1}_\rho$ and ${h'_\rho}^{-1}$ 
 to $h_\rho$ and $h'_{\rho}$:
\begin{equation}\label{exphrho}
h^{-1}_\rho=\sum_{j=0}^{m}\rho^j h_j+\mc{O}(\rho^{m+1}), \quad 
{h'_\rho}^{-1}=\sum_{j=0}^{m}\rho^j h'_j+\mc{O}(\rho^{m+1})
\end{equation}
for $m\in \NN$, and we define for each $j$
\[T_j:=h_j-h_j'.\]
Here we view $h_j,h_j',T_j$ as homogeneous functions of order $2$ on $T^*\pl\bbar{M}$ and,
by abuse of notations, $h_0$ denotes both the metric on $T\pl\bbar{M}$ and $T^*\pl\bbar{M}$. 

We shall apply perturbation theory in the regime $\epsilon\to 0$ to the system \eqref{tilde dynamic}. Observe that for each $\epsilon >0$ the ODE system \eqref{tilde dynamic} is given by a 1-parameter smooth family of  vector fields $\til{X}_\epsilon$ given by
\begin{equation}\label{tildeXeps} 
\til{X}_\eps:= \til{\xi}_0\pl_{\til{\rho}}-\til{\rho}\big(|\til{\eta}|^2_{h_{\eps\til{\rho}}}+\frac{\eps\til{\rho}}{2}(\partial_\rho |\til{\eta}|^2_{h_\rho})|_{\rho=\eps\til{\rho}}
\big)\pl_{\til{\xi}_0}+
H_{\eps\til{\rho}}.
\end{equation}
The variables $\til{\rho},\til{\xi}_0,(\til{y},\til{\eta})$ belong to $[0,1]\x [-1,1]\x T^*\pl\bbar{M}$, and the vector field $H_{\eps\til{\rho}}$ is the Hamilton vector field of $(\til{y},\til{\eta})\mapsto \frac{1}{2}h^{-1}_{\eps\til{\rho}}(\til{\eta},\til{\eta})=\frac{1}{2}|\til{\eta}|^2_{h_{\eps\til{\rho}}}$
on $T^*\pl\bbar{M}$ with respect to the Liouville symplectic form. In local coordinates one has 
\[H_{\rho}=\sum_{j,k=1}^{n-1}h_{\rho}^{jk}\til{\eta}_k\pl_{\til{y}_j}-\frac{1}{2}\sum_{j=1}^{n-1}\pl_{\til{y}_j}|\til{\eta}|^2_{h_\rho}\pl_{\til{\eta}_j}.\]
Remark that a priori the integral curves solving the ODE \eqref{tilde dynamic} belong to the hypersurface $\{\til{\xi}_0^2+\til{\rho}^2|\til{\eta}|^2_{h_{\eps\til{\rho}}}=1\}$ but the expression defining the vector field $\til{X}_\eps$ extends smoothly to a neighborhood of that hypersurface.

The vector fields $\til{X}_{\epsilon}$ and $\til{X}'_{\epsilon}$ corresponding to the metrics $g$ and $g'$ have a smooth expansion in powers of $\eps$: 
\[\til{X}_{\epsilon}=\sum_{j=0}^m\eps^jX_j+\mc{O}(\eps^{m+1}),\quad \til{X}'_{\epsilon}=\sum_{j=0}^m\eps^jX'_j+\mc{O}(\eps^{m+1})\]
with $X_j, X_j'$ smooth vector fields. 
Under the assumption that $h_j=h'_j$ for $j\leq m-1$, we get in addition that $X_j=X'_j$ for $j\leq m-1$ and  
\begin{equation}
\label{tilX and tilX'}
X_m-X'_m=-\big(\frac{m}{2}+1\big){\til{\rho}}^{m+1}T_{m}(\til{\eta},\til{\eta})\pl_{\bbar{\xi}_0}+\til{\rho}^{m} (H_{m}-H'_m)
\end{equation}
where $H_m$ (resp. $H_m')$ is the Hamilton vector field of $\frac{1}{2}h_m(\til{\eta},\til{\eta})$  (resp. $\frac{1}{2}h_m(\til{\eta},\til{\eta})$) on $T^*\pl\bbar{M}$, and $H_m-H_m'$ is the Hamilton field of $T_m(\til{\eta},\til{\eta})$.
We introduce on $T^*\partial \bbar{M}$ the coordinate system $(E,\til{y},\hat \eta)$ where $E := |\til\eta|^2_{h_0}$ and $\hat \eta = \til \eta/\sqrt{E}$.

We set $c_\eps(s)$ and $c'_\eps(s)$ to be the trajectories of $\til{X}_\eps$ and $\til{X}'_\eps$ respectively with the same initial condition 
\[\big(\til{\rho},\til{\xi}_0,E,\til{y},\hat{\eta}\big)=(0,1,1,y_0,\eta_0) \in [0,1]\times [-1,1] \times \RR^+\times S^*\partial M.\]
These solutions have a Taylor expansion in powers of $\eps$ of the form
\[ c_\eps(s)=\sum_{j=1}^m\eps^j c_j(s)+\mc{O}(\eps^{m+1}), \quad c'_\eps(s)=\sum_{j=1}^m\eps^j c'_j(s)+\mc{O}(\eps^{m+1}).\]

\begin{lemma}\label{solutionlinearised}
Assume that $g,g'$ are two asymptotically conic metrics written in normal form such that their boundary jets $h_j$ and $h_j'$ are equal up to $j\leq m-1$ for some $m\geq 1$.
If the scattering maps $S_g$ and $S_{g'}$ agree,
then for all $(y_0,\eta_0)\in S^*\pl\bbar{M}$: 
\begin{equation}\label{energyvar} 
\int_0^\pi \sin(s)^m H_0T_m(e^{sH_0}(y_0,\eta_0))ds=0
\end{equation}
if $H_0$ is the Hamilton field of $\frac{1}{2}|\eta|^2_{h_0}$ on $T^*\pl\bbar{M}$.
If in addition $H_0T_m=0$, then
\begin{equation}\label{rhocm}
\til{\rho}(c_m(\pi))-\til{\rho}(c'_m(\pi))=-(\frac{m}{2}+1)\int_0^\pi \sin^{m+2}(s)T_m(e^{sH_0}(y_0,\eta_0))ds,
\end{equation}
\begin{equation}\label{equcos}
\int_0^\pi \cos(s)\sin(s)^{m+1}T_m(e^{sH_0}(y_0,\eta_0))ds=0,
\end{equation}
\begin{equation}\label{equdirectionH0}
\int_0^\pi (\sin(s)^m-(\frac{m}{2}+1)\sin(s)^{m+2})T_m(e^{sH_0}(y_0,\eta_0))ds=0.
\end{equation}
\end{lemma}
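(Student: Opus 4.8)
The plan is to set up a perturbation expansion in $\epsilon$ for the rescaled dynamics $\tilde X_\epsilon$ and $\tilde X'_\epsilon$ and extract, from the hypothesis $S_g=S_{g'}$, a sequence of linear constraints on $T_m$. Since $h_j=h_j'$ for $j\le m-1$, the flows $c_\epsilon(s)$ and $c'_\epsilon(s)$ agree to order $\epsilon^{m-1}$, and the first difference $c_m(s)-c'_m(s)$ solves the linearised (variational) equation along the limiting flow, with inhomogeneous term coming from $X_m-X'_m$ as given in \eqref{tilX and tilX'}. The zeroth-order ($\epsilon^0$) flow is, by the discussion following Lemma \ref{asymp of geod}, the explicit model $\tilde\rho(s)=\sin s$, $\tilde\xi_0(s)=\cos s$, $|\tilde\eta(s)|_{h_0}=1$, with the $(\tilde y,\tilde\eta)$ part given by the boundary Hamilton flow $e^{sH_0}(y_0,\eta_0)$. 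The constraint $S_g=S_{g'}$ forces the difference of solutions to vanish at $s=\pi$ in all components (both at the level of position on $\pl_+S^*M\cong T^*\pl\bbar M$ and of the arrival time, which ties together $\tilde\rho,\tilde\xi_0$ at the endpoint).

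Concretely, I would first write the linearised system for the difference $w(s):=c_m(s)-c'_m(s)=(\delta\tilde\rho,\delta\tilde\xi_0,\delta E,\delta\tilde y,\delta\hat\eta)(s)$. The $(\delta\tilde\rho,\delta\tilde\xi_0)$ block decouples into a forced harmonic-oscillator-type equation $\delta\ddot{\tilde\rho}+\delta\tilde\rho = (\text{source})$ whose source involves $-(\tfrac m2+1)\sin^{m+1}(s)\,T_m(e^{sH_0}(y_0,\eta_0))$ together with terms proportional to $\delta E$ (the variation of the energy $|\tilde\eta|^2$) — and $\delta E$ itself, by differentiating \eqref{|tilde eta|}-type identities, is governed by $\partial_s(\delta E) = \sin^m(s)\,\tilde\xi_0\, H_0 T_m(\cdots)+\cdots$, integrating to something controlled by $\int_0^s \cos(t)\sin^m(t) H_0T_m\,dt$. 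Imposing that $w(\pi)=0$ (all components, because $S_g=S_{g'}$ and the arrival time is pinned down by $\tilde\xi_0(\pi)=-1$, $\tilde\rho(\pi)=0$) yields exactly the three orthogonality relations: the vanishing of the $\delta E$-part at $s=\pi$ gives \eqref{energyvar}; then \emph{assuming} $H_0T_m=0$ kills the $\delta E$ coupling, and the explicit Duhamel/variation-of-parameters solution of the harmonic oscillator with source $\propto\sin^{m+1}(s)T_m$, evaluated at $s=\pi$, produces \eqref{rhocm} (from the $\delta\tilde\rho(\pi)$ component via $\cos$-weighted integral) and \eqref{equcos} (from the $\delta\tilde\xi_0(\pi)$ component, or equivalently from requiring the other fundamental solution's coefficient to vanish), while \eqref{equdirectionH0} should drop out of the $(\delta\tilde y,\delta\hat\eta)$ block — the variation of the boundary position — combined with \eqref{energyvar}, or equivalently as a linear combination of the $\delta\tilde\rho$-type and $\delta\tilde\xi_0$-type relations. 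I would verify that $\sin s\cdot\cos s$ and $\sin^2 s$ are (essentially) the two independent homogeneous solutions giving the kernel functions $G(s,t)$ in the Duhamel formula, so that the endpoint conditions $\delta\tilde\rho(\pi)=\delta\dot{\tilde\rho}(\pi)=0$ translate verbatim into \eqref{rhocm}–\eqref{equcos}.

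The main obstacle I anticipate is bookkeeping the coupling between the $(\tilde\rho,\tilde\xi_0)$ block and the $(\tilde y,\tilde\eta,E)$ block in the linearisation: the source term in the $\delta\tilde\rho$-equation contains not only $T_m(e^{sH_0}(\cdots))$ but also $\delta E(s)$ times $\sin s$ (since $\ddot{\tilde\rho}=-\tilde\rho|\tilde\eta|^2+\cdots$ and varying $|\tilde\eta|^2$ contributes), and one must check that under the extra hypothesis $H_0T_m=0$ this coupling genuinely vanishes — i.e. that $\delta E\equiv 0$ along the flow when $H_0T_m=0$, which is why the three cleaner identities \eqref{rhocm}–\eqref{equdirectionH0} are stated only in that case. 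A secondary subtlety is making sure the arrival time variation $\delta\tau^+$ is handled correctly: $S_g=S_{g'}$ means the endpoints match \emph{at the respective exit times}, so the comparison should be done after reparametrising, or equivalently one notes $\tilde\rho(\pi)=0$ is automatic at zeroth order and the first-order correction to the exit time is absorbed because $\dot{\tilde\rho}(\pi)=\tilde\xi_0(\pi)=-1\neq 0$, making the implicit function theorem applicable. Once these couplings are pinned down, the three displayed identities follow by reading off the components of $w(\pi)=0$ and using $\int_0^\pi\sin^{m+1}s\cos s\,f\,ds$ and $\int_0^\pi \sin^{m+2}s\,f\,ds$ as the relevant moments.
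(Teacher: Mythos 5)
Your overall strategy is the same as the paper's: expand $\til X_\eps$ and $\til X'_\eps$ in powers of $\eps$, note $c_j=c'_j$ for $j\le m-1$, solve the linearised equation for $e_m=c_m-c'_m$ by Duhamel along the explicit zeroth-order solution $(\sin s,\cos s,e^{sH_0}(y_0,\eta_0))$, and read off components of an endpoint condition coming from $S_g=S_{g'}$. However, the step where you encode $S_g=S_{g'}$ is wrong as written, and it is exactly the step that produces \eqref{rhocm} and \eqref{equdirectionH0}. You impose $w(\pi)=0$ in all components, in particular $\delta\til\rho(\pi)=0$ and the vanishing of the tangential difference. That is false: matching the two trajectories at their respective exit times $\tau_\eps\neq\tau'_\eps$ leaves, at the fixed parameter $s=\pi$, a residual term proportional to the flow direction $\til X_0(c_0(\pi))=-\pl_{\til\rho}+H_0$, whose coefficient is the order-$\eps^m$ exit-time shift; by the implicit function theorem (simple zero, since $\til\xi_0(\pi)=-1$) this shift equals $\til\rho(c_m(\pi))-\til\rho(c'_m(\pi))$, so the correct relation is $e_m(\pi)+(\til\rho(c_m(\pi))-\til\rho(c'_m(\pi)))\til X_0(c_0(\pi))=0$, not $e_m(\pi)=0$ (this is \eqref{empi} in the paper). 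Only the $E$- and $\til\xi_0$-components of $\til X_0(c_0(\pi))$ vanish, so $\delta E(\pi)=0$ and $\delta\til\xi_0(\pi)=0$ are legitimate and give \eqref{energyvar} and \eqref{equcos}; but $\delta\til\rho(\pi)$ is \emph{not} zero, and \eqref{rhocm} is precisely the Duhamel evaluation of this nonzero quantity, while the tangential difference equals $-\delta\til\rho(\pi)\,H_0$, which, after applying the Liouville form (using that the linearised boundary flow preserves the $H_0$-direction and that $\la(H_m-H'_m)=T_m$) and substituting \eqref{rhocm}, is what yields \eqref{equdirectionH0}. If you literally impose $\delta\til\rho(\pi)=0$ and vanishing of the tangential part, you deduce $\int_0^\pi\sin^{m+2}(s)T_m\,ds=0$ and $\int_0^\pi\sin^{m}(s)T_m\,ds=0$, which are false in general: take $m=1$ and $g'=\psi^*g$ a change of normal form with constant $\omega_0$ (cf.\ \eqref{changeofrho} and the remark after Definition \ref{nontrap}), for which $S_{g'}=S_g$ and $T_1\propto h_0$. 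Your remark that the exit-time correction is ``absorbed'' by the IFT does not repair this; the correction is not absorbed, it contributes the extra term that the whole argument hinges on.

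Two further points. First, your $\delta E$ equation carries a spurious factor $\til\xi_0=\cos s$: since $dE\cdot\til X_0=0$ and $dE\cdot(X_m-X'_m)=-2\til\rho^{m}H_0T_m$ by \eqref{tilX and tilX'}, the evolution along the zeroth-order flow is $\pl_s\delta E=-2\sin^{m}(s)\,H_0T_m(e^{sH_0}(y_0,\eta_0))$, and only then does $\delta E(\pi)=0$ give \eqref{energyvar}; with your $\cos$-weighted source you would land on the wrong moment $\int_0^\pi\cos(s)\sin^{m}(s)H_0T_m\,ds$. Second, \eqref{equdirectionH0} cannot be obtained by combining the tangential block with \eqref{energyvar} (which is vacuous once $H_0T_m=0$), nor as a linear combination of the $(\til\rho,\til\xi_0)$-block relations, because the moment $\int_0^\pi\sin^{m}(s)T_m\,ds$ arises only from the $H_0$-direction of the tangential block via $\la(H_m-H'_m)=T_m$; the correct combination is the tangential relation together with \eqref{rhocm}.
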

\begin{proof}
Observe that when $\epsilon = 0$, the vector field $\til{X}_0=\til{X}_\epsilon|_{\epsilon=0}$ is
\begin{equation}
\label{tilX_0}
\til{X}_0 = \til{\xi}_0\pl_{\til{\rho}}-\til{\rho}|\til{\eta}|^2_{h_{0}} \partial_{\til{\xi}_0}+
H_{0}
\end{equation}
and its integral curves with initial condition $(\til{\rho},\til{\xi}_0,\til{y},\til{\eta})|_{s=0}=(0,1,y_0,\eta_0)$ when $|\eta_0|_{h_0}=1$
are given by 
\[c_0(s)=(\sin(s),\cos(s),e^{sH_0}(y_0,\eta_0)).\] 
Note that since $H_0$ is the geodesic vector field on $T^*\pl\bbar{M}$ for the metric $h_0$,
\begin{eqnarray}
\label{dE(H_0) = 0}
dE(H_0) = 0.
\end{eqnarray}
In the coordinates $(\til{\rho},\til{\xi}_0, E,\til{y},\hat \eta)$, the first order linearization of $\til{X}_0$ takes the convenient block form:
\begin{eqnarray}
\label{dtilX_0}
d\til{X}_0=\left(\begin{array}{c c c c}
0 & 1 & 0 & 0\\
-E & 0 & -\til{\rho} & 0\\
 0 & 0 & 0 & 0 \\
 0 & 0 & \pl_E H_0 & d_{\til{y},\hat{\eta}}H_0
\end{array}
\right),
\end{eqnarray}
where we view vector fields as column vectors and the last line correspond to the pair of coordinates $(\til{y},\hat \eta)\in T^*\pl\bbar{M}$: for example, 
\[\til{X}_0=\left(\begin{array}{c}
\til{\xi}_0\\
 -\til{\rho}|\til{\eta}|^2_{h_{0}}\\
0 \\ 
H_0
\end{array}\right),\] 
the columns in \eqref{dtilX_0} are the partial derivatives 
$(\pl_{\til{\rho}}, \pl_{\til{\xi}_0},\pl_E,\pl_{(\til{y},\hat{\eta})})$.
The vector fields $\til{X}_{\epsilon}$ and $\til{X}'_{\epsilon}$ corresponding to the metrics $g$ and $g'$ have a smooth expansion in powers of $\eps$: 
\[\til{X}_{\epsilon}=\sum_{j=0}^m\eps^jX_j+\mc{O}(\eps^{m+1}),\quad \til{X}'_{\epsilon}=\sum_{j=0}^m\eps^jX'_j+\mc{O}(\eps^{m+1})\]
with $X_j, X_j'$ smooth vector fields. 
Under the assumption that $h_j=h'_j$ for $j\leq m-1$, we get in addition that $X_j=X'_j$ for $j\leq m-1$ and  
\begin{equation}
\label{tilX and tilX'}
X_m-X'_m=-\big(\frac{m}{2}+1\big){\til{\rho}}^{m+1}T_{m}(\til{\eta},\til{\eta})\pl_{\bbar{\xi}_0}+\til{\rho}^{m} (H_{m}-H'_m)
\end{equation}
where $H_m$ (resp. $H_m')$ is the Hamilton vector field of $\frac{1}{2}h_m(\til{\eta},\til{\eta})$  (resp. $\frac{1}{2}h'_m(\til{\eta},\til{\eta})$) on $T^*\pl\bbar{M}$, and $H_m-H_m'$ is the Hamilton field of $T_m(\til{\eta},\til{\eta})$.

We set $c_\eps(s)$ and $c'_\eps(s)$ to be the trajectories of $\til{X}_\eps$ and $\til{X}'_\eps$ respectively with the same initial condition 
\[\big(\til{\rho}_0,\til{\xi}_0,E,\til{y},\hat{\eta}\big)=(0,1,1,y_0,\eta_0) \in [0,1]\times [-1,1] \times \RR^+\times S^*\partial M.\]
By Taylor expanding in powers of $\epsilon$ the equations $\til{X}_\eps(c_\eps(s))=\dot{c}_\eps(s)$ and $\til{X}'_\eps(c'_\eps(s))=\dot{c}_\eps'(s)$, we obtain using $X_j=X_j'$ for $j\leq m-1$
\[c_j(s)=c'_j(s) \textrm{ for }j\leq m-1\]
and the $\eps^m$ term yields the equation
\[ \dot{c}_m(s)-\dot{c}'_m(s)=X_m(c_0(s))-X'_m(c_0(s))+dX_0(c_0(s)).(c_m(s)-c_m'(s)).\]
Writing $e_m(s):=c_m(s)-c_m'(s)$ and using \eqref{tilX and tilX'}, we obtain the linear ODE system
\begin{equation}\label{equationek}
\dot{e}_m(s)=X_m(c_0(s))-X'_m(c_0(s))+dX_0(c_0(s)).e_m(s).
\end{equation}
To solve this equation, we introduce the matrix solution of 
\begin{eqnarray}
\label{ODE of R}
\dot{R}(s)=d\til{X}_0(c_0(s))R(s), \quad R(0)={\rm Id}
\end{eqnarray}
which can be solved explicitly (in the $(\til{\rho},\til{\xi}_0,E,(\til{y},\hat{\eta}))$ coordinates) as 
\begin{eqnarray}
\label{R matrix}
R(s)=\left(\begin{array}{c c c c}
\cos(s) & \sin(s) & a_1(s) & 0\\
-\sin(s) & \cos(s) & a_2(s) & 0\\
 0 & 0 & 1 & 0 \\
 0 & 0 & K(s) & L(s)
\end{array}
\right).
\end{eqnarray}
The function $L(s)$ solves the ODE $\dot{L}(s)=dH_{0}(e^{sH_0}(y,\hat{\eta}))L(s)$ with $L(0)={\rm Id}$ on $\{E=1\}=S^*\pl\bbar{M}$, and $a_1,a_2,K$ are smooth functions that do not play any role for later. Note that $L(s)=d\phi_s$ if $\phi_s=e^{sH_0}$ is the geodesic flow on $S^*\pl \bbar{M}$.
The function $e_m(s)$ is then given by 
\begin{equation}\label{ems}
e_m(s)=R(s)\int_0^s R(t)^{-1}(X_m(c_0(t))-X_m'(c_0(t)))dt.
\end{equation}
Let $\tau_\eps$ and $\tau'_\eps$ be the positive solutions of $\til{\rho}(c_\eps(\tau_\eps))=0$ and $\til{\rho}(c'_\eps(\tau'_\eps))=0$; we note that $\tau_0=\tau'_0=\pi$. 
Expanding the equation in powers of $\eps$, we obtain that 
$\tau_\eps=\tau'_\eps+\eps^m\tau_m+\mc{O}(\eps^{m+1})$
and
\[\til{\rho}(c_m(\pi))-\til{\rho}(c'_m(\pi))+(\tau_m-\tau'_m)d\til{\rho}.\til{X}_0(c_0(\pi))=0.\]
Since $\til{\xi}_0(c_0(\pi))=-1$, this gives 
\begin{equation}\label{equtauj}
(\tau_m-\tau'_m)=\til{\rho}(c_m(\pi))-\til{\rho}(c'_m(\pi)).
\end{equation}
The identity $S_g=S_g'$ implies that 
$c_\eps(\tau_\eps)=c_\eps'(\tau'_\eps)$ and taking the $\eps^{m}$ coefficient of the Taylor expansion of this equation, we deduce that 
\[0= c_m(\pi)-c_m'(\pi)+ \til{X}_0(c_0(\pi))(\tau_m-\tau_m'),\]
which can be rewritten using \eqref{equtauj} under the form
\begin{equation}\label{empi}
e_m(\pi)+(\til{\rho}(c_m(\pi))-\til{\rho}(c'_m(\pi)))\til{X}_0(c_0(\pi))=0.
\end{equation}
Combining \eqref{empi} and \eqref{ems} we deduce that
\begin{equation}\label{empifinal}
R(\pi)\int_0^\pi R(t)^{-1}(X_m(c_0(t))-X_m'(c_0(t)))dt+(\til{\rho}(c_m(\pi))-\til{\rho}(c'_m(\pi)))\til{X}_0(c_0(\pi))=0.
\end{equation}
We can write $R(s)^{-1}$ under the matrix form
\begin{eqnarray}
\label{Rinv matrix}
R(s)^{-1}=\left(\begin{array}{c c c c}
\cos(s) & -\sin(s) & b_1(s) & 0\\
\sin(s) & \cos(s) & b_2(s) & 0\\
 0 & 0 & 1 & 0 \\
 0 & 0 & -L(s)^{-1}K(s) & L(s)^{-1}
\end{array}
\right)
\end{eqnarray}
for some functions $b_j(s)$.
We consider the $\pl_E$ (i.e fiber radial) component of \eqref{empifinal}: since $dE.\til{X}_0=0$ and  
$dE.(\til{X}_m-\til{X}'_m)=\til{\rho}^mdE.(H_m-H_m')=-2\til{\rho}^mH_0T_m$ by \eqref{tilX and tilX'} (viewing $T_m$ as a homogeneous function of degree $2$ in $\til{\eta}$), 
this leads to 
\[ 0=\int_0^\pi \sin(s)^mH_0T_m(e^{sH_0}(y_0,\eta_0))ds,\]
which is \eqref{energyvar}.

Assume now that $T_{m}$ is a Killing 2-tensor, i.e. that $H_0T_m=0$. This implies  
\[(H_{m}- H_{m}') h_0 = - H_0T_m = 0.\]
In other words, in the coordinate system $(\til\rho, \til\xi_0, E, \tilde y, \hat \eta)$, the vector field $\til{X}_{m} - \til{X}_{m}'$ given by \eqref{tilX and tilX'} lies in the kernel of $d E$, it thus has the matrix form in our coordinates
\[\til{X}_m- \til{X}_{m}'=\left(\begin{array}{c}
0\\
 -\big(\frac{m}{2}+1\big){\til{\rho}}^{m+1}T_{m}(\til{\eta},\til{\eta})\\
0 \\ 
\til{\rho}^m(H_m-H_m')
\end{array}\right).\]
Identifying the $\pl_{\til{\xi}_0}$ component of \eqref{empifinal}, we get 
\[ \int_0^\pi (\frac{m}{2}+1)\sin^{m+1}(s)\cos(s)T_m(e^{sH_0}(y_0,\eta_0))ds=0\]
 implying equation \eqref{equcos}.

Identifying the $\pl_{\til{\rho}}$ component of \eqref{empifinal} and using $\til{X}_0(c_0(\pi))=-\pl_{\til{\rho}}+H_0$ and \eqref{tilX and tilX'}, we obtain \eqref{rhocm} if $H_0T_m=0$.

To obtain \eqref{equdirectionH0}, we consider the $H_0$ component of \eqref{empifinal}. Since $d(e^{sH_0})_{y,\eta}.H_0(y,\eta)=H_0(e^{sH_0}(y,\eta))$ for all $y,\eta\in T^*\pl\bbar{M}$, the direction $H_0$ is preserved by the linearisation of the geodesic flow $H_0$ on  $\pl\bbar{M}$, and the same holds for 
$\ker \la$ if $\la:=\sum_{j}\til{\eta}_jdy_j$ is the Liouville $1$-form on $T^*\pl \bbar{M}$. 
We then have $L(s)H_0(y_0,\eta_0)=H_0(e^{sH_0}(y_0,\eta_0))$, that is for $f$ smooth and $Y\in \ker \lambda$ 
\[ L(s)^{-1}(f H_0+ Y)(\phi_s(y_0,\eta_0))=f(\phi_s(y_0,\eta_0)) H_0(y_0,\eta_0)+
d\phi_s^{-1}(\phi_s(y_0,\eta_0))Y(\phi_s(y_0,\eta_0))\] 
if $\phi_s=e^{sH_0}$ is the geodesic flow on $S^*\pl\bbar{M}$. The last line of the system \eqref{empifinal} becomes
\[ \int_0^\pi \sin(s)^mL(s)^{-1}(H_m-H_m')(c_0(s))ds+(\til{\rho}(c_m(\pi))-\til{\rho}(c'_m(\pi)))\til{X}_0(c_0(0))=0\]
Applying $\la$ to this equation gives 
\[\til{\rho}(c_m(\pi))-\til{\rho}(c'_m(\pi)) + \int_0^\pi \sin(s)^{m}\la((H_m-H_m')(e^{sH_0}(y_0,\eta_0)))ds =0.\]
A direct calculation gives that $\la(H_m-H_m')=T_m$ so we conclude that
\[\til{\rho}(c_m(\pi))-\til{\rho}(c'_m(\pi)) + \int_0^\pi \sin(s)^{m}T_m(e^{sH_0}(y_0,\eta_0)))ds =0.\]
Substituting \eqref{rhocm} for $\til{\rho}(c_m(\pi))-\til{\rho}(c'_m(\pi))$ into the above equation yields \eqref{equdirectionH0}. The proof is complete.
\end{proof}

\begin{corollary}\label{negcurv}
Assume that $g,g'$ are asymptotically conic metrics with the same boundary metric $h_0$ and assume that the geodesic flow $e^{2\pi H_0}$ of $h_0$ at time $2\pi$ is ergodic on $S^*\pl\bbar{M}$ (which is in particular true if $h_0$ has negative curvature or more generally if it has mixing geodesic flow).
If the scattering operator $S_{g'}=S_g$ agree, then there is a smooth diffeomorphism $\psi:\bbar{M}\to \bbar{M}$ fixing the boundary so that $\psi^*g$ and $g$ agree to infinite order at the boundary. 
\end{corollary}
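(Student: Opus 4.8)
The plan is to reduce both metrics to a common normal form near $\pl\bbar{M}$ and then show, by induction on the order of the Taylor expansion at $\pl\bbar{M}$, that their dual metric coefficients coincide; each inductive step is driven by Lemma~\ref{solutionlinearised} and the ergodicity hypothesis. First I would apply Lemma~\ref{normalform} to put $g$ in normal form with a defining function $\rho$. Ergodicity of $e^{2\pi H_0}$ implies ergodicity of $e^{\pi H_0}$ (an $e^{\pi H_0}$-invariant set is $e^{2\pi H_0}$-invariant), so Lemma~\ref{ergodic} applies and gives $h_0'=c\,h_0$ for a positive constant $c$, which we absorb into the statement so as to take $h_0'=h_0$. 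Pulling $g'$ back by a boundary-fixing diffeomorphism we may then assume $g'=d\rho^2/\rho^4+h'_\rho/\rho^2$ with the \emph{same} $\rho$. Writing the dual metrics as in \eqref{exphrho} and setting $T_j:=h_j-h_j'$, it then suffices to arrange $T_j=0$ for all $j$ after one further boundary-fixing change of defining function; the composite of all these maps is the diffeomorphism $\psi$.

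For the inductive step, suppose $T_j=0$ for $j\le m-1$. Then Lemma~\ref{solutionlinearised} yields \eqref{energyvar}, that is $\int_0^\pi\sin^m(s)\,(H_0T_m)(e^{sH_0}z_0)\,ds=0$ for every $z_0\in S^*\pl\bbar{M}$. The main obstacle will be to upgrade this to $H_0T_m=0$, i.e.\ to prove $T_m$ is a Killing symmetric $2$-tensor of $h_0$. The argument I have in mind is as follows. The function $\Theta(z_0):=\int_0^\pi\sin^m(s)\,T_m(e^{sH_0}z_0)\,ds$ has derivative along $H_0$ equal to the left-hand side of \eqref{energyvar}, hence zero; so $\Theta$ is invariant under the geodesic flow of $h_0$ and therefore constant by ergodicity of $e^{2\pi H_0}$. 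Differentiating the resulting identity $\int_0^\pi\sin^m(s)\,T_m(e^{(s+t)H_0}z_0)\,ds\equiv\mathrm{const}$ in $t$ and integrating by parts in $s$ — where every boundary term vanishes because $\sin0=\sin\pi=0$ — produces the same type of identity with $\sin^m$ lowered to $\sin^{m-2}$, and, at intermediate stages, identities with weights $\sin^j(s)\cos(s)$. Iterating finitely many times, one reaches — according to the parity of $m$ — either $T_m\circ e^{\pi H_0}=T_m$ on $S^*\pl\bbar{M}$, or $T_m\circ e^{\pi H_0}+T_m\equiv\mathrm{const}$, in which latter case composing with $e^{\pi H_0}$ and subtracting gives $T_m\circ e^{2\pi H_0}=T_m$. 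In either case $T_m$ is $e^{2\pi H_0}$-invariant, hence constant by ergodicity, so $T_m=c_m\,h_0^{-1}$ for a constant $c_m$; in particular $H_0T_m=0$.

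Once $H_0T_m=0$, the map $s\mapsto T_m(e^{sH_0}z_0)$ is constant, so for $m\ge2$ the identity \eqref{equdirectionH0} reduces to $\big(\int_0^\pi\sin^m(s)\,ds-(\tfrac m2+1)\int_0^\pi\sin^{m+2}(s)\,ds\big)\,T_m(z_0)=0$; using the Wallis relation $\int_0^\pi\sin^{m+2}=\tfrac{m+1}{m+2}\int_0^\pi\sin^m$, the bracket equals $\tfrac{1-m}{2}\int_0^\pi\sin^m\neq0$, whence $T_m\equiv0$ and $h_m=h_m'$. For $m=1$ the bracket vanishes, so \eqref{equdirectionH0} is empty; but then $T_1=c_1\,h_0^{-1}$, and by \eqref{changeofrho} a change of defining function $\rho\mapsto\rho+\mu\rho^2+\mc{O}(\rho^3)$ with $\mu$ a suitable constant shifts the first dual coefficient by a constant multiple of $h_0^{-1}$ while fixing $h_0$; choosing $\mu$ to cancel $c_1$ makes $T_1=0$ after this boundary-fixing change. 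Performing the $m=1$ reduction first and then running the induction for $m\ge2$ — where the normal form is rigid by Lemma~\ref{normalform}, so no further modification is needed — gives $h_j=h_j'$ for all $j$, i.e.\ $\psi^*g'$ and $g$ agree to infinite order at $\pl\bbar{M}$ (the remaining identities \eqref{equcos} and \eqref{rhocm} are then not needed).
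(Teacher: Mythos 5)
Your proposal is correct and follows essentially the same route as the paper: induction on the jet order using Lemma \ref{solutionlinearised}, repeated integration by parts in the weighted integrals along $e^{sH_0}$ to show (via ergodicity of $e^{2\pi H_0}$, splitting by the parity of $m$) that $T_m$ is flow-invariant and hence $c\,h_0$, then \eqref{equdirectionH0} plus the Wallis relation to force $c=0$ for $m\ge 2$, and the change of boundary defining function via \eqref{changeofrho} to absorb the $m=1$ case. The only cosmetic difference is that you integrate $T_m$ itself (carrying constants pinned down by ergodicity of $\Theta$) whereas the paper keeps $H_0T_m$ inside the integrals so all intermediate identities are exactly zero; both versions of the bookkeeping are valid.
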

\begin{proof}
Assume $g$ and $g'$ agree to order $m$, i.e. $h_j=h_j'$ for all $j<m$ using the notation 
\eqref{exphrho}. We will show that $T_m=h_m-h_m'$ must vanish.
We apply $H_0$ to \eqref{energyvar} and integrate by part (using $H_0(f\circ e^{sH_0})=\pl_s(f\circ e^{sH_0})$) to get for all $(y,\eta)\in S^*\pl\bbar{M}$
\[0=\int_0^\pi \sin(s)^{m}H_0^2T_m(e^{sH_0}(y,\eta))ds=
-m\int_0^\pi \cos(s)\sin(s)^{m-1}H_0T_m(e^{sH_0}(y,\eta))ds.\]
Applying again this method and using \eqref{energyvar}, we obtain for $m>1$ that
\[0=\int_0^\pi \cos(s)^2\sin(s)^{m-2}H_0T_m(e^{sH_0}(y,\eta))ds.\]
Using \eqref{energyvar}, this gives $\int_0^\pi\sin(s)^{m-2}H_0T_m(e^{sH_0}(y,\eta))ds=0$.
We repeat the operation, and one gets when $m$ is even that for all $(y,\eta)\in S^*\pl\bbar{M}$
\[0=\int_0^\pi H_0T_m(e^{sH_0}(y,\eta))ds= T_m(e^{\pi H_0}(y,\eta))-T_m(y,\eta).\]
If $e^{\pi H_0}$ is ergodic, then necessarily $T_m=c\,h_0$ for some $c\in \RR$ and $H_0T_m=0$. If $m$ is odd, we end up with $\int_0^\pi \sin(s)H_0T_m(e^{sH_0}(y,\eta))ds=0$, which gives after another application of $H_0^2$ and two integrations by parts
\begin{equation}\label{Tmsym} 
(H_0T_m)\circ e^{\pi H_0}=-H_0T_m.
\end{equation}
Here again if $e^{2\pi H_0}$ is ergodic, as $(H_0T_m)\circ e^{2\pi H_0}=H_0T_m$, we conclude that $H_0T_m=c'\, h_0$ for some $c'\in \RR$. But integrating this identity over $S^*\pl\bbar{M}$ shows that $c'=0$ and thus $T_m=c\, h_0$ for some $c$ by ergodicity of $e^{2\pi H_0}$. In all case we have $T_m=c\, h_0$ for some $c\in\RR$. 
Next, we apply the identity \eqref{equdirectionH0} and obtain 
\[c \int_0^\pi (\sin(s)^m -(\frac{m}{2}+1)\sin(s)^{m+2}))ds=0.\]  
Since $\int_0^\pi \sin(s)^{m+2}ds=\frac{m+1}{m+2}\int_0^\pi \sin(s)^{m}ds$, we conclude that 
$c=0$ if $m\geq 2$. To deal with the case $m=1$, we can change the normal form by using Lemma \ref{normalform}: this amounts to change the boundary defining function $\rho$ in that Lemma to 
$\hat{\rho}=\rho(1+c_0\rho+\mc{O}(\rho^2))$ for some $c_0\in\RR$ so that, by \eqref{changeofrho}, 
$g$ in this normal form becomes $\frac{ds^2}{s^4}+\frac{\hat{h}_s}{s^2}$ with
\[ \hat{h}_s-h_s=2s c_0h_0+\mc{O}(s^2).\] 
The change of normal form amounts to pulling-back $g$ by a smooth diffeomorphism $\psi$ on $\bbar{M}$, fixing $\pl\bbar{M}$ pointwise and so that $\psi^*\hat{\rho}=\rho$.  
Since we know that $h_1-h_1'=c \,h_0$ for some $c$, we can choose $c_0=c/2$ so that in a normal form near $\pl\bbar{M}$, the expansion of $\psi^*g$ and $g'$ in normal form agree to order $2$, i.e and we are reduced to the case $m=2$ dealt with above (we also use that $S_{\psi^*g}=S_g$ for such diffeomorphism $\psi$ by the remark following Definition \ref{nontrap}).  
\end{proof}

In a companion paper in collaboration with Mazzucchelli \cite[Theorem 7.5]{GMT}, we prove a boundary determination similar to Corollary \ref{negcurv} from the scattering map for the case where the boundary is the canonical sphere with curvature $+1$.

\subsection{Deformation Rigidity of Rescaled Lens Map}
Assume now that, on $M$, we have a family of non-trapping asymptotically conic metrics of the form $g(s) = \frac{d\rho^2}{\rho^4} + \frac{h(s)}{\rho^2}$ near $\pl\bbar{M}$  for $s\in (-1,1)$ (we set $g = g(0)$) such that $L_{g(s)} = L_{g}$ and $S_{g(s)} = S_{g}$ for each $s$. Furthermore we will assume that $h(s) = h(0) + \mc{O}(\rho^\infty)$. We will denote by prime the derivative with respect to $s$ at $s=0$.
Observe that if $h(s)$ is a smooth family of tensors which are smooth up to the boundary, then $g'(\cdot,\cdot)\in \rho^\infty C^\infty(\overline M ; S^2({}^{\rm sc} T^*M))$ which is annihilated by $\iota_{\partial_\rho}$ for $\rho>0$ small. Therefore, by the estimates of Lemma \ref{x asymptotic}, for each geodesic $\gamma(t)$ of $g$,
$|g'(\dot \gamma, \dot \gamma)|= \mc{O}(t^{-\infty})$ as $|t|\to\infty$.
The goal of this section is to show the
\begin{proposition}\label{impliesI2=0}
Let $g(s)$ be a smooth family of non-trapping asymptotically conic metrics of the form $g(s) = \frac{d\rho^2}{\rho^4} + \frac{h(s)}{\rho^2}$ near $\pl\bbar{M}$.
If $S_{g(s)}=S_g$ and $L_{g(s)}=L_g$ for all $s$, then $g'=\pl_sg(s)|_{s=0}$ satisfies $I_2(g')=0$
if $I_2$ is the X-ray transform associated to $g$.
\end{proposition}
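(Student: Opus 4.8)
The plan is to compute the first variation in $s$ at $s=0$ of the renormalized length along a natural family of geodesics and to recognise the outcome as $I_2(g')$.

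\textbf{Setup.} Every complete non-trapped geodesic of $g=g(0)$ is, up to reparametrisation, the trajectory $\{\pi_0(\bbar{\varphi}^{g}_\tau(z_0)):0<\tau<\tau^+_g(z_0)\}$ of the rescaled flow starting at some $z_0\in\partial_-S^*M\simeq T^*\partial\bbar{M}$ with $\tau^+_g(z_0)<\infty$. Fix such a $z_0$. Since $\bbar{X}^{g(s)}$ depends smoothly on $s$ and extends smoothly to $\bbar{S^*M}$ (Lemma \ref{factorX}) and every $g(s)$ is non-trapping, $\tau^+_{g(s)}(z_0)$ is smooth in $s$ and the $g(s)$-geodesic $\gamma_s$ with trajectory $\{\pi_0(\bbar{\varphi}^{g(s)}_\tau(z_0))\}$ forms a smooth family; parametrise each $\gamma_s$ by $g(s)$-arclength with a smoothly varying origin (the conclusion will not depend on this choice, as $L^\lambda_{g(s)}(\gamma_s)$ does not). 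Write $V(t):=\partial_s|_{s=0}\gamma_s(t)$ for the variation field along $\gamma:=\gamma_0$; by Lemma \ref{jacobi growth} and $g(s)-g(0)\in\rho^\infty C^\infty$, $|V(t)|_g=\mc{O}(\langle t\rangle)$. Since $g(s)-g(0)=(h(s)-h(0))/\rho^2\in\rho^\infty C^\infty(\bbar{M};S^2({}^{\rm sc}T^*\bbar{M}))$ with $\iota_{\rho^2\partial_\rho}g'=0$, \eqref{liftest} gives $\pi_2^*g'\in\rho^\infty C^\infty(\bbar{S^*M})$, so $I_2(g')$ is defined and $I_2(g')(\gamma)=\int_{-\infty}^\infty g'_{\gamma(t)}(\dot\gamma,\dot\gamma)\,dt$ with integrand $\mc{O}(\langle t\rangle^{-\infty})$ by Lemma \ref{x asymptotic}. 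It suffices to show $I_2(g')(\gamma)=0$ for every such $\gamma$.

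\textbf{Two identities.} For ${\rm Re}\,\lambda$ large, $L^\lambda_{g(s)}(\gamma_s)=\int_{-\infty}^\infty\rho(\gamma_s(t))^\lambda\,dt$, where only $\gamma_s(t)$ depends on $s$ because $\rho$ is a fixed function. Differentiating under the integral (justified using $|V|_g=\mc{O}(\langle t\rangle)$),
\[\frac{d}{ds}\Big|_{s=0}L^\lambda_{g(s)}(\gamma_s)=\lambda\int_{-\infty}^\infty\rho(\gamma(t))^{\lambda-1}\,d\rho_{\gamma(t)}(V(t))\,dt=:\lambda\,I(\lambda),\qquad {\rm Re}\,\lambda\gg1,\]
and both sides continue meromorphically in $\lambda$. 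Second, differentiating the unit-speed relation $g(s)(\dot\gamma_s,\dot\gamma_s)\equiv1$ and using $g(s)-g(0)\in\rho^\infty C^\infty$ yields the pointwise identity $\frac{d}{dt}g(V,\dot\gamma)=g(\nabla_{\dot\gamma}V,\dot\gamma)=-\tfrac12 g'_{\gamma(t)}(\dot\gamma,\dot\gamma)$. As the right side is $\mc{O}(\langle t\rangle^{-\infty})$, the limits $a_\pm:=\lim_{t\to\pm\infty}g(V,\dot\gamma)(t)$ exist and $a_+-a_-=-\tfrac12\,I_2(g')(\gamma)$.

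\textbf{Residue computation and conclusion.} Decompose $V=g(V,\dot\gamma)\dot\gamma+V^\perp$, so $d\rho(V)=g(V,\dot\gamma)\tfrac{d}{dt}(\rho\circ\gamma)+d\rho(V^\perp)$. Using that $\rho\circ\gamma$ is monotone for $|t|$ large (Lemma \ref{x asymptotic}) and $g(V,\dot\gamma)-a_\pm=\mc{O}(\langle t\rangle^{-\infty})$, the change of variable $r=\rho(\gamma(t))$ shows that $\int\rho^{\lambda-1}g(V,\dot\gamma)\tfrac{d}{dt}(\rho\circ\gamma)\,dt$ has a simple pole at $\lambda=0$ with residue $a_--a_+$. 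For the remaining term we use $S_{g(s)}=S_g$: differentiating $S_{g(s)}(z_0)=\bbar{\varphi}^{g(s)}_{\tau^+_{g(s)}(z_0)}(z_0)$ shows the variation of the flow at the outgoing endpoint is a multiple of $\bbar{X}$, hence longitudinal, while it vanishes entirely at the incoming endpoint since $z_0$ is fixed; translating back and using that near infinity $g'=\mc{O}(\rho^\infty)$, so that $V^\perp$ is a Jacobi field along $\gamma$ up to an $\mc{O}(\langle t\rangle^{-\infty})$ source, the curvature decay of Lemma \ref{R as map} and the growth bounds of Lemma \ref{jacobi growth} give $V^\perp(t)\to0$ and then $d\rho(V^\perp)=\mc{O}(\langle t\rangle^{-4})$, so $\int\rho^{\lambda-1}d\rho(V^\perp)\,dt$ is holomorphic at $\lambda=0$. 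Hence $\mathrm{Res}_{\lambda=0}I(\lambda)=a_--a_+$. On the other hand, by Proposition \ref{rescaled geod length} $L^\lambda_{g(s)}(\gamma_s)$ is holomorphic at $\lambda=0$ with value $L_{g(s)}(\gamma_s)=L_g(\gamma)$, constant in $s$; thus $\mathrm{Res}_{\lambda=0}I(\lambda)=\lambda I(\lambda)\big|_{\lambda=0}=\frac{d}{ds}|_{s=0}L_{g(s)}(\gamma_s)=0$. Therefore $a_+=a_-$, and by the second identity $I_2(g')(\gamma)=-2(a_+-a_-)=0$; since $z_0$ was arbitrary, $I_2(g')=0$. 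The main obstacle is precisely the step where $S_{g(s)}=S_g$ is invoked: one must check that the $s$-dependent reparametrisation $t\leftrightarrow\tau$ and the $\rho^{-2}$ factor relating $X$ to $\bbar{X}$ affect only the longitudinal component $g(V,\dot\gamma)\dot\gamma$, so that the constancy of the scattering map genuinely forces the decay of $d\rho(V^\perp)$ needed to identify the residue; everything else is routine bookkeeping of the meromorphic continuations near $\lambda=0$.
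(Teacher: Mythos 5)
Your overall strategy is viable and in fact close in spirit to the paper's: both differentiate the regularized length in $s$, and your two identities (the unit--speed relation giving $\tfrac{d}{dt}g(V,\dot\gamma)=-\tfrac12 g'(\dot\gamma,\dot\gamma)$, hence $a_+-a_-=-\tfrac12 I_2(g')(\gamma)$, and the residue $a_--a_+$ produced by the longitudinal part of $d\rho(V)$) are correct. The genuine gap is exactly the step you flag at the end: the decay of the transverse variation $V^\perp$. Lemmas \ref{R as map} and \ref{jacobi growth} cannot deliver it --- they are growth (upper) bounds for Jacobi fields in terms of data at a finite time, and say nothing about the solution singled out by asymptotic conditions at $t=\pm\infty$. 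Knowing only that the variation of the compactified flow is longitudinal at the outgoing boundary point gives, by smoothness of the family alone, a transverse coordinate-variation of size $\mc{O}(\tau^+-\tau)=\mc{O}(\rho)$ near the end, hence merely $|V^\perp|_g=\mc{O}(1)$ after converting coordinate increments to $g$-norms (a factor $\rho^{-1}$), and then $|d\rho(V^\perp)|\leq \rho^2|V^\perp|_g=\mc{O}(\langle t\rangle^{-2})$. This is precisely the borderline at which $\int \rho^{\lambda-1}d\rho(V^\perp)\,dt$ itself acquires a pole at $\lambda=0$, so your residue identification would fail. What rescues the argument is the quantitative estimate the paper establishes first (its bound \eqref{gammaOtau^N}): since $h(s)=h(0)+\mc{O}(\rho^\infty)$, the rescaled fields satisfy $\bbar{X}_{g(s)}-\bbar{X}_{g}=\mc{O}(s\rho^\infty)$ on $\bbar{S^*M}$, and a Gr\"onwall argument for the forward flow from the fixed point $z_0\in\pl_-S^*M$ and the backward flow from the fixed point $z'=S_{g(s)}(z_0)=S_g(z_0)$ (this is where $S_{g(s)}=S_g$ is really used) shows that the variation of the trajectory and of its $\tau$-derivative is $\mc{O}(\rho^N)$ for every $N$ near both ends. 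Only with this does $V^\perp$ (and $d\rho(V^\perp)$) decay rapidly and your transverse integral become holomorphic at $\lambda=0$; you should prove this estimate rather than appeal to the Jacobi-field lemmas.

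A secondary point you assert without justification is the identity $\lambda I(\lambda)|_{\lambda=0}=\pl_s|_{s=0}L_{g(s)}(\gamma_s)$, i.e.\ the interchange of $\pl_s$ with the meromorphic continuation at $\lambda=0$; this requires the continuation of $L^\lambda_{g(s)}(\gamma_s)$ to be $C^1$ in $s$ locally uniformly in $\lambda$ near $0$, and verifying that again comes down to the same uniform endpoint control. For comparison, the paper's proof avoids the residue bookkeeping altogether: it differentiates the regularized length in the rescaled time $\tau$, splits the integral at distance $\eps$ from the two endpoints, shows the endpoint pieces contribute $\mc{O}(\eps^N)$ by \eqref{gammaOtau^N}, and identifies the $s$-derivative of the middle piece with the first variation of the $g(0)$-energy (a boundary term, again $\mc{O}(\eps^N)$ since $\gamma$ is a $g$-geodesic) plus $\int h'(\dot{\bbar{\gamma}},\dot{\bbar{\gamma}})\,d\tau$, which converges to $I_2(g')$ as $\eps\to0$, giving $\pl_sL_{g(s)}|_{s=0}=I_2(g')$ directly. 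Your packaging is a legitimate alternative, but the analytic content you must supply is the same endpoint estimate, and as written it is missing.
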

\begin{proof}
We denote the projection on $\bbar{M}$ of the (non-trapped) integral curves of the rescaled geodesic vector fields $\bbar{X}_{s}$ of $g(s)$ by $\bbar{\gamma}_s(\tau,z)$, if $z\in \pl_-S^*M$ is the 
initial value at time $\tau=0$ (these are simply the geodesics of $g(s)$ with time rescaled).
Let $\tau_+(s,z)$ be the time to that $\bbar{\varphi}^s_{\tau_+(s,z)}(z)\in \pl_+S^*M$ if 
$\bbar{\varphi}_t^s(z)$ denotes the flow of $\bbar{X}_{s}$ at time $\tau\geq 0$. We let $z'=S_{g(s)}(z)=S_{g}(z)\in \pl_+S^*M$. By assumptions, we have $\bbar{X}_s=\bbar{X}_0+\mc{O}(s\rho^\infty)$ when viewed as smooth vector fields on 
$[0,\eps]_{\rho}\x [-1,1]_{\bbar{\xi}_0}\x T^*\pl\bbar{M}_{y,\eta}$. This implies that for each 
 $N>0$ and $\tau\geq 0$ small
\[ \bbar{\varphi}_\tau^s(z)=\bbar{\varphi}_\tau^0(z)+\mc{O}(s\max_{u\leq\tau}\rho(\bbar{\gamma}_s(u,z))^N), \quad \bbar{\varphi}_{-\tau}^s(z')=\bbar{\varphi}_{-\tau}^0(z')+\mc{O}(s\max_{u\leq\tau}\rho(\bbar{\gamma}_s(-u,z'))^N).\]
Let $\bbar{\gamma}'(\tau,z):=\pl_s\bbar{\gamma}_s(\tau,z)|_{s=0}$ and dot denotes the $\tau$ derivative. Then we obtain 
\begin{equation}\label{gammaOtau^N}
\bbar{\gamma}(\tau,z)=\mc{O}(\tau^N),\quad \dot{\bbar{\gamma}}(\tau,z)=\mc{O}(\tau^N)
\end{equation} 
uniformly for $\tau$ small, with the similar bounds for 
$\bbar{\gamma}'(-\tau,z')$ and $\dot{\bbar{\gamma}}'(-\tau,z')$. We recall that $L_{g(s)}$ is obtained from the formula \eqref{L^lambda_g} in terms of the curve $\bbar{\gamma}_s(\tau,z)$, and we shall vary \eqref{L^lambda_g} with respect to $s$.
Let $\eps\in(0,\tau_+(s,z)/4)$ be small. 
Using that $\rho^4g(s)(\dot{\bbar{\gamma}}(\tau,z),\dot{\bbar{\gamma}}(\tau,z))=1$ we compute 
for ${\rm Re}(\la)>1$ (using $\pl_s(\rho^2g(s))|_{s=0}=\rho^2g'=h'$)
\[ \begin{split}
\pl_s \Big[\int_0^\eps \rho(\bbar{\gamma}_s(\tau,z))^{\la-2}d\tau\Big]\Big|_{s=0}=& \pl_s \Big[\int_0^\eps \rho(\bbar{\gamma}_s(\tau,z))^{\la+2}g(s)(\dot{\bbar{\gamma}}_s(\tau,z),\dot{\bbar{\gamma}}_s(\tau,z))d\tau\Big]\Big|_{s=0}\\
=& \la\int_0^\eps \rho(\bbar{\gamma}(\tau,z))^{\la-3} d\rho(\bbar{\gamma}'(\tau,z)) d\tau\\
 & +\int_0^\eps \rho(\bbar{\gamma}(\tau,z))^{\la}\pl_s(\rho^2g(\dot{\bbar{\gamma}}_s(\tau,z),\dot{\bbar{\gamma}}_s(\tau,z)))|_{s=0}d\tau\\
& + \int_0^\eps \rho(\bbar{\gamma}(\tau,z))^{\la}h'(\dot{\bbar{\gamma}}(\tau,z),\dot{\bbar{\gamma}}(\tau,z)))d\tau .
\end{split}\]
Using that  $h'\in \rho^\infty C^\infty(\bbar{M};S^2(T^*\bbar{M}))$ and the bounds \eqref{gammaOtau^N}, we deduce that the integrals above all extend holomorphically to $\la\in \CC$ since $\rho(\bbar{\gamma}(\tau,z))=\mc{O}(\tau)$ as $\tau\to 0$, moreover this extension is uniformly $\mc{O}(\eps^N)$ for $\la\in \CC$ in compact sets. The same argument and estimates also applies to 
\[\pl_s|_{s=0} \Big[\int_0^\eps \rho(\bbar{\gamma}_s(-\tau,z'))^{\la-2}d\tau\Big].\] 
Next we compute (with $\tau_+(z):=\tau_+(0,z))$)
\[ \begin{split}
\pl_s\Big[\int_\eps^{\tau_+(s,z)-\eps} \rho(\bbar{\gamma}_s(\tau,z))^{\la-2}d\tau\Big]\Big|_{s=0} =& \pl_s \Big[\int_\eps^{\tau_+(s,z)-\eps} \rho(\bbar{\gamma}(\tau,z))^{\la+2}g(\dot{\bbar{\gamma}}_s(\tau,z),\dot{\bbar{\gamma}}_s(\tau,z))d\tau\Big]\Big|_{s=0}\\
& +\la\int_\eps^{\tau_+(z)-\eps}  \rho(\bbar{\gamma}(\tau,z))^{\la-3} d\rho(\bbar{\gamma}'(\tau,z)) d\tau\\
 & +\int_\eps^{\tau_+(z)-\eps} \rho(\bbar{\gamma}(\tau,z))^{\la}h'(\dot{\bbar{\gamma}}(\tau,z),\dot{\bbar{\gamma}}(\tau,z)))d\tau.
\end{split}\]
where we denoted $h'=\rho^2g'$ (defined globally on $\bbar{M}$ and tangential to boundary near $\pl\bbar{M}$). We thus obtain 
\[\begin{split}
\Big(\pl_s \Big[\int_0^\eps \rho(\bbar{\gamma}_s(\tau,z))^{\la-2}d\tau\Big]\Big|_{s=0}\Big)\Big|_{\la=0}= & 
\pl_s \Big[\int_\eps^{\tau_+(s,z)-\eps} \rho(\bbar{\gamma}(\tau,z))^2g(\dot{\bbar{\gamma}}_s(\tau,z),\dot{\bbar{\gamma}}_s(\tau,z))d\tau\Big]\Big|_{s=0}\\
&+ \int_\eps^{\tau_+(z)-\eps} h'(\dot{\bbar{\gamma}}(\tau,z),\dot{\bbar{\gamma}}(\tau,z))d\tau+\mc{O}(\eps^N).
\end{split}\]
As $\eps\to 0$, the second term converges to $I_2(g')$ by Definition \ref{defXray}. Making the change of variable $\tau \mapsto t(\tau,z)=\int_\eps^\tau \rho^{-2}(\bbar{\gamma}(r,z))dr$ we have \[\int_\eps^{\tau_+(s,z)-\eps} \rho(\bbar{\gamma}(\tau,z))^2g(\dot{\bbar{\gamma}}_s(\tau,z),\dot{\bbar{\gamma}}_s(\tau,z))d\tau=
\int_{0}^{t_s(\eps)}g(\dot{\gamma}_s(t),\dot{\gamma}_s(t))dt
\]
where $\gamma_s(t):=\bbar{\gamma}_s(\tau,z)$ and $t_s(\eps):=t(\tau_+(s,z)-\eps,z)$. This is the energy functional of the curve $\gamma_s(t)$ with respect to $g=g(0)$, and since $\gamma_0(t)$ is a geodesic for $g$, we get by the variation formula for the energy
\[\pl_s \Big[\int_0^\eps \rho(\bbar{\gamma}_s(\tau,z))^{\la-2}d\tau\Big]\Big|_{s=0}=g(\gamma'(\eps,z),\dot{\gamma}_0(t_0(\eps))-g(\gamma'(\eps,z'),\dot{\gamma}(0))=\mc{O}(\eps^N)\]
for all $N$. Letting $\eps\to 0$, we conclude that 
\[\pl_s L_{g(s)}(z)|_{s=0}=I_2(g')\]
and that proves the Proposition in the non-trapping case. 
\end{proof}

\begin{proof}[Proof of Theorem \ref{def rigid}]
We now prove deformation rigidity following the argument of \cite{GGSU}. First 
by Corollary \ref{negcurv} we can assume that $g(s)=g(0)+\mc{O}(\rho^\infty)$.
By Proposition \ref{impliesI2=0} applied at the point $s_0$, we have that at each $s_0$, $I_2^{g(s_0)}(g'(s_0))=0$.
By Theorem \ref{injectivity of tensors}, there exists $q(s)\in \rho^\infty C^\infty (M; 
{^{\rm sc}T}^*M)$ such that $g'(s) = D^{g(s)}q(s)$, or equivalently 
\begin{eqnarray}
\label{g'(s) is lie derivative}
g'(s) = {\mathcal L}_{\frac{1}{2}q(s)^\sharp} g(s),
\end{eqnarray}
with $q(s)^\sharp$ the vector field dual to $q(s)$ by $g(s)$.  Integrating the vector field produces the desired family of diffeomorphisms.
\end{proof}

\end{document}